\numberwithin{equation}{section}
\numberwithin{table}{section}
\numberwithin{figure}{section}
\def\naive{{na\"ive}}
\def\Dokovic{{\DH}okovi{\'c}}
\def\cf{cf.~}
\newcommand{\rb}[1]{\raisebox{1.5ex}[0pt]{#1}}
\newcommand{\hsp}[1]{{\hbox{\hspace{#1}}}}
\newcommand{\mystack}[2]{\ensuremath{ \substack{ \hbox{\tiny{${#1}$}} \\ \hbox{\tiny{${#2}$}} }} }
\newcounter{letcnt} % letter section
\def\a{\alpha}  
\def\b{\beta}  
\def\d{\delta}  
\def\e{\varepsilon}  
\def\z{\zeta}
\def\s{\sigma}
\def\w{\omega}
\def\fa{\mathfrak{a}} 
\def\tAd{\mathrm{Ad}} \def\tad{\mathrm{ad}}
\def\tAut{\mathrm{Aut}}
 \def\sB{\mathscr{B}}
\def\fb{\mathfrak{b}} 
\def\bC{\mathbb C} \def\cC{\mathcal C}
\def\tcodim{\mathrm{codim}}
 \def\tdim{\mathrm{dim}}
 \def\sE{\mathscr{E}}
 \def\ttE{\mathtt{E}}
\def\tEnd{\mathrm{End}} 
\def\be{\mathbf{e}}  
\def\texp{\mathrm{exp}}
\def\tGL{\mathrm{GL}}
\def\fg{{\mathfrak{g}}} 
\def\fgl{\mathfrak{gl}}
 \def\sH{\mathscr{H}}
\def\fh{\mathfrak{h}} 
\def\bh{\mathbf{h}}
\def\bi{\mathbf{i}}
 \def\tIm{\mathrm{Im}}
\def\fk{\mathfrak{k}}
 \def\tker{\mathrm{ker}}
\def\cL{\mathcal L} 
\def\fl{\mathfrak{l}}
\def\cM{\mathcal M}
 \def\cN{\mathcal N}
 \def\sN{\mathscr{N}}
\def\fn{\mathfrak{n}} 
\def\bn{\mathbf{n}}
\def\tNilp{\mathrm{Nilp}}
 \def\cO{\mathcal O}
\def\tO{\mathrm{O}}
\def\bP{\mathbb P} \def\cP{\mathcal P}
\def\fp{\mathfrak{p}} 
\def\tprim{\mathrm{prim}}
\def\bQ{\mathbb Q}
\def\bR{\mathbb R} \def\cR{\mathcal R}
 \def\sR{\mathscr{R}}
 \def\cS{\mathcal S}
\def\sS{\mathscr{S}}
\def\ttS{\mathtt{S}}
\def\tss{\mathrm{ss}}
\def\tSL{\mathrm{SL}} \def\tSO{\mathrm{SO}}
\def\tSp{\mathrm{Sp}}
 \def\tspan{\mathrm{span}}
\def\fsl{\mathfrak{sl}} \def\fso{\mathfrak{so}} 
\def\fsp{\mathfrak{sp}} \def\fsu{\mathfrak{su}}
\def\ft{\mathfrak{t}}
\def\cW{\mathcal W} \def\sW{\mathscr{W}}
\def\by{\mathbf{y}}
   \def\bZ{\mathbb Z}
\def\ttZ{\mathtt{Z}} \def\sZ{\mathscr{Z}}
\def\fz{\mathfrak{z}} 
 \def\bz{\mathbf{z}}
\def\half{\tfrac{1}{2}}
\def\one{\mathbbm{1}}
\def\tand{\quad\hbox{and}\quad}
\def\sb{{\hbox{\tiny{$\bullet$}}}}
\def\inj{\hookrightarrow}
\def\op{\oplus}
\def\ot{\otimes}
\newcounter{numcnt}
\newenvironment{numlist}{
   \begin{list}{{\small{(\arabic{numcnt})}}}
   {\usecounter{numcnt} 
    \setlength{\itemsep}{3pt}
    \setlength{\leftmargin}{25pt} 
    \setlength{\labelwidth}{20pt} 
    \setlength{\listparindent}{20pt} }
   }
   {\end{list}}
\newcounter{cnt}
\newcounter{acnt}
\newenvironment{a_list}{ % modest indent a.) , etc.
  \begin{list}{{(\alph{acnt})}}
   {\usecounter{acnt} \setlength{\itemsep}{3pt}
    \setlength{\leftmargin}{25pt} 
    \setlength{\labelwidth}{20pt}
    \setlength{\listparindent}{20pt} }
   }
   {\end{list}}
\newcounter{Acnt}
\newcounter{icnt}
\newenvironment{i_list}{ % modest indent a.) , etc.
  \begin{list}{{(\roman{icnt})}}
   {\usecounter{icnt} 
    \setlength{\itemsep}{3pt} 
    \setlength{\parsep}{0pt} % space between items is \itemsep+\parsep
    \setlength{\topsep}{0pt} % space between top and list is \topset+\parskip
    \setlength{\leftmargin}{25pt} 
    \setlength{\labelwidth}{20pt}
    \setlength{\listparindent}{20pt} }
   }
   {\end{list}}
\newcounter{Icnt}
\newcounter{exam_cnt}
\newcounter{mccnt}
\newenvironment{bcirclist}{ 
  \begin{list}{\boldmath$\circ$\unboldmath}
   {\usecounter{cnt} \setlength{\itemsep}{2pt}
    \setlength{\leftmargin}{15pt} \setlength{\labelwidth}{20pt}
    \setlength{\listparindent}{20pt} }
   }
   {\end{list}}
\newenvironment{blist}{ 
  \begin{list}{$\bullet$}
   {\usecounter{cnt} \setlength{\itemsep}{2pt}
    \setlength{\leftmargin}{20pt} \setlength{\labelwidth}{20pt}
    \setlength{\listparindent}{20pt} }
   }
   {\end{list}}
\newtheorem{corollary}[equation]{Corollary}
\newtheorem{lemma}[equation]{Lemma}
\newtheorem{proposition}[equation]{Proposition}
\newtheorem{theorem}[equation]{Theorem}
\theoremstyle{definition}
\newtheorem*{boldQ*}{Question}
\newtheorem*{boldP*}{Problem}
\theoremstyle{definition}
\theoremstyle{remark}
\newtheorem*{assume*}{Assume}
\newtheorem*{answer*}{Answer}
\newtheorem{claim}[equation]{Claim}
\newtheorem*{claim*}{Claim}
\newtheorem{definition}[equation]{Definition}
\newtheorem*{definition*}{Definition}
\newtheorem{example}[equation]{Example}
\newtheorem*{example*}{Example}
\newtheorem*{hint*}{Hint}
\newtheorem*{notation*}{Notation}
\newtheorem{remark}[equation]{Remark}
\newtheorem*{remark*}{Remark}
\newtheorem*{remarks*}{Remarks}
\newtheorem*{fact*}{Fact}
\newtheorem*{emphL*}{Lemma}
\newtheorem*{emphQ*}{Question}
\newtheorem*{emphA*}{Answer}
\def\uz{\underline{z}}
\def\uy{\underline{y}}
\def\fgR{\fg_\bR}
\def\cC{\mathcal{C}}
\def\cO{\mathcal{O}}
\def\PP{\mathbb{P}}
\def\be{\underline{\epsilon}}
\def\bfD{\mathbf{\Delta}}
\def\bfN{\mathbf{N}}
\def\blank{\hbox{\ }}
\def\rmI{\mathrm{I}}
\def\rmII{\mathrm{II}}
\def\rmIII{\mathrm{III}}
\def\rmIV{\mathrm{IV}}
\def\schmid{MR0382272}
\def\Schmid{MR0382272}
\def\BP{MR3133298}
\def\CK{MR664326}
\def\CK2{MR1042802}
\def\DL{DtoCK}
\def\GGK{MR2918237}
\def\HP{HayPearl}
\def\KP{KP2012}
\def\CKSdeg{MR840721}
\def\CKS{MR840721}
\def\CKpmhs{MR664326}
\def\CKextn{MR0432925}
\def\CoMc{MR1251060}
\def\beq{\begin{equation}}
\def\eeq{\end{equation}}
\begin{document}

\title[Polarized relations on horizontal $\tSL(2)$'s]{Polarized relations on horizontal $\tSL(2)$'s}

\author[Kerr]{M. Kerr}
\email{matkerr@math.wustl.edu}
\address{Department of Mathematics, Washington University in St. Louis, Campus Box 1146, One Brookings Drive, St. Louis, MO 63130-4899}
\author[Pearlstein]{G. Pearlstein}
\email{gpearl@math.tamu.edu}
\address{Mathematics Department, Mail stop 3368, Texas A\&M University, College Station, TX  77843}
\author[Robles]{C. Robles}
\email{robles@math.duke.edu}
\address{Mathematics Department, Duke University, Box 90320, Durham, NC  27708-0320} 
\thanks{The authors gratefully acknowledge the National Science Foundation partial support though the DMS grants 1361147 (Kerr), 1361120 (Pearlstein and Robles), and 1309238 (Robles).}

\date{\today}

\begin{abstract}
We introduce a relation on real conjugacy classes of $\mathrm{SL}(2)$-orbits in a Mumford-Tate domain $D$. The relation answers the question \emph{when is one $\bR$--split polarized mixed Hodge structure more singular/degenerate than another?}  The relation is compatible with natural partial orders on the sets of nilpotent orbits in the corresponding Lie algebra and boundary orbits in the compact dual.

A generalization of the $\mathrm{SL}(2)$-orbit theorem to such domains leads to an algorithm for computing this relation.  The relation is then worked out in several examples and special cases, including period domains, Hermitian symmetric domains, and complete flag domains.  

Although the above relation is not in general a partial order, it leads (via cubical sets) to a poset of equivalence classes of multivariable nilpotent orbits on $D$.  The elements of this poset encode the possible degeneracy relations amongst the polarized mixed Hodge structures that arise in a several-variable degeneration of Hodge structure. We conclude with an example illustrating a link to mirror symmetry for Calabi-Yau VHS.
\end{abstract}

\keywords{}
\subjclass[2010]
{
 %14C30. % Hodge theory (transcendental flavor?)
 %14D07, 32G20. % Variation of Hodge structures.
 %14N15  % AG : Classical problems, Schubert calculus
 %14M15. % AG: Grassmannians, Schubert varieties, flag manifolds
 %14M17. % AG: Homogeneous spaces
 %17B08  % Coadjoint orbits; nilpotent varieties
 %53A55, % Differential Invariants
 %53C10, % G-structures
 %53C24, % Rigidity results
 %53C29, % Issues of holonomy
 %53C30, % Homogeneous manifolds
 %53C38, % Calibrations and calibrated geometries
 %58A15, % EDS (Cartan Thy)
 %58A14. % Hodge theory (under global analysis).
 %58A17, % Pfaffian systems
}

\bibliographystyle{amsalpha}

\maketitle

\setcounter{tocdepth}{1}

\tableofcontents
%------------------------------------------------------------------------------

%------------------------------------------------------------------------------
\section{Introduction} \label{S:intro}
\subsection{Objective} \label{S:1.1}
%------------------------------------------------------------------------------

The purpose of this article is to use representation theory to better understand the constraints on several-variable degenerations of Hodge structure, and hence (via the period map) on degenerations of algebraic varieties along a local normal crossing divisor.  Polarizable nilpotent cones $\sigma = \bR_{>0}\langle N_1,\ldots ,N_r \rangle$ in a reductive Lie algebra $\fg_{\bR}$ are the basic objects underlying such degenerations:  any unipotent variation of Hodge structure over $(\Delta^*)^r$ is approximated on the universal cover by a (Hodge-theoretic) nilpotent orbit $\theta(\underline{z})=e^{\sum z_j N_j} F^{\bullet}_{\infty}$, where the $N_j \in \fg_{\bQ}$ are the monodromy logarithms.  It is these cones that we would like to somehow classify, for polarized variations with arbitrary Hodge numbers and Mumford-Tate group $G$.

We recall that $G\leq \mathrm{Aut}(V,Q)$ is the reductive, connected $\bQ$-algebraic group fixing all Hodge tensors of a polarized Hodge structure $(V,Q,\varphi)$ on a $\bQ$-vector space $V$.  However, the Lie group $G(\bR)$ of real points need not be topologically connected; and we shall primarily work with the identity connected component $G(\bR)^+$, of which the (connected) Mumford-Tate domain $D:=G(\bR)^+.\varphi \cong G(\bR)^+/G^0(\bR)$ is an orbit.  The domain $D$ is an analytic open subset of its compact dual $\check{D}=G(\bC).F_{\varphi}^{\bullet}$.  Given a period map $\Phi: (\Delta^*)^r \to \Gamma \backslash D$ with generic M-T (Mumford-Tate) group $G$, the approximating nilpotent orbit $\theta(\underline{z})$ has the property that $\theta(\underline{z})\in D$ when all $\text{Im}(z_j)\gg 0$ ($\theta$ is polarized), and $F^{\bullet}_{\infty} \in \check{D}$ satisfies $N_j F^{\bullet}_{\infty}\subseteq F^{\bullet-1}_{\infty}$ ($\theta$ is horizontal).

\begin{comment}
Every period domain may be realized as a M-T domain with automorphism group $G = \tAut(V,Q)$; in contrast, for general M-T domains, $G \subset \tAut(V,Q)$.  The purpose of studying the more general M-T domains is to gain insight into Hodge theoretic properties of families of smooth polarized varieties with special Hodge tensors.  See \cite{MR2918237} for a general discussion, and \S\ref{S:PDvMT} for a discussion of the issues relevant to this paper.
\end{comment}

Our present goal is to construct a ``combinatorially computable'' finite poset comprising suitable equivalence-classes of these $\{\theta\}$, in such a way as to render transparent the relation between the stratification of a cone $\sigma$ (in a given class) by its faces and the stratification of $\mathrm{Nilp}(\fg_{\bR})$ and $\partial D \subset \check{D}$ by $G(\bR)^+$-orbits. To obtain a reasonable classification, we shall jettison much of the rational structure, allowing the $N_j$ to be real so that we may act by $G(\bR)^+$ on the set of all such $\{\theta\}$. Unfortunately, for $r>1$, what remains is still a ``wild'' problem -- for instance, this action typically has infinitely many orbits.  In order to find some structure in the situation, we are therefore led to study equivalence classes modulo the action of $G(\bR)^+$ on each face of $\sigma$  \emph{individually}.  The full strength of the multivariate $\mathrm{SL}(2)$-orbit theory, adapted to M-T domains, must be brought to bear to determine how the resulting equivalence-classes of faces may fit together -- that is, which limiting MHS-types can admissibly degenerate into which.  

In order to describe these goals (and our results) more precisely, we shall introduce the main objects of study $\Psi_D$, $\mathbf{N}_D$, and $\mathbf{\Delta}_D$ in the next subsection. The initial stimulus for this paper was to relate work by the third author on $\Psi_D$ \cite{SL2} to work of the first two authors on $\mathbf{\Delta}_D$ \cite{MR3331177} and Hodge-theoretic boundary components \cite{KP2012}, and to the study of the partial order on $\mathbf{\Delta}_D$ for adjoint domains in \cite{KR1}.

Before going further, we briefly address why we work in the more general setting of Mumford-Tate domains \cite{MR2918237}, rather than sticking to period domains (i.e. to the case $G=\tAut(V,Q)$). While long familiar in the setting of Shimura varieties (cf. \cite{K-sv,FL}), period maps into such subdomains are increasingly common in algebraic geometry, whether in the context of ``occult'' period maps arising from cyclic covers (e.g. \cite{ACT,LPZ}), or from other motivations related to arithmetic or exceptional groups (e.g. \cite{dSKP,Katz,Yun}). In these and related settings, it is important to be able to compute the restrictions imposed by the M-T group on the LMHS types and on their degeneracies into one another.  

Also significant is that the definitions and results are simply more natural in the representation-theoretic language.  For instance, while the relations $\leq,\preceq$ on the set $\Psi_D$ are not partial orders in general (or for most period domains), they actually yield a \emph{linear} order when $D$ is Hermitian symmetric ($\S$\ref{S:HS}), and for ``complete flag'' domains $\leq$ (but not $\preceq$) yields a partial order ($\S$\ref{S:fd}).  Moreover, while some of the results for period domains can be stated in terms of Hodge numbers (e.g. Theorem \ref{T:PDpr}), we are unaware of such a formulation for the secondary poset in $\S$\ref{S:eg}.

%------------------------------------------------------------------------------
\subsection{Definitions} 
%------------------------------------------------------------------------------

Let $D = G(\bR)^+/ G^0(\bR)$ be a Mumford--Tate domain parameterizing weight $k$, $Q$--polarized Hodge structures $F$ on $V$.  (Henceforth we shall drop the superscript bullets on these Hodge flags.)  By the 1-variable case of Schmid's Nilpotent Orbit Theorem \cite{\schmid}, a period map $\Phi:\Delta^* \to \Gamma\backslash D$ (or rather, its lift $\tilde{\Phi}:\mathfrak{H}\to D$) is asymptotically approximated by a nilpotent orbit
\begin{equation} \label{E:noi}
  z \ \mapsto \ e^{zN} F \, .
\end{equation}
Here $F$ is a point in the compact dual $\check D = G(\bC)/P$ of $D$, $N$ is a nilpotent element of the Lie algebra $\fg_\bR$ of $G(\bR)^+$, $z\in \bC$, and $e^{zN} F \in D$ for $\tIm\,z \gg0$.    The ``\naive~ limit'' 
\[
  F_\infty \ := \ \lim_{\tIm\,z \to \infty} e^{zN} F
\]
of the nilpotent orbit lies in the analytic closure $\overline D$ of $D$ in the compact dual.

%------------------------------------------------------------------------------
\subsubsection{Polarized mixed Hodge structures} 
%------------------------------------------------------------------------------

Recall that, given $F \in \check D$ and a nilpotent $N \in \fg_\bR$, the map \eqref{E:noi} is a nilpotent orbit on $D$ if and only if $(F,N)$ is a \emph{polarized mixed Hodge structure} (PMHS) on $D$, cf.~\cite[(6.16)]{\schmid} and \cite[(3.13)]{\CKSdeg}.  Associated to $N$ is a monodromy weight filtration $W=W(N)[-k]$ on $V$, see \S\ref{S:JMf}.  Then $(F,N)$ is a PMHS if and only if $NF^p \subset F^{p-1}$ ($\forall p$), $F$ induces a weight-$m$ HS on each $\mathrm{Gr}^W_m V$, and $Q_\ell (u,v)=Q (u,N^{\ell} v)$ polarizes each primitive subspace $(\mathrm{Gr}^W_{k+\ell} V)_{\text{prim}}$ ($\forall \ell\geq 0$).%\footnote{The ``on $D$'' part is simply defined to mean that the corresponding nilpotent orbit belongs to $D$ for $\mathrm{Im}(z)\gg 0$.} 

%------------------------------------------------------------------------------
\subsubsection{Horizontal $\tSL(2)$'s} 
%------------------------------------------------------------------------------
A particularly nice class of nilpotent orbits are those arising from horizontal $\tSL(2)$s.  Schmid's $\tSL(2)$--Orbit Theorem \cite{\schmid} asserts that every one-variable nilpotent orbit is asymptotically approximated by a horizontal $\tSL(2)$--orbit.  (The several-variable $\tSL(2)$--Orbit Theorem is due to Cattani, Kaplan and Schmid \cite{\CKSdeg}.)  
The horizontal $\tSL(2)$--orbits on $D$ are the nilpotent orbits on $D$ with the property that the PMHS is $\bR$--split (which is to say, the associated actual MHS $(F,W(N)[-k])$ is $\bR$--split).  

Set 
\[
  \tilde B_\bR(D) \ := \ 
  \left\{ (F,N) \in \check D \times \tNilp(\fg_\bR) \ | \ (F,N)
  \hbox{ is an $\bR$--split PMHS on $D$}\right\}\,.
\] 
Let $\tNilp(\fg_\bR) \subset \fg_\bR$ denote the set of nilpotent endomorphisms.  Then we have maps 
\begin{equation} \label{E:diagram1}
\begin{tikzcd}
  & \tilde B_\bR(D) \arrow[rd, "\phi_\infty"] \arrow[ld,"\pi"'] & \\
  \tNilp(\fg_\bR) & & \check D\,,
\end{tikzcd}
\end{equation}
where $\pi$ is the projection $(F,N) \mapsto N$ onto the second factor, and
\[
(F_{\infty}=\ )\  \phi_\infty(F,N) \ := \ \lim_{y \to + \infty} \exp(\bi y\,N)F 
  \ \in \ \overline D 
\]
is the \naive~limit map.  (Here and throughout, we denote $\sqrt{-1}$ by a boldface $\mathbf{i}$.) In the case of horizontal $\tSL(2)$'s, the latter is related to the PMHS by

\begin{proposition}[{\cite[\S5.1]{MR3331177}}] \label{P:sorb}
When the nilpotent orbit is an $\tSL(2)$--orbit, $F$ and $F_\infty$ lie in the same $G(\bR)^+$--orbit.
\end{proposition}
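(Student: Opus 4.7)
The plan is to construct an explicit element of $G(\bR)^+$ carrying $F$ to $F_\infty$ using the $\fsl_2$-triple naturally attached to the $\bR$-split PMHS. From the $\bR$-split hypothesis one extracts the Deligne bigrading $V=\bigoplus I^{p,q}$ (with $\overline{I^{p,q}}=I^{q,p}$) and the grading element $Y\in\fg_\bR$ acting on $I^{p,q}$ by $p+q-k$; together with $N$, this completes to an $\fsl_2$-triple $(N,Y,N^+)\subset\fg_\bR$, giving a homomorphism $\rho\colon\tSL(2,\bR)\to G(\bR)^+$. Since $F^p=\bigoplus_{p'\ge p,\,q'}I^{p',q'}$ is a sum of $Y$-eigenspaces, $y^{Y/2}F=F$ for $y>0$; combined with $[Y,N]=-2N$ (so that $y^{-Y/2}e^{iN}y^{Y/2}=e^{iyN}$) this yields the key identity
\[
  e^{iyN}F \;=\; y^{-Y/2}\,e^{iN}F, \qquad y>0,
\]
and in particular $F_\infty=\lim_{y\to\infty}y^{-Y/2}(e^{iN}F)$.

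Next I would compute this limit irreducible-by-irreducible. Decompose $V=\bigoplus_v E_v$ into $\fsl_2$-irreducibles, one for each primitive $v\in\tPrim_{\ell_v}\cap I^{p_0,q_0}$, with $E_v=\mathrm{span}\{N^jv:0\le j\le\ell_v\}$ and $\ell_v=p_0+q_0-k$. The Hodge flag restricts to the ``top part'' $F^p\cap E_v=\mathrm{span}\{N^jv:0\le j\le p_0-p\}$. Expanding each $y^{-Y/2}e^{iN}N^jv$ as a polynomial in $y$ and extracting the Grassmannian limit by rescaling suitable linear combinations of the basis vectors, one identifies
\[
  F_\infty^p\cap E_v \;=\; \mathrm{span}\{N^jv:\ell_v-(p_0-p)\le j\le\ell_v\},
\]
the matching ``bottom part'' of $E_v$.

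Finally, let $w_0=\exp\!\bigl(\tfrac{\pi}{2}(N^+-N)\bigr)\in G(\bR)^+$ be the image under $\rho$ of the Weyl element $\bigl(\begin{smallmatrix}0&-1\\1&0\end{smallmatrix}\bigr)\in\tSL(2,\bR)$. By standard $\fsl_2$-representation theory, $w_0$ acts on each $E_v$ by interchanging highest- and lowest-weight subspaces (up to scalars), so it sends top parts to bottom parts; summing over $v$ gives $w_0\cdot F=F_\infty$, which places $F$ and $F_\infty$ in the same $G(\bR)^+$-orbit. The main obstacle will be the Grassmannian limit in the second step: naively taking projective vector limits of $y^{-Y/2}e^{iN}N^jv$ collapses each $E_v$-piece to the single line $\bC N^{\ell_v}v$, so one must carefully rescale independent combinations of basis vectors to pick up each of the $p_0-p+1$ bottom directions in turn.
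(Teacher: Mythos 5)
This is correct and is essentially the argument of the cited reference, reused in this paper in the proof of Corollary \ref{cor:barequiv}: the Weyl element $w_0=\rho\left(\begin{smallmatrix}0&-1\\1&0\end{smallmatrix}\right)$ of the horizontal $\tSL(2)$ attached to the $\bR$--split PMHS carries $F$ to $F_\infty$. The one step you leave incomplete --- the identification of the Grassmannian limit $F_\infty^p$ with the ``bottom parts'' of the irreducibles --- is exactly \cite[(3.12)]{\CKSdeg} (quoted in \S\ref{S:T0} of this paper), so it can be cited rather than re-derived via the \Plucker{}-coordinate rescaling you describe.
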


%------------------------------------------------------------------------------
\subsubsection{Congruence classes} 
%------------------------------------------------------------------------------

The group $G(\bR)^+$ acts on both $\overline D$ and $\tNilp(\fg_\bR)$ (via the adjoint action in the second case\footnote{In an unfortunate clash of nomenclature, these orbits are also called ``nilpotent orbits'' in representation theory.}).  The Schmid and Cattani--Kaplan--Schmid orbit theorems, and their r\^ole in the analysis of degenerations of Hodge structure, lead us to consider the set
\begin{eqnarray*}
  \Psi_D & := & \left.\{
  \hbox{$G(\bR)^+$--conjugacy classes of pairs 
  $(F,N) \in \check D \times \tNilp(\fg_\bR)$}\right.\\
  & & \left.\hbox{ such that $(F,N)$ is an $\bR$--split PMHS on $D$}\right\}
\end{eqnarray*}
introduced in \cite{SL2}.  Setting
\begin{eqnarray*}
  \bfD & := & \left\{ 
  \hbox{$G(\bR)^+$--orbits in the analytic closure 
  $\overline D$ of $D$ in the compact dual $\check D$}\right\}\,,\\
  \bfN & := & \left\{
  \hbox{$G(\bR)^+$--conjugacy classes in $\tNilp(\fg_\bR)$}\right\}\,,
\end{eqnarray*}
the maps of \eqref{E:diagram1} descend to well-defined maps 
\begin{equation} \label{E:diagram2}
\begin{tikzcd}
  & \Psi_D \arrow[rd, "\phi_\infty"] \arrow[ld,"\pi"'] & \\
  \bfN & & \bfD
\end{tikzcd}
\end{equation}
on the quotients.

Given $[F,N]\in\Psi_D$, we say that $\phi_\infty([F,N]) \in \Delta$ is the \emph{boundary orbit polarized by $[F,N]$} and that $N$ is a \emph{polarizing nilpotent}.  Let 
\[
  \bfD_D \ := \ \phi_\infty(\Psi_D) \ \subset \ \bfD
\] 
denote the \emph{polarizable boundary strata}, and 
\[
  \bfN_D \ := \ \pi(\Psi_D) \ \subset \ \bfN
\] 
the (conjugacy classes of) polarizing nilpotents.  Thus we obtain a subdiagram
\begin{equation} \label{E:diagram3}
\begin{tikzcd}
  & \Psi_D 
  \arrow[rd, two heads, "\phi_\infty"] \arrow[ld, two heads, "\pi"'] & \\
  \bfN_D & & \bfD_D
\end{tikzcd}
\end{equation}
of \eqref{E:diagram2}.  Surprisingly, we have

\begin{theorem} \label{T:0}
The map $\phi_\infty : \Psi_D \to \bfD_D$ is a bijection.
\end{theorem}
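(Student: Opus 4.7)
Proof plan.

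Surjectivity is tautological from $\bfD_D := \phi_\infty(\Psi_D)$; the content of the theorem is injectivity. My approach uses the Levi parameterization $\Psi_D \simeq \cL_{\varphi,\ft}/\sW^0$ of \eqref{E:FNvl} together with the formula $F_\infty = \varrho\cdot\varphi$ of \eqref{E:phiinf-ct}, where $\varrho = \exp\bigl(\bi\tfrac{\pi}{4}(\sE+\overline\sE)\bigr)\in \exp(\fl^\tss_\bC)$ is the Cayley element attached by Lemma \ref{L:DKS} to the DKS triple.

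Suppose $[\fl_1],[\fl_2]\in \cL_{\varphi,\ft}$ produce canonical representatives $(F_i,N_i)$ whose $\phi_\infty$-images lie in the same $G(\bR)^+$-orbit in $\check D$. The first step is to promote this conjugacy to an honest equality $\varrho_1\cdot\varphi = \varrho_2\cdot\varphi$ in $\check D$. Since $\sE_i+\overline\sE_i\in \fk_\bR^\perp$ by \eqref{E:k}, each $\varrho_i$ lies in the totally real submanifold $\exp(\bi\fk_\bR^\perp)\cdot\varphi \subset \check D$. A Cartan/polar-type argument provides a normal form within this submanifold for each $G(\bR)^+$-orbit, and combined with the freedom to replace $\fl_2$ by a $\sW^0$-conjugate (which leaves its class in $\Lambda_{\varphi,\ft}$ unchanged), this effects the desired reduction.

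Second, I would derive the key identity
\[
  \tAd_\varrho(\ttE_\varphi) \ = \ \bigl(\ttE_\varphi-\tfrac{1}{2}\sZ\bigr) \,+\, \tfrac{\bi}{2}(\overline\sE-\sE)\,,
\]
which follows from the standard $\tSL(2)$ identity $\tAd_\varrho(\sZ)=\bi(\overline\sE-\sE)$ together with the observation that $\ttE_\varphi-\tfrac{1}{2}\sZ\in \tilde\fz$ by \eqref{E:sZ1}, and so is centralized by $\varrho \in \exp(\tilde\fl^\tss_\bC)$. The equality $\varrho_1\varphi=\varrho_2\varphi$ implies $\tAd_{\varrho_1}(\ttE_\varphi)=\tAd_{\varrho_2}(\ttE_\varphi)$ as the grading element of the Hodge filtration at $\varrho\cdot\varphi$ (properly pinned down by the real-form data carried by $\varrho$). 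Decomposing both sides under the $\varphi$-Hodge decomposition \eqref{E:gphi}---using $\sZ_i\in\fg^0_\varphi$, $\sE_i\in\fg^{-1}_\varphi$, and $\overline\sE_i\in\fg^1_\varphi$---the $\fg^0_\varphi$-summand yields $\sZ_1=\sZ_2$ and the $\fg^{-1}_\varphi$-summand yields $\sE_1=\sE_2$. Hence $\varrho_1=\varrho_2$ and $(F_1,N_1)=(F_2,N_2)$, giving $[F_1,N_1]=[F_2,N_2]$ in $\Psi_D$.

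I expect the main obstacle to be the first step: carefully controlling the interplay between $G(\bR)^+$-conjugacy and the rigid base-point data $(\varphi,\ft)$ used to parameterize $\Psi_D$, particularly the subtle point that the ``grading element at the boundary'' is not canonically determined by the filtration $F_\infty$ alone but rather by the Cayley transform $\varrho$, and the $\sW^0$-ambiguity in \eqref{E:FNvl} is precisely what tracks this slack. Once this reduction is in hand, the remaining argument is a direct Lie-theoretic computation using the DKS triple structure from \S\ref{S:Psi}.
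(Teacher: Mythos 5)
Your proposal correctly identifies that the content is injectivity, and the Lie-algebra endgame you sketch (the Cayley identity $\tAd_\varrho(\ttE_\varphi)=(\ttE_\varphi-\tfrac{1}{2}\sZ)\pm\tfrac{\bi}{2}(\overline\sE-\sE)$ followed by matching $\fg^p_\varphi$-components to extract $\sZ$ and $\sE$) is a correct computation and genuinely different from the paper's endgame. But the two reductions that feed into it are both gaps, and they are precisely where the paper has to do real work. First, the reduction to an honest equality $\varrho_1\cdot\varphi=\varrho_2\cdot\varphi$: equivariance of $\phi_\infty$ lets you conjugate $(F_2,N_2)$ by some $g\in G(\bR)^+$ so that the two naive limit \emph{flags} coincide, but $g\cdot(F_2,N_2)$ is then no longer of the canonical form \eqref{E:FN} attached to a Levi in $\cL_{\varphi,\ft}$, so its limit is not presented as $\varrho_2'\cdot\varphi$ for a Cayley element of a DKS triple adapted to $(\varphi,\ft)$. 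Producing a canonical representative of $[\fl_2]$ whose Cayley image equals $\varrho_1\cdot\varphi$ on the nose is essentially the whole theorem; the ``Cartan/polar-type normal form'' on $\exp(\bi\fk^\perp_\bR)\cdot\varphi$ that you invoke is not a standard decomposition, and no argument for it is given. In the paper this step is absorbed into Lemma~\ref{L:tilde}, whose proof needs Rao's theorem and Kostant's theorem to conjugate the two standard triples into one another.

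Second, even granting $\varrho_1\varphi=\varrho_2\varphi$, the inference $\tAd_{\varrho_1}(\ttE_\varphi)=\tAd_{\varrho_2}(\ttE_\varphi)$ does not follow: a point of $\check D$ is a filtration, a filtration admits many splittings, and hence many grading elements induce it. You flag this yourself (``properly pinned down by the real-form data carried by $\varrho$''), but that appeal is circular, since the $\varrho_i$ are exactly what you are trying to prove equal. The paper closes this gap with the uniqueness statement of \cite[Lemma 3.2]{MR3331177}: the \emph{pair} $(F_\infty,\ft)$ determines a unique bigrading of $\fg_\bC$, and both Deligne bigradings $\fg^{p,q}_{(F_i,N_i)}$ are shown to induce it, whence $\tilde\fl_1=\tilde\fl_2$. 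If you import that lemma, your component-matching argument does recover $\sZ_1=\sZ_2$ and $\sE_1=\sE_2$ directly and would give an attractive shortcut past Rao--Kostant; as written, however, both reductions are asserted rather than proved.
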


\noindent Theorem \ref{T:0} is proved in \S\ref{S:T0}.  In contrast, the map $\pi : \Psi_D \to \bfN_D$ generally fails to be injective; see Example \ref{eg:borel7}.  
%\[
%\begin{tikzcd}
%   \bfN_D \arrow[rd, hook]
%   & & 
%   \Psi_D \arrow[ll, two heads, "\pi"'] \arrow[rr, two heads, "\phi_\infty"]
%   \arrow[ld] \arrow[rd]
%   & &
%   \bfD_D \arrow[ld, hook'] \\ 
%  & \bfN & & \bfD &
%\end{tikzcd}
%\]
%of finite sets.

%------------------------------------------------------------------------------
\subsection{Motivation}
%------------------------------------------------------------------------------

On each of $\Psi_D$, $\bfD$ and $\bfN$, we introduce ``relations'', which for the latter two sets are partial orders given by ``containment in closure''.  Schmid's several-variable Nilpotent Orbit Theorem leads to a notion of a ``polarizable relation.''  Very roughly, these are the relations that are ``Hodge--theoretically realizable.''

It is these polarizable relations which are our main object of study, along with the relationships between $\Psi_D$, $\bfD$ and $\bfN$, especially as encoded in Hodge-theoretically natural maps \eqref{E:diagram2} preserving the relations.  Our efforts are motivated by the expectation that the polarizable relations will reflect the boundary structure of a partial compactification $\overline{\Gamma\backslash D}$.  Given an extension $\overline\cM \to \overline{\Gamma\backslash D}$ of a period map, this would in turn provide some information on the boundary $\overline\cM \backslash \cM$ of the moduli space.

\begin{example}[Period domain for $\bh = (g,g)$] \label{eg:curve0}
A familiar classical example is the period domain parameterizing weight 1 Hodge structures.  In this case $\Psi_D$ consists of $g+1$ elements, the relations are all polarized and define a linear order.  In particular, we may enumerate the elements $[F_a,N_a] \in \Psi_D$, $0 \le a \le g$, so that the linear order may be visualized as 
\[
  [F_0,N_0] \ \to \ [F_1,N_1] \ \to \ [F_2,N_2] \ 
  \to \cdots \to \ [F_g,N_g] \,,
\]
where each arrow $\to$ represents a generating relation $<$ of the linear order.  Specializing to $g=2$, we have $[F_0,N_0] \to [F_1,N_1]\to [F_2,N_2]$.  Geometrically, these polarized relations are realized by degenerations of the form
\begin{center}
\begin{tikzpicture}[scale=0.6]
% Smooth genus 2 curve
\draw[thick] (0,0) to [out=90,in=180] (1,1)
  to [out=0,in=180] (2,0.5)
  to [out=0,in=180] (3,1)
  to [out=0,in=90] (4,0)
  to [out=270,in=0] (3,-1)
  to [out=180,in=0] (2,-0.5)
  to [out=180,in=0] (1,-1)
  to [out=180,in=270] (0,0);
% Hole
\draw (1,0.3) to [out=240,in=120] (1,-0.3);
\draw (1,0.3) to [out=60,in=50] (1.1,0.35);
\draw (1,-0.3) to [out=300,in=310] (1.1,-0.35);
\draw (1,0.3) to [out=310,in=40] (1,-0.3);
% Hole
\draw (3,0.3) to [out=240,in=120] (3,-0.3);
\draw (3,0.3) to [out=60,in=50] (3.1,0.35);
\draw (3,-0.3) to [out=300,in=310] (3.1,-0.35);
\draw (3,0.3) to [out=310,in=40] (3,-0.3);
% Arrow
\draw[->] (4.5,0) -- (5.5,0);
% Genus 2 curve with double point
\draw[thick] (6,0) to [out=90,in=180] (7,1)
  to [out=0,in=180] (8,0.5)
  to [out=0,in=180] (9,1)
  to [out=0,in=90] (10,0)
  to [out=270,in=0] (9,-1)
  to [out=180,in=0] (8,-0.5)
  to [out=180,in=30] (7,-1)
  to [out=150,in=270] (6,0);
% Node
\draw (7,-1) to [out=130,in=240] (6.7,-0.2)
  to [out=60,in=240] (6.8,0);
\draw (7,-1) to [out=50,in=300] (7.3,-0.2)
  to [out=120,in=300] (7.2,0);
\draw (6.7,-0.2) to [out=20,in=160] (7.3,-0.2);
% Hole
\draw (9,0.3) to [out=240,in=120] (9,-0.3);
\draw (9,0.3) to [out=60,in=50] (9.1,0.35);
\draw (9,-0.3) to [out=300,in=310] (9.1,-0.35);
\draw (9,0.3) to [out=310,in=40] (9,-0.3);
% Arrow
\draw[->] (10.5,0) -- (11.5,0);
% Genus 2 curve with two double points
\draw[thick] (12,0) to [out=90,in=180] (13,1)
  to [out=0,in=180] (14,0.5)
  to [out=0,in=180] (15,1)
  to [out=0,in=90] (16,0)
  to [out=270,in=30] (15,-1)
  to [out=150,in=0] (14,-0.5)
  to [out=180,in=30] (13,-1)
  to [out=150,in=270] (12,0);
% Node
\draw (13,-1) to [out=130,in=240] (12.7,-0.2)
  to [out=60,in=240] (12.8,0);
\draw (13,-1) to [out=50,in=300] (13.3,-0.2)
  to [out=120,in=300] (13.2,0);
\draw (12.7,-0.2) to [out=20,in=160] (13.3,-0.2);
% Hole
\draw (15,-1) to [out=130,in=240] (14.7,-0.2)
  to [out=60,in=240] (14.8,0);
\draw (15,-1) to [out=50,in=300] (15.3,-0.2)
  to [out=120,in=300] (15.2,0);
\draw (14.7,-0.2) to [out=20,in=160] (15.3,-0.2);
\end{tikzpicture}
\end{center}
Weight one Hodge structures are discussed in greater detail in Examples \ref{eg:curve1} and \ref{eg:curve2}.
\end{example}

%------------------------------------------------------------------------------
\subsection{Relations} \label{iS:R}
%------------------------------------------------------------------------------

We may define partial orders on $\bfD$ and $\bfN$ by ``inclusion in closure.'' That is, given $\cN \in \bfN$, let $\overline\cN$ denote the analytic closure of $\cN$ in $\tNilp(\fg_\bR)$; likewise, given $\cO \in \bfD$, let $\overline \cO$ denote the analytic closure of $\cO$ in $\check D$.  Given $\cN_i \in \bfN$, we write
\[
  \cN_1 \ \le \ \cN_2 \quad\hbox{if}\quad \cN_1 \subset \overline\cN_2  \,.
\]
We give $\bfD$ the ``opposite'' partial order: given $\cO_i \in \mathbf{\Delta}$, we write  
\[
  \cO_1 \ \le \ \cO_2 \quad\hbox{if}\quad \cO_2 \subset \overline\cO_1 \,.
\]
(This is the choice that will be compatible with inclusions of nilpotent cones.)  As subsets, both $\bfD_D \subset \bfD$ and $\bfN_D \subset \bfN$ inherit partial orders. 

In \S\ref{S:po} we define a relation (also denoted $\le$) on $\Psi_D$.  In general transitivity fails for this relation, so that it is not a partial order (Examples \ref{eg:CY0} and \ref{eg:CY2}).  Nonetheless, the maps of \eqref{E:diagram2} preserve the relations.  

\begin{theorem} \label{T:1}
The surjections $\phi_\infty : \Psi_D \to \bfD_D$ and $\pi : \Psi_D \to \bfN_D$ preserve the relations $\le$.
\end{theorem}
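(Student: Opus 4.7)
The plan is to verify both preservations separately via a common algebraic reduction. Given $[\fl_1]\le [\fl_2]$, after replacing $\fl_1$ by a $\sW^0$--conjugate (preserving the class) I may assume $\fl_1\subset\tilde\fl_2$. I fix DKS--triples $\{\overline\sE_i,\sZ_i,\sE_i\}$ as in Lemma \ref{L:DKS} and write $\varrho_i, F_i, N_i$, $\phi_\infty([F_i,N_i])$ via \eqref{E:rho}--\eqref{E:phiinf-ct}. The key algebraic observation is a weight computation: by \eqref{E:tL} one has $\tilde\fl_{2,\bC} = Z_{\fg_\bC}(2\ttE_\varphi - \sZ_2)$, so $\sE_1\in\fl_1\subset\tilde\fl_2$ combined with $[\ttE_\varphi,\sE_1] = -\sE_1$ forces $[\sZ_2,\sE_1] = -2\sE_1$. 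Thus $\sE_1$ and $\sE_2$ both lie in the $(-2)$--eigenspace $\fg[-2]$ of $\tad\sZ_2$, and both in $\fk_\bC^\perp$ by \eqref{E:k}.

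For $\pi$: By the closure-preserving Kostant--Sekiguchi correspondence (Barbasch--Sepanski), it suffices to prove $\sE_1\in\overline{K(\bC)\cdot\sE_2}$ in $\tNilp(\fk_\bC^\perp)$, where $K(\bC)$ denotes the complexified maximal compact. Since $\sZ_2\in\bi\ft\subset\fk_\bC$ induces a $\bZ$--grading of $\fg_\bC$ compatible with the Cartan decomposition, and $\{\overline\sE_2,\sZ_2,\sE_2\}$ is a ``normal triple'' for the symmetric pair $(\fg_\bC,\fk_\bC)$, the Kostant--Rallis theorem (equivalently, Vinberg's theory of $\theta$--groups) identifies $\sE_2$ as being in the unique open orbit of $Z_{K(\bC)}(\sZ_2)$ acting on the nilpotent elements of $\fk_\bC^\perp\cap\fg[-2]$. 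Hence any nilpotent in this subspace --- in particular $\sE_1$ --- lies in the closure.

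For $\phi_\infty$: I need $\varrho_2\cdot\varphi\in\overline{G(\bR)^+\cdot\varrho_1\cdot\varphi}$. The plan is to leverage the $\fsl_2$--action of the triple $\{\overline\sE_2,\sZ_2,\sE_2\}$. Using the weight identity $[\sZ_2,\sE_1]=-2\sE_1$ and the inclusion $\fl_1\subset\tilde\fl_2$, $\varrho_1\cdot\varphi$ sits in a sub-flag variety on which the real one-parameter subgroups generated by $X := \sE_2+\overline\sE_2$, $Y := \bi(\sE_2-\overline\sE_2)$, and $H := -\bi\sZ_2$ (all in $\fg_\bR$) act in a controlled way. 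An appropriate combination produces a real curve $g_t\in G(\bR)^+$ with $g_t\cdot(\varrho_1\varphi)\to\varrho_2\cdot\varphi$ as $t\to\infty$, placing $\varrho_2\cdot\varphi$ in $\overline{\cO_1}$. The explicit form $\varrho_2 = \exp(\bi\tfrac{\pi}{4}X)$ together with the $\tad\sZ_2$--weight decomposition of $\fg_\bC$ are essential for identifying the limit.

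The principal obstacle is Part 2: constructing $g_t$ precisely and verifying that its limit is $\varrho_2\cdot\varphi$ (and not some other boundary point). This requires fully exploiting the compatibility between the two DKS triples imposed by $\fl_1\subset\tilde\fl_2$ and carefully tracking how the Cayley transform $\varrho_2$ interacts with $\varrho_1$ via the weight decomposition. By contrast, Part 1 (via Kostant--Rallis/Vinberg) together with the structural reduction is essentially formal.
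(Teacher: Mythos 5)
Your first half (the statement for $\pi$) is essentially correct and is a legitimate variant of the paper's argument. The weight computation $[\sZ_2,\sE_1]=-2\sE_1$ from $\sE_1\in\fl_1\subset\tilde\fl_2=Z_{\fg_\bC}(2\ttE_\varphi-\sZ_2)$ is right, the reduction via the closure‑preserving Kostant--Sekiguchi correspondence is exactly the paper's, and the density of $Z_{K_\bC}(\sZ_2)\cdot\sE_2$ in $\fk^\perp_\bC\cap\fg[-2]$ does follow from surjectivity of $\tad\,\sE_2:\fk_\bC\cap\fg[0]\to\fk^\perp_\bC\cap\fg[-2]$ (Kostant--Rallis, or directly from $\fsl_2$-theory plus $\theta$-equivariance). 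The paper instead restricts to the diagonal Levi and shows $L_2^0(\bC)\cdot\sE_2$ is dense in $\tilde\fl_2^{-1}$ using the Jacobson--Morosov filtration; your version works in the larger space $\fk^\perp_\bC\cap\fg[-2]$ but the density mechanism is the same, so nothing is lost or gained.

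The second half (for $\phi_\infty$) has a genuine gap, and it is the entire content of that half. You need $\varrho_2\cdot\varphi\in\overline{G(\bR)^+\cdot\varrho_1\cdot\varphi}$, and you propose to produce a real curve $g_t$ from the one-parameter subgroups of the \emph{second} $\fsl_2$ with $g_t\cdot(\varrho_1\varphi)\to\varrho_2\varphi$ --- but you never construct $g_t$, and the construction is not merely a technical loose end. The hypothesis $[\fl_1]\le[\fl_2]$ gives only the containment $\fl_1\subset\tilde\fl_2$; it does \emph{not} give commuting horizontal $\tSL(2)$'s or any compatibility between $\varrho_1$ and $\varrho_2$ beyond the weight identity you noted (that would be the content of a \emph{polarized} relation, via Theorem \ref{T:4}, whose proof in turn relies on Theorem \ref{T:1} --- so invoking anything of that flavor would be circular). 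Without such compatibility there is no canonical real flow carrying $\varrho_1\varphi$ to the specific boundary point $\varrho_2\varphi$ rather than to some other point of $\partial D$, and identifying the limit of a real one-parameter orbit in $\check D$ is precisely the delicate step. The paper sidesteps this entirely: it observes that $F_1\in\overline{D_2}$ where $D_2=L_2(\bR)^+\cdot\varphi$, that the restriction of the PMHS $(F_2,N_2)$ to $\tilde\fl_2$ is Hodge--Tate, hence $C_2=L_2(\bR)^+\cdot F_2$ is the unique \emph{closed} $L_2(\bR)^+$-orbit in $\check D_2$ and therefore lies in the closure of \emph{every} $L_2(\bR)^+$-orbit, in particular of $L_2(\bR)^+\cdot F_1$; saturating by $G(\bR)^+$ gives $\cO_2\subset\overline{\cO_1}$. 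Some argument of this structural kind (or an explicit, verified limit) is needed; as written, your Part 2 is a statement of intent rather than a proof.
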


\noindent The theorem is proved in \S\ref{S:T1}.

%\begin{remark}
%The special cases in which $\le$ is a partial order include: (i) Hermitian symmetric $D$ (Examples \ref{eg:curve2} and \ref{eg:K32}, and Proposition \ref{P:hs}), (ii) period domains with contact horizontal distribution (Example \ref{eg:H3}), and (iii) the maximally nonclassical case that $G^0(\bR)$ is abelian (Proposition \ref{P:fd}).
%\end{remark}

%------------------------------------------------------------------------------
\subsection{Polarized relations} \label{iS:PR}
%------------------------------------------------------------------------------

In \S\ref{S:nc+pr} we introduce the notion of a ``polarized relation''; these are the relations in the partial orders that may be realized Hodge theoretically.  Geometrically, a polarized relation $\preceq$ arises as follows (for example):  consider a variation of Hodge structure $\Phi : \Delta^* \times \Delta^* \to \Gamma \backslash D$ defined over a product of punctured discs.  Then Schmid's Nilpotent Orbit Theorem associates to the limits $\lim_{z\to 0} \Phi(w,z)$ and $\lim_{w,z\to 0} \Phi(w,z)$ two conjugacy classes $[F_1,N_1] , [F_2,N_2] \in \Psi_D$ with polarized relation $[F_1,N_1] \preceq [F_2,N_2]$.  More generally, suppose that $\s$ is a nilpotent cone underlying a (possibly several variable) nilpotent orbit $e^{\bC \sigma}F$.  Let $\Gamma_\s$ denote the faces of $\s$, and define a partial order on $\Gamma_\s$ by declaring $\s_1 \le \s_2$ if $\s_1 \subset \overline\s_2$.  Then we construct a commutative diagram associated to the nilpotent orbit:
\begin{equation} \label{E:cd}
\begin{tikzcd}
   & \Gamma_\s 
   \arrow[d,"\psi^\circ"] \arrow[dl,"\pi^\circ"'] \arrow[dr,"\phi_\infty^\circ"]
   & \\
   \bfN_D 
   &
   \Psi_D \arrow[l, two heads, "\pi"'] \arrow[r, two heads, "\phi_\infty"]
   &
   \bfD_D \,.
\end{tikzcd}
\end{equation}

\begin{theorem} \label{T:2}
The map $\psi^\circ: \Gamma_\s \to \Psi_D$ preserves the relations $\le$.
\end{theorem}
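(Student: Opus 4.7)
The strategy is to realize $\psi^\circ(\tau_1)$ and $\psi^\circ(\tau_2)$ simultaneously via a single family of pairwise-commuting horizontal $\fsl_2$-triples adapted to $\sigma$, and then verify $\fl_{\tau_1}\subset\tilde\fl_{\tau_2}$ by a short Lie-bracket calculation; the relation $\psi^\circ(\tau_1)\le\psi^\circ(\tau_2)$ then follows from \eqref{E:PsiPO} with trivial Weyl conjugacy.

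First I would invoke the multivariable SL(2)-orbit theorem for Mumford-Tate domains ($\S$\ref{S:CKSMTD}) to produce, after acting by $G(\bR)^+$, pairwise-commuting horizontal standard triples $\{N_j^-,Y_j,N_j^+\}_{j=1}^\ell$ in $\fg_\bR$ with $\sigma=\bR_{>0}\langle N_1^-,\ldots,N_\ell^-\rangle$ and $Y_j\in\bi\ft$.  The Cayley transforms $\varrho_j=\exp(\bi\tfrac{\pi}{4}(N_j^-+N_j^+))$ commute, yielding commuting DKS triples $\{\overline\sE_j,\sZ_j,\sE_j\}$ with each $\sZ_j\in\bi\ft$ and $\sE_j\in\fg^{-1}_\varphi$.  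For a face $\tau\le\sigma$ with index set $J_\tau$ as in \eqref{E:ftinb}, additivity gives a DKS triple $(\overline\sE_\tau,\sZ_\tau,\sE_\tau)=\sum_{j\in J_\tau}(\overline\sE_j,\sZ_j,\sE_j)$, and I would take as representative the Levi $\fl_\tau\in\cL_{\varphi,\ft}$ generated by $\ft$ together with the $\fsl_2$-subalgebras $\fl_j^\tss$ for $j\in J_\tau$; then $\fl_\tau^\tss\cong(\fsl_2)^{|J_\tau|}$ contains the DKS triple with $\sZ_\tau=2\pi^\tss_{\fl_\tau}(\ttE_\varphi)$ a distinguished grading element.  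The well-definedness of $\psi^\circ(\tau)$ modulo $G(\bR)^+$ (Theorems \ref{T:R} and \ref{T:KP}) ensures $[\fl_\tau]$ represents $\psi^\circ(\tau)$.

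For $\tau_1\le\tau_2$, $J_{\tau_1}\subset J_{\tau_2}$, so $\sZ_{\tau_2}=\sZ_{\tau_1}+\sZ'$ where $\sZ':=\sum_{j\in J_{\tau_2}\setminus J_{\tau_1}}\sZ_j\in\bi\ft$.  Two brackets vanish: $[\sZ',\ft]=0$ since $\ft$ is abelian and $\sZ'\in\bi\ft$; and $[\sZ',\fl_j^\tss]=0$ for each $j\in J_{\tau_1}$ since the SL(2)-triples indexed by $J_{\tau_1}$ and by $J_{\tau_2}\setminus J_{\tau_1}$ commute.  Hence $[\sZ',\fl_{\tau_1}]=0$.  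Separately, \eqref{E:sZ1} applied to $\tau_1$ gives $2\ttE_\varphi-\sZ_{\tau_1}\in\tilde\fz_{\tau_1}$, which centralizes $\tilde\fl_{\tau_1}\supseteq\fl_{\tau_1}$.  Adding, $[2\ttE_\varphi-\sZ_{\tau_2},\fl_{\tau_1}]=0$, so $\fl_{\tau_1}\subset\tilde\fl_{\tau_2}$ by \eqref{E:tL}, whence $\psi^\circ(\tau_1)\le\psi^\circ(\tau_2)$ by \eqref{E:PsiPO}.

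The main obstacle is the first step: producing a single coordinated family of commuting horizontal SL(2)'s adapted to $\sigma$ whose partial subsums, for each face $\tau$, align with the basepoint $\tilde F_\tau\in\tilde B_\bR(\tau)^\circ$ of \eqref{E:ftinb} and realize $\psi^\circ(\tau)$ via the natural Levi $\fl_\tau$.  This relies on the Mumford-Tate-adapted CKS theorem from $\S$\ref{S:CKSMTD}, the description of $\Psi_D$ via DKS triples in \eqref{E:FN}--\eqref{E:phiinf-ct}, and Theorem \ref{T:KP} for compatibility of the $\tilde B_\bR(\tau)^\circ$; once this coordinated construction is in place, the residual algebra is the clean computation above.
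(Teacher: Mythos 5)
Your Lie-algebra endgame is close to sound, but the reduction that sets it up contains a genuine gap. You assert that, after conjugating by $G(\bR)^+$, the cone $\s$ itself is spanned by the nilnegative elements of pairwise-commuting horizontal standard triples. This is false in general: the generators $N_j$ of a polarizable cone commute as nilpotent endomorphisms, but the associated $\fsl_2$'s do not, and the multivariable $\tSL(2)$-orbit theorem produces a \emph{different} cone $\hat\s = \bR_{>0}\langle \hat N_1,\ldots,\hat N_\ell\rangle$ (with $\hat N_j = N_j^0$ the projection of $N_j$ onto $\ker(\tad\,Y_{(j-1)})$), not a conjugate of $\s$. To transfer the problem from $\s$ to $\hat\s$ you would need $\psi^\circ(\tau) = \hat\psi^\circ(\hat\tau)$ for corresponding faces; that identification is essentially the content of Theorem \ref{T:4}, whose proof goes through $\phi^\circ_\infty$, the injectivity of $\phi_\infty$ (Theorem \ref{T:0}), and \cite[Theorem 4.20.vii-viii]{\CKSdeg}, and also requires re-running the (order-dependent) CKS construction for each pair of nested faces so that both become initial faces. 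None of that is circular, but none of it appears in your write-up, and without it the first step fails.

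A second, separate error: the Levi representing $[F_\tau,N_\tau]$ under \eqref{E:FNvl} is not ``the subalgebra generated by $\ft$ and the $\fsl_2$'s indexed by $J_\tau$,'' and its semisimple part is not $(\fsl_2)^{|J_\tau|}$ in general. The correct representative is the minimal ($\varphi$-stable, $\ft$-containing) Levi containing $N_\tau$ --- the Bala--Carter Levi --- which can be strictly larger: for $G_2$ with $\bh=(2,3,2)$ the class III is realized by commuting root $\fsl_2$'s on $\a_2$ and $2\a_1+\a_2$, yet its Levi is all of $\fg$ (indeed $\ft\op\fsl_2^{\a_2}\op\fsl_2^{2\a_1+\a_2}$ is not even a Levi subalgebra, its center being trivial). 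Your bracket computation can be repaired by minimality --- $Z_{\fg}(\sZ')$ is a Levi containing $\ft$ and $\sE_{\tau_1}$, hence contains the minimal such Levi $\fl_{\tau_1}$ --- but as written it leans on the false description of $\fl_\tau$. For comparison, the paper's proof bypasses $\tSL(2)$-orbits entirely: it observes that $\s\subset\fg^{-1,-1}_{(F_\s,N_\s)}\subset\tilde\fl_\s$ and that the Deligne splitting element $\d$ of the face's limit MHS also lies in $\tilde\fl_\s$, so the $\bR$-split PMHS of the face restricts to one on $\tilde\fl^\tss_\s$, and a representative Levi for $\psi^\circ(\tau)$ can then be chosen inside $\tilde\fl_\s$ directly.
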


\noindent Theorem \ref{T:2} is proved in \S\ref{S:T2}.  From Theorems \ref{T:1} and \ref{T:2}, and the commutativity of \eqref{E:cd}, we obtain

\begin{corollary} \label{C:3}
The maps $\phi_\infty^\circ : \Gamma_\s \to \bfD_D$ and $\pi^\circ : \Gamma_\s \to  \bfN_D$ are morphisms of posets.
\end{corollary}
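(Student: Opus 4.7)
The corollary is a formal consequence of Theorems \ref{T:1} and \ref{T:2} together with the commutativity of diagram \eqref{E:cd}. My plan is to assemble these pieces by composition, after first noting that $\bfD_D$ and $\bfN_D$ are in fact partially ordered sets.

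First, I would record that $\bfD_D \subseteq \bfD$ and $\bfN_D \subseteq \bfN$ inherit the ``inclusion in closure'' partial order from their ambient finite sets, in particular transitivity and antisymmetry; consequently, any relation-preserving map into $\bfD_D$ or $\bfN_D$ is automatically a morphism of posets. It therefore suffices to show that $\phi_\infty^\circ$ and $\pi^\circ$ preserve $\le$.

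Now suppose $\tau_1 \le \tau_2$ in $\Gamma_\s$, i.e., $\tau_1$ is a face of $\tau_2$. By Theorem \ref{T:2}, $\psi^\circ(\tau_1) \le \psi^\circ(\tau_2)$ in $\Psi_D$. The commutativity of \eqref{E:cd} gives
\[
  \phi_\infty^\circ \ = \ \phi_\infty \circ \psi^\circ
  \quad\hbox{and}\quad
  \pi^\circ \ = \ \pi \circ \psi^\circ \,.
\]
Applying Theorem \ref{T:1} to the relation $\psi^\circ(\tau_1) \le \psi^\circ(\tau_2)$ then yields $\phi_\infty^\circ(\tau_1) \le \phi_\infty^\circ(\tau_2)$ in $\bfD_D$ and $\pi^\circ(\tau_1) \le \pi^\circ(\tau_2)$ in $\bfN_D$, as required.

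There is no real obstacle: the substantive content is carried by Theorems \ref{T:1} and \ref{T:2} (together with the antisymmetry corollary and the Rao/Kostant-type uniqueness arguments of \S\ref{S:T1}--\S\ref{S:T0}), while the present argument is purely formal. The only point that requires care, already handled in the paragraph preceding diagram \eqref{E:cd}, is that $\phi_\infty^\circ$ and $\pi^\circ$ are well-defined and factor through $\psi^\circ$; this in turn depends on Theorems \ref{T:R} and \ref{T:KP}, whose combination shows that any two choices of $(\tilde F_\tau, N_\tau)$ differ by an element of $M_{B(\tau)}(\bR)^+ \cdot L^0_\tau(\bR)$ fixing the relevant data, so that $\psi^\circ(\tau)$ is independent of choices.
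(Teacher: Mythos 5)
Your proof is correct and is essentially the paper's own argument: the corollary is obtained by composing $\psi^\circ$ (relation-preserving by Theorem \ref{T:2}) with $\phi_\infty$ and $\pi$ (relation-preserving by Theorem \ref{T:1}), via the commutativity of \eqref{E:cd}. The extra remarks on well-definedness and on $\bfD_D$, $\bfN_D$ being genuine posets are consistent with the discussion the paper gives before stating the corollary.
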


\noindent We note that, in general, the full nilpotent orbit is needed to define $\phi_{\infty}^{\circ}$ and $\psi^{\circ}$: the cone $\sigma$ by itself gives only $\pi^{\circ}$.  (The issue is that the associated \emph{boundary component} $\tilde{B}(\sigma)$ may have multiple connected components; see $\S$\ref{S:nc+pr} and also Example \ref{eg:borel7}.)

\begin{definition}
A relation in any one of $\Psi_D$, $\bfD_D$ or $\bfN_D$ is \emph{polarized} if it is the image of a relation on $\Gamma_\s$.   
\end{definition}

The key computational tool used to identify polarizable relations is Theorem \ref{T:4}: any polarized relation $\preceq$ may be realized by commuting horizontal $\tSL(2)$'s.  This result relies, in turn, on the multivariable $\mathrm{SL}(2)$-orbit theorem.  This theorem is proved by Cattani, Kaplan and Schmid \cite{MR840721} in the case that $D$ is a period domain.  We extend their result to arbitrary Mumford-Tate domains in \S\ref{S:CKSMTD}.

\begin{remark}
The polarized relations $\preceq$ form a partial order only in very special cases; in general, transitivity fails; see Examples \ref{eg:CY0} and  \ref{eg:CY2}, and Remark \ref{R:fd}.  The special cases include: (i) Hermitian symmetric $D$ (Examples \ref{eg:curve2} and \ref{eg:K32}, and Theorem \ref{T:hs}), (ii) period domains with contact horizontal distribution (Example \ref{eg:H3}).  In both cases all relations are polarized.
\end{remark}

%------------------------------------------------------------------------------
\subsubsection{Period domains} 
%------------------------------------------------------------------------------
In \S\ref{S:pd} we consider the case that $D$ is a period domain.  The main result here is a simple, combinatorial characterization of polarized relations in terms of the possible Hodge substructures on the primitive cohomology (Theorem \ref{T:PDpr}).  A number of examples are worked out here.  (In this section only, we work modulo the full automorphism group $\mathrm{Aut}(V_{\bR},Q)$, rather than the connected component.  Of course, this only makes a difference for even weight.) % Here \Dokovic's \cite[Theorem 2.21]{BPR} characterizes the partial order on $\overline{\bfN}_D$ in terms of the partially signed Young diagrams classifying the elements nilpotent conjugacy classes. Moreover, the elements of $\overline{\Psi}_D$ are classified by the possible Hodge diamonds (Proposition \ref{P:hd}).

%------------------------------------------------------------------------------
\subsubsection{The classical case} 
%------------------------------------------------------------------------------
In \S\ref{S:HS} we study the ``classical case'' that $D$ is Hermitian symmetric.  (This includes Example \ref{eg:curve0} above.)  

\begin{theorem} \label{T:hs}
If $D$ is Hermitian symmetric, then \emph{(i)} the relation $\le$ on $\Psi_D$ is a linear order, and \emph{(ii)} all relations are polarized.
\end{theorem}

\noindent The theorem is proved in \S\ref{S:hs}.  Moreover, in this case there exists a single nilpotent cone $\s$ that realizes every polarizable relation on $\Psi_D$ (Remark \ref{R:1cone}).

%------------------------------------------------------------------------------
\subsubsection{The case of minimal isotropy} 
%------------------------------------------------------------------------------

In \S\ref{S:fd} we turn to the case that the isotropy subgroup $G^0(\bR)$ of $D$ is a torus.  As discussed there, this case may be viewed as ``maximally nonclassical.''  

\begin{theorem}
Suppose that $G^0(\bR)$ of $D$ is a torus.  Then $\Psi_D$ is indexed by the subsets of the simple roots $\sS$ of $\fg_\bC$, and the relation $[F_1,N_1] \leq [F_2,N_2] \in \Psi_D$ holds if and only if $\sS_1 \subseteq \sS_2$.  In particular, $\leq$ yields a partial order on $\Psi_D$.  Moreover, a relation is polarized if and only if the corresponding subsets $\sS_1 \subset \sS_2 \subset \sS$ have the property that the elements of $\sS_1$ are strongly orthogonal to the elements of $\sS_2\backslash\sS_1$.  Accordingly, $\preceq$ is also a partial order on $\Psi_D$.
\end{theorem}

\noindent The theorem is proved in \S\ref{S:fd} (cf.~Proposition \ref{P:fd} and Corollary \ref{C:fd}).
%------------------------------------------------------------------------------
\subsection{Examples} 
%------------------------------------------------------------------------------

\begin{example}[Period domain for $\bh = (2,m,2)$] \label{eg:H0}
An interesting \emph{nonclassical} case that has much in common with the classical case is the period domain parameterizing weight two polarized Hodge structures with Hodge numbers $\bh = (2,m,2)$.  This period domain is ``nearly classical'' in the sense that the horizontal subbundle $T^hD \subset TD$ has corank 1 (while $T^h D = TD$ holds in the classical case).  In this case the relations on $\bar{\Psi}_D$ define a partial (but nonlinear) order, and the maps of \eqref{E:diagram3} are isomorphisms of posets.  Moreover the relations are all polarizable.  Each set consists of six elements, which we enumerate $0,\mathrm{I},\mathrm{II},\ldots,\mathrm{V}$, where ``$0$'' corresponds to $D$ (i.e. pure HS) and ``$\mathrm{V}$'' to Hodge-Tate LMHS.  The partial order may be visualized as below, where each arrow ``$\to$'' (suggesting degeneration) represents a generating relation ``$<$'' (suggesting inclusions of cones):
\begin{equation} \label{E:H2diag}
\begin{tikzpicture}[baseline=(current  bounding  box.center)]
  \node (0) at (0,0) {0};
  \node (I) at (1,0) {I};
  \node (II) at (2,1) {II};
  \node (III) at (2,-1) {III};
  \node (IV) at (3,0) {IV};
  \node (V) at (4.5,0) {V};
  \draw[->] (0) -- (I);
  \draw[->] (I) -- (II);
  \draw[->] (I) -- (III);
  \draw[->] (II) -- (IV);
  \draw[->] (III) -- (IV);
  \draw[->] (IV) -- (V);
\end{tikzpicture}
\end{equation}
So, for example, here we have $\rmI < \rmII$ and $\rmII < \rmIV$ (so that $\rmI < \rmIV$ by transitivity), but there is no relation between $\rmII$ and $\rmIII$.  (These period domains parameterize Hodge structures of Horikawa surfaces.  Green, Griffiths and Laza \cite{GGL} have identified geometric degenerations realizing each of the arrows in \eqref{E:H2diag}.  As in Example \ref{eg:curve0}, the algebraic varieties become successively more singular as we move to the right.)  To juxtapose with the classical case:
\begin{a_list}
\item[(a${}^\prime$)] The partial order is nonlinear (Example \ref{eg:H1}).
\item[(b${}^\prime$)] All the relations are polarized (Example \ref{eg:H2}).
\item[(c${}^\prime$)] There does exist a single nilpotent cone with the property that every polarized relation on $\overline{\Psi}_D$ is realized by some face of $\s$ \cite[\S5.3]{BPR}.
\end{a_list}
This example is discussed in greater detail in Examples \ref{eg:H1}, \ref{eg:H2} and \ref{eg:H3}. 
\end{example}

In general, the structure of the polarizable relations on $\Psi_D$, $\bfD_D$ and $\bfN_D$ is not a simple as Examples \ref{eg:curve0} and \ref{eg:H0} may suggest.  The following example (which is a special case of Examples \ref{eg:CY1} and \ref{eg:CY2}) hints at the more complicated structures that may arise.

\begin{example}[Period domain for $\bh = (1,2,2,1)$] \label{eg:CY0}
As in the two examples above the maps of \eqref{E:diagram3} are bijections.  However, as we will discuss below, the relation on $\Psi_D$ is not a partial order.  The set $\Psi_D$ consists of eight elements, which we denote 
\[
  \Psi_D \ = \ \{ \rmI_0 \,,\, \rmI_1 \,,\, \rmI_2 \,,\, \rmII_0 \,,\, 
  \rmII_1 \,,\, \rmIII_0 \,,\, \rmIV_1 \,,\, \rmIV_2 \}
\]
in order to be consistent with the notation of Examples \ref{eg:CY1} and \ref{eg:CY2}.  The polarized relations $\prec$ on $\Psi_D$ are indicated below by the arrows $\to$.
\begin{center}
\begin{tikzcd}
  % row 1
  \rmI_0 \arrow[r] \arrow[rr, bend left]
  \arrow[rd] \arrow[rrd]
  \arrow[rrrd, bend left=60]
  \arrow[rrdd, bend right=20] \arrow[rrddd, bend right]
  & \rmI_1 \arrow[r] \arrow[rd] \arrow[rrd]
  & \rmI_2 & \\
  % \row 2
  & \rmII_0 \arrow[r]
  & \rmII_1 \arrow[r]
  & \rmIV_2 \\
  % row 3
  &
  & \rmIII_0 \arrow[ru , bend right=10]
  & \\
  % row 4
  &
  & \rmIV_1 \arrow[ruu, bend right=20]
  &
\end{tikzcd}
\end{center}
Notice that the polarized relations are not transitive (and so fail to constitute a partial order):  $\rmII_0 \prec \rmII_1 \prec \rmIV_2$, but $\rmII_0 \not\prec \rmIV_2$.  The remaining (unpolarized) relations are 
\begin{eqnarray*}
  \rmI_1 & < & \rmIV_1 \\
  \rmI_2 & < & \rmIII_0 \,,\ \rmIV_2 \\
  \rmII_0 & < & \rmIII_0 \,,\ \rmIV_1 \,,\ \rmIV_2 \,. 
\end{eqnarray*}
Note that the relation $<$ is not transitive: $\rmI_1 < \rmI_2$ and $\rmI_2 < \rmIII_0$, but $\rmI_1 \not < \rmIII_0$.
\end{example}

%------------------------------------------------------------------------------
\subsection{Secondary poset} 
%------------------------------------------------------------------------------
The \emph{secondary poset} $\tilde{\Psi}_D^{\text{pol}}$ of equivalence classes of multivariable nilpotent orbits is constructed in \S\ref{S:cubes}.  This, finally, is the object promised in $\S$\ref{S:1.1}, which classifies how admissible degeneracies may be assembled into cones.

We first define a partially ordered set $\tilde{\Psi}_D$ whose elements are morphisms (for any $r\in \mathbb{N}$) from the power sets $(\mathcal{P}\{1,\ldots,r\},\subseteq )$ to $(\Psi_D,\preceq)$  satisfying certain root-theoretic admissibility conditions, and which are ordered by an obvious notion of inclusion.  These elements are called \emph{admissible $n$-cubes}.  We then define two subposets $\tilde{\Psi}_D\supseteq \tilde{\Psi}^{\text{str}}_D \supseteq \tilde{\Psi}^{\text{pol}}_D$, with $\tilde{\Psi}^{\text{pol}}_D$ indexing the ``types'' of multivariable nilpotent orbits that really do occur.  (The reason for defining $\tilde{\Psi}_D$ at all is that it is straightforward to compute, whereas the two refinements are not.)  In \S\ref{S:G2} we compute these posets in the case where $G(\bR)$ is the simple, noncompact, exceptional real Lie group $G_2$ of rank two, the interesting case being that with Hodge numbers $(2,3,2)$. 

Finally, in \S\ref{S:matt} we describe how mirror symmetry can be used in some special cases to check that a given admissible $n$-cube in $\tilde{\Psi}_D$ belongs to $\tilde{\Psi}_D^{\text{pol}}$. Much to our surprise (and great interest), since the initial posting of this article, the classification of admissible 2-cubes for Calabi-Yau variations has been put to use in work on quantum gravity \cite{GPV,GLP}.

%------------------------------------------------------------------------------
\subsection{Technical remarks} \label{S:tech}
%------------------------------------------------------------------------------

\begin{i_list}
\item A given connected Mumford-Tate domain arises from a Hodge representation (of $G$) on a vector space $V$.  In the paper, we frequently pass to the adjoint representation (on $\fg$), which factors through $G^{\text{ad}}$. However, this affects neither the (connected) M-T domain nor its boundary components, cf. \cite[\S 1]{KP2012}.
\item Given a Mumford-Tate (algebraic) group $G$ and field $K\supseteq\bQ$, we write $G_K$ for the base-change (an algebraic group) and $G(K)$ for the group of $K$-valued points, which we interpret as a Lie group when $K=\bR$ or $\bC$.  While $G$ is always connected (i.e. irreducible), $G(\bR)$ need not be, as in the case of $\mathrm{SO}(p,q)$. On the other hand, if $G^0\leq G_{\bR}$ is the isotropy group of a HS $\varphi$, then $G^0(\bR)$ is always connected as a Lie group. (Being the centralizer of a torus in $G$, $G^0$ connected as an algebraic group. As $G^0(\bR)$ is compact, each of its elements is semisimple, hence -- by the algebraic connectedness -- contained in some real torus $T\leq G^0(\bR)$, which being compact is an $(S^1)^r$.)  
\item On the other hand, in \S\ref{S:pd}, for even-weight period domains, we use a group $G=\mathrm{O}(V,Q)$ which (with two irreducible components) is not even algebraically connected.  In this situation, the Lie group $G(\bR)\cong \mathrm{O}(p,q)$ has four components, and $G^0(\bR)$ has two.  Moreover, the domain $\tilde{D} = G(\bR).\varphi = D \amalg D'$ has two components.  The reason why the resulting ($G(\bR)$-)equivalence-classes are quotients of those for $D$, is that $\mathrm{SO}(V,Q)/\mathrm{SO}(V,Q)^+$ already gives identifications between $\Psi_D$ and $\Psi_{D'}$, etc.
\item We will make frequent use of an identification $\Psi_D \cong \mathcal{L}_{\varphi,\ft}/\mathcal{W}^0$ (cf. \eqref{E:FNvl}), which is stated and proved in \cite[Thm. 5.5]{SL2} under the assumption that the horizontal distribution on $D$ is bracket-generating. That this assumption is unnecessary may be seen at once in light of \cite[Prop. 3.10]{MR3217458}, which yields a (unique) subdomain $\mathsf{D}=\mathsf{G}(\bR)^+ .\varphi$ through $\varphi\in D$ which is maximal for the bracket-generating property and contains every horizontal $\mathrm{SL}(2)$ through $\varphi$. By \cite{SL2}, we have $\Psi_{\mathsf{D}} \cong \mathcal{L}_{\varphi,\ft}/\mathsf{W}^0$, where $\mathsf{W}^0$ is the Weyl group of $\mathsf{G}^0(\bR)$ and $\mathcal{L}$ is the same as for $D$.  Quotienting both sides by the (larger) Weyl group of $G^0(\bR)$ then yields the identification in the general case. 
\end{i_list}
The proofs in the paper make some use of representation theory; the necessary background is reviewed in the appendix.

%------------------------------------------------------------------------------
\subsection*{Acknowledgements} 
%------------------------------------------------------------------------------
Over the course of this work we have benefitted from conversations and correspondence with several colleagues; we would especially like to thank Eduardo Cattani, Mark Green, Phillip Griffiths, William McGovern, Thomas Grimm and Radu Laza.  We also thank the referee for comments which have helped us to improve the exposition.

%------------------------------------------------------------------------------
\section{The relation on $\Psi_D$}  \label{S:po}
%------------------------------------------------------------------------------

Recall that $\Psi_D$ is the finite set consisting of $G(\bR)^+$-conjugacy classes of pairs $(F,N)\in \check{D}\times \mathrm{Nilp}(\fg_{\bR})$ defining $\bR$-split PMHSs (or equivalently, of horizontal $\tSL(2)$-orbits). 
In this section, we shall use inclusions of Levi subalgebras in $\fg_{\bR}$ to define the relation alluded to in \S\ref{iS:R}, and prove Theorem \ref{T:1} establishing the compatibility of the relation with the partial orders on $\mathbf{N}_D$ and $\mathbf{\Delta}_D$.

%------------------------------------------------------------------------------
\subsection{Parameterization of $\Psi_D$}  \label{S:Psi}
%------------------------------------------------------------------------------

To define the relation on $\Psi_D$ we must first summarize the characterization of $\Psi_D$ given by Robles in \cite{SL2}.  The description is representation theoretic and involves the notions of Weyl groups, Levi subalgebras and distinguished grading elements; the reader wishing to review these notions will find definitions and some discussion in the appendix.

Fix a Hodge structure $\varphi \in D$.  From this point on, we will 
\begin{center}
\emph{assume that $G^0(\bR)$ is the stabilizer of $\varphi$ in $G(\bR)^+$.}
\end{center}
Let 
\begin{equation}\label{E:gphi}
  \fg_\bC \ = \ \bigoplus_{p \in\bZ} \fg^p_\varphi
\end{equation}
denote the induced weight zero Hodge decomposition.\footnote{Traditionally, $\fg^p_\varphi$ is denoted $\fg^{p,-p}_\varphi$.}  Recall that 
\begin{equation}\label{E:lb}
  [\fg^p_\varphi,\fg^q_\varphi] \ \subset \ \fg^{p+q}_\varphi \,.
\end{equation}
Moreover, 
\[
  \fg^0_{\varphi,\bR} \ := \ \fg^0_\varphi \,\cap\,\fg_\bR
\]
is a compact real form of $\fg^0_\varphi$, and the Lie algebra of $G^0(\bR)$.

Let $\ttE_\varphi' \in \tEnd(\fg_\bC)$ be the endomorphism acting on $\fg^p_\varphi$ by $p \one$.  Then \eqref{E:lb} implies that $\ttE_\varphi'$ is a derivation of $\fg_\bC$.  Since $\fg_\bC$ is semisimple, there exists a semisimple element $\ttE_\varphi \in \fg_\bC$ such that $\ttE_\varphi' = \tad(\ttE_\varphi)$.  Note that $\ttE_\varphi$ is a grading element (\S\ref{S:ge}).  Moreover, $\ttE_\varphi \in \bi \fg_\bR$ \cite[\S2.3]{MR3217458}.

Fix a compact Cartan subalgebra $\ft \subset \fg_\bR$ containing $\bi \ttE_\varphi$.  We have
\begin{equation} \label{E:tg00}
  \ft \ \subset \ \fg^0_{\varphi,\bR}\,.
\end{equation}
Given a Levi subalgebra $\fl_\bR \subset \fg_\bR$, recall the projection $\pi^\tss_\fl : \fl_\bC \to \fl_\bC^\tss$ onto the semisimple factor, \cf\eqref{SE:proj}.  Define\footnote{See the choice of conventions in \S\ref{S:dge}.}
\begin{equation} \label{E:dfncL}
  \cL_{\varphi,\ft} \ := \ 
  \left\{ \begin{array}{c}
  \hbox{$\varphi$--stable Levi subalgebras $\fl_\bR \subset \fg_\bR$ such
        that $\ft \subset \fl_\bR$ and}\\
  \hbox{$2\,\pi^\tss_\fl(\ttE_\varphi)$ is a distinguished 
        grading element of $\fl_\bC^\tss$}
  \end{array}\right\} \,.
\end{equation}
Note that 
\begin{equation} \label{E:Einl}
  \ttE_\varphi \ \in \ \fl_\bC \quad \hbox{for all} \quad \fl_\bR \in \cL_{\varphi,\ft}\, ,
\end{equation}
and that $\pi^\tss_\fl(\ttE_\varphi)$ is always a grading element of $\fl_\bC^\tss$.  Moreover, $\ft$ is always an element of $\cL_{\varphi,\ft}$ (cf. \ref{S:dge}).
Let $\sW^0$ denote the Weyl group of $\fg^{0}_\varphi$.  Then $\sW^0$ acts on $\cL_{\varphi,\ft}$ and 
\begin{equation}\label{E:FNvl}
  \Psi_D \ \simeq \ \Lambda_{\varphi,\ft} \ := \ \cL_{\varphi,\ft}/\sW^0 \,.
\end{equation}

Given $[\fl_\bR]\in \Lambda_{\varphi,\ft}$, the corresponding $[F,N] \in \Psi_D$ is described as follows.  (See \cite{SL2} for details.)  The Cartan subalgebra $\ft \subset \fg_\bR$ determines a \emph{Cartan decomposition} $\fg_\bR = \fk_\bR \op \fk_\bR^\perp$ with $\fk_\bR \supset \ft$ a maximal compact subalgebra.  In fact, 
\begin{equation}\label{E:k}
  \fk_\bC \ = \ \op\,\fg^{2p}_\varphi \tand
  \fk_\bC^\perp \ = \ \op\,\fg^{2p+1}_\varphi\,.
\end{equation}

\begin{definition}
A \emph{\Dokovic--Kostant--Sekiguchi triple\label{p:DKS}} (DKS--triple) is a standard triple (\S\ref{S:st}) $\{\overline\sE , \sZ , \sE\} \subset \fg_\bC$ such that $\overline\sZ = -\sZ \in \fk_\bC$ and $\overline\sE,\sE \in \fk^\perp_\bC$.
\end{definition}

\begin{lemma}[{\cite{SL2}}] \label{L:DKS}
Given $\fl_\bR \in \cL_{\varphi,\ft}$, there exists a DKS--triple $ \{ \overline\sE , \sZ , \sE\} \subset \fl_\bC^\tss$ with neutral element \begin{equation} \label{E:sZ0} \sZ = 2\,\pi^\tss_\fl(\ttE_\varphi) \in \bi \ft \subset \fg^0_\varphi \end{equation} and $\sE \in \fg^{-1}_\varphi$. % Edited 21 Jun 2015
\end{lemma}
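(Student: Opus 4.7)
The plan is to combine the Bala--Carter theory of distinguished nilpotent orbits with the Kostant--Sekiguchi correspondence, exploiting the compact real form structure provided by the isotropy subalgebra $\fg^0_{\varphi,\bR}$.

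First I would record the relevant grading on $\fl_\bC^\tss$. Because $\fl_\bR$ is $\varphi$-stable, the Hodge decomposition \eqref{E:gphi} restricts to $\fl_\bC^\tss = \bigoplus_p \fl^p$ with $\fl^p := \fl_\bC^\tss \cap \fg^p_\varphi$; since the center $\mathfrak{z}(\fl_\bC)$ acts trivially on $\fl_\bC^\tss$, the element $\sZ = 2\pi^\tss_\fl(\ttE_\varphi)$ acts on $\fl^p$ by $2p\,\one$, so it is itself a grading element for the decomposition. Moreover $\fl^0_\bR := \fl^0 \cap \fg_\bR$ is a compact real form of $\fl^0$, as a real subalgebra of the compact real form $\fg^0_{\varphi,\bR}$ of $\fg^0_\varphi$.

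Next I would invoke the hypothesis that $\tfrac{1}{2}\sZ = \pi^\tss_\fl(\ttE_\varphi)$ is a \emph{distinguished} grading element of $\fl_\bC^\tss$: by Bala--Carter, the $L^0$-action on $\fl^{-1}$ has a dense open orbit $\mathcal{O}$ each of whose elements $\sE$ completes $\sZ$ to a unique standard triple $\{F_\sE, \sZ, \sE\}$ in $\fl_\bC^\tss$ with $F_\sE \in \fl^1$. Here $L^0 \subset L_\bC^\tss$ denotes the connected subgroup with Lie algebra $\fl^0$.

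The key step will be to select, among these, an $\sE$ for which $F_\sE = \overline{\sE}$. I would apply the Kostant--Sekiguchi correspondence relative to the compact real form of $\fl_\bC^\tss$ determined by $\fl^0_\bR$. In moment-map language: the polarization of the weight-zero Hodge structure on $\fl_\bC^\tss$ (by the negative of the Killing form) restricts to a positive-definite Hermitian form on $\fl^{-1}$ invariant under the compact group $L^0_\bR \subset L^0$, and the map $\sE \mapsto [\overline\sE, \sE]$ (which takes values in $\bi\fl^0_\bR$, since $\overline{[\overline\sE,\sE]} = -[\overline\sE,\sE]$) is, up to a positive scalar, a moment map for this action. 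Kempf--Ness / Kostant--Sekiguchi then furnishes an $\sE \in \mathcal{O}$ with $[\overline\sE, \sE] = \sZ$, yielding the sought triple. Finally, the remaining DKS-conditions are immediate: $\sZ \in \bi\ft \subset \fk_\bC$ by \eqref{E:tg00}; $\overline\sZ = -\sZ$ since $\ttE_\varphi \in \bi\fg_\bR$ and $\pi^\tss_\fl$ commutes with conjugation; and $\sE \in \fg^{-1}_\varphi \subset \fk_\bC^\perp$, whence $\overline\sE \in \fg^1_\varphi \subset \fk_\bC^\perp$, by \eqref{E:k}.

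The hard part will be the third step. Bala--Carter alone yields some $\sE$ completing $\sZ$ to a standard triple in $\fl_\bC^\tss$, but not the reality constraint $F_\sE = \overline\sE$; producing this constraint requires the compactness of $\fg^0_{\varphi,\bR}$, which is a defining feature of a polarized Hodge structure. The Kostant--Sekiguchi theorem (equivalently, the Kempf--Ness moment-map argument) is precisely the bridge from arbitrary $\mathfrak{sl}_2$-triples to DKS-triples.
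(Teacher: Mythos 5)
The paper does not actually prove this lemma: it is imported verbatim from \cite{SL2}, and the strategy there is, as far as I can tell, the same two\textendash step one you propose (Bala--Carter to produce a standard triple with neutral element $\sZ$ and nilnegative in $\fl^{-1}:=\fl^\tss_\bC\cap\fg^{-1}_\varphi$, followed by a Kostant--Sekiguchi\textendash type reality argument using the compactness of $\fg^0_{\varphi,\bR}$). Your first three steps are sound: the grading of $\fl^\tss_\bC$ by $\sZ$, the compactness of $\fl^0\cap\fg_\bR$, and the Bala--Carter step --- including the uniqueness of $F_\sE$, which follows from the bijectivity of $\tad(\sE)\colon\fl^1\to\fl^0$ for $\sE$ in the open orbit --- are all correct, and so is the final verification of the DKS conditions via \eqref{E:k}.

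The gap is in the step you yourself identify as the hard one, which is asserted rather than proved: neither theorem you name applies off the shelf. The Kempf--Ness theorem locates \emph{zeros} of the moment map on \emph{closed} orbits, whereas here the orbit $\mathcal O\subset\fl^{-1}$ is open (its closure contains $0$, so it is as far from closed as possible) and the target value is $\sZ\neq0$. What rescues the moment-map approach is an observation you do not make: $\sZ$ is \emph{central} in $\fl^0=\{\xi\in\fl^\tss_\bC\mid[\sZ,\xi]=0\}$, so one may twist the linearization by the character of $L^0$ whose differential is the Killing pairing with $\sZ$ and run the shifted Kempf--Ness argument; one must then still verify that the points of $\mathcal O$ are polystable for this twisted linearization (a Hilbert--Mumford computation), which is not free. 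Likewise, the standard Kostant--Sekiguchi existence theorem (e.g.\ \cite[Theorems 9.4.1--9.4.2]{\CoMc}) produces a normal triple \emph{somewhere} in a given nilpotent $K_\bC$-orbit in $\fk^\perp_\bC$; it neither prescribes the neutral element to be exactly $\sZ$ (only up to conjugacy, a priori only up to the Weyl group of $(\fk,\ft)$ after moving it into $\bi\ft$), nor places the nilnegative in the single Hodge summand $\fg^{-1}_\varphi$ rather than in all of $\fk^\perp_\bC=\oplus\,\fg^{2p+1}_\varphi$; this last point is precisely the horizontality constraint and lies outside the classical statement. What you actually need is the graded refinement due to Sekiguchi and \Dokovic: if $\sZ\in\bi\ft$ is the neutral element of some standard triple $\{F,\sZ,\sE_0\}$ with $\sE_0\in\fl^{-1}$ and $F\in\fl^1$, then there is one with the same $\sZ$ and with $F=\overline{\sE}$. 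Citing that statement, or carrying out the twisted Kempf--Ness / minimal-vector argument in detail, would close the proof; as written, the crux of the lemma is outsourced to theorems that do not quite say what is needed.
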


\noindent Given a DKS--triple as in Lemma \ref{L:DKS}, set 
\begin{equation}\label{E:rho}
  \varrho \ := \ \exp\,\bi\tfrac{\pi}{4}(\sE + \overline\sE) \ \in \ L_\bC \,.
\end{equation}
Then the conjugacy class of $\Psi_D$ associated with $[\fl_\bR]\in \Lambda_{\varphi,\ft}$ by \eqref{E:FNvl} is represented by 
\begin{equation}\label{E:FN}
  (F,N) \ = \ \varrho^{-1}\cdot (\varphi,\sE) \ \in \ \tilde B_\bR(D) \,.
\end{equation}
Moreover, both $F = \varrho^{-1}\cdot\varphi$ and 
\begin{equation}\label{E:phiinf-ct}
  \phi_\infty(F,N) \ = \ \varrho\cdot\varphi \ \in \ \check D
\end{equation}
lie in the \emph{same} $G(\bR)^+$--orbit $\cO \in \bfD_D$.

Observe that 
\[
  [F_{\varphi},0] \ \in \ \Psi_D
\]  
is a well-defined element; we call this the \emph{trivial element}.  Note that it corresponds to the Cartan:
\begin{equation}\label{E:trive}
  [F_{\varphi},0] \quad \longleftrightarrow \quad [\fl_\bR] \,=\, [\ft] \, .
\end{equation}

%------------------------------------------------------------------------------
\subsubsection{The diagonal Levi} \label{S:diagL}
%------------------------------------------------------------------------------

There is a second Levi subalgebra $\tilde\fl_\bR \supset \fl_\bR$ that will be used to define the relation on $\Psi_D$.  The $\bR$--split PMHS $(F,W(N))$ induces a Deligne bigrading $\fg_\bC = \op \fg^{p,q}_{(F,N)}$.  To be precise, setting $\ttE' = \tAd_{\varrho}^{-1}(\ttE_\varphi)$ and $Y = \tAd_\varrho^{-1}(\sZ)$, we have
\begin{equation}\label{E:gpq}
  \fg^{p,q} \ = \ \left\{ \xi \in \fg_\bC \ 
  \left| \ [\ttE',\xi] = p\,\xi\,,\ [Y,\xi] = (p+q)\xi \right.\right\}\,,
\end{equation}
\cf\cite[(5.12)]{SL2}.  The Levi subalgebra $\fl_\bC$ is contained in the ``diagonal'' Levi subalgebra
\begin{equation}\label{E:diag}
  \tilde\fl_\bC \ := \ \op \, \fg^{p,p}_{(F,N)} \,,
\end{equation}
which is also defined over $\bR$.  Moreover, since \[ \tAd_{\rho^{-1}}[2\ttE_{\varphi}-\sZ,\tAd_{\rho} (\xi)] = [2\ttE' - Y,\xi] \] and $\xi\in\fl_{\bC} \iff \tAd_{\rho}(\xi)\in\fl_{\bC}\, ,$
%Set 
%\begin{equation}\label{E:sZ0}
%  \sZ \ := \ 2\pi^\tss_\fl(\ttE_\varphi) \,. % Edited 21 June 2015.
%\end{equation}
%Then \cite[Theorem 5.5(d)]{SL2} yields
\begin{equation}\label{E:tL}
  \tilde\fl_\bC \ = \ \left\{ \xi \in \fg_\bC \ | \ 
  [  2\ttE_\varphi-\sZ \,,\, \xi] = 0 \right\} \,. % Edited 21 June 2015.
\end{equation}
That is, $2\ttE_\varphi-\sZ$ % Edited 21 June 2015.
is an element of the centralizer of $\tilde \fl_\bC$ in $\fg_\bC$.  It is a general property of Levi subalgebras that they contain their centralizers.  That is, 
\begin{subequations}\label{SE:sZ}
\begin{equation}\label{E:sZ1}
  2\ttE_\varphi\,-\,\sZ \ \in \ \tilde \fz\,,% Edited 21 June 2015.
\end{equation}
where $\tilde\fz$ denotes the center of $\tilde\fl_\bC$.  Since $\sZ \subset \fl^\tss_\bC \subset \tilde\fl^\tss_\bC$, it follows that 
\begin{equation} \label{E:sZ2}
  \sZ \ = \ 2\pi^\tss_{\tilde\fl}(\ttE_\varphi) \,. % Edited 21 June 2015.
\end{equation}
\end{subequations}

%------------------------------------------------------------------------------
\subsubsection{Sub-Hodge structures} \label{S:subH}
%------------------------------------------------------------------------------

This is a convenient point to record two remarks on the induced Hodge structure on $\fl$ that will be used in subsequent proofs.  (Identical remarks hold for the diagonal Levi $\tilde \fl$ of \eqref{E:diag}.)  See \cite[\S3.1.3 \& \S4.2]{SL2} and \cite[\S V.E]{GGR} for proofs and further discussion.  Let $L \subset G$ be the algebraic subgroup with Lie subalgebra $\fl \subset \fg$.  \smallskip

\textsc{(a)}
Note that $\varphi$ induces a real sub-Hodge structure on $\fl_\bR$, since $\ttE\in\fl_{\bR}$ stabilizes $\fl_{\bR}$.  The Hodge decomposition $\fl_\bC = \op\,\fl_\varphi^{p,-p}$ is given by $\fl_\varphi^{p,-p} = \fl_\bC \cap \fg_\varphi^{p,-p}$.  Its $L(\bR)^+$-orbit $D_{\fl}$ may be identified with the subdomain $L(\bR)^+\cdot\varphi\subset D$, with compact dual $\check D_\fl = L(\bC) \cdot F_{\varphi} = L(\bC) \cdot F \subset \check D$ (with $F$ as in \eqref{E:FN}).
 
The semisimple factor $\fl^\tss = [\fl,\fl]$ likewise inherits a real sub-Hodge structure (with $L^{\tss}(\bR)^+$-orbit $D_{\fl^{\tss}}$), as it is stabilized by $\fl_{\bR}\, (\ni\ttE_{\varphi})$.  Since $L^\tss(\bR)^+\cdot\varphi = L(\bR)^+\cdot\varphi$, we may identify $D_{\fl^\tss}$ with $D_\fl$ (as complex manifolds, but not as homogeneous manifolds). \smallskip

\textsc{(b)}
More generally, if $(F,W(N))$ is polarized mixed Hodge structure on $\fg_\bR$ with $N\in\fl_\bR$, and $F\in\check{D}_{\fl}$, then $( \fl_\bC \cap F \,,\, \fl \cap W(N))$ is a polarized mixed Hodge structure on $\fl_\bR$, which is $\bR$--split if $(F,W(N))$ is.  

As a nilpotent endomorphism, $N$ is necessarily contained in the semisimple factor $\fl^\tss_\bR$, and the mixed Hodge representation necessarily stabilizes $\fl^{\tss}_{\bC}\subset \fl_{\bC}$ (because $L(\bC)$ does).  So we obtain a polarized mixed Hodge structure on $\fl_\bR^\tss$.  In particular, if $[F,N] \in \Psi_D$, and we set $F' = \fl_\bC^\tss \cap F$, then $[F' , N] \in \Psi_{D_{\fl^\tss}}$.

Let $\sW_\fl$ denote the Weyl group of $\fl^\tss_\bC$.  Since $\fl_\bC$ is a Levi subgroup of $\fg_\bC$, we have $\sW_\fl \subset \sW$.  Set $\sW^0_\fl = \sW_\fl \cap \sW^0$.  Likewise, set $\ft' = \fl_\bR^\tss \cap \ft$.  Applying the characterization above we see that $\Psi_{D_{\fl^\tss}} \simeq \cL_{\left.\varphi\right|_\fl,\ft'}/\sW^0_\fl$.

%------------------------------------------------------------------------------
\subsection{The relation on $\Psi_D$}  \label{S:po-Psi}
%------------------------------------------------------------------------------

Given $[\fl]\in\Psi_D$, recall the diagonal Levi subalgebra $\tilde\fl$ of \eqref{E:diag}.  

\begin{definition} \label{dfn:PsiPO}
Write $[\fl_1] \le [\fl_2 ]$ if $\fl_1 \subset w(\tilde\fl_2)$ for some $w\in \sW^0$.
\end{definition}

\begin{remark} \label{R:trivr}
Recall the trivial element $[F_{\varphi},0]\in\Psi_D$ of \eqref{E:trive}.  It follows directly from \eqref{E:dfncL}, \eqref{E:trive} and Definition \ref{dfn:PsiPO} that 
\[
  [F_{\varphi},0] \ \le \ [F,N] \quad \hbox{for all}\quad [F,N] \ \in \ \Psi_D \,.
\]
We call these the \emph{trivial relations}.
\end{remark}

%\begin{lemma} \label{L:1}
%The relation $\le$ is a partial order on $\Psi_D \simeq \Lambda_{\varphi,\ft}$.
%\end{lemma}

%\begin{remark}
%The map $\phi_\infty : \Psi_D \to \bfD_D$ is \emph{not} an isomorphism of posets: there exist examples in which $\phi_\infty( [\fl] ) < \phi_\infty( [\fl'])$, but $[\fl] \not< [\fl']$.  See Example \ref{eg:matt}.
%\end{remark}

%------------------------------------------------------------------------------
\subsection{Proof of Theorem \ref{T:1}} \label{S:T1}
%------------------------------------------------------------------------------

Suppose that $[F_1,N_1] \le [F_2,N_2]  \in \Psi_D$.  Then without loss of generality, we may assume that the representatives $\fl_i$ of $[\fl_i] \in \Lambda_{\varphi,\ft}$ were chosen so that 
\[
  \fl_1 \ \subset \ \tilde \fl_2 \,.
\]
Let $L_1 \subset L_2 \subset G$ be the associated connected algebraic subgroups with Lie algebras $\fl_1 \subset \tilde\fl_2$.  

We also assume that $(F_i,N_i)$ are given by \eqref{E:FN}.  Set 
\[
  \cN_i \ := \  \pi([F_i,N_i]) \ = \ \tAd(G(\bR)^+) \cdot N_i \ \in \ \bfN_D
\]
and 
\[
  \cO_i \ := \ \phi_\infty([F_i,N_i]) \ = \ G(\bR)^+\cdot F_i 
  \ \in \ \bfD_D \,.
\]
We want to show that $\cO_1 \le \cO_2$ and $\cN_1 \le \cN_2$.

%------------------------------------------------------------------------------
\begin{proof}[Proof of $\cO_1 \le \cO_2$]
%------------------------------------------------------------------------------

By \eqref{E:Einl}, $\ttE_\varphi \in \fl_1 \subset \tilde\fl_2$.  As discussed in \S\ref{S:subH}, the restrictions of $\varphi$ to $\fl_1$ and $\tilde\fl_2$, respectively, are Hodge structures.  Their respective orbits (by $L_1(\bR)^+$ and $L_2(\bR)^+$) are naturally identified with 
\[
  D_1 \ := \ L_1(\bR)^+ \cdot \varphi \tand 
  D_2 \ := \ L_2(\bR)^+ \cdot \varphi \,.
\]
Note that 
\[
  F_1 \ \in \ \overline {D_1} \tand F_2 \ \in \ \overline{D_2} \,.
\]
Moreover, $L_1 \subset L_2 \subset G$ implies $D_1 \subset D_2$, so that 
\begin{equation}\label{E:F1}
  F_1 \ \in \ \overline{D_2} \,.
\end{equation}

It is clear from the definition \eqref{E:diag} of $\tilde\fl$ that the polarized mixed Hodge structure $( \tilde\fl_2\cap F_2 , \left.N_2\right|_{\tilde\fl_2})$ is Hodge--Tate.  It follows that the $L_2(\bR)^+$--orbit
\[
  C_2 \ := \ L_2(\bR)^+\cdot F_2 
\]
polarized by the mixed Hodge structure is the unique closed $L_2(\bR)^+$--orbit in the compact dual $\check D_2 = L_2(\bC)\cdot \varphi$ of $D_2$ (\cf\cite[Corollary 4.3]{MR3331177}), hence contained in the closure of all $L_2(\bR)^+$-orbits \cite{MR0251246}.  Then \eqref{E:F1} implies
\[
  C_2 \ \subset \ \overline{L_2(\bR)^+\cdot F_1}\,. 
\]
Whence 
\begin{eqnarray*}
  \cO_2 & = & G(\bR)^+\cdot F_2 \ = \ G(\bR)^+\cdot L_2(\bR)^+\cdot F_2 \\
  & = & G(\bR)^+ \cdot C_2 \ \subset \ 
  G(\bR)^+\cdot \overline{L_2(\bR)^+\cdot F_1}\\
  &  \subset & \overline{G(\bR)^+\cdot L_2(\bR)^+\cdot F_1} 
  \ = \ \overline{G(\bR)^+\cdot F_1} \ = \ \overline\cO_1\,. 
\end{eqnarray*}
\end{proof}

\begin{proof}[Proof of $\cN_1 \le \cN_2$]
The \Dokovic--Kostant--Sekiguchi correspondence \cite[Section 2.6]{SL2} preserves the closure order on orbits \cite{MR1422847, MR1104427}.  So it suffices to show that 
\begin{equation} \label{E:ord0}
  K_\bC \cdot \sE_1 \ \subset \ \overline{K_\bC \cdot \sE_2} \,.
\end{equation}
Let $\tilde\fl_{2,\bC} = \op \tilde\fl^p_{2,\varphi}$ be the $\ttE_\varphi$--eigenspace decomposition.  From \eqref{E:gphi} we see that $\tilde\fl^p_{2,\varphi} = \tilde\fl_{2,\bC} \cap \fg^p_\varphi$.  Then \eqref{E:k} implies $\tilde\fl_{2,\varphi}^0 \subset \fk_\bC$, and Lemma \ref{L:DKS} implies $\sE_i \in \tilde\fl_{2,\varphi}^{-1}$.  Let $L_2^0(\bR) \subset L_2(\bR)^+$ be the connected Lie group with Lie algebra $\tilde\fl_{2,\varphi}^0$.  We claim that 
\begin{equation} \label{E:ord1}
  L_2^0(\bC) \cdot \sE_1 \ \subset \
  \overline{L_2^0(\bC) \cdot \sE_2}.
\end{equation}
This implies \eqref{E:ord0} and will complete the proof.  Note that the Jacobi identity implies $[\tilde\fl_2^0 , \tilde\fl_2^{-1}] \subset \tilde\fl_2^{-1}$.  So $L_2^0(\bC)$ preserves $\tilde\fl_2^{-1}$.  Therefore, to prove \eqref{E:ord1}, it suffices to show that $L_2^0(\bC) \cdot \sE_2$ is Zariski dense in $\tilde\fl^{-1}_2$.  

Let $W(\sE_2)$ denote the Jacobson--Morosov filtration of $\tilde\fl_{2,\bC}$ (\S\ref{S:JMf}).  Then \eqref{E:tL} and \eqref{E:W(N)} imply $\tilde\fl_{2,\bC} \cap \fg^{\leq 0}_\varphi = W_0(\sE_2)$.  This implies that $\sE_2 : \tilde\fl_2^0 \to \tilde\fl_2^{-1}$ is a surjection.  It follows that the rank of the map
\[
  L_2^0(\bC) \ \to \ \tilde\fl_2^{-1} \quad\hbox{sending}\quad
  g \ \mapsto \ g\cdot \sE_2
\]
is equal to the dimension of $\tilde\fl_2^{-1}$.  Whence $\overline{L_2^0(\bC) \cdot \sE_2} = \tilde\fl_2^{-1}$ and \eqref{E:ord1} follows.
\end{proof}

%------------------------------------------------------------------------------
\subsection{Proof of Theorem \ref{T:0}} \label{S:T0}
%------------------------------------------------------------------------------

Let $[F_1,N_1] , [F_2,N_2] \in \Psi_D$ and assume that 
\begin{equation}\label{E:2}
  \phi_\infty([F_1,N_1]) \ = \ \phi_\infty([F_2,N_2]) \,. 
\end{equation}
Recall that $\phi_\infty : \Psi_D \to \bfD_D$ is induced by the map $\tilde B_\bR(D) \to \check D$, also denoted $\phi_\infty$, of \eqref{E:diagram1}.  Since the latter map is $G(\bR)^+$--equivariant, and \eqref{E:2} holds, we may assume without loss of generality that 
\begin{equation}\label{E:3}
  F_\infty \ := \ \phi_\infty(F_1,N_1) \ = \ \phi_\infty(F_2,N_2) \,.
\end{equation}

Let $\fg_\bC = \op\,\fg^{p,q}_{(F_i,N_i)}$ denote the Deligne bigradings of the $\bR$--split PMHS $(F_i,W(N_i))$.  Recall that 
\[
  F^a_\infty \ = \ \bigoplus_{q \le -a} \fg^{p,q}_{(F_i,N_i)} \,,
\]
\cf~ the proof of \cite[(3.12)]{\CKSdeg}.  Define $\fg^{p,q}_{\infty,i}:= \fg^{-q,-p}_{(F_i,N_i)}$.  Then $\fg_\bC = \op\,\fg^{p,q}_{\infty,i}$ is the unique bigrading of $\fg_\bC$ associated to the pair $(F_\infty,\ft)$ by Kerr and Pearlstein \cite[Lemma 3.2]{MR3331177}.  Therefore, $\fg^{p,q}_{(F_1,N_1)} = \fg^{p,q}_{(F_2,N_2)}$, so that 
\[
  \tilde \fl_1 \ = \tilde \fl_2 \,.
\]
Then Lemma \ref{L:tilde} implies $[\fl_1] = [\fl_2]$, establishing injectivity.

\begin{lemma} \label{L:tilde}
Let $\fl_1 , \fl_2 \in \cL_{\varphi,\ft}$.  If $\tilde\fl_1 = \tilde \fl_2$, then $[\fl_1] = [\fl_2]$.
\end{lemma}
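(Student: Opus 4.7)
The plan is to reduce the question to the sub-Mumford--Tate domain $D_{\tilde\fl^\tss} \subset D$ of \S\ref{S:subH}, where each $\fl_i$ will restrict to a representative of the (unique) Hodge--Tate element of $\Psi_{D_{\tilde\fl^\tss}}$; the resulting equality on the sub-domain will then force $[\fl_1]=[\fl_2]$ in $\Psi_D$.

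First, by \eqref{E:sZ2} applied to both $\fl_i$, one has $\sZ_1 = 2\pi^\tss_{\tilde\fl_1}(\ttE_\varphi) = 2\pi^\tss_{\tilde\fl_2}(\ttE_\varphi) =: \sZ$. Writing $\ft' := \ft \cap \tilde\fl^\tss$ and $\tilde\fz$ for the center of $\tilde\fl$, we have $\ft = \ft' \oplus \tilde\fz$; since $\fl_i \supset \ft \supset \tilde\fz$, we obtain $\fl_i = \fl_i' \oplus \tilde\fz$ with $\fl_i' := \fl_i \cap \tilde\fl^\tss$ a Levi subalgebra of $\tilde\fl^\tss$ containing $\ft'$. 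A direct check yields $(\fl_i')^\tss = \fl_i^\tss$ and $\sZ = 2\pi^\tss_{\fl_i'}(\ttE_{\varphi|_{\tilde\fl^\tss}})$, so $\fl_i' \in \cL_{\varphi|_{\tilde\fl^\tss},\ft'}$, and the DKS triple $\{\overline\sE_i,\sZ,\sE_i\} \subset (\fl_i')^\tss$ of Lemma \ref{L:DKS} realizes $[\fl_i']$ via \eqref{E:FN} as the restricted PMHS $(F_i|_{\tilde\fl^\tss},\,N_i)$.

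The defining property \eqref{E:diag} of the diagonal Levi gives $\tilde\fl_\bC = \bigoplus_p \fg^{p,p}_{(F_i,N_i)}$, whence the restricted PMHS on $\tilde\fl^\tss$ is Hodge--Tate. By \cite[Corollary 4.3]{MR3331177} --- which, as recalled in the proof of $\cO_1 \le \cO_2$ in Theorem \ref{T:1}, identifies the Hodge--Tate polarized boundary orbit with the unique closed $\tilde L^\tss(\bR)^+$-orbit in $\check D_{\tilde\fl^\tss}$ --- the Hodge--Tate element of $\Psi_{D_{\tilde\fl^\tss}}$ is unique. Hence $[\fl_1'] = [\fl_2']$ in $\cL_{\varphi|_{\tilde\fl^\tss},\ft'}/\sW^0_{\tilde\fl}$, i.e., $\fl_1' = w(\fl_2')$ for some $w \in \sW^0_{\tilde\fl}$.

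Since $\sW^0_{\tilde\fl} = \sW_{\tilde\fl} \cap \sW^0 \subset \sW^0$ by \S\ref{S:subH}(b), and $\sW_{\tilde\fl} = \sW_{\tilde\fl^\tss}$ acts trivially on $\tilde\fz$, we obtain $w(\fl_2) = w(\fl_2') \oplus \tilde\fz = \fl_1' \oplus \tilde\fz = \fl_1$, so $[\fl_1] = [\fl_2]$ in $\Psi_D$. The main subtlety is the uniqueness of the Hodge--Tate element of $\Psi_{D_{\tilde\fl^\tss}}$; the remaining steps are bookkeeping about compatibility between the parameterizations \eqref{E:FNvl} for $D$ and for $D_{\tilde\fl^\tss}$.
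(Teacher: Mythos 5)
Your reduction to the semisimple factor $\tilde\fl^\tss$ of the common diagonal Levi is sound, and the surrounding bookkeeping checks out: the splittings $\ft = \ft'\oplus\tilde\fz$ and $\fl_i = \fl_i'\oplus\tilde\fz$, the verification that $\fl_i'\in\cL_{\varphi|_{\tilde\fl^\tss},\ft'}$ with the same distinguished element $\sZ$, and the final descent from $\sW^0_{\tilde\fl}$ back to $\sW^0$ are all correct. The problem is the one step that carries the entire weight of the lemma: the assertion that ``the Hodge--Tate element of $\Psi_{D_{\tilde\fl^\tss}}$ is unique.'' What \cite[Corollary 4.3]{MR3331177} gives is that every Hodge--Tate $\bR$--split PMHS polarizes the \emph{unique closed orbit} in the compact dual, i.e.\ that all Hodge--Tate classes have the same image under $\phi_\infty$. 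To pass from that to uniqueness of the class itself you would need $\phi_\infty$ to be injective on these classes --- but that injectivity is Theorem \ref{T:0}, whose proof rests on Lemma \ref{L:tilde}. So the inference is circular (or, read charitably, simply missing its key ingredient). Indeed, your uniqueness claim is precisely the special case of Lemma \ref{L:tilde} in which the common diagonal Levi is the whole algebra: a priori two non--$\sW^0_{\tilde\fl}$--conjugate Levis $\fl_1',\fl_2'\subset\tilde\fl^\tss$, each with $\sZ=2\ttE$ distinguished in its semisimple factor, could both produce $\bR$--split Hodge--Tate PMHS, and nothing you cite rules this out.

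The missing content is exactly what the paper supplies with honest conjugacy theorems. Since both DKS--triples share the neutral element $\sZ$, their Cayley transforms $\{N_i^+,Y_i,N_i\}$ satisfy $N_1^+-N_1=\bi\sZ=N_2^+-N_2$; Rao's theorem \cite[Theorem 9.4.6]{MR1251060} then conjugates the two Cayley triples under $L^0(\bR)$, Kostant's theorem conjugates $Y_1$ to $Y_2$ inside the centralizer of the (now common) nilnegative $N$, and from $N_1=N_2$, $Y_1=Y_2$ one recovers $N_1^+=N_2^+$, hence $\varrho_1=\varrho_2$ and $F_1=F_2$. Note that this argument already takes place inside $\tilde\fl$ (that is where $L^0(\bR)$ lives), so your reduction, while valid, does not actually save any work: to repair the proof you must still run the Rao--Kostant argument for the Hodge--Tate classes on $\tilde\fl^\tss$.
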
 

\begin{proof}[Proof of Lemma \ref{L:tilde}]
Set 
\begin{equation}\label{E:tli}
  \tilde\fl \ := \ \tilde\fl_1 \ = \ \tilde\fl_2 \,.
\end{equation}
Let $L(\bR)^+ \subset G(\bR)^+$ be the Lie subgroup with Lie algebra $\tilde\fl$, and let $L^0(\bR) = L(\bR) \cap G^0(\bR)$.  

From \eqref{E:sZ0} and \eqref{E:sZ2} we see that $\sZ_1 = \sZ_2$.  Set $\sZ = \sZ_1 = \sZ_2$.  By Lemma \ref{L:DKS}, there exist two DKS--triples $\{\overline\sE_i , \sZ , \sE_i\} \subset \fl_{i,\bC}$, where $i=1,2$, containing $\sZ$ as the neutral element.  Set $\varrho_i = \texp\,\bi\frac{\pi}{4}(\overline\sE_i+\sE_i) \in L(\bC)$.  Recall \eqref{E:FNvl} that $(F_i , N_i) = \varrho_i^{-1}(\varphi,\sE_i)$ represents the conjugacy class $[F_i , N_i] \in \Psi_D$ corresponding to $[\fl_i] \in \Lambda_{\varphi,\ft}$.  We will prove the lemma by showing that 
\begin{equation} \label{E:6}
  [F_1 , N_1] \ = \ [F_2,N_2] \,.
\end{equation}

The Cayley transform of the DKS--triple $\{ \overline\sE_i , \sZ , \sE_i \}$ is the triple $\{ N^+_i , Y_i , N_i \} = \tAd_{\varrho_i}^{-1}\{ \overline\sE_i , \sZ , \sE_i\} \subset \fl_{i,\bR}$, see \cite[\S2.7]{SL2}.  Note that 
\[
  N^+_1 \,-\, N_1 \ = \ \bi\,\sZ \ = \ N^+_2 \,-\, N_2 \,.
\]
Rao's Theorem \cite[Theorem 9.4.6]{\CoMc} asserts that the two Cayley triples $\{ N^+_i , Y_i , N_i\}$ are conjugate under $L^0(\bR)$.  That is, $N_1 = \tAd_gN_2$ for some $g \in L^0(\bR)$.  Since we are working modulo the action of $G(\bR)^+$ we may assume that 
\begin{equation} \label{E:Ni}
  N_1 \ = \ N_2 \ =: \ N\,.
\end{equation}
Then Kostant's Theorem asserts that $Y_1$ and $Y_2$ are conjugate under an element of the centralizer $Z(N) = \{ g \in G(\bR)^+ \ | \ \tAd_g(N) = N \}$ of $N$, \cf\cite[Theorem 3.6]{MR0114875} or \cite[Theorem 3.4.10]{\CoMc}.  So, without loss of generality we may assume that 
\begin{equation} \label{E:Yi}
  Y_1 \ = \ Y_2 \ =: \ Y \,.
\end{equation}
The nilnegative and neutral elements, $N_i$ and $Y_i$, uniquely determine the nilpositive element $N_i^+$ of the standard triple.  Whence \eqref{E:Ni} and \eqref{E:Yi} imply
\[
  N_1^+ \ = \ N_2^+ \ =: \ N^+ \,.
\]
One may check that $\overline\sE_i + \sE_i = N_i^+ + N_i$ \cite[(2.27)]{SL2}.  Consequently $\varrho_1 = \varrho_2$ and $F_1 = F_2$.
\end{proof}

An easy consequence of Lemma \ref{L:tilde} (proof left to the reader) is the following:

\begin{corollary}
The relation $<$ on $\Psi_D$ satisfies the antisymmetry property:  that is, if $[\fl_1]\leq [\fl_2]$ and $[\fl_1]\geq [\fl_2]$, then $[\fl_1]=[\fl_2]$.
\end{corollary}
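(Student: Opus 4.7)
The plan is to reduce the antisymmetry on $\Psi_D$ to antisymmetry of the closure order on $\bfD$. Suppose $[\fl_1]\leq[\fl_2]$ and $[\fl_2]\leq[\fl_1]$. By Theorem \ref{T:1} the map $\phi_\infty:\Psi_D\to\bfD_D$ preserves the relation, so these two inequalities push forward to $\phi_\infty([\fl_1])\leq\phi_\infty([\fl_2])$ and $\phi_\infty([\fl_2])\leq\phi_\infty([\fl_1])$ in $\bfD_D\subset\bfD$.

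The order on $\bfD$ is a genuine partial order: if $\cO_1\subset\overline{\cO_2}$ and $\cO_2\subset\overline{\cO_1}$, then $\overline{\cO_1}=\overline{\cO_2}$, and since the $G(\bR)^+$-orbits in $\overline{D}$ are locally closed (so each orbit closure contains a unique open dense orbit), this forces $\cO_1=\cO_2$. Therefore $\phi_\infty([\fl_1])=\phi_\infty([\fl_2])$, and the bijectivity statement of Theorem \ref{T:0} yields $[\fl_1]=[\fl_2]$.

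The main content is really Theorem \ref{T:0}, whose proof makes essential use of Lemma \ref{L:tilde}; this is why the corollary is ``easy.'' A more self-contained route would be to show directly that $\tilde\fl_1=\tilde\fl_2$ (after a $\sW^0$-adjustment reducing to $\fl_1\subset\tilde\fl_2$ and $\fl_2\subset w(\tilde\fl_1)$), using that each root of $\fl_1$ must vanish on the defining element $2\ttE_\varphi-\sZ_2$ of $\tilde\fl_2$, and symmetrically, and then invoke Lemma \ref{L:tilde} directly. The technical obstacle in that approach is that the center of $\tilde\fl_2$ need not lie in $\fl_1$, which makes the comparison of the semisimple projections $\sZ_i=2\pi^\tss_{\fl_i}(\ttE_\varphi)$ under the two inclusions delicate. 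The route through Theorems \ref{T:1} and \ref{T:0} sidesteps this entirely, and is the one I would write up.
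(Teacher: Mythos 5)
Your argument is correct and complete: mutual relations push forward under $\phi_\infty$ by Theorem \ref{T:1}, the closure order on $\bfD$ is genuinely antisymmetric (finitely many $G(\bR)^+$--orbits on $\check D$, each locally closed, so equal closures force equal orbits), and injectivity from Theorem \ref{T:0} pulls the equality back to $\Psi_D$. There is no circularity, since neither Theorem \ref{T:0} nor Theorem \ref{T:1} uses the corollary. This is, however, a different packaging from what the paper intends: the corollary is announced as "an easy consequence of Lemma \ref{L:tilde}," i.e.\ the authors have in mind the direct argument you sketch in your last paragraph --- deduce $\tilde\fl_1=\tilde\fl_2$ (up to $\sW^0$) from the two containments $\fl_1\subset w(\tilde\fl_2)$ and $\fl_2\subset w'(\tilde\fl_1)$ and then invoke the lemma. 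Your detour through $\bfD_D$ buys you exactly what you identify: it avoids the delicate comparison of the semisimple projections $\sZ_i=2\pi^\tss_{\fl_i}(\ttE_\varphi)$ under the two inclusions, at the cost of routing all the content through Theorem \ref{T:0} (which is itself where Lemma \ref{L:tilde} does the work, so your proof is still "a consequence of Lemma \ref{L:tilde}" in the paper's sense). Given that both theorems are already proved at the point the corollary is stated, your version is a perfectly good, and arguably cleaner, write-up.
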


As we shall see, the transitivity property can fail.

%------------------------------------------------------------------------------
\section{Nilpotent cones and polarized relations} \label{S:nc+pr}
%------------------------------------------------------------------------------

To a multivariable nilpotent orbit $e^{z_1 N_1 + \cdots + z_{\ell}N_{\ell}}F$ on $D$, one can associate $2^{\ell}$ single-variable nilpotent orbits in 1-to-1 correspondence with the faces of the underlying nilpotent cone $\sigma$.  In this section we will show that inclusions of these faces are compatible with $\leq$ on $\Psi_D$ (Theorem \ref{T:2}), leading to the notion of ``polarized relation'' $\preceq$ on $\Psi_D$ discussed in \S\ref{iS:PR}.  We will also show that all polarized relations are realized by multivariable $\tSL(2)$-orbits (Theorem \ref{T:4}), which leads to an algorithm for computing $\preceq$.

%------------------------------------------------------------------------------
\subsection{Nilpotent cones} \label{S:ncs}
%------------------------------------------------------------------------------

Given a nilpotent cone 
\begin{equation} \nonumber %\label{E:nc}
  \s \ = \ \tspan_{\bR_{>0}}\{N_1 , \ldots , N_\ell\} \ \subset \ \fg_\bR
\end{equation}
underlying a nilpotent orbit, let $\Gamma_\s$ denote the partially ordered set comprising the faces of $\s$, including both $\s$ and the trivial vertex $\{0\}$, with partial order $\tau' \le \tau$ if and only if $\tau'$ is a face of $\tau$.  Of course the $\{ \tau\}$ also underlie nilpotent orbits (see \eqref{E:ftinb} below). Recall 

\begin{theorem}[{\cite[Corollary 4.9]{SL2}}] \label{T:R}
Let $\s$ be any cone underlying a nilpotent orbit and choose a Hodge flag $F \in \check D$ so that $(F,W(\s))$ is an $\bR$--split polarized mixed Hodge structure \emph{(equivalently, $F \in \tilde B_\bR(\s)$)}.  Fix $N\in\s$.  Then $\sigma$ is contained in the orbit of $N$ under a subgroup $L^{0,0}(\bR)^+ \subset G(\bR)^+$ that stabilizes $F$.
\end{theorem}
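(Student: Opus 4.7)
The plan is to proceed in three steps.

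Step 1 sets up the bigrading and identifies the group. By hypothesis $(F,W(\s))$ is an $\bR$-split polarized mixed Hodge structure, so the adjoint representation inherits a real Deligne bigrading $\fg_\bC = \bigoplus_{p,q} \fg^{p,q}_{(F,\s)}$. I define $L^{0,0}(\bR)^+$ to be the connected Lie subgroup of $G(\bR)^+$ with Lie algebra $\fg^{0,0}_\bR := \fg^{0,0}_{(F,\s)} \cap \fg_\bR$. Since $\fg^{0,0} \subset F^0\fg_\bC$, this subgroup automatically stabilizes $F \in \check D$; and from $[\fg^{0,0},\fg^{-1,-1}] \subset \fg^{-1,-1}$ the adjoint action of $L^{0,0}(\bR)^+$ preserves the real subspace $\fg^{-1,-1}_{(F,\s),\bR}$.

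Step 2 records the containment $\s \subset \fg^{-1,-1}_{(F,\s),\bR}$. Horizontality ($N_i F^p \subset F^{p-1}$) and the weight-shift property ($N_i W_k \subset W_{k-2}$), combined with the Deligne splitting, force the Hodge type of each generator $N_i$, and hence of every element of $\s$, to be $(-1,-1)$. Thus both $N$ and the orbit $L^{0,0}(\bR)^+ \cdot N$ lie in the common finite-dimensional real vector space $\fg^{-1,-1}_{(F,\s),\bR}$.

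Step 3, the heart of the argument, is to show $\s \subset L^{0,0}(\bR)^+ \cdot N$. For $N$ in the relative interior of $\s$, I would fit $N$ into a Jacobson--Morosov standard triple $\{N, Y(\s), N^+\}$ lying in the diagonal Levi $\bigoplus_p \fg^{p,p}_{(F,\s),\bR}$, with $Y(\s) \in \fg^{0,0}_\bR$ independent of the choice of interior point (by Cattani--Kaplan). Decomposing $\fg_\bR$ into $\fsl_2(\bR)$-isotypic pieces under this triple, the map $\tad(N): \fg^{0,0}_\bR \to \fg^{-1,-1}_\bR$ is surjective (by the weight-zero to weight-minus-two step in $\fsl_2$-representation theory) onto the subspace of $\fg^{-1,-1}_\bR$ spanned by $\s$. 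A standard ODE-lifting argument, using convexity and path-connectedness of $\s$, then produces $g \in L^{0,0}(\bR)^+$ with $\tAd(g)N = N'$ for any prescribed $N' \in \s$. Boundary points of $\s$ are handled by the same argument applied to the face of $\s$ whose relative interior contains $N$, after checking that the corresponding $Y(\tau)$ still lies in $\fg^{0,0}_{(F,\s),\bR}$.

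The main obstacle is the surjectivity claim in Step 3 --- namely, that $\tad(N)(\fg^{0,0}_\bR)$ actually contains the tangent cone to $\s$ at $N$, not merely a proper subspace of $\fg^{-1,-1}_\bR$. This is where the polarization condition enters nontrivially, constraining which $\fsl_2$-irreducible summands of $\fg$ can contribute to $\s$, and is the step that requires the full strength of the PMHS hypothesis rather than the weaker statement that $\s$ consists of commuting nilpotents of type $(-1,-1)$.
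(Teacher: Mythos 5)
The paper itself does not prove this statement; it is imported from \cite{SL2}, so I am judging your argument against the standard proof there. Your skeleton --- take $L^{0,0}(\bR)^+$ to be the connected group of $\fg^{0,0}_{(F,\s)}\cap\fg_\bR$, place $\s$ inside $\fg^{-1,-1}_{(F,\s),\bR}$, and conclude by showing the orbits of polarizing elements are open while $\s$ is connected --- is the right one. But the key step is both mis-stated and, by your own admission, not closed. The correct assertion is that $\tad(N):\fg^{0,0}_\bR\to\fg^{-1,-1}_\bR$ is surjective onto \emph{all} of $\fg^{-1,-1}_\bR$, not merely onto the span of $\s$, and it follows from ingredients you already have in hand: for the standard triple $\{N,Y,N^+\}$ attached to the $\bR$-split PMHS, $\tad(N)$ maps the $0$-eigenspace of $\tad Y$, namely $\op_p\fg^{p,-p}$, onto the $(-2)$-eigenspace $\op_p\fg^{p-1,-p-1}$ by $\fsl(2)$-theory; since $\tad(N)$ shifts Deligne bidegree by $(-1,-1)$, this surjection decomposes bidegree by bidegree and in particular gives $\tad(N)\fg^{0,0}=\fg^{-1,-1}$, which descends to real points because the splitting is defined over $\bR$. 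What genuinely uses the full PMHS hypothesis is not this linear algebra but the Cattani--Kaplan fact (which you do cite) that \emph{every} element of the open cone $\s$ polarizes the same $\bR$-split MHS $(F,W(\s))$ with the same grading $Y$; that is what lets you repeat the openness statement at each point of a path in $\s$, after which connectedness of $\s$ (or your ODE lift) finishes the argument. As written, you assert the surjectivity, then retract it as ``the main obstacle,'' so the proof is not complete.

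Separately, the paragraph on ``boundary points'' should be deleted rather than repaired. In this paper $\s=\tspan_{\bR_{>0}}\{N_1,\dots,N_\ell\}$ is the \emph{open} cone, its proper faces being distinct elements of $\Gamma_\s$; so the theorem makes no claim about them. More importantly, the statement you propose to prove for a point $N'$ of a proper face is false: there $W(N')\ne W(\s)$, whereas $L^{0,0}(\bR)^+$ preserves $W(\s)$ (its Lie algebra lies in $\op_p\fg^{p,-p}$ and hence preserves the weight grading), so $N'$ cannot lie in $L^{0,0}(\bR)^+\cdot N$. Indeed, the fact that distinct faces of $\s$ generally land in distinct conjugacy classes is precisely the content of the map $\pi^\circ:\Gamma_\s\to\bfN_D$ of \S\ref{S:ncs}.
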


\noindent Applying Theorem \ref{T:R} to the cone $\tau$, we see that $\tau$ is contained in a conjugacy class $\cN_\tau \in \bfN_D$.  In particular, we obtain a well-defined map 
\[
  \pi^\circ : \Gamma_\s \ \to \ \bfN_D \,.
\]

In order to define the map $\Gamma_\s \to \bfD_D$ we must choose a connected component $\tilde B_\bR(\s)^\circ$ of 
\[
  \tilde B_\bR(\s) \ := \ \left\{ F \in \check D \ | \ (F,W(\s)[-k])
  \hbox{ is an $\bR$--split PMHS}\right\}\,.
\]
This choice determines a connected component of $\tilde B_\bR(\tau)^\circ$ of $\tilde B_\bR(\tau)$ as follows.  Fix $F \in \tilde B_\bR(\s)^\circ$.  Define $J_\tau \subset \{ 1 , \ldots , \ell\}$ by 
\[
  \tau \ = \ \tspan_{\bR_{>0}}\{N_j \ | \ j \in J_\tau \} \,.
\]
Set 
\[
  \tilde B(\tau) \ := \ \left\{ F \in \check D \ | \ (F,W(\tau)[-k])
  \hbox{ is an PMHS}\right\}\,,
\]
and note that 
\begin{equation}\label{E:ftinb}
  F_\tau \ := \ \exp\Big( \bi \sum_{j \not\in J_\tau} N_j\Big) F
  \ \in \ \tilde B(\tau) \,.
\end{equation}
Recall the smooth map
\[
  \delta_\tau : \tilde B(\tau) \ \to \ \tilde B_\bR(\tau) 
\]
of \cite[(2.20) and (2.24)]{\CKSdeg}.  (As checked in \cite[\S 4]{KP2012}, this is compatible with the Mumford-Tate group $G$.) Define $\tilde B_\bR(\tau)^\circ$ to be the connected component containing $\d_{\tau}(F_\tau)$.  

Let $N_{\tau}\in\tau$ and $\tilde{F}_{\tau}\in\tilde{B}_{\bR}(\tau)^{\circ}$.  We define 
%For these choices of connected components, the map $\phi_\infty : \Psi_D \to \bfD_D$ induces 
\begin{equation}\label{E:glim}
  \phi_\infty^\circ : \Gamma_\s \ \to \ \bfD_D
\end{equation} 
by $\phi^{\circ}_{\infty}(\tau):=G(\bR)^+\cdot \underset{\text{Im}(z)\to \infty}{\lim} e^{z N_{\tau}}\tilde{F}_{\tau}$, and
\begin{equation}\label{E:glob}
\psi^{\circ}: \Gamma_{\s}\to \Psi_D
\end{equation}
by $\psi^{\circ}(\tau):=[\tilde{F}_{\tau},N_{\tau}]$.  Both are independent of the choice of $N_{\tau}$, $\tilde{F}_{\tau}$.  For $\phi^{\circ}_{\infty}$, this is Remark 5.6 of \cite{MR3331177}, but we can easily see it for both, by invoking

%Fix $N_\tau \in \tau$ and define $\psi^\circ : \Gamma_\s \to \Psi_D$ by $\psi^\circ(\tau) := [\tilde F_\tau,N_\tau] \in \Psi_D$.  In order to see that $\psi^\circ(\tau)$ does not depend on our choice of $\tilde F_\tau$ and $N_\tau$ it is necessary to recall

\begin{theorem}[{\cite[\S5]{KP2012}}] \label{T:KP}
The connected component $\tilde B_\bR(\tau)^\circ$ is homogeneous under the action of a Lie subgroup $M_{B(\tau)}(\bR)^+ \subset G(\bR)^+$ that point-wise fixes the elements of $\tau$.
\end{theorem}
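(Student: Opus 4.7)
\medskip

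\noindent\textbf{Proof proposal (plan).}
The plan is to build $M_{B(\tau)}$ as the centralizer of $\tau$ intersected with the subgroup that preserves the auxiliary polarized Hodge structures on the graded pieces of $W(\tau)$. Concretely, I would first fix a reference point $F_0\in \tilde B_\bR(\tau)^\circ$ and, using the $\bR$--split polarized mixed Hodge structure $(F_0,W(\tau)[-k])$, consider the Deligne bigrading $\fg_\bC = \bigoplus_{p,q} I^{p,q}_0$. All of $\tau$ sits inside $I^{-1,-1}_0\cap \fg_\bR$, and the relative weight filtration is $W_m(\tau) = \bigoplus_{p+q\le m}I^{p,q}_0$. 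I would then take $M_{B(\tau)}$ to be the real algebraic subgroup of $G$ whose Lie algebra $\fm_{B(\tau)}$ consists of those $X\in\fz_\fg(\tau)$ (centralizer of $\tau$) which, under the Hodge decomposition on each $\mathrm{Gr}^{W(\tau)}_m\fg$ induced by $F_0$, lie in the $(0,0)$--piece. Equivalently (and more intrinsically), $\fm_{B(\tau)}$ is the kernel of the Hodge component $\delta$ of Cattani--Kaplan--Schmid on $\fz_\fg(\tau)$, whose vanishing precisely characterises the $\bR$--split locus.

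That this group pointwise fixes $\tau$ is immediate from its definition as a subgroup of $Z_G(\tau)$. That the action preserves the defining condition of $\tilde B_\bR(\tau)$ I would verify by noting that $M_{B(\tau)}(\bR)^+$ preserves $W(\tau)$ (since it centralises $\tau$), induces automorphisms of $\mathrm{Gr}^{W(\tau)}$ of Hodge type $(0,0)$ with respect to the reference bigrading, and preserves the polarising forms $Q_\ell(\cdot,N^\ell\cdot)$ on primitives because it fixes each $N\in\tau$ and lies in $G$. Hence its orbit of $F_0$ remains inside the same connected component of $\tilde B_\bR(\tau)$.

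For transitivity on $\tilde B_\bR(\tau)^\circ$, the plan is to parametrise the $\bR$--split locus. Any $F\in \tilde B(\tau)$ yields a Deligne bigrading, and the bigrading varies holomorphically in $F$. The $\bR$--split condition is the cut $\delta(F) = 0$ for the CKS splitting. Using the orbit structure from \cite{KP2012} on $\tilde B(\tau)$ (which is a homogeneous space for a parabolic Levi fixing $W(\tau)$), I would descend to the real fixed locus and show that the stabiliser of the extra $\bR$--split datum on each $\mathrm{Gr}^{W(\tau)}_m$ is precisely $M_{B(\tau)}$. Concretely: given any two $F_1,F_2\in\tilde B_\bR(\tau)^\circ$, the induced Hodge structures on the graded pieces are polarised and of the same Hodge numbers, so related by an element $g_m$ in the corresponding real orthogonal/symplectic group. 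These $g_m$ lift to a single real element $g\in Z_G(\tau)$ acting as $(0,0)$--endomorphism, hence in $M_{B(\tau)}(\bR)$; connectedness of $\tilde B_\bR(\tau)^\circ$ lets us further reduce to the identity component.

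\emph{Main obstacle.} The delicate step is the transitivity argument on a \emph{fixed} connected component: global lifting from the Levi acting on each $\mathrm{Gr}^{W(\tau)}_m$ to a real element of $Z_G(\tau)\cap G(\bR)^+$, together with the verification that the lifted element has Hodge type $(0,0)$ (as opposed to contributing a nonzero Kaplan--Pearlstein $\delta$). This is where one genuinely needs the several-variable $\mathrm{SL}(2)$--orbit theory to control the interaction between the splitting datum and the real structure; once this is achieved, closedness of $M_{B(\tau)}(\bR)^+$ and a dimension count finish the argument.
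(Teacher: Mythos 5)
First, a caveat: the paper does not prove this statement. Theorem \ref{T:KP} is imported verbatim from \cite[\S5]{KP2012}, so there is no internal proof to compare against; what follows assesses your proposal on its own terms. The essential problem is your definition of the group. Taking $\fm_{B(\tau)}$ to be the part of $\fz_\fg(\tau)$ lying in the $(0,0)$--piece \emph{relative to the reference point} $F_0$ produces, in effect, the isotropy algebra of the induced graded Hodge structure: elements of Hodge type $(0,0)$ on $\mathrm{Gr}^{W(\tau)}_\sb\fg$ are precisely the infinitesimal automorphisms of the Hodge structures on the $\mathrm{Gr}^{W(\tau)}_m V$, so the group you build acts trivially on the graded Hodge filtrations (and, on the literal reading, $I^{0,0}(\fg)$ even stabilizes $F_0$ itself). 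But $\tilde B_\bR(\tau)^\circ$ in general contains points whose induced Hodge structures on $\mathrm{Gr}^{W(\tau)}$ genuinely vary --- they sweep out a positive-dimensional piece of the Mumford--Tate domain for the polarized graded/primitive data --- so your group cannot act transitively. The group of \cite[\S5]{KP2012} is essentially the identity component of the full real pointwise centralizer $Z_{G(\bR)}(\tau)$ (which, as you correctly verify, automatically preserves $W(\tau)$, $\bR$--splitness and the polarizations): it contains both a reductive part surjecting onto the automorphisms of the graded polarized Hodge data and a unipotent part moving the real splitting of $W(\tau)$, and neither survives a $(0,0)$--condition anchored at $F_0$. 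Your ``equivalently, the kernel of the Hodge component $\delta$'' is also not well defined, since $\delta$ is an invariant of a mixed Hodge structure, not a linear operator on $\fz_\fg(\tau)$.

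The transitivity step has a second, independent gap even after the group is corrected. Matching the graded Hodge structures of $F_1$ and $F_2$ by elements of ``the corresponding real orthogonal/symplectic group'' is the period-domain argument of Remark \ref{R:Aut(P,Q)}; in the Mumford--Tate setting of this theorem the matching element must come from the image of $Z_G(\tau)$, which is in general a proper subgroup of the full product of automorphism groups of the polarized primitive pieces, so one needs homogeneity of the \emph{boundary Mumford--Tate domain} under that smaller group (this is exactly the content of \cite[\S5]{KP2012}, not an input one can assume). Moreover, two $\bR$--split PMHS with the same $W(\tau)$, the same polarizing cone, and \emph{identical} Hodge structures on $\mathrm{Gr}^{W(\tau)}$ need not coincide: they may differ by the choice of real splitting (real extension data), so after applying your lifted $g$ one still needs a correction by a real unipotent element of $\exp\bigl(W_{-1}\fg_\bR\cap\fz_\fg(\tau)\bigr)$. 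This unipotent contribution is missing from both your candidate group and your argument, and it is precisely where the several-variable $\tSL(2)$/Deligne-splitting machinery you allude to actually gets used.
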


\noindent From Theorems \ref{T:R} and \ref{T:KP}, we see that any second choice of $(\tilde F_\tau',N_\tau')$ is necessarily of the form $(\tilde F_\tau',N_\tau') = gh \cdot(\tilde F_\tau,N_\tau) = (g\cdot \tilde F_\tau , h \cdot N_\tau)$ with $g \in M_{B(\tau)}(\bR)^+$ fixing $\tau$ point-wise, and $h \in L^0_\tau(\bR)$ fixing $\tilde F_\tau$; thus, $\psi^\circ(\tau) = [\tilde F_\tau,N_\tau] \in \Psi_D$ is well-defined.

Moreover, it is clear from the definitions that $\pi^\circ$ and $\phi^{\circ}_{\infty}$ factor through $\psi^\circ$; in particular, $\phi^{\circ}_{\infty}$ is well-defined.  We remark that by Proposition \ref{P:sorb}, we could also have defined $\phi^{\circ}_{\infty} (\tau) := G(\bR)^+\cdot \tilde{F}_{\tau}$.  We now obtain the commutative diagram \eqref{E:cd}.

%------------------------------------------------------------------------------
\subsection{Polarized relations} 
%------------------------------------------------------------------------------

Corollary \ref{C:3} suggests a refinement of the relations on $\bfD_D$, $\bfN_D$ and $\Psi_D$.
Given $[F,N] , [F',N'] \in \Psi_D$, we write $[F',N'] \preceq [F,N]$ if there exists a nilpotent cone $\s$ underlying a nilpotent orbit, a face $\s' \in \Gamma_\s$ and a choice of connected component $\tilde B(\s)^\circ$ such that $\psi^\circ (\s') = [F',N']$ and $\psi^\circ(\s) = [F,N]$.  It follows at once from the definition and Theorem \ref{T:2} that  
\[
  [F',N'] \ \preceq \ [F,N] \quad\hbox{implies}\quad [F',N'] \ \le \  [F,N] \,.
\]

\begin{remark} \label{R:trivp}
It also follows directly from the definition that the trivial relations (Remark \ref{R:trivr}) are all polarized.
\end{remark}

\noindent Given $\sigma$ as above, we shall also write $\cO' \preceq \cO$ if $\cO' = \phi^\circ_\infty(\s')$ and $\cO = \phi^\circ_\infty (\sigma)$, and $\cN' \preceq \cN$ when $\cN' = \pi^\circ(\s')$ and $\cN = \pi^\circ(\sigma)$.  As above, 
\[
  \cO' \ \preceq \ \cO \quad \hbox{implies}\quad \cO' \ \le \ \cO \,,
\]
and
\[
  \cN' \ \preceq \ \cN
  \quad\hbox{implies}\quad 
  \cN' \ \le \ \cN \,.
\]
\begin{definition}
On $\mathbf{\Delta}_D$, $\mathbf{N}_D$, or $\Psi_D$, we call $\preceq$ a \emph{polarized relation}, and say that the polarized relation is \emph{realized by $\s$}. More heuristically, one should think of the polarized relations $\preceq$ as \emph{those relations} $\leq$ \emph{which may be realized Hodge theoretically}.
\end{definition}
\noindent Unlike the usual relations on $\bfN_D$ resp. $\bfD_D$, the polarizable ones all ``come from $\Psi_D$'', a fact which shall be (together with Theorem \ref{T:4} and \S\ref{S:multvarSL2}) useful in the general Mumford-Tate domain case (where $\Psi_D$ is the more computationally accessible object).

%------------------------------------------------------------------------------
\subsection{Polarized relations and commuting horizontal $\tSL(2)$'s} 
%------------------------------------------------------------------------------
As we will now discuss, when identifying the polarized relations it suffices to consider those coming from commuting horizontal $\tSL(2)$'s (Theorem \ref{T:4}).

To every nilpotent orbit
\begin{equation} \label{E:no}
  (z_1,\ldots,z_\ell) \ \mapsto \ \exp( \sum_j z_j N_j ) F \ \in \ \check D
\end{equation}
Cattani, Kaplan and Schmid \cite{\CKSdeg} associate a canonical Lie group homomorphism
\begin{equation} \label{E:up}
   \upsilon : \underbrace{\tSL(2,\bC) \times \cdots \times \tSL(2,\bC)}_{\ell \hbox{ terms}} \ \to \ 
   G(\bC) 
\end{equation}
(which is defined over $\bR$).  The homomorphism $\upsilon$ determines a second nilpotent orbit
\begin{equation} \label{E:hat-no}
   (z_1,\ldots,z_\ell) \ \mapsto \ \exp(\sum_j z_j \hat N_j) \hat F 
   \ \in \ \check D 
\end{equation}
that asymptotically approximates the first.  The relationship between \eqref{E:up} and \eqref{E:hat-no} is that the $\hat N_j$ are the images $\upsilon_*\bn_j$ of nilnegative elements  in standard triples $\{ \bn_j^+ , \by_j , \bn_j\}$ spanning pair-wise commuting $\fsl(2)$s.  (See \S\ref{S:st} for the definitions of standard triples and nilnegative elements.)  Let 
\[
  \hat \sigma \ = \ \tspan_{\bR_{>0}}\{\hat N_1 , \ldots , \hat N_\ell\}
\]
be the associated nilpotent cone.  Note that the $\{\hat N_j\}$ depend on our choice of ordering of the $\{N_j\}$.  In particular, reindexing the $N_j$ may yield a different set of $\{\hat N_j\}$ and thus a different cone $\hat \s$.

\begin{theorem} \label{T:4}
If a polarized relation is realized by the cone $\s$, then it is also realized by a cone $\hat\s$.  That is, all polarized relations on $\bfD_D$, $\bfN_D$ and $\Psi_D$ may be realized by horizontal commuting $\tSL(2)$'s.
\end{theorem}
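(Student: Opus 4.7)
The plan is to apply the several-variable $\tSL(2)$-orbit theorem to the nilpotent orbit underlying $\s$ and to verify that the approximating commuting $\tSL(2)$-orbit, together with the face structure, induces the same map $\psi^\circ$ as $\s$ itself does. Suppose $\psi^\circ_\s(\s') \preceq \psi^\circ_\s(\s)$ is the polarized relation realized by $\s = \bR_{>0}\langle N_1,\ldots,N_\ell\rangle$, with $\s'$ a face. Reorder the generators so that $\s' = \bR_{>0}\langle N_1,\ldots,N_r\rangle$. Applying the CKS construction in the Mumford-Tate setting of $\S$\ref{S:CKSMTD} to the nilpotent orbit $\exp(\sum z_j N_j)F$ yields the homomorphism $\upsilon$, commuting standard triples $\{\upsilon_*\bn_j^+,\upsilon_*\by_j,\upsilon_*\bn_j\}$ with nilnegative elements $\hat N_j := \upsilon_*\bn_j$, and a base flag $\hat F\in\check D$. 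Set $\hat\s := \bR_{>0}\langle\hat N_1,\ldots,\hat N_\ell\rangle$ and $\hat \s' := \bR_{>0}\langle\hat N_1,\ldots,\hat N_r\rangle$, which are faces with $\hat\s'\subset\hat\s$.

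The first step is to show $\psi^\circ_{\hat\s}(\hat\s) = \psi^\circ_\s(\s)$. By the $\tSL(2)$-orbit theorem the two orbits are asymptotically equivalent with matching limiting mixed Hodge structures; in particular their canonical $\bR$-splittings, obtained via the $\delta$-map of \cite{\CKSdeg}, coincide modulo the action of the subgroup $M_{B(\s)}(\bR)^+$ pointwise fixing $\s$. By Theorems \ref{T:R} and \ref{T:KP} the invariant $\psi^\circ$ is insensitive to this ambiguity, so the two images agree in $\Psi_D$.

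The second step, which I expect to be the main obstacle, is to prove the face-compatibility $\psi^\circ_{\hat\s}(\hat\s') = \psi^\circ_\s(\s')$. The key input is that the multivariable CKS construction is built inductively, taking successive one-variable limits starting from the last coordinate, and this process commutes with restriction to the face spanned by the first $r$ coordinates. More precisely, iterating $\lim_{z_\ell\to i\infty}\cdots\lim_{z_{r+1}\to i\infty}$ on the original nilpotent orbit produces the face nilpotent orbit with cone $\s'$, and the CKS data attached to that orbit is exactly the restriction $\upsilon' := \upsilon|_{\tSL(2,\bC)^r}$ (with base flag obtained from $\hat F$ via the prescription \eqref{E:ftinb} applied inside $\hat\s$). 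Consequently the approximating commuting $\tSL(2)$-orbit for $\s'$ has cone $\hat\s'$ and base flag matching, modulo the stabilizer ambiguity controlled by Theorems \ref{T:R} and \ref{T:KP}, the one defining $\psi^\circ_\s(\s')$. Verifying this inductive compatibility rigorously in the Mumford-Tate setting is the crux of the proof and will rely on tracking the limiting mixed Hodge structures produced at each inductive stage of the $\tSL(2)$-orbit theorem of $\S$\ref{S:CKSMTD}.

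Combining the two equalities shows that $\hat\s$ realizes the same polarized relation on $\Psi_D$ as $\s$. The corresponding statements on $\bfD_D$ and $\bfN_D$ then follow from the commutativity of the diagram \eqref{E:cd} and from Theorem \ref{T:1}.
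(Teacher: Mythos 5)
Your overall strategy --- run the CKS construction on the orbit underlying $\s$ and match the data face by face --- is the ``direct'' route, and it is viable in principle; but as written there is a genuine gap exactly where you flag ``the crux.'' The second step, $\psi^\circ_{\hat\s}(\hat\s') = \psi^\circ_\s(\s')$, is not a formal consequence of the inductive structure of \cite{\CKSdeg}: the $r$-variable orbit produced at stage $r$ of the induction is $\theta_r(z_1,\ldots,z_r)=\hat\theta_{r+1}(z_1,\ldots,z_r,i)$, whose base flag is the \emph{already split} flag $F_{r+1}$ evaluated at $z_{r+1}=i$, whereas $\psi^\circ_\s(\s')$ is defined from $\delta_{\s'}\bigl(\exp(i\sum_{j>r}N_j)F\bigr)$ built from the original $F$. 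Showing that these determine the same class in $\Psi_D$ is precisely the content of parts (v)--(vi) of \cite[Theorem 4.20]{\CKSdeg}, and your proposal defers rather than supplies that argument (``will rely on tracking the limiting mixed Hodge structures''). So the proof is incomplete at its central point.

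The paper sidesteps this difficulty. It first proves the statement at the level of naive limits: for each initial face $\tau_j$, the flags $\lim_{y\to\infty}\exp(\bi y\sN_j)F$ and $\lim_{y\to\infty}\exp(\bi y\hat\sN_j)\hat F$ lie in the same $G(\bR)^+$-orbit, which \emph{is} a direct consequence of parts (vii)--(viii) of \cite[Theorem 4.20]{\CKSdeg}, as extended to Mumford--Tate domains in \S\ref{S:CKSMTD}. It then invokes Theorem \ref{T:0} --- that $\phi_\infty:\Psi_D\to\bfD_D$ is a bijection, hence by Theorem \ref{T:1} an isomorphism of posets onto its image --- together with the commutativity of \eqref{E:cd}, to transfer the equality from $\bfD_D$ back to $\Psi_D$, and then pushes it down to $\bfN_D$. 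This use of Theorem \ref{T:0} is the missing idea in your write-up: it converts the delicate comparison of limit mixed Hodge structures into a comparison of boundary orbits, for which the required CKS statement is immediate. If you prefer to complete the direct argument instead, you must actually carry out the face compatibility via (v)--(vi) of \cite[Theorem 4.20]{\CKSdeg}; the paper's remark after the proof acknowledges this alternative but notes it is more complicated.
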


\noindent Theorem \ref{T:4} is proved in \S\ref{S:T4}. It immediately follows that all such relations are realized by $2$-variable $\tSL(2)$-orbits, by taking an appropriate slice of $\hat{\sigma}$.  

%------------------------------------------------------------------------------
\subsection{Classification of horizontal $\tSL(2)$--orbits}
%------------------------------------------------------------------------------

In order to use Theorem \ref{T:4} to study the polarized relations in $\bfN_D$ and $\bfD_D$ it is necessary to understand the cones $\hat\s$ that arise from two-variable $\tSL(2)$--orbits; that is, it is necessary to have a good understanding of the orbits in Cattani, Kaplan and Schmid's theorem.  The single-variable horizontal $\tSL_2(\bC)$--orbits of Schmid's theorem \cite{\schmid} are classified in \cite{SL2}; this classification is briefly reviewed in \S\ref{S:1varSL2}.  We then explain in \S\ref{S:multvarSL2} how to obtain a classification of the several-variable orbits by induction.

%------------------------------------------------------------------------------
\subsubsection{Classification of one--variable $\tSL(2)$--orbits} \label{S:1varSL2}
%------------------------------------------------------------------------------

Recall that the upper half-plane $\sH \subset \bC$ is homogeneous under the action of $\tSL(2,\bR)$.  There is a well--known equivalence 
\begin{equation}\label{E:wk}
  \tilde B_\bR(D) \quad \longleftrightarrow \quad
  \left\{\hbox{$\tSL_2(\bR)$--equivariant embeddings $\sH \inj D$}
  \right\}\,,
\end{equation}
\cf\cite{\CKpmhs, \CKSdeg, \CKextn, \schmid, MR1245714}; these embeddings are the the \emph{horizontal $\tSL(2)$--orbits} on $D$.  There is a natural action of $G(\bR)$ on the right-hand side of \eqref{E:wk}; let $\Upsilon_D$ denote the set of $G(\bR)$--conjugacy classes.  The equivalence \eqref{E:wk} is $G(\bR)$--equivariant, and we have a natural identification $\Psi_D \simeq \Upsilon_D$.  That is, $\Upsilon_D \simeq \Lambda_{\varphi,\ft}$.  Briefly, and recalling the notation of \S\S\ref{S:Psi}, \ref{S:st} \& \ref{S:hSL2}, a representative $\upsilon : \tSL(2,\bC) \to G(\bC)$ of the conjugacy class $[\upsilon] \in \Upsilon_D$ corresponding to $[\fl_\bR] \in \Lambda_{\varphi,\ft}$ may be described as follows.  (See \cite{SL2} for further discussion.)  First recall that $\upsilon$ is determined by the image of the differential $\upsilon_*$ at the identity $\one \in \tSL(2,\bC)$.  Second, as a linear map, the differential is determined by the image of the basis 
\[
  \bar{\mathbf{e}} \ := \ \half \left(\begin{array}{cc} -\bi & 1 \\
            1 & \bi \end{array}\right) \,,\quad
  \bz \ := \ \left(\begin{array}{cc} 0 & -\bi \\ 
           \bi & 0 \end{array}\right) \,,\quad
  \mathbf{e} \ := \ \half \left(\begin{array}{cc} \bi & 1 \\ 
            1 & -\bi \end{array}\right)
\]
of $\fsl(2,\bC)$.  The class $[\upsilon]\in\Psi_D$ corresponding to $[\fl_\bR]$ is given by 
\[
  \upsilon_*\{ \bar{\mathbf{e}},\bz,\mathbf{e}\} \ = \ 
  \{ \overline\sE  , \sZ , \sE \} \,,
\]
where the right-hand side is the DKS--triple of Lemma \ref{L:DKS}.

%------------------------------------------------------------------------------
\subsubsection{Classification of several-variable $\tSL(2)$--orbits} \label{S:multvarSL2}
%------------------------------------------------------------------------------

A simple inductive argument yields a classification of the several variable orbits.  (Note that we only need to classify the two-variable ones.) One proceeds as follows:  Fix a Hodge structure $\varphi$ and suppose that $\fa_1,\fa_2 \subset \fgR$ are two commuting $\fsl(2)$'s that are horizontal at $\varphi$.  The algebras $\fa_1$ and $\fa_2$ commute if and only if $\fa_2$ is contained in the trivial isotypic component $\tilde \Gamma \subset \fgR$ of $\fa_1$.  The trivial isotypic component $\tilde\Gamma$ is a reductive Lie algebra, and $\fa_2$ is contained in the semisimple factor $\Gamma = [\tilde\Gamma , \tilde\Gamma]$.  Moreover, $\Gamma$ inherits a polarized Hodge structure from $\fgR$ by $\Gamma^{p,-p}_\varphi := \fg^{p,-p}_\varphi \cap \Gamma_\bC$; the polarization on $\Gamma$ is just the restriction of that on $\fg$.  Clearly $\fa_2\subset\Gamma$ is horizontal with respect to this induced Hodge structure on $\Gamma$ if and only if $\fa_2\subset \fgR$ is horizontal with respect to the original Hodge structure $\varphi$ on $\fgR$.  To summarize, the inductive classification of an $n$--tuple $\{ \fa_1 , \ldots , \fa_n\}$ of commuting $\fsl(2)$'s that are  horizontal with respect to $\varphi$ proceeds as follows:
\begin{numlist}
\item 
Apply \cite{SL2} to classify all $\fsl(2)$'s $\fa_1 \subset \fgR$ that are horizontal with respect to the Hodge structure $\varphi$.
\item Let $\Gamma_1 := \Gamma\subset\fgR$ be the semisimple factor of the trivial isotypic component of $\fa_1$.  Apply \cite{SL2} to classify all $\fsl(2)$'s $\fa_2 \subset \Gamma_1$ that are horizontal with respect to the induced Hodge structure $\varphi_1$ on $\Gamma_1$.  At this point we have a classification (up to the adjoint action of $G_\bR$) of commuting, horizontal pairs $\{ \fa_1 , \fa_2\}$.
\item The inductive hypothesis: suppose $2\le k \le n$, $\{ \fa_1,\ldots,\fa_k\}$ is a $k$--tuple of commuting $\fsl(2)$'s.  Then $\fa_k \subset \Gamma_{k-1}$, where $\Gamma_{k-1}$ is (the semisimple factor of) the trivial isotypic component for the adjoint action of $\fa_{k-1}$ on $\Gamma_{k-2}$.  (Our convention is that $\Gamma_0 = \fgR$.)  As part of the inductive hypotheses we further suppose that $\Gamma^{p,-p}_{k-1} = \Gamma_{k-1,\bC} \cap \fg^{p,-p}_\varphi$ defines a polarized Hodge structure on $\Gamma_{k-1}$, where the polarization is the restriction of that on $\fgR$, and that $\fa_k$ is horizontal with respect to this Hodge structure.
\item The induction: let $\Gamma_k \subset \Gamma_{k-1}$ be the (semisimple factor of the) trivial isotypic component for the adjoint action of $\fa_k$ on $\Gamma_k$.  As above $\Gamma^{p,-p}_k = \Gamma_{k,\bC} \cap \fg^{p,-p}_\varphi$ defines a polarized Hodge structure on $\Gamma_k$.  Let $\fa_{k+1} \subset \Gamma_k$ be any $\fsl(2)$ that is horizontal with respect to this Hodge structure.  Then $\{ \mathfrak{a}_1 , \ldots , \mathfrak{a}_{k+1}\}$ is a $(k+1)$--tuple of commuting $\fsl(2)$'s that are horizontal with respect to the original Hodge structure $\varphi$.
\end{numlist}

\begin{remark}
Of course, in order for this algorithm to be useful it is necessary that we be able to compute $\Gamma\subset\fgR$.  This is generally straightforward in explicit examples (see \S\ref{S:G2}).  And it is in general understood how to determine at least the isomorphism class of $\Gamma$; see, for example, \cite[\S15]{MR926619}.
\end{remark}

\begin{remark}
The number $n$ of commuting horizontal $\fsl(2)$'s is bounded by the real rank of $\fgR$ \cite{BPR}.
\end{remark}

In order to give the $n$--variable nilpotent orbit $\exp(\sum z_k N_k) \cdot F$ corresponding to the tuple $\{\fa_1 , \ldots , \fa_n\}$, it suffices to describe the $N_k\in\fgR$ and $F \in \partial D\subset\check D$.  First, let $\upsilon_k : \tSL(2) \to G$ be the embedding of the horizontal $\tSL(2)$ with Lie algebra $\fa_k$.  Then 
\[
  N_k \ = \
  v_{k,\ast} \left( \begin{array}{cc} 0 & 0 \\ 1 & 0 \end{array} \right) 
  \ = \ \half \,v_{k,\ast} (\bar{\mathbf{e}} + \mathbf{e} - \bi\bz) \,.
\]
Set 
\[
  N_k^+ \ := \
  v_{k,\ast} \left( \begin{array}{cc} 0 & 1 \\ 0 & 0 \end{array} \right) 
  \ = \ \half\,v_{k,\ast} (\bar{\mathbf{e}} + \mathbf{e} + \bi\bz) \,,
\]
and $\varrho_k := \exp \,\bi\tfrac{\pi}{4}(N_k^+ + N_k)$.  Then
\[
  F \ = \ \varrho_n \cdots \varrho_1 \cdot \varphi \,.
\]

%------------------------------------------------------------------------------
\subsection{Proofs}
%------------------------------------------------------------------------------

%------------------------------------------------------------------------------
\subsubsection{Proof of Theorem \ref{T:2}} \label{S:T2}
%------------------------------------------------------------------------------

Let $\sigma$, $N_{\sigma} \in \sigma$, and $F_{\sigma}\in\tilde{B}_{\bR}(\sigma)^{\circ}$ be given; we may assume that $(F_{\sigma},N_{\sigma})$ arises from $\fl_{\sigma}\in\cL_{\varphi,\ft}$ via \eqref{E:FN}.  As in \S\ref{S:po}, write $\tilde{\fl}_{\sigma,\bC} = \oplus\fg^{p,p}_{(F_{\sigma},N_{\sigma})}$.  Note that $\check{D}_{\tilde{\fl}} := \tilde{L}(\bC)\cdot F_{\varphi} = \tilde{L}(\bC)\cdot F_{\sigma}$ is the compact dual of the ``real M-T domain'' $D_{\tilde{\fl}} = \tilde{L}(\bR)^+\cdot \varphi$, all $\varphi '$ in which factor through $\tilde{L}(\bR)^+$.

Next, choose $\tau\in\Gamma_{\sigma}$, $N_{\tau}\in\tau$, and define $F_{\tau}:=e^{-\mathbf{i}\delta}e^{\mathbf{i}\sum_{j\notin J_{\tau}} N_j} F_{\sigma} \in \tilde{B}_{\bR}(\tau)^{\circ}$ (notations as in \S\ref{S:ncs}).  Then $\psi^{\circ}(\tau)=[F_{\tau},N_{\tau}]$ and $\psi^{\circ}(\sigma)=[F_{\sigma},N_{\sigma}]$, and we want to show that $\psi^{\circ}(\tau)\leq \psi^{\circ}(\sigma)$.  Equivalently, if $\fl_{\tau}\in \cL_{\varphi,\ft}$ is a Levi representing $\psi^{\circ}(\tau)$, it suffices to show that $\fl_{\tau}\subset w\tilde{\fl}_{\sigma}$ for some $w\in\sW^0$.

We construct such an $\fl_{\tau}$ as follows.

First, observe that $\sigma\subset \fg^{(-1,-1)}_{(F_{\sigma},N_{\sigma})}\subset \tilde{\fl}_{\sigma}$, so that $N_{\tau}\in\tau\subset\overline{\sigma}\subset\tilde{\fl}_{\sigma,\bR}$.  Next, since $\bC\tau\subset\tilde{\fl}_{\sigma,\bC}$, we have $\tilde{F}_{\tau}:=e^{\mathbf{i}\sum_{j\notin J_{\tau}}N_j}F_{\sigma}\in\check{D}_{\tilde{\fl}}$, so that the mixed-Hodge representation $\tilde{\varphi}$ associated to $(\tilde{F}_{\tau},W(N_\tau))$ (cf. \cite[\S I.C]{MR2918237}) factors through $\tilde{L}(\bC)$.  The Deligne splitting element $\delta$ which produces $F_{\tau}= e^{-\mathbf{i}\delta}\tilde{F}_{\tau}$ commutes with all $(r,r)$ morphisms of $\bR$-MHS, not just of $\fg$ but of all tensor spaces $T^{a,b}\fg$.  Equivalently, $\delta$ kills all $(p,p)$ tensors, hence belongs to $\tilde{\fl}_{\sigma}$, and so $F_{\tau}$ remains in $\check{D}_{\tilde{\fl}}$.  Applying \S\ref{S:subH}(\textsc{b}) to $\tilde{\fl}_{\sigma}$ and $(F_{\tau},W(N_{\tau}))$, and setting $F_{\tau} ':=\tilde{\fl}^{\tss}_{\bC}\cap F_{\tau}$, $(F_{\tau} ',N_{\tau})$ defines a PMHS on $\tilde{\fl}^{\tss}_{\bR}$.

Let $\fl_{\tau} '\in \cL_{\varphi|_{\tilde{\fl}_{\sigma}},\ft '}$ represent the class of $[F_{\tau} ',N_{\tau}]\in\Psi_{D_{\tilde{\fl}}^{\tss}} \simeq \cL_{\varphi|_{\tilde{\fl}_{\sigma}},\ft '}/\sW^0_{\tilde{\fl}_{\sigma}}$.  In particular, $\fl_{\tau}'$ is a Levi subgroup of $\tilde{\fl}^{\tss}_{\sigma}$, and $\fl_{\tau}:=\fz_{\sigma}\oplus \fl_{\tau} '$ is an element of $\cL_{\varphi,\ft}$.  We claim that this element represents $[F_{\tau},N_{\tau}]$.

Consider the commutative diagram
\begin{equation*}
\begin{tikzcd}
   \Psi_{D_{\tilde{\fl}^{\tss}_{\sigma}}} \arrow[r] 
   & 
   \Psi_D \\
   \cL_{\varphi|_{\tilde{\fl}_{\sigma}},\ft '} \arrow[r,"\oplus\fz_{\sigma}"] \arrow[u,two heads,"/\sW^0_{\tilde{\fl}_{\sigma}}"] 
  &
  \cL_{\varphi,\ft}\, . \arrow[u,two heads,"/\sW^0"]
\end{tikzcd}
\end{equation*}
with top row induced by the inclusions (of $\check{D}_{\tilde{\fl}_{\sigma}^{\tss}}\subset \check{D}$ and $\tilde{\fl_{\sigma}^{\tss}}\subset \fg$).  This obviously sends $[F_{\tau}',N_{\tau}]\mapsto [F_{\tau},N_{\tau}]$, and the claim follows.  Since $\fl_{\tau}\subset \tilde{\fl}_{\sigma}$ by construction, we are done.

\subsubsection{Proof of Theorem \ref{T:4}} \label{S:T4}
%------------------------------------------------------------------------------

We will show that for every pair of faces $\tau \le \tau' \in \Gamma_\s$ there exists a homomorphism \eqref{E:up} and associated cone $\hat \s$ (depending on $\tau$ and $\tau'$) with faces $\hat \tau < \hat \tau' \in\Gamma_{\hat\s}$ so that 
\begin{equation}\label{E:T4a}
\begin{array}{rcl}
  \phi_\infty^\circ(\tau) & = & \hat\phi_\infty^\circ(\hat \tau) \,,\\
  \phi_\infty^\circ(\tau') & = & \hat\phi_\infty^\circ(\hat \tau') \,.
\end{array}
\end{equation}
This is precisely the assertion that every polarized relation on $\bfD_D$ may be realized by commuting horizontal $\tSL(2)$'s.  As $\phi_\infty : \Psi_D \to \bfD_D$ is a bijection (Theorem \ref{T:0}), it then follows from the commutativity of \eqref{E:cd} and the definition of ``$\preceq$'' that every polarized relation on $\Psi_D$ may be realized by commuting horizontal $\tSL(2)$'s.  Finally, invoking the commutativity of \eqref{E:cd} again, we conclude that every polarized relation on $\bfN_D$ may be realized by commuting, horizontal $\tSL(2)$'s.

Recall the notation preceding the statement of Theorem \ref{T:4}, and set 
\begin{eqnarray*}
  \tau_j & = & \tspan_{\bR_{>0}}\{ N_1 , \ldots , N_j \} \\
  \hat \tau_j & = & \tspan_{\bR_{>0}}\{\hat N_1 , \ldots , \hat N_j\} \,.
\end{eqnarray*}
Take $\tilde B(\s)^\circ$ and $\tilde B(\hat\s)^\circ$ to be the connected components containing $F$ and $\hat F$, respectively, and consider the corresponding maps $\phi_\infty^\circ$ defined on $\Gamma_\s$, and $\hat\phi^\circ_\infty$ defined on $\Gamma_{\hat\s}$.  To establish \eqref{E:T4a}, it clearly suffices to show that
\begin{equation} \label{E:T4b}
  \phi_\infty^\circ(\tau_j) \ = \ \hat\phi_\infty^\circ(\hat \tau_j) \, ,
\end{equation}
as our ordering of the $\{N_j\}$ is arbitrary.
%, \eqref{E:T4b} implies \eqref{E:T4a} and establishes the theorem.  

The assertion \eqref{E:T4b} is precisely the statement that 
\[
  \lim_{y\to\infty} \exp(\bi\,y\,\sN_j) F 
  \tand 
  \lim_{y\to\infty} \exp(\bi\,y\,\hat \sN_j) \hat F
\]
lie in the same $G(\bR)^+$--orbit $\cO_j \subset \overline D$ for some (and therefore every, by Theorem \ref{T:KP}) $\sN_j \in \tau_j$ and $\hat\sN_j \in \hat\tau_j$.  This is a direct consequence of Cattani, Kaplan and Schmid's \cite[Theorem 4.20.vii-viii]{\CKSdeg}, as extended to Mumford-Tate domains in \S\ref{S:CKSMTD}.  \hfill\qed

\begin{remark}
Parts (v)-(vi) [resp. (vi)] of \cite[Theorem 4.20]{\CKSdeg} may be used to give a direct (but more complicated) argument for the realization of polarized relations on $\Psi_D$ [resp. $\bfN_D$] by commuting horizontal $\tSL(2)$'s.
\end{remark}

%------------------------------------------------------------------------------
\section{$\mathrm{SL}(2)$-orbit theorem for MT domains} \label{S:CKSMTD}
%------------------------------------------------------------------------------

In this section, we prove the extension of the multivariable $\tSL(2)$-orbit theorem to Mumford-Tate domains, which is needed in the proof of Theorem \ref{T:4} above.  More precisely, we show that if $\theta$
is a nilpotent orbit which takes values in a Mumford--Tate domain $D_M$ with
Mumford--Tate group $M$ then all of the constructs of Theorem $(4.20)$ 
in~\cite{\CKS} can be done using analytic functions and representations
with values in $M(\bR)^+$ and filtrations in the compact dual $\check D_M$.
  
\subsection{Splittings} Let $V_{\bR}$ be a finite
dimensional $\mathbb R$-vector space and $(F,W)$ be a mixed Hodge
structure on $V_{\mathbb C}=V_{\mathbb R}\otimes\mathbb C$.  Let
$V_{\mathbb C} = \bigoplus_{p,q}\, I^{p,q}_{(F,W)}$ denote the associated 
Deligne bigrading of $(F,W)$ (cf. \cite[(2.14)]{\CKS}).  
By \cite[Prop. 2.20]{\CKS}, there exists a unique
real endomorphism $\delta$ of $V_{\mathbb C}$ such that 
$\delta(I^{p,q})\subseteq \bigoplus_{a<p,b<q}\, I^{a,b}_{(F,W)}$
and 
\beq      
          (\tilde F,W) = (e^{-\mathbf{i}\delta}F,W)            \label{eq:ds-1}
\eeq
is a mixed Hodge structure which is split over $\mathbb R$.  Moreover,
every $(r,r)$-morphism of $(F,W)$ commutes with $\delta$. 

\par Lemma 6.60 of~\cite{\CKS} gives a further construction of a real
endomorphism $\zeta$ of $V_{\mathbb C}$ in terms of universal Lie
polynomials involving the Hodge components of $\delta$ with respect to
$(F,W)$.  In particular $\zeta$ commutes with all $(r,r)$-morphisms of 
$(F,W)$.  The mixed Hodge structure 
\beq
           (\hat F,W) = (e^{\zeta}e^{-\mathbf{i}\delta}F,W)      \label{eq:ds-3}
\eeq
is called the $\mathfrak{sl}_2$ or {\it canonical} splitting of $(F,W)$.

\par Let $D$ be a (connected) classifying space of pure Hodge structures of weight 
$k$ on $V_{\mathbb C}$ which are polarized by $Q$, and $\check D$ be 
the corresponding compact dual. Let $G:=\mathrm{Aut}(V,Q)^{\circ}$ (algebraic identity
component), so that $G(K)\cong \mathrm{Sp}(V_K,Q)$ or $\mathrm{SO}(V_K,Q)$ for $K=\bR$ or $\bC$
(with Lie algebra $\fg_K$), and $D$ (resp. $\check{D}$) is a $G(\bR)^+$- (resp. $G(\bC)$-) orbit.  Recall from above:

\begin{definition}[$(1.14)$,\,\cite{\CKS}] A nilpotent orbit is a  
map $\theta:\mathbb C^n\to\check D$ of the form 
$
     \theta(z) = \exp(\sum_j\, z_j N_j)F
$
where $N_1,\dots,N_n$ are commuting nilpotent elements of 
$\mathfrak g_{\mathbb R}$ which are horizontal at $F\in\check D$
for which there exists $\alpha\in\mathbb R$ such that 
$\theta(z)\in D$ whenever ${\rm Im}(z_1),\dots,{\rm Im}(z_n)>\alpha$.
\end{definition} 

\par If $N$ is a nilpotent endomorphism of $V_{\mathbb C}$ we let
$W(N)$ denote the monodromy weight filtration of $N$ centered at
zero.  Let $\theta(z) = e^{zN}F$ be a 1-variable nilpotent orbit
and $W=W(N)[-k]$. Then, by Schmid's 1-variable ${\rm SL}_2$-orbit 
theorem~\cite{\Schmid}, $(F,W)$ is a mixed Hodge structure relative
to which $N$ is a $(-1,-1)$-morphism.  Let  $(\tilde F,W)$ denote 
the Deligne splitting \eqref{eq:ds-1} of $(F,W)$ and $\tilde Y$ 
denote the semisimple endomorphism of $V_{\mathbb R}$ which acts 
by multiplication by $p+q-k$ on $I^{p,q}_{(\tilde F,W)}$.

\begin{lemma}\label{lem:split-1} If $(N,F)$ determines a 1-variable nilpotent orbit
then $(N,\tilde Y)$ is an $\mathfrak{sl}_2$-pair with associated
triple $(N,\tilde Y,\tilde N^+)$.  Moreover, (i) $N$ commutes with 
$\delta$ and $\zeta$, (ii) $N$ is horizontal at $\tilde F$, 
(iii) $\delta$, $\zeta$, $\tilde Y$, $\tilde N^+\in\mathfrak g_{\mathbb R}$ and (iv)
\beq
        e^{\mathbf{i}yN}\tilde F = \exp(-(1/2)\log(y)\tilde Y)e^{\mathbf{i}N}\tilde F\in D
        \label{eq:sl2-1}
\eeq
for all $y>0$.
\end{lemma}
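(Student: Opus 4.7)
The plan is to derive each assertion essentially formally from the properties of the Deligne splitting $\tilde F = e^{-i\delta}F$ and the canonical splitting $\hat F = e^{\zeta}\tilde F$, combined with the hypothesis (provided by Schmid's 1-variable nilpotent orbit theorem) that $(F,W)$ is a PMHS relative to which $N$ is a $(-1,-1)$-morphism.

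I would start with (i) and (ii). By Prop.~2.20 and Lemma~6.60 of~\cite{\CKS}, both $\delta$ and $\zeta$ commute with every $(r,r)$-morphism of $(F,W)$, so $[N,\delta]=[N,\zeta]=0$. Part (ii) then follows from $[N,\delta]=0$ by the direct computation
\[
N\tilde F^{p} \;=\; N e^{-i\delta}F^{p} \;=\; e^{-i\delta}NF^{p}\;\subseteq\; e^{-i\delta}F^{p-1}\;=\;\tilde F^{p-1}.
\]
For the $\mathfrak{sl}_2$-structure, conjugation by $e^{-i\delta}$ carries the Deligne bigrading of $(F,W)$ to that of $(\tilde F,W)$, so $N$ still maps $I^{p,q}_{(\tilde F,W)}$ into $I^{p-1,q-1}_{(\tilde F,W)}$. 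Hence $[\tilde Y,N]=-2N$, giving the $\mathfrak{sl}_2$-pair; Jacobson--Morozov applied with the pre-specified $\tilde Y$ then supplies $\tilde N^+$ completing the triple. For (iii): $\delta$ is real by the construction recalled in~\eqref{eq:ds-1}, $\zeta$ is real by Lemma~6.60 of~\cite{\CKS}, and both lie in $\fgR$ because they annihilate the Hodge tensor $Q$; $\tilde Y$ lies in $\fgR$ because the $\bR$-splitting $\overline{I^{p,q}_{(\tilde F,W)}}=I^{q,p}_{(\tilde F,W)}$ is compatible with the symmetric eigenvalue $p+q-k$, and because $\tilde Y$ annihilates $Q$ by the identity $(p+q-k)+(p'+q'-k)=0$ on summands with $Q(I^{p,q},I^{p',q'})\neq 0$; finally $\tilde N^+\in\fgR$ follows from Jacobson--Morozov (or Mal'cev uniqueness) applied to the real pair $(N,\tilde Y)$.

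For (iv), the relation $\tad(\tilde Y)N=-2N$ gives
\[
\exp\bigl(-\tfrac12\log(y)\tilde Y\bigr)\,e^{iN}\,\exp\bigl(\tfrac12\log(y)\tilde Y\bigr)\;=\;e^{iyN},
\]
while $\tilde Y$ preserves each $\tilde F^p=\bigoplus_{a\ge p}I^{a,b}_{(\tilde F,W)}$, so $\exp(\tfrac12\log(y)\tilde Y)\cdot\tilde F = \tilde F$ in $\check D$; combining these yields the displayed identity. Membership of $e^{iyN}\tilde F$ in $D$ for every $y>0$ (not merely $y\gg 0$) is the standard consequence of Schmid's $\mathrm{SL}_2$-orbit theorem~\cite{\Schmid} applied to the $\bR$-split PMHS $(\tilde F,W)$: the associated horizontal embedding $z\mapsto e^{zN}\tilde F$ of $\sH$ into $D$ is valued in $D$ throughout. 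The main obstacle in this last step is knowing \emph{a priori} that $(\tilde F,W)$ is itself polarized so that the $\mathrm{SL}_2$-orbit theorem is applicable; this uses that $\delta\in\fgR$ preserves $Q$, together with the standard compatibility of the Deligne splitting with the primitive-polarization conditions $Q_\ell(\cdot,\cdot)=Q(\cdot,N^\ell\cdot)$ on $\mathrm{Gr}^W$.
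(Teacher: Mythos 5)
Your proof is correct and follows essentially the same route as the paper's, whose proof of this lemma consists entirely of citations to the relevant statements of Cattani--Kaplan--Schmid \cite{MR840721} ((2.20), (3.10), (3.11), (6.60) and Lemma 3.12); you have simply unwound those citations into the standard underlying arguments. The one step worth tightening is the passage from $[\tilde Y,N]=-2N$ to a triple: Jacobson--Morozov alone does not let you prescribe the neutral element, so you should invoke Morozov's lemma, which requires $\tilde Y\in\mathrm{im}(\mathrm{ad}\,N)$ --- and this holds here precisely because $\tilde Y$ grades the monodromy weight filtration $W(N)[-k]$, so the Lefschetz isomorphisms $N^{\ell}:\mathrm{Gr}_{k+\ell}\to\mathrm{Gr}_{k-\ell}$ apply.
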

\begin{proof} See \cite[(3.10)]{\CKS} for the statement that $(N,\tilde Y)$
is an $\mathfrak{sl}_2$-pair.  Since $N$ is a $(-1,-1)$-morphism of $(F,W)$ it
commutes with $\delta$ and $\zeta$.  See \cite[(3.11)]{\CKS} for the fact that 
$\delta$, $\tilde Y$ and $\tilde N^+$ belong to $\mathfrak g_{\mathbb R}$.  For the
statement that $\zeta\in\mathfrak g_{\mathbb R}$, see \cite[(6.60)]{\CKS}.
Equation \eqref{eq:sl2-1} is \cite[Lemma 3.12]{\CKS}.
\end{proof}

\begin{corollary}\label{cor:split-1} Let $(N,F)$ determine a nilpotent orbit, 
$(\hat F,W)$ denote the $\mathfrak{sl}_2$-splitting of $(F,W)$ and $\hat Y = {\rm Ad}(e^{\zeta})\tilde Y$,
$N^+ = {\rm Ad}(e^{\zeta})\tilde N^+$.
Since $\zeta\in\mathfrak g_{\mathbb R}$ and commutes with $N$, it follows 
that $(N,\hat Y,N^+)$ is an $\mathfrak{sl}_2$-triple, $\hat Y$, $N^+\in\mathfrak g_{\mathbb R}$, 
and $N$ is horizontal at $\hat F$.  Moreover, \eqref{eq:sl2-1} remains valid with 
$\tilde F$ replaced by $\hat F$ and $\tilde Y$ replaced by  $\hat Y$.  In particular, 
$\hat\theta(z) = e^{zN}\hat F$ is a nilpotent orbit which takes values in $D$ for 
${\rm Im}(z)>0$.
\end{corollary}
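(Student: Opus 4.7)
The plan is to derive every claim by transporting the corresponding statement of Lemma~\ref{lem:split-1} under $e^\zeta$, exploiting the two key facts $[\zeta,N]=0$ and $\zeta\in\fg_\bR$ (together with the definitions $\hat Y=\tAd(e^\zeta)\tilde Y$, $N^+=\tAd(e^\zeta)\tilde N^+$, and $\hat F=e^\zeta\tilde F$).

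First I would observe that since $\tAd(e^\zeta)$ is an inner automorphism of $\fg_\bC$ that fixes $N$ (because $[\zeta,N]=0$), and since $(N,\tilde Y,\tilde N^+)$ is an ${\rm sl}_2$-triple by Lemma~\ref{lem:split-1}, its image $(N,\hat Y,N^+)$ under this automorphism is also an ${\rm sl}_2$-triple. Reality of $\hat Y$ and $N^+$ is then immediate from $\tilde Y,\tilde N^+,\zeta\in\fg_\bR$. For horizontality of $N$ at $\hat F$, I would compute
\[
  N\,\hat F^p \;=\; N\,e^\zeta\,\tilde F^p \;=\; e^\zeta\bigl(\tAd(e^{-\zeta})N\bigr)\tilde F^p \;=\; e^\zeta N\,\tilde F^p \;\subseteq\; e^\zeta\,\tilde F^{p-1} \;=\; \hat F^{p-1},
\]
using $[\zeta,N]=0$ in the third equality and Lemma~\ref{lem:split-1}(ii) in the inclusion.

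For the analogue of \eqref{eq:sl2-1}, I would multiply that identity on the left by $e^\zeta$. On the left-hand side, $e^\zeta$ commutes with $e^{iyN}$ (since $[\zeta,N]=0$) to yield $e^{iyN}\hat F$. On the right-hand side, the identity $e^\zeta\exp(t\tilde Y)=\exp\bigl(t\,\tAd(e^\zeta)\tilde Y\bigr)\,e^\zeta=\exp(t\hat Y)\,e^\zeta$ (for any scalar $t$), together with the same commutation of $e^\zeta$ through $e^{iN}$, produces $\exp(-\tfrac12\log(y)\hat Y)\,e^{iN}\hat F$, which is the desired formula. Finally, since $\zeta\in\fg_\bR$ we have $e^\zeta\in G(\bR)^+$, and this group preserves the open orbit $D\subset\check D$; hence the containment $e^{iyN}\tilde F\in D$ for $y>0$ transfers to $e^{iyN}\hat F\in D$, showing that $\hat\theta(z)=e^{zN}\hat F$ takes values in $D$ for $\tIm(z)>0$ and therefore defines a nilpotent orbit.

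There is no serious obstacle here: the corollary is essentially the observation that $\zeta\in\fg_\bR$ centralizes $N$, which permits one to push every assertion of Lemma~\ref{lem:split-1} from $\tilde F$ to $\hat F$ by the single automorphism $\tAd(e^\zeta)$ (or left-multiplication by $e^\zeta$ on $\check D$). Each of the four assertions then reduces to a one-line algebraic manipulation using precisely these two facts.
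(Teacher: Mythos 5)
Your proof is correct and is exactly the argument the paper intends: the corollary's statement itself carries the proof ("Since $\zeta\in\mathfrak g_{\mathbb R}$ and commutes with $N$, it follows that\ldots"), namely transporting each assertion of Lemma~\ref{lem:split-1} by $\tAd(e^\zeta)$, resp.\ left multiplication by $e^\zeta\in G(\bR)^+$ on $\check D$. You have simply written out the one-line verifications that the paper leaves implicit.
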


\par Let $\{N_1,\dots,N_n\}$ be commuting nilpotent elements of $\mathfrak g_{\mathbb R}$.
For $j=1,\dots,n$ let $\mathcal C_j = \{\sum_{\ell=1}^j\, a_{\ell} N_{\ell} \mid a_1,\dots,a_j>0\,\}$.
By~\cite{\CKpmhs}, if $\{N_1,\dots,N_n\}$ underlie a nilpotent orbit then every element 
$N\in\mathcal C_j$ determines the same monodromy weight filtration $W^j = W(N)[-k]$. 

\begin{lemma}\label{lem:split-2} If $(N_1,\dots,N_n;F)$ determine a nilpotent orbit 
$\theta_n:\mathbb C^n\to\check D$ then $(F,W^n)$ is a mixed Hodge structure.
Define $(F_n,W^n)$ to be the $\mathfrak{sl}_2$-splitting of $(F,W^n)$.  Then,
$\hat\theta_n(\uz) = e^{\sum_{\ell=1}^n\, z_{\ell}N_{\ell}}F_n$ is a nilpotent orbit 
with values in $D$ for ${\rm Im}(z_1),...,{\rm Im}(z_n)>0$.
\end{lemma}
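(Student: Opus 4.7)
The plan proceeds in two stages: first establish that $(F,W^n)$ is a MHS, so that the $\mathfrak{sl}_2$-splitting $F_n=e^{\zeta}e^{-i\delta}F$ of \eqref{eq:ds-3} is well-defined; second, verify the two defining properties of a nilpotent orbit for $\hat\theta_n$, namely horizontality of each $N_\ell$ at $F_n$ and that $\hat\theta_n(\underline z)\in D$ whenever $\mathrm{Im}(z_\ell)>0$ for all $\ell$.

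For the first stage, pick any $N=\sum a_\ell N_\ell\in\mathcal C_n$ with $a_\ell>0$. The restriction of $\theta_n$ to the ray $z_\ell=a_\ell z$ is a one-variable nilpotent orbit $z\mapsto e^{zN}F$, so Schmid's one-variable theorem \cite{\Schmid} (as recalled in the setup to Lemma \ref{lem:split-1}) guarantees that $(F,W(N)[-k])=(F,W^n)$ is a MHS polarized by $N$. The Cattani--Kaplan result \cite{\CKpmhs} then shows that each individual $N_\ell$ is a $(-1,-1)$-morphism of $(F,W^n)$.

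For the second stage, the crucial point is that $\delta$ and $\zeta$ commute with every $(r,r)$-morphism of $(F,W^n)$, hence with each $N_\ell$. This immediately gives the factorization $\hat\theta_n(\underline z)=e^{\zeta}e^{-i\delta}\theta_n(\underline z)$, and horizontality of $N_\ell$ at $F_n$ follows from
\[
  N_\ell F_n^p \ = \ e^{\zeta}e^{-i\delta}N_\ell F^p \ \subset \ e^{\zeta}e^{-i\delta}F^{p-1} \ = \ F_n^{p-1}\,.
\]
To show $\hat\theta_n(\underline z)\in D$ for $\mathrm{Im}(z_\ell)>0$, the key observation is that $W(N)[-k]=W^n$ is the same for every $N\in\mathcal C_n$, and by construction $\delta$ and $\zeta$ depend only on the pair $(F,W^n)$; consequently $F_n$ is \emph{simultaneously} the $\mathfrak{sl}_2$-splitting of the one-variable MHS $(F,W(N)[-k])$ for every $N\in\mathcal C_n$. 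Applying Corollary \ref{cor:split-1} to each such $(N,F)$ then yields that $z\mapsto e^{zN}F_n$ takes values in $D$ whenever $\mathrm{Im}(z)>0$.

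To finish, write $z_\ell=x_\ell+iy_\ell$ with $y_\ell>0$ and set $N_y:=\sum_\ell y_\ell N_\ell\in\mathcal C_n$. Since the $N_\ell$ commute, we factor
\[
  e^{\sum_\ell z_\ell N_\ell}F_n \ = \ e^{\sum_\ell x_\ell N_\ell}\,e^{iN_y}F_n\,,
\]
and $G(\bR)^+$-invariance of $D$ combined with $e^{iN_y}F_n\in D$ (from the previous step) gives $\hat\theta_n(\underline z)\in D$. The main subtlety to verify carefully is that the canonical construction of $\delta$ and $\zeta$ via \cite[Prop.~2.20 and Lem.~6.60]{\CKS} depends only on the MHS $(F,W^n)$ and not on any choice of $N$ in the interior of the cone; once this is confirmed, the multivariable claim reduces cleanly to the already-established single-variable Corollary \ref{cor:split-1}.
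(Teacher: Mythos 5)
Your proof is correct and follows essentially the same route as the paper: reduce to the one-variable case by applying Corollary \ref{cor:split-1} to an arbitrary $N\in\mathcal C_n$, using that $W(N)[-k]=W^n$ and hence $F_n$ are independent of the choice of $N$ in the open cone. The paper states the final step ("as the choice of $N\in\mathcal C_n$ was arbitrary") without elaboration; your explicit factorization $e^{\sum_\ell z_\ell N_\ell}F_n=e^{\sum_\ell x_\ell N_\ell}e^{iN_y}F_n$ and the horizontality check via $[N_\ell,\delta]=[N_\ell,\zeta]=0$ are exactly the details being suppressed there.
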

\begin{proof} Let $N\in\mathcal C_n$.  Then, $(N,F)$ defines a 1-variable nilpotent orbit
and hence $(F,W^n)$ is a mixed Hodge structure.  By Corollary \ref{cor:split-1} it follows
that $\hat\theta(z) = e^{zN}F_n$ is a nilpotent orbit which takes values in $D$ for ${\rm Im}(z)>0$.
As the choice of $N\in\mathcal C_n$ was arbitrary, it follows that 
$\hat\theta_n$ is a nilpotent orbit with values in $D$ for ${\rm Im}(z_1)$,...,${\rm Im}(z_n)>0$.
\end{proof}

\par Given $\theta_n$ and $\hat\theta_n$ as in Lemma~\ref{lem:split-2}, set
$
      \theta_{n-1}(z_1,\dots,z_{n-1}) = \hat\theta_n(z_1,\dots,z_{n-1},\mathbf{i})
$.
Then, $\theta_{n-1}$ is a nilpotent orbit which takes values in $D$ for 
${\rm Im}(z_1)$,...,${\rm Im}(z_{n-1})>0$. Application of Lemma~\ref{lem:split-2} 
to $\theta_{n-1}$ produces a nilpotent orbit $\hat\theta_{n-1}$ with associated 
limit mixed Hodge structure $(F_{n-1},W^{n-1})$.  Iterating this process produces
nilpotent orbits 
\beq
       \theta_j(z_1,\dots,z_j) = \theta_{j+1}(z_1,\dots,z_j,\mathbf{i})   \label{eq:iterate}
\eeq
and
\beq     
     \hat\theta_j(z_1,\dots,z_j) = e^{\sum_{\ell=1}^j\, z_{\ell}N_{\ell}}F_{j}
     \label{eq:split-orbits}
\eeq
with values in $D$ for ${\rm Im}(z_1),...,{\rm Im}(z_j)>0$, terminating  
at the constant orbit $\theta_0 = \hat\theta_0 = \hat\theta_1(\mathbf{i}) = F_0\in D$.  
Let $W^0$ be the trivial filtration of weight $k$ on $H_{\mathbb C}$ and 
$Y_{(j)}$ denote the semisimple endomorphism which acts as multiplication 
by $p+q-k$ on $I^{p,q}_{(F_j,W^j)}$.   By  Corollary~\ref{cor:split-1}, 
$Y_{(j)}\in\mathfrak g_{\mathbb R}$. In~\cite{\CKS}, $F_j$ is denoted
$\tilde F_{\bf j}$.

\begin{theorem}\label{thm:ds-1} The elements $Y_{(0)},\dots,Y_{(n)}$ commute.  
Define $H_j = Y_{(j)} - Y_{(j-1)}$ for $j>0$ and let $N_j^0$ denote the 
projection of $N_j$ onto $\ker({\rm ad}\, Y_{(j-1)})$ with respect to the 
decomposition of $\mathfrak g_{\mathbb R}$ into the eigenspaces 
of ${\rm ad}\, Y_{(j-1)}$.
Then,
\beq
       (N_1^0,H_1),\dots,(N_n^0,H_n)                 \label{eq:ds-2}
\eeq
are commuting $\mathfrak{sl}_2$-pairs.  In particular, $N_1^0 = N_1$ since 
$Y_{(0)} = 0$.
\end{theorem}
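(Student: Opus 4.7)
The plan is to adapt the proof of \cite[Theorem 4.20(i)--(ii)]{\CKS} to the Mumford--Tate setting, checking at each step that the constructions stay in $\fg_\bR$ (the Lie algebra of the ambient Mumford--Tate group). The key Mumford--Tate observation is that the Deligne splitting $\delta$ of Lemma \ref{lem:split-1} and the $\fsl_2$-splitting $\zeta$ of Corollary \ref{cor:split-1} commute with every $(r,r)$-morphism of the mixed Hodge structure they split. Since the Hodge tensors cutting out $M$ are $(0,0)$-morphisms, every $\delta_j$, $\zeta_j$, and hence every $F_j \in \check D_M$ and every semisimple grading $Y_{(j)}$ lies in $\fg_\bR$.

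The first step is to prove commutativity of the $Y_{(j)}$. Since $Y_{(0)}=0$, it suffices to check $[Y_{(i)}, Y_{(j)}]=0$ for $i<j$, and by induction it suffices to verify $[Y_{(j-1)}, Y_{(j)}]=0$. The crucial structural input, due to Cattani--Kaplan \cite{\CKpmhs}, is that $W^j$ is the relative weight filtration of $N_j$ (or of any element of $\mathcal{C}_j\setminus\mathcal{C}_{j-1}$) with respect to $W^{j-1}$. Applying Deligne's compatibility theorem for relative weight filtrations to the two $\bR$-split mixed Hodge structures $(F_{j-1},W^{j-1})$ and $(F_j, W^j)$ produces a simultaneous bigrading which, at the level of semisimple grading elements, translates into $[Y_{(j-1)}, Y_{(j)}]=0$. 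The iterative construction \eqref{eq:iterate}--\eqref{eq:split-orbits} then propagates the commutativity to all pairs $(Y_{(i)}, Y_{(j)})$.

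Next, I would verify that each $(N_j^0, H_j)$ is an $\fsl_2$-pair. The commutativity of $\{Y_{(k)}\}$ yields a joint eigenspace decomposition of $\fg_\bR$, under which the projection $\pi_{j-1} : \fg_\bR \to \ker(\tad\,Y_{(j-1)})$ is $\tad\,Y_{(j)}$-equivariant. Since $N_j$ is a $(-1,-1)$-morphism of $(F_j, W^j)$, we have $[Y_{(j)}, N_j]=-2N_j$; applying $\pi_{j-1}$ gives $[Y_{(j)}, N_j^0]=-2N_j^0$, while $[Y_{(j-1)}, N_j^0]=0$ by construction. Hence $[H_j, N_j^0] = -2N_j^0$, and Jacobson--Morozov completes this to an $\fsl_2$-triple inside $\fg_\bR$. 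The assertion $N_1^0 = N_1$ is immediate from $Y_{(0)}=0$, which forces $\ker(\tad\,Y_{(0)}) = \fg_\bR$.

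The main obstacle will be establishing commutativity of distinct pairs, i.e.\ $[N_i^0, N_j^0] = [N_i^0, H_j] = [H_i, N_j^0] = 0$ for $i\ne j$. The bracket $[H_i, H_j]=0$ is automatic from commutativity of the $Y_{(k)}$. For the remaining identities, one argues as in \cite[\S4]{\CKS}: each $N_\ell$ is a $(-1,-1)$-morphism of $(F_\ell, W^\ell)$ and preserves every $W^k$ with $k\ge \ell$, and combined with the joint $\{Y_{(k)}\}$-eigenspace decomposition this forces the projections $\pi_k$ to commute in a manner compatible with the commuting family $\{N_\ell\}$. A direct bracket computation in the joint eigenspace decomposition then yields the required vanishing. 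Mumford--Tate compatibility is preserved throughout because every projection $\pi_k$ commutes with every Hodge tensor (each being a $(0,0)$-morphism on all relevant tensor spaces).
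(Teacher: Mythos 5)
Your proposal follows the same general route the paper takes (Deligne's algebraic approach via relative weight filtrations, rather than the analytic estimates of \cite{\CKS}), but the two load-bearing steps are asserted rather than proved. The main gap: the claim that $(N_j^0,H_j)$ is an ${\rm sl}_2$-pair does \emph{not} follow from $[H_j,N_j^0]=-2N_j^0$ together with Jacobson--Morozov. Jacobson--Morozov embeds a nilpotent element in \emph{some} standard triple; it does not let you prescribe the neutral element. To complete a given pair $(N,H)$ with $[H,N]=-2N$ to a standard triple one needs Morozov's lemma, which requires $H\in[N,\fg_\bR]$ — for instance $(0,H)$ with $H\neq 0$ satisfies your bracket identity but is not an ${\rm sl}_2$-pair. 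Showing that $H_j=Y^j-Y^{j-1}$ lies in the image of ${\rm ad}\,N_j^0$ is precisely the hard content of Deligne's theorem on Deligne systems (equivalently of \cite[Thm.~4.20]{\CKS}); the paper does not rederive it but cites Deligne's letter \cite{\DL} and the expositions in \cite{\BP} and \cite[\S 6]{BPR}. The same remark applies to the cross-commutativity $[N_i^0,N_j^0]=[N_i^0,H_j]=0$ for $i\neq j$: the ``direct bracket computation in the joint eigenspace decomposition'' you invoke does not exist in the form described. The statement that $N_n^0$ and $H_n$ commute with $N_1,\dots,N_{n-1}$ is again part of the cited theorem, and the full system of commuting pairs is then obtained by downward induction on the orbits $\theta_j$ of \eqref{eq:iterate}.

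A secondary problem is your treatment of $[Y_{(i)},Y_{(j)}]=0$: reducing ``by induction'' to the adjacent case $[Y_{(j-1)},Y_{(j)}]=0$ is a non sequitur, since pairwise commutativity of consecutive elements of a sequence does not imply commutativity of the whole family. The paper's logic runs the other way: Deligne's result gives $[Y^{n-1},Y^n]=0$ and the commuting pair $(N_n^0,H_n)$, the downward induction produces all the pairs \eqref{eq:ds-2}, and only then is the full commutativity of the $Y_{(j)}$ recovered from $Y_{(j)}=\sum_{\ell\le j}H_\ell$ (using $Y^0=k\mathbf{1}$) and the pairwise commutativity of the $H_\ell$. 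Finally, your Mumford--Tate bookkeeping (that $\delta$, $\zeta$, $Y_{(j)}$ commute with the Hodge tensors and so lie in $\fm_\bR$) is fine but belongs to Theorem \ref{thm:mt-rep}; the present statement is purely about $\fg_\bR$.
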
  
\begin{proof} The assertion that the elements $Y_{(0)},\dots,Y_{(n)}$ commute
is part of \cite[Thm. 4.20]{\CKS}. An alternative algebraic proof was 
sketched by Deligne in~\cite{\DL}.  More precisely, let $Y^j$ be the grading 
of $W^j$ which acts on $I^{p,q}_{(F_j,W^j)}$ as multiplication by $p+q$. By 
Deligne's results, $[Y^{n-1},Y^n]=0$.  Moreover $N_n^0$ and 
$H_n=Y^n-Y^{n-1} = Y_{(n)} -Y_{(n-1)}$ form an $\mathfrak{sl}_2$-pair which commutes 
with $N_1,\dots,N_{n-1}$.  Proceeding by downward induction gives the system 
of commuting $\mathfrak{sl}_2$-pairs \eqref{eq:ds-2}.  To recover the fact that 
$Y_{(0)},\dots,Y_{(n)}$ commute, observe that 
$Y_{(j)} = \sum_{\ell=1}^j H_{\ell}$ for $j>0$ since $Y^0 = k\mathbf{1}$.
\end{proof}

\begin{corollary}\label{cor:sl2-rep} $N_j^0$ commutes with $Y_{(\ell)}$ for $\ell<j$.
Moreover, we have the following dictionary with~\cite{\CKS}{\rm :}
$Y_{(j)}\leftrightarrow\hat Y_{\bf j}$, $H_j\leftrightarrow\hat Y_j$ and 
$N_j^0\leftrightarrow \hat N_j^-$. In particular, the $\mathfrak{sl}_2$-pairs 
\eqref{eq:ds-2} generate the representation $\rho:({\rm SL}_2)^n\to G(\bR)^+$ 
of \cite[Thm. 4.20]{\CKS}.
\end{corollary}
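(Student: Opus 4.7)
I would organize the argument around the three distinct assertions of the corollary. For the commutativity claim $[N_j^0,Y_{(\ell)}]=0$ when $\ell<j$, the case $\ell=j-1$ is immediate from the definition of $N_j^0$ as the projection of $N_j$ onto $\tker(\tad\,Y_{(j-1)})$. For the general case, I would invoke Theorem \ref{thm:ds-1}: the commuting $\fsl_2$-pairs property gives $[N_j^0,H_k]=0$ whenever $k\neq j$. Since $Y_{(\ell)}=\sum_{k=1}^{\ell} H_k$ by the identity $Y_{(\ell)}=\sum_{k=1}^{\ell}(Y_{(k)}-Y_{(k-1)})$ (using $Y^0=k\one$, so effectively $Y_{(0)}=0$ in the adjoint action), and every summand has $k\leq\ell<j$, each $H_k$ commutes with $N_j^0$, giving the result.

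The dictionary is essentially a matter of matching constructions. In~\cite{\CKS}, $\hat Y_{\mathbf j}$ is the grading associated to the $\fsl_2$-splitting of $(F,W^j)$ for the relevant sub-orbit, and $\hat Y_j=\hat Y_{\mathbf j}-\hat Y_{\mathbf{j-1}}$, with $\hat N_j^-$ the projection of $N_j$ onto $\tker(\tad\,\hat Y_{\mathbf{j-1}})$. The identifications $Y_{(j)}\leftrightarrow\hat Y_{\mathbf j}$, $H_j\leftrightarrow\hat Y_j$, and $N_j^0\leftrightarrow\hat N_j^-$ then follow once one verifies that the iteratively constructed $(F_j,W^j)$ of Lemma~\ref{lem:split-2} and equation~\eqref{eq:iterate} coincides with CKS's $\tilde F_{\mathbf j}$, which is the content of the iterated application of Lemma~\ref{lem:split-2} and Corollary~\ref{cor:split-1}. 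I expect this verification to be the main obstacle, since it requires tracking the compatibility between the $\fsl_2$-canonical splitting iterated in our Mumford--Tate setting and the corresponding construction in~\cite{\CKS}; the key input is that at every stage the endomorphisms $\delta$ and $\zeta$ entering the splitting lie in $\fg_\bR$ (Lemma~\ref{lem:split-1} and Corollary~\ref{cor:split-1}), ensuring we never leave $M(\bR)^+$.

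For the final assertion about $\rho$, I would complete each pair $(N_j^0,H_j)$ to a triple $(N_j^0,H_j,(N_j^0)^+)$ by Jacobson--Morozov; Theorem~\ref{thm:ds-1} together with the first claim of the corollary shows these triples pairwise commute. The resulting Lie algebra homomorphism $\fsl(2,\bR)^{\oplus n}\to\fg_\bR$ then exponentiates (using simple-connectedness of $\tSL_2$ and the fact that all of $N_j^0$, $H_j$, and $(N_j^0)^+$ lie in $\fg_\bR$) to a group homomorphism $\rho:\tSL(2)^n\to G(\bR)^+$. Via the dictionary above, this $\rho$ agrees with the representation of \cite[Thm.~4.20]{\CKS}, establishing that the latter may be taken to have values in the Mumford--Tate group, as needed for the application in the proof of Theorem~\ref{T:4}.
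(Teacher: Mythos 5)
Your proposal is correct and follows essentially the same route as the paper: the commutativity claim via $Y_{(\ell)}=\sum_{k\le\ell}H_k$ and the commuting $\mathfrak{sl}_2$-pairs of Theorem \ref{thm:ds-1}, and the dictionary via the identification $F_j=\tilde F_{\mathbf j}$ together with parts (ii)--(iii) and equation (4.18) of \cite[Thm.~4.20]{\CKS}. The only difference is emphasis: the identification $F_j=\tilde F_{\mathbf j}$ that you flag as the "main obstacle" is treated in the paper as notational, being built into the iterative construction \eqref{eq:iterate}--\eqref{eq:split-orbits}, so no further verification is needed there.
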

\begin{proof} By part $(ii)$ of \cite[Thm. 4.20]{\CKS}, $\hat Y_{\bf j} = Y_{(j)}$
since $\tilde F_{\bf j} = F_j$.  If $\ell<j$ then $[N_j^0,Y_{(\ell)}] =0$ since 
$Y_{(\ell)} = \sum_{r=1}^{\ell} H_r$ and $[N_j^0,H_r] = 0 $ for $r=1,\dots,\ell$.  
By part $(iii)$ of \cite[Thm. 4.20]{\CKS}, it follows that $\hat N_j^- = N_j^0$.  
By equation $(4.18)$ of~\cite{\CKS}, $\hat Y_{\bf r} = \sum_{j\leq r}\, \hat Y_j$ 
and hence $\hat Y_r = \hat Y_{\bf r}-\hat Y_{{\bf r}-1} = Y_{(r)} - Y_{(r-1)} = H_r$.
\end{proof}

\begin{remark}\label{rmk:ds-3} The proof that Deligne's construction gives the 
$\mathfrak{sl}_2$-splitting along nilpotent orbits appears in~\cite{\BP}.  A survey of 
Deligne's results appears in \S 6 of~\cite{BPR}. 
\end{remark}

\par Suppose now that $V_{\mathbb R}$ admits a rational form $V_{\mathbb Q}$
relative to which $Q$ is rational.  Let $D_M$ be a (connected) Mumford--Tate subdomain of 
$D$ with Mumford--Tate group $M$ and compact dual $\check D_M$.  By Proposition (VI.B.11) of~\cite{\GGK}, 
$D_M$ is a closed subset of $D$ in the analytic topology.  
For $K=\mathbb R$ or $\mathbb C$ let $\mathfrak m_K$ be the Lie algebra of $M_K$.
 
\begin{lemma}\label{lem:split-3} Suppose that $N\in\mathfrak m_{\mathbb R}$ and 
$F\in\check D_M$ determine a nilpotent orbit 
$\theta:\mathbb C\to\check D$ such that $\theta(z)\in D_M$ for ${\rm Im}(z)>\alpha$.  
Then {\rm (}cf.~Lemma~\ref{lem:split-1}, Corollary~\ref{cor:split-1}{\rm )}, $\delta$, $\zeta$, 
$\tilde Y$, $\hat Y$, $\tilde N^+$, $N^+\in\mathfrak m_{\mathbb R}$ and 
$\hat\theta(z) = e^{zN}\hat F$ is a nilpotent orbit which takes values in $D_M$ 
for ${\rm Im}(z)>0$.
\end{lemma}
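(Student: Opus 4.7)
The strategy is to invoke the naturality of Deligne's construction of $\delta$ to refine the $\mathfrak g_\bR$-membership of Lemma \ref{lem:split-1} and Corollary \ref{cor:split-1} to $\mathfrak m_\bR$-membership. Since $M$ is the Mumford--Tate group of the generic $\varphi\in D_M$, its Lie algebra $\mathfrak m$ equals the common annihilator in $\mathfrak g$ of all Hodge tensors for $\varphi$; thus it suffices to verify that $\delta$ annihilates every such Hodge tensor $t\in T=V^{\otimes a}\otimes(V^*)^{\otimes b}$.

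Fix such a $t$: by hypothesis $t$ is real, of pure Hodge type $(0,0)$ in each pure Hodge structure $\theta(z)$ with ${\rm Im}(z)>\alpha$, and annihilated by $\mathfrak m$ (in particular $N\cdot t=0$). Since $t\in(e^{zN}F)^0T=e^{zN}(F^0T)$ and $e^{-zN}t=t$, we get $t\in F^0T$; similarly $t\in\overline{F^0T}$ using the complex-conjugate filtration; and $t\in W_0T$ because $W(N)_0\supset\ker N$. Hence $\bR\cdot t\hookrightarrow(T,F,W)$ is a sub-MHS of pure type $(0,0)$ in the limit MHS on $T$. By naturality of Deligne's splitting (a consequence of the uniqueness in Prop. 2.20 of \cite{\CKS}), $\delta_T$ preserves this sub-MHS and restricts there to the splitting of the trivial MHS, which is $0$; hence $\delta_T(t)=0$. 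Moreover, by uniqueness applied to tensor and dual MHS (the sum $\delta_{V_1}\otimes\one+\one\otimes\delta_{V_2}$ satisfies the characterizing properties of $\delta_{V_1\otimes V_2}$, and dually for $V^*$), $\delta_T$ equals the derivation induced by $\delta\in\mathfrak g$ acting on $T$; therefore $\delta\cdot t=0$. Since $t$ was arbitrary, $\delta\in\mathfrak m_\bR$.

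The same reasoning applies to $\zeta$: it is built from $\delta$ via universal Lie polynomials in the Hodge components of $\delta$ (Lemma 6.60 of \cite{\CKS}), and these components lie in $\mathfrak m_\bC$ because $\mathfrak m_\bC$ is a sub-MHS of ${\rm End}(V_\bC)$ (which follows from $F\in\check D_M$ and $N\in\mathfrak m$), so $\zeta\in\mathfrak m_\bR$. The grading elements $\tilde Y,\hat Y$ act by $p+q-k$ on Deligne bigrading pieces of $(\tilde F,W),(\hat F,W)$ and so must kill every pure weight-$k$ sub-MHS like $\bR\cdot t$; the same functoriality argument then gives $\tilde Y,\hat Y\in\mathfrak m_\bR$. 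The nilpositive elements $\tilde N^+,N^+$ are uniquely determined in $\mathfrak g$ by the ${\rm sl}(2)$-triple relations with $(N,\tilde Y)$ resp.\ $(N,\hat Y)$; since $N,\tilde Y,\hat Y\in\mathfrak m_\bR$ and $\mathfrak m$ is a Lie subalgebra, $\tilde N^+,N^+\in\mathfrak m_\bR$.

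For the concluding claim, $\hat F=e^\zeta e^{-i\delta}F\in\check D_M$ since $\zeta,\delta\in\mathfrak m_\bR$ and $F\in\check D_M$; hence $\hat\theta(z)=e^{zN}\hat F\in\check D_M$ for all $z$, and $\hat\theta(z)\in D$ for ${\rm Im}(z)>0$ by Corollary \ref{cor:split-1}. So $\hat\theta$ maps the upper half-plane continuously into $\check D_M\cap D$, of which $D_M$ is a union of connected components (by Prop. VI.B.11 of \cite{\GGK}, $D_M$ is closed in $D$, and as an $M(\bR)^+$-orbit inside the open $\check D_M\cap D$ it is also open there). The connected image of $\hat\theta$ therefore lies in a single component, which by Schmid's asymptotic comparison of $\hat\theta$ with $\theta\in D_M$ for ${\rm Im}(z)\gg 0$ must be $D_M$. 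The main obstacle is the verification in the second paragraph that $t\in F^0T\cap\overline{F^0T}\cap W_0T$ in the limit MHS on $T$; once this is granted, the proof is largely formal, driven by the uniqueness and naturality of Deligne's splitting.
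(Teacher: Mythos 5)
Your first three paragraphs give a self-contained derivation of the memberships $\delta,\zeta,\tilde Y,\hat Y,\tilde N^+,N^+\in\mathfrak m_{\mathbb R}$, whereas the paper simply quotes these facts from \cite{\GGK} (Prop.\ IV.A.13) and \cite{\KP} (p.\ 682) and only supplies the implications ``Hodge components of $\delta$ in $\mathfrak m_{\mathbb C}$ $\Rightarrow$ $\zeta\in\mathfrak m_{\mathbb R}$ $\Rightarrow$ $\hat Y, N^+\in\mathfrak m_{\mathbb R}$.''  Your Hodge-tensor argument (a tensor $t$ fixed by $M$ satisfies $Nt=0$, $t\in F^0\cap\overline{F^0}\cap W_0$, hence spans a type-$(0,0)$ sub-MHS of the limit, hence is killed by the functorial $\delta$) is essentially the argument of the cited references and is sound.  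One caveat: for $\tilde N^+$, uniqueness of the completion of the pair $(N,\tilde Y)$ inside $\fg$ does not by itself place $\tilde N^+$ in $\fm$ --- you also need a completion to \emph{exist} inside $\fm$.  The cleanest repair in your framework is representation-theoretic: since $Nt=0$ and $\tilde Y t=0$, the tensor $t$ is a weight-zero vector killed by the lowering operator, hence generates a trivial $\fsl(2)$-summand of $T$ and is killed by $\tilde N^+$ as well; therefore $\tilde N^+\in\fm_{\bR}$, and $N^+=\mathrm{Ad}(e^{\zeta})\tilde N^+\in\fm_\bR$.

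The genuine gap is in the final step.  Your reduction is fine up to the point where $\hat\theta(\mathfrak H)$ lies in a single connected component $C$ of $\check D_M\cap D$ (this matches the paper's own remark after the lemma).  But the identification $C=D_M$ ``by Schmid's asymptotic comparison'' is not a proof.  Both $\theta(iy)$ and $\hat\theta(iy)$ escape to $\partial D$ as $y\to\infty$; the relation $\theta(iy)=g(y)\hat\theta(iy)$ with $g(y)\to 1$ does \emph{not} control the invariant distance $d(\theta(iy),\hat\theta(iy))$, because the metric distortion of a fixed $g\neq e$ is unbounded over the noncompact $D$; and even a distance estimate tending to zero would still require knowing that distinct components of $\check D_M\cap D$ stay at positive invariant distance.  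This component-identification is exactly what the paper's proof is built around: setting $e^{\xi}=e^{\zeta}e^{-i\delta}$ and $e^{\xi(y)}=\mathrm{Ad}(\exp(\tfrac12\log(y)\hat Y))e^{\xi}$, one has $\xi(y)\to 0$ and the identity $\exp(\tfrac12\log(y)\hat Y)\,\theta(iy)=e^{-\xi(y)}e^{iN}\hat F$.  The left-hand side lies in $D_M$ (this uses $\hat Y\in\fm_{\bR}$, which you have), while the right-hand side converges to the \emph{interior} point $e^{iN}\hat F\in D$; closedness of $D_M$ in $D$ then gives $e^{iN}\hat F\in D_M$, and $e^{iyN}\hat F=\exp(-\tfrac12\log(y)\hat Y)e^{iN}\hat F\in D_M$ for all $y>0$.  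You need this rescaling device (or an equivalent one, e.g.\ first showing from your Lie-algebra memberships and Theorem~\ref{thm:single-sl2}(c)--(d) that $g(y)\in M(\bR)^+$, so that $\hat\theta(iy)=g(y)^{-1}\theta(iy)\in M(\bR)^+\cdot D_M=D_M$) to pin down the component.
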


\begin{remark}
In general $\check{D}_M \cap D$ can have multiple (finitely many) connected components;
$D_M$ is, by definition, one of these. If we only assumed in Lemma \ref{lem:split-3} that $\theta(z)\in D_M$
for $\mathrm{Im}(z)>\alpha$, then the conclusion would be that $\hat{\theta}(z)$ takes values in one of these components (not
necessarily $D_M$) for $\mathrm{Im}(z)>0$.
\end{remark}

\begin{proof} An analytic proof that $\tilde Y$ and $\tilde N^+$ belong to $\mathfrak m_{\mathbb R}$ 
following the methods of~\cite{\Schmid} appears in Proposition (IV.A.13) of~\cite{\GGK}.  An 
algebraic proof that $\tilde Y$ and $\tilde N^+$ belong to $\mathfrak m_{\mathbb R}$ is given 
in~\cite{\KP}.  Let $W=W(N)[-k]$.  The fact that $\delta$ and its Hodge components relative 
to $(F,W)$ belong to $\mathfrak m_{\mathbb R}$ is stated on \cite[p. 682]{\KP}.  Since $\zeta$ 
is given by universal Lie polynomials in the Hodge components of $\delta$, 
it follows that  $\zeta\in\mathfrak m_{\mathbb R}$, and hence so do 
$\hat Y = {\rm Ad}(e^{\zeta})\tilde Y$ and $N^+ = {\rm Ad}(e^{\zeta})\tilde N^+$.

\par By Corollary~\ref{cor:split-1}, $\hat\theta$ is a nilpotent
orbit such that $\hat\theta(z)\in D$ for ${\rm Im}(z)>0$.  Let
$e^{\xi} = e^{\zeta}e^{-\mathbf{i}\delta}$ and define $\xi(y)$ by the formula 
$e^{\xi(y)} = {\rm Ad}(\exp((1/2)\log(y)\hat Y))e^{\xi}$. 
Then $\lim_{y\to\infty}\, \xi(y) = 0$ because $\hat Y$ is a grading
of $W$ and $\xi(W_{\ell})\subset W_{\ell-2}$ for each index $\ell$. 
Likewise, because $\hat Y$ preserves $\hat F$ and $[\hat Y,N] = -2N$
we have
$$
     \exp((1/2)\log(y)\hat Y)e^{\mathbf{i}yN}F = e^{-\xi(y)}e^{\mathbf{i}N}\hat F
$$
Accordingly, as the left hand side of this equation takes values
in $D_M$ whereas the right hand side limits to $e^{\mathbf{i}N}\hat F\in D$,
it follows that $e^{\mathbf{i}N}\hat F\in D_M$ since $D_M$ is a closed
subset of $D$.  Consequently, 
$
        e^{\mathbf{i}yN}\hat F = \exp(-(1/2)\log(y)\hat Y)e^{\mathbf{i}N}\hat F\in D_M
$
for $y>0$.
\end{proof}

\begin{theorem}\label{thm:mt-rep} Let $(N_1,\dots,N_n;F)$ define a 
nilpotent orbit $\theta_n:\mathbb C^n\to\check D$.  Suppose that 
$N_1,\dots,N_n\in\mathfrak m_{\mathbb R}$, $F\in\check D_M$ and there exists
$\alpha>0$ such that $\theta(z)$ takes values in $D_M$ for 
${\rm Im}(z_1),\dots,{\rm Im}(z_n)>\alpha$.  Then, the $\mathfrak{sl}_2$-pairs
\eqref{eq:ds-2} take values in $\mathfrak m_{\mathbb R}$ and hence the
representation $\rho$ of Corollary~\ref{cor:sl2-rep} takes values in 
$M_{\mathbb R}$.  Moreover the filtrations $F_1,\dots,F_n\in\check D_M$
and $F_0\in D_M$.
\end{theorem}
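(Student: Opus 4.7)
The plan is a downward induction on $j=n,n-1,\dots,0$, with the stage-$j$ hypothesis being that $F_j\in\check D_M$ and that the $\fsl_2$-split orbit $\hat\theta_j$ takes values in $D_M$ whenever $\mathrm{Im}(z_\ell)>0$ for $\ell=1,\dots,j$. Lemma~\ref{lem:split-3}, the $1$-variable Mumford-Tate version of the orbit theorem, will be the workhorse at each stage.

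For the base case $j=n$, I would fix an arbitrary $N\in\mathcal C_n$; by \cite{\CKpmhs} the weight filtration $W^n=W(N)[-k]$ is the same for every $N\in\mathcal C_n$, so the Deligne/$\fsl_2$ splitting operators $\delta$ and $\zeta$ built from the MHS $(F,W^n)$ depend only on that MHS, not on $N$. The pair $(N,F)$ defines a $1$-variable nilpotent orbit landing in $D_M$ for $\mathrm{Im}(z)>\alpha$, so Lemma~\ref{lem:split-3} puts $\delta,\zeta\in\fm_\bR$; hence $F_n=e^\zeta e^{-i\delta}F\in\check D_M$. To promote the lemma's $1$-variable conclusion $e^{zN}F_n\in D_M$ to the full multivariable $\hat\theta_n(\uz)\in D_M$, I would write
\[
  e^{\sum_\ell z_\ell N_\ell}F_n \;=\; e^{\sum_\ell x_\ell N_\ell}\cdot e^{iN'}F_n,\qquad N':=\sum_\ell y_\ell N_\ell\in\mathcal C_n,
\]
with $x_\ell=\mathrm{Re}(z_\ell)$ and $y_\ell=\mathrm{Im}(z_\ell)>0$. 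The inner factor is in $D_M$ by Lemma~\ref{lem:split-3} applied to $N'$, and the outer factor is in $M_\bR$, so the whole expression is in $D_M$.

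The inductive step repeats the same argument: given the conclusion at stage $j$, the truncation $\theta_{j-1}(z_1,\dots,z_{j-1}):=\hat\theta_j(z_1,\dots,z_{j-1},i)$ is a nilpotent orbit based at $e^{iN_j}F_j\in\check D_M$ and landing in $D_M$ for $\mathrm{Im}(z_\ell)>0$, so the base-case reasoning (with $n$ replaced by $j-1$) applies verbatim. Termination at $j=0$ gives $F_0=\hat\theta_1(i)\in D_M$ (the open domain, not merely its compact dual). Given $F_j\in\check D_M$ for every $j$, and because each $F_j$ is $\bR$-split, a further application of Lemma~\ref{lem:split-3} to $(N,F_j)$ for any $N\in\mathcal C_j$ (where now $\delta=\zeta=0$) places $Y_{(j)}\in\fm_\bR$, whence $H_j=Y_{(j)}-Y_{(j-1)}\in\fm_\bR$; and the projection $N_j^0$ of $N_j\in\fm_\bR$ onto $\ker(\mathrm{ad}\,Y_{(j-1)})$ remains in $\fm_\bR$ because $\mathrm{ad}\,Y_{(j-1)}$ preserves $\fm_\bR$. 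By Corollary~\ref{cor:sl2-rep} the $\fsl_2$-pairs \eqref{eq:ds-2} in $\fm_\bR$ generate the representation $\rho:(\mathrm{SL}_2)^n\to M_\bR$.

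The main technical hurdle is the bootstrap from $1$-variable to several-variable at each inductive stage: Lemma~\ref{lem:split-3} handles only a single nilpotent, while Lemma~\ref{lem:split-2} places $\hat\theta_j$ only in $D$. Landing specifically in the chosen component $D_M$ of $\check D_M\cap D$ (cf.\ the remark after Lemma~\ref{lem:split-3}) hinges on the Cattani-Kaplan constancy of $W^j$ on the open cone $\mathcal C_j$, so that the splittings are canonically determined by the MHS, combined with the real/imaginary decomposition trick above to reduce each multivariable statement to Lemma~\ref{lem:split-3} applied to a single $N'\in\mathcal C_j$.
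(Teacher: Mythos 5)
Your proposal is correct and follows essentially the same route as the paper: iterate the one-variable Mumford--Tate lemma (Lemma~\ref{lem:split-3}) down the chain of orbits $\hat\theta_j$ to get $Y_{(j)}\in\mathfrak m_{\mathbb R}$ and $F_j\in\check D_M$, then deduce $N_j^0\in\mathfrak m_{\mathbb R}$ because the $\mathrm{ad}\,Y_{(j-1)}$-eigenspace projection preserves $\mathfrak m_{\mathbb R}$. Your only departure is that you spell out the real/imaginary decomposition promoting the one-variable conclusion to the multivariable orbit $\hat\theta_j(\uz)\in D_M$, a step the paper leaves implicit in Lemma~\ref{lem:split-2} and the phrase ``iterating as in \eqref{eq:split-orbits}.''
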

\begin{proof} For any $N\in\mathcal C_n$, the pair $(N,F)$ determines
a nilpotent orbit $e^{zN}F$ with values in $D_M$ for ${\rm Im}(z)$
sufficiently large.  Therefore, $e^{zN}\hat F=e^{zN}F_n$ takes values 
in $D_M$ for ${\rm Im}(z)>0$ by Lemma~\ref{lem:split-3}, and hence
$\hat Y = Y_{(n)}\in\mathfrak m_{\mathbb R}$.  Iterating as in 
\eqref{eq:split-orbits} shows that each $Y_{(j)}\in\mathfrak m_{\mathbb R}$.
Likewise, since $N_j^0$ is the projection of $N_j\in\mathfrak m_{\mathbb R}$
to $\ker({\rm ad}\, Y_{(j-1)})$ with respect to the eigenspaces of 
${\rm ad}\,Y_{(j-1)}$, it follows that $N_j^0\in\mathfrak m_{\mathbb R}$.
Finally, since the orbits \eqref{eq:split-orbits} take values in $D_M$
and $N_1,\dots,N_n\in\mathfrak m_{\mathbb R}$ it follows that 
$F_j\in\check D_M$ and $F_0\in D_M$.
\end{proof}

\par Equation $(4.9)$ of~\cite{\CKS} defines a choice of reference Hodge 
structure on $\mathfrak{sl}_{2,\bC}$. Part (i) of \cite[Thm. 4.20]{\CKS} asserts 
that if $\rho$ is the representation attached to a nilpotent orbit 
$\theta_n$ as in Theorem \ref{thm:ds-1} and Corollary \ref{cor:sl2-rep}, then
\beq
          \rho_*:(\mathfrak{sl}_{2,\bC})^{\oplus n}\to \mathfrak g_{\mathbb C}
          \label{eq:hodge-rep}
\eeq
is morphism of Hodge structure of type $(0,0)$ when 
$\mathfrak g_{\mathbb C}$ is equipped with the Hodge structure
induced by $F_0\in D$.  In the setting of Theorem \ref{thm:mt-rep} above,
$\mathfrak m_{\mathbb C}$ is a Hodge substructure of 
$\mathfrak g_{\mathbb C}$ relative to $F_0\in D_M$.  Therefore, by
the strictness of morphisms of Hodge structures, it follows 
that $\rho_*$ defines a morphism of Hodge structure to 
$\mathfrak m_{\mathbb C}\subset\mathfrak g_{\mathbb C}$.

\par Part (i) of \cite[Thm. 4.20]{\CKS} also asserts that
$\tilde F_{\bf r} = (\Pi_{j=1}^r\, e^{-\mathbf{i}\hat N_j^-})e^{\mathbf{i}\hat N_1^-}\tilde F_{\bf 1}$.
In the setting of Theorem \ref{thm:mt-rep}, $\tilde F_{\bf r}$ and $\tilde F_{\bf 1}$ 
belong to $\check D_M$ and $e^{-\mathbf{i}\hat N_1^-},\dots,e^{-\mathbf{i}\hat N_r^-}\in M_{\mathbb C}$. 
Thus, in the setting of Theorem \ref{thm:mt-rep}, the constructs of parts (i) to (iii) 
of \cite[Thm. 4.20]{\CKS} only involve representations with values in $M$ 
and filtrations in $\check D_M$.  

\subsection{Univariate Orbits} Parts (iv) to (ix) of \cite[Thm. 4.20]{\CKS}
involve analytic functions with values in $G(\bR)^+$.  In this section, we
show that for 1-variable nilpotent orbits with values in $D_M$, these functions
take values in $M_{\mathbb R}$.

\par Let $(N,F)$ determine a nilpotent orbit $\theta:\mathbb C\to\check D$, and 
define $\hat F$, $\delta$, $\zeta$, $N^+$ and $\hat Y$ as in Corollary \ref{cor:split-1}; then we have
$F_b := e^{\mathbf{i}N}\hat F\in D$.  Let $G^0(\bR)$ denote the stabilizer of 
$F_b$ in $G(\bR)$, and $\mathfrak{g}^0_{\mathbb R}$ be the Lie algebra of 
$G^0(\bR)$.
Let 
$
       \mathfrak g_{\mathbb C} = \oplus_p\, \mathfrak g^{p,-p}
$
be the Hodge decomposition induced by $F_b$ on $\mathfrak g_{\mathbb C}$.  
As above, write $\fg^{0,0}=:\fg^0$.
Then $\mathfrak{g}^0_{\mathbb R} = \mathfrak{g}^{0}\cap\mathfrak g_{\mathbb R}$ 
and hence 
\beq
     \mathfrak{g}'= (\bigoplus_{p\neq 0}\, \mathfrak g^{p,-p})\cap\mathfrak g_{\mathbb R}  
     \label{eq:c-1}
\eeq
is an ${\rm Ad}(G^0(\bR))$-invariant vector space complement to $\mathfrak{g}^0_{\mathbb R}$
in $\mathfrak g_{\mathbb R}$.  Let $\nabla$ denote the associated connection on the 
principal bundle
\beq
      G^0(\bR)\to G(\bR)^+\to G(\bR)^+/G^0(\bR) \cong D   \label{eq:c-2}
\eeq

\par Suppose that $\theta(z)\in D$ for ${\rm Im}(z)>a$ and let $h:(a,\infty)\to G(\bR)^+$
be a lifting of $y\mapsto\theta(\mathbf{i}y)$ which is tangent to $\nabla$, i.e.
\begin{itemize}
\item[(i)] $\theta(\mathbf{i}y) = h(y) F_b$ for $y>a$;
\item[(ii)] $h^{-1}(y)h'(y)\in\mathfrak{g}'$. 
\end{itemize}
Set $W=W(N)[-k]$. Then, \S 3 and \S 6 of~\cite{\CKS} can be summarized as
the following version of the 1-variable ${\rm SL}_2$-Orbit theorem:

\begin{theorem}\label{thm:single-sl2} Let $(N,F)$ define a nilpotent
orbit $\theta:\mathbb C\to\check D$ such that $\theta(z)\in D$
for ${\rm Im}(z)>a$.  Then, there exists a real-analytic function 
$g:(a,\infty)\to G(\bR)^+$ such that 
\begin{itemize}
\item[(a)]  $\theta(\mathbf{i}y) = g(y)e^{\mathbf{i}yN}\hat F = g(y)\exp(-(1/2)\log(y)\hat Y)F_b$
            for $y>a$;
\item[(b)] $h(y) = g(y)\exp(-(1/2)\log(y)\hat Y)$ satisfies conditions $(i)$ and $(ii)$
            above;
\item[(c)] $g(y)$ and $g^{-1}(y)$ have convergent power series about $\infty$ of the form 
           \begin{equation*}
           \begin{gathered}
                g(y) = 1 + g_1 y^{-1} + g_2 y^{-2} + \cdots \\
                g^{-1}(y) = 1 + f_1 y^{-1} + f_2 y^{-2} + \cdots 
           \end{gathered}
           \end{equation*}
           with $g_k$, $f_k\in\ker({\rm ad}\, N)^{k+1}$;
\item[(d)] $$
               e^{\mathbf{i}\delta}e^{-\zeta} = 1 + \sum_{k>0}\, \frac{1}{k!}(-\mathbf{i})^k ({\rm ad}\, N)^k g_k
           $$ 
           Moreover, $g_k$ and $f_k$ can be expressed as universal Lie polynomials over
           $\mathbb Q(\sqrt{-1})$ in the Hodge components of $\delta$ with respect to 
           $(\hat F,W)$ and ${\rm ad}\, N^+$.
\end{itemize}
\end{theorem}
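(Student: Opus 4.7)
The plan is to follow the classical proof in Sections 3 and 6 of \cite{\CKS}, which itself reorganizes Schmid's original argument. The strategy has three parts: reduce to the $\mathrm{sl}_2$-split model orbit, convert horizontality into a singular ODE, and solve by a convergent formal series.

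The second equality in (a), $e^{iyN}\hat F = \exp(-\tfrac12\log(y)\hat Y)F_b$, is immediate from Lemma \ref{lem:split-1}(iv) applied to the $\mathrm{sl}_2$-split orbit $\hat\theta(z)=e^{zN}\hat F$ of Corollary \ref{cor:split-1}. For the first equality, $G(\bR)^+$-transitivity on $D$ gives a real-analytic $g:(a,\infty)\to G(\bR)^+$ with $\theta(iy)=g(y)\hat\theta(iy)$, uniquely normalized so that $h(y)=g(y)\exp(-\tfrac12\log(y)\hat Y)$ satisfies conditions (i)-(ii); uniqueness uses the direct-sum decomposition $\fg_\bR=\fg^{0}_\bR\oplus\fg'$ of \eqref{eq:c-1}.

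The main step is to analyze the resulting ODE. A short computation gives
\[
  h^{-1}h' \;=\; \mathrm{Ad}\bigl(\exp(\tfrac12\log y\,\hat Y)\bigr)(g^{-1}g') \,-\, \tfrac{1}{2y}\hat Y,
\]
and condition (ii), together with the horizontal lifting equation for $\theta(iy)$, translates (after using $[\hat Y,N]=-2N$ to absorb $\log y$ factors) into a nonlinear first-order ODE $g^{-1}g' = \Phi(g,g^{-1};y^{-1})$ with $\Phi$ polynomial in its arguments. Plugging in the ansatz $g(y)=1+\sum_{k\ge1}g_ky^{-k}$ and decomposing $\fg_\bC$ into $\mathrm{ad}\,\hat Y$-weight spaces yields a triangular recursion for the $g_k$: at each stage one inverts $\mathrm{ad}\,N^+$ on the highest-weight component and propagates down by $\mathrm{ad}\,N$. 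A straightforward induction then shows $g_k\in\ker(\mathrm{ad}\,N)^{k+1}$, because only Jordan blocks of length $\le k+1$ of the $\mathrm{sl}_2$-triple can contribute at the $k$-th step, and the $f_k$ arise from formally inverting the series.

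The hard part, and the heart of Schmid's original argument, is convergence of the formal power series. This is accomplished by a majorant estimate: read off weight-by-weight, the nonlinear recursion is dominated by a scalar Riccati-type equation whose solution is analytic on some half-line $(R,\infty)$, from which uniform bounds on $\|g_k\|$ follow. Finally, identity (d) is obtained by pulling $e^{-iyN}$ through $g(y)e^{iyN}\hat F = e^{iyN}e^{i\delta}e^{-\zeta}\hat F$: expanding $e^{-iyN}g_ke^{iyN}=\sum_{j\ge0}\tfrac{(-iy)^j}{j!}(\mathrm{ad}\,N)^jg_k$, the condition $g_k\in\ker(\mathrm{ad}\,N)^{k+1}$ forces each sum to terminate at $j=k$, and matching the coefficient of $y^{0}$ on both sides yields (d). The claim that $g_k$ and $f_k$ are universal Lie polynomials over $\bQ(\sqrt{-1})$ in the Hodge components of $\delta$ and $\mathrm{ad}\,N^+$ then follows from the recursive construction, since every step uses only iterated brackets of these quantities.
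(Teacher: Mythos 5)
The paper does not actually prove this theorem: it is presented verbatim as a repackaging of \S 3 and \S 6 of \cite{\CKS} (which in turn rest on \cite{\schmid}), so the paper's ``proof'' is a citation. Your sketch of the classical argument has the right architecture --- pass to the $\mathrm{sl}_2$-split orbit via Corollary \ref{cor:split-1}, define $g(y)$ by transitivity of $G(\bR)^+$ on $D$ and normalize $h(y)$ by condition (ii), turn horizontality into a first-order ODE, solve formally by an $\mathrm{ad}\,\hat Y$-weight-graded recursion, and obtain (d) by conjugating $g(y)$ by $e^{iyN}$ and letting $y\to\infty$.

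The genuine gap is the convergence step, which is not bookkeeping but the entire analytic content of the theorem. As written, the claim that the recursion ``is dominated by a scalar Riccati-type equation'' both misdescribes the argument and skips the point at which it could fail: the formal recursion does not by itself have a unique solution (at resonant weights the operator you invert has a kernel, and the general formal solution of the regular-singular ODE contains $\log y$ terms and extraneous powers), so there is nothing yet to majorize. What selects the correct branch and forces convergence is the a priori boundedness of $h(y)$ coming from the hypothesis that $\theta(iy)$ remains in the open orbit $D$ (via the distance-decreasing property of period maps); your sketch uses that hypothesis only once, to define $g(y)$ by transitivity, and never again --- yet it is exactly where Schmid's hard analysis lives. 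A secondary issue: your derivation of (d) only shows that $1+\sum_{k>0}\tfrac{(-i)^k}{k!}(\mathrm{ad}\,N)^k g_k$ and $e^{i\delta}e^{-\zeta}$ agree modulo the stabilizer of $\hat F$; to upgrade this to equality of group elements you must observe that both are unipotent of the form $1+(\text{strictly bigrading-lowering})$ and invoke uniqueness of such a factorization. For the purposes of this paper, the appropriate proof is the one the authors give: a citation to \cite{\CKS}.
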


\par Suppose now that $N\in\mathfrak m_{\mathbb R}$ and $F\in\check D_M$ determine a
nilpotent orbit $\theta:\mathbb C\to\check D$ such that $\theta(z)\in D_M$ for 
${\rm Im}(z)>a$.  Then, $F_b\in D_M$ by Lemma \ref{lem:split-3}, and $\delta$, 
$\zeta$, $\hat Y$, $N^+\in\mathfrak m_{\mathbb R}$.  Let 
$M^0(\bR)=G^0(\bR)\cap M(\bR)$ be the stabilizer of $F_b$ in
$M(\bR)$ with Lie algebra ${\mathfrak m}^0_{\mathbb R}$.  Then
$\mathfrak{m}^0_{\mathbb R} = \mathfrak g^{0}\cap\mathfrak m_{\mathbb R}$ and 
$\mathfrak{m}_{\bR} ' = \mathfrak{g}'\cap\mathfrak m_{\mathbb R}$ is an 
${\rm Ad}(M^0(\bR))$-invariant complement to $\mathfrak{m}^0_{\bR}$
in $\mathfrak{m}_{\mathbb R}$.  We therefore obtain a corresponding 
connection $\nabla^{\mathfrak m}$ on the principal bundle
\beq
      M^0(\bR)\to M(\mathbb{R})^+\to M(\mathbb{R})^+ /M^0(\mathbb{R}) \cong D_M   \label{eq:c-3}
\eeq

\par Let $g:(a,\infty)\to G(\bR)^+$ be the function constructed from $\theta$ by 
Theorem~\ref{thm:single-sl2}. By parts (c) and (d), it follows that $g(y)\in M(\mathbb{R})^+$.  
By part (b), it then follows that $h$ is an $M(\mathbb{R})^+$-valued function which satisfies 
conditions (i) and (ii).  Moreover, since $h$ takes values in $M(\mathbb{R})^+$, condition
(ii) implies that
$$
        h^{-1}(y)h'(y) \in \mathfrak{g}'\cap\mathfrak{m}_{\mathbb R} = \mathfrak{m}'
$$
Thus, $h$ is a lift of $\theta$ which is tangent to $\nabla^{\mathfrak m}$.  In summary, 
in the case of a 1-variable nilpotent orbit with values in $D_M$, the analytic functions 
$g$ and $h$ of the ${\rm SL}_2$-orbit theorem take values in  $M(\bR)^+$, the filtrations
belong to $\check D_M$, and all of the Lie algebra theoretic data takes values in 
$\mathfrak m_{\mathbb R}$.

\subsection{Several Variable Orbits}  Let $(N_1,\dots,N_n;F)$ determine a nilpotent orbit
$\theta_n:\mathbb C^n\to\check D$ such that $\theta(\uz)\in D$ if 
$\text{\rm Im}(z_1),\dots,\text{\rm Im}(z_n)>\beta$.  Fix $\alpha>\beta$ and set $c=\beta/\alpha$.
Then, given $\uy\in\mathbb R^n$ with coordinates $y_1,\dots,y_n>\alpha$, the map
\beq
       \theta_{n,\uy}(w) = \theta_n(wy_1,\dots,wy_n)           \label{eq:sev-1}
\eeq
is a nilpotent orbit such that $\theta_{n,\uy}(w)\in D$ if ${\rm Im}(w)>c$.  Let 
$g_{n,\uy}:(c,\infty)\to G(\bR)^+$ be the function attached to $\theta_{n,\uy}(w)$ 
by Theorem~\ref{thm:single-sl2} and observe that $c<1$.  Define
\beq
      g_{\bf n}(y_1,\dots,y_n) = g_{n,\uy}(1)\in G(\bR)^+,\qquad
      y_1,\dots,y_n>\alpha         \label{eq:sev-2}
\eeq
as on \cite[p. 496]{\CKS}.

\par To continue, we recall that for $1\leq j\leq n-1$ the orbits $\theta_j$
defined by \eqref{eq:iterate} take values in $D$ for 
${\rm Im}(z_1),\dots,{\rm Im}(z_j)>0$.  Given $\uy\in \bR^j$ with coordinates
$y_1,\dots,y_j$ let 
\beq
       \theta_{j,\uy}(w) = \theta_j(wy_1,\dots,wy_j)         \label{eq:sev-3}
\eeq
and $g_{j,\uy}:(0,\infty)\to G(\bR)^+$ be the function attached 
to $\theta_{j,\uy}(w)$ by Theorem~\ref{thm:single-sl2}.  Define 
\beq
     g_{\bf j}(y_1,\dots,y_j) = g_{j,\uy}(1)                \label{eq:sev-4}
\eeq
as in [loc. cit.].  Finally, define
$h_{\bf r}(y_1/y_r,\dots,y_{r-1}/y_r;y_r)$ (also as in [loc. cit.]) via the formula
\beq
      g_{\bf r}(y_1,\dots,y_r) 
       = h_{\bf r}(y_1/y_r,\dots,y_{r-1}/y_r;y_r)\exp((1/2)\log(y_r)Y_{(r)})
         \label{eq:sev-5}
\eeq
(recall $Y_{(r)}\leftrightarrow\hat Y_{\bf r}$ in our dictionary with~\cite{\CKS}). 
          
\par Suppose now that $N_1,\dots,N_n\in\mathfrak m_{\mathbb R}$ and $F\in\check D_M$
define a nilpotent orbit $\theta_n:\mathbb C^n\to\check D$ such that 
$\theta_n(\uz)\in D_M$ if ${\rm Im}(z_1),\dots,{\rm Im}(z_n)>\beta$.  Then, 
$\theta_{n,\uy}(w)$ takes values in $D_M$ for ${\rm Im}(w)>c$ and hence the
function $g_{\bf n}$ defined by \eqref{eq:sev-2} takes values in $M(\bR)^+$.
Likewise, $\theta_{j,\uy}(w)$ take values in $D_M$ for ${\rm Im}(w)>0$, and hence
the function $g_{\bf j}$ defined by \eqref{eq:sev-4} takes values in
$M(\bR)^+$. Finally, since $Y_{(1)},\dots, Y_{(n)}$ take values in 
$\mathfrak m_{\mathbb R}$ by Theorem~\ref{thm:mt-rep}, equation \eqref{eq:sev-5}
defines a function with values in $M(\bR)^+$.

\subsection{Supplements} Let $(N_1,\dots,N_n;F)$ 
determine a nilpotent orbit $\theta_n:\mathbb C^n\to\check D$ and $(H_j,N_j^0)$
denote the associated $\mathfrak{sl}_2$-pairs \eqref{eq:ds-2}. Recall that $H_j = Y^j - Y^{j-1}$
where $Y^0,\dots,Y^n$ commute and $Y^0 = k\mathbf{1}$ (proof of Theorem~\ref{thm:ds-1}).  
Given positive real numbers $y_1,\dots,y_n$ let $y_{n+1} = 1$ and $t_j = y_{j+1}/y_j$. 
Define $y_j^A = \exp(\log(y_j)A)$.  Then, a reindexing argument shows that  
\beq
            t(\uy) = \Pi_{j=1}^n\, t_j^{(1/2)Y^j} 
                 = y_1^{-(k/2)}\Pi_{j=1}^n\, y_j^{-H_j/2} . \label{eq:bp-ident}
\eeq
The function $t(\uy)$ appears in equations \cite[(6.1)]{\BP} 
and \cite[(1.12)]{\HP}.  The scalar factor $y_1^{-(k/2)}$ can be omitted when 
considering the adjoint action of $t(\uy)$ on $\mathfrak g_{\mathbb C}$ or the action of $t(\uy)$ 
on $\check D$.  In the notation of~\cite{\CK2}, $y_1^{k/2}t(\uy) = e^{-1}(t)$. 

\par Let $\Delta^n$ denote the unit polydisc with coordinates $(s_1,\dots,s_n)$ and $\Delta^{*n}$
be the complement of the divisor $s_1\cdots s_n=0$.  Let $(z_1,\dots,z_n)$ be Cartesian coordinates
on $\mathbb C^n$ and $z_j = x_j + \mathbf{i} y_j$.  Let $\mathfrak{H}^n$ be the product of upper half-planes defined
by $y_1,\dots,y_n>0$ and $I'\subset \mathfrak{H}^n$ be the set defined by $y_1\geq y_2\geq\cdots\geq y_n\geq 1$.
Let $\mathfrak{H}^n\to\Delta^{*n}$ be the covering map defined by $s_j = e^{2\pi \mathbf{i}z_j}$.  Recall that a period map 
$\Phi:\Delta^{*r}\to\Gamma\backslash D_M$ lifts to a holomorphic, horizontal map $\tilde{\Phi}:\mathfrak{H}^n\to D_M$.
Let $\mathfrak m_{\mathbb C} = \oplus_{p,q}\,\mathfrak m^{p,q}$ denote the Deligne bigrading of
$\mathfrak m_{\mathbb C}$ induced by the limit mixed Hodge structure of $\Phi$.

\begin{theorem}\label{thm:rel-compact} Let $\tilde{\Phi}:\mathfrak{H}^n\to D_M$ be a lift of a period map 
$\Phi:\Delta^{*n}\to\Gamma\backslash D_M$ with unipotent monodromy.  Let $t(\uy)$ be the 
function~\eqref{eq:bp-ident} attached to the nilpotent orbit $\theta(\uz) = e^{\sum_j\, z_j N_j}F_{\infty}$ 
of $\Phi$.  Then, the image of $I'$ under the map $\uz\in \mathfrak{H}^n \mapsto t(\uy)^{-1}e^{-\sum_j\, x_j N_j}\tilde{\Phi}(z)$ 
is a relatively compact subset of $D_M$.
\end{theorem}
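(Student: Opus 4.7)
The plan is to first establish the analogous relative compactness statement in the ambient classifying space $D$, which is the classical content of the several-variable asymptotic analysis of \cite{\CKS}, and then descend to $D_M$ using the enhancements of the $\tSL(2)$-orbit machinery proved in the previous subsections together with the fact that $D_M$ is analytically closed in $D$.

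\emph{Step 1.}  Apply the nilpotent orbit theorem to write
\[
  \tilde\Phi(\uz) \ = \ \texp\Bigl(\sum_j z_j N_j\Bigr)\,\texp(\Gamma(\ul s))\,F_\infty\,,
\]
where $s_j = \texp(2\pi\bi z_j)$ and $\Gamma : \Delta^n\to\fg_\bC$ is holomorphic with $\Gamma(0)=0$ (and takes values in a fixed complement to the stabilizer Lie algebra of $F_\infty$).  Since the $N_j$ commute,
\[
  t(\uy)^{-1}e^{-\sum_j x_j N_j}\tilde\Phi(\uz) \ = \ t(\uy)^{-1}\texp\Bigl(\bi\sum_j y_j N_j\Bigr)\texp(\Gamma(\ul s))F_\infty\,.
\]

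\emph{Step 2.}  The image of $I'$ under this map is relatively compact in $D$.  This is the content of the distance estimates underlying the several-variable $\tSL(2)$-orbit theorem; see \cite[\S5]{\CKS}, \cite[\S6]{\BP}, and \cite{\HP}.  It combines (a) the convergence of the ``pure'' piece $t(\uy)^{-1}\texp(\bi\sum_j y_j N_j)F_\infty$ along $I'$ as $y_j\to\infty$, which follows from iterating Theorem \ref{thm:single-sl2} in the inductive construction of $F_0,F_1,\dots,F_n$; with (b) a weight-filtration estimate showing that the conjugate $t(\uy)^{-1}\texp(\Gamma(\ul s))t(\uy)$ remains uniformly bounded on $I'$, using the decay $\Gamma(\ul s)\to 0$ as $\ul s\to 0$ together with the $\tad H_j$-eigenspace decomposition of $\fg_\bC$ (positive $H_j$-weights contribute ratios $y_{j+1}/y_j\leq 1$, the zero weights are unaffected, and negative weights are compensated by the exponential decay of $\Gamma$).

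\emph{Step 3.}  The image actually lies in $D_M$.  Since $(N_1,\dots,N_n;F_\infty)$ defines a nilpotent orbit with values in $D_M$, Theorem \ref{thm:mt-rep} provides $H_1,\dots,H_n\in\fm_\bR$, whence $t(\uy)\in M(\bR)^+$ (the scalar factor $y_1^{k/2}$ acts trivially on $\check D$).  Combined with $N_j\in\fm_\bR$ and $\tilde\Phi(\uz)\in D_M$, this places $t(\uy)^{-1}e^{-\sum_j x_j N_j}\tilde\Phi(\uz)$ in $M(\bR)^+\cdot D_M = D_M$ for every $\uz\in I'$.  By Proposition (VI.B.11) of \cite{\GGK}, $D_M$ is closed in $D$ in the analytic topology, so a relatively compact subset of $D$ contained in the closed set $D_M$ is relatively compact in $D_M$, completing the proof.

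The main obstacle is Step 2, which is the technical core of several-variable asymptotic Hodge theory.  Fortunately no additional work is required for the Mumford-Tate setting, because by the preceding subsections the analytic functions $g_{\mathbf j}, h_{\mathbf j}$ and Lie-algebra data $Y^j, H_j, N_j^0, N_j^+$ furnished by the $\tSL(2)$-orbit theorem automatically take values in $M(\bR)^+$ and $\fm_\bR$ respectively whenever the underlying nilpotent orbit takes values in $D_M$.
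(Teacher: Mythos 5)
Your argument is correct, but the endgame differs from the paper's. The paper does not pass through relative compactness in the ambient $D$: instead it records (i) that $y_1^{k/2}t(\uy)$ takes values in $M(\bR)^+$ and (ii) that the holomorphic function $\Gamma(\underline{s})$ in the local normal form $\psi(\underline{s})=e^{\Gamma(\underline{s})}F_{\infty}$ takes values in $\fq=\oplus_{p<0,q}\,\fm^{p,q}\subset\fm_{\mathbb C}$, and then observes that the proof of \cite[Thm.~4.7]{\CK2} (equivalently \cite[Lemma 7.1]{\BP}) runs verbatim with $G$ replaced by $M$, producing relative compactness in $D_M$ directly. You instead quote the classical statement for $D$ as a black box, verify that the image lies in $D_M$ (which is the same observation (i), obtained via Theorem \ref{thm:mt-rep}), and descend using the analytic closedness of $D_M$ in $D$ from \cite[(VI.B.11)]{\GGK}: the $D$-closure of the image is compact and, being contained in the closed set $D_M$, coincides with its $D_M$-closure. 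That topological step is sound, and for this statement alone your route is the more economical one --- it never needs (ii). What the paper's internal argument buys is the $M$-valued local normal form itself, which is exactly what gets reused in the very next theorem (the distance estimate), where relative compactness in $D$ would not suffice. One point worth making explicit in your Step 3: invoking Theorem \ref{thm:mt-rep} presupposes that the nilpotent orbit $\theta$ of $\Phi$ takes values in $D_M$ (not merely $D$) for $\mathrm{Im}(z_j)\gg 0$; this follows from $N_j\in\fm_{\mathbb R}$, $F_{\infty}\in\check D_M$ and the closedness of $D_M$ once more (as in the proof of Lemma \ref{lem:split-3}), and is the same unstated input the paper relies on, so it is an omitted step rather than a gap.
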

\begin{proof} For period maps into $D$, this is \cite[Thm. 4.7]{\CK2}.  The analog for period
maps of admissible variations of mixed Hodge structure is \cite[Thm. 7.1]{\BP}.  Let 
$\psi(\underline{s}) = e^{-\sum_j\, z_j N_j}\tilde{\Phi}(\uz)$ and $\mathfrak q = \oplus_{p<0,q}\,\mathfrak m^{p,q}$.
The hypothesis that $\Phi:\Delta^{*n}\to\Gamma\backslash D_M$ forces $y_1^{k/2}t(\uy)$ to take values 
in $M(\bR)^+$.  Moreover, we can write $\psi(\underline{s}) = e^{\Gamma(\underline{s})}F_{\infty}$ for a unique
$\mathfrak q$-valued holomorphic function $\Gamma(\underline{s})$ on a neighborhood of $\underline{s}=\underline{0}$ (cf. \cite[(2.5)]{\CK2}).  With these two observations in hand, the proof now follows verbatim from \cite[Thm. 4.7]{\CK2} or \cite[Lemma 7.1]{\BP}.
\end{proof}

\begin{theorem} In the setting of Theorem~\ref{thm:rel-compact} there exist constants $\alpha$, 
$\beta_1,\dots,\beta_n$ and $C$ such that if ${\rm Im}(z_1),\dots,{\rm Im}(z_n)>\alpha$ then 
$\theta(\uz)\in D_M$ and 
$$
          d(\tilde{\Phi}(\uz),\theta(\uz))< C\sum_{j=1}^n\, {\rm Im}(z_j)^{\beta_j}e^{-2\pi{\rm Im}(z_j)}
$$
where $d$ denotes the $M(\bR)^+$-invariant metric on $D_M$ induced by the Hodge metric.
\end{theorem}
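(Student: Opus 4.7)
The plan is to combine three ingredients already established: the unique expansion of $\tilde\Phi$ about its limit nilpotent orbit, the relative compactness furnished by Theorem~\ref{thm:rel-compact}, and the $M(\bR)^+$-version of the $\mathrm{SL}(2)$-orbit theorem from the preceding subsections. The underlying idea is classical (cf. \cite[Thm. 5.7]{\CK2}): write the difference $\tilde\Phi(\uz) - \theta(\uz)$ as the action of an exponentially small group element on $\theta(\uz)$, then use $M(\bR)^+$-invariance to translate everything into a compact piece of $D_M$ where norms compare to distances.

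First, following \cite[(2.5)]{\CK2}, one writes $\psi(\underline{s}) := e^{-\sum_j z_j N_j}\tilde{\Phi}(\uz) = e^{\Gamma(\underline{s})}F_\infty$, where $\Gamma$ is a holomorphic $\fq$-valued function on a polydisc about $\underline{s}=0$ with $\Gamma(0)=0$. By the proof of Theorem~\ref{thm:rel-compact} and the Mumford--Tate $\mathrm{SL}(2)$-orbit theorem of this section (in particular Theorem~\ref{thm:mt-rep}), $\Gamma$ takes values in $\fq \subset \fm_\bC$. The Cauchy estimates then give $\|\Gamma(\underline{s})\|_{F_\infty} \le C_0\sum_j|s_j| = C_0\sum_j e^{-2\pi y_j}$ on a smaller polydisc, which in particular forces $\theta(\uz)\in D_M$ and $\psi(\underline{s})\in D_M$ for all $y_j$ larger than some $\alpha_0$.

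Next, set $m(\uz) := t(\uy)^{-1}e^{-\sum_j x_j N_j}$; by \eqref{eq:bp-ident} and Theorem~\ref{thm:mt-rep}, $m(\uz)$ defines an element of $M(\bR)^+$ up to the scalar $y_1^{-k/2}$ (which acts trivially on $D_M$). Then
\begin{align*}
  m(\uz)\cdot\theta(\uz) &\;=\; t(\uy)^{-1}e^{i\sum_j y_j N_j}F_\infty,\\
  m(\uz)\cdot\tilde{\Phi}(\uz) &\;=\; \exp\!\big(\tAd(t(\uy)^{-1}e^{i\sum_j y_j N_j})\,\Gamma(\underline{s})\big)\cdot m(\uz)\cdot\theta(\uz).
\end{align*}
By the several-variable $\mathrm{SL}(2)$-orbit theorem in the $M$-setting, $m(\uz)\cdot\theta(\uz)$ stays in a fixed relatively compact subset $K\subset D_M$ as $\uz$ ranges over $I'$. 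Using the $M(\bR)^+$-invariance of $d$ and the standard comparison between $d$ and the Hodge norm on a compact neighborhood of $K$, we obtain a constant $C_1$ such that
\[
  d(\tilde{\Phi}(\uz),\theta(\uz)) \;\le\; C_1\,\|\xi(\uz)\|,
  \qquad
  \xi(\uz) \;:=\; \tAd\!\big(t(\uy)^{-1}e^{i\sum_j y_j N_j}\big)\Gamma(\underline{s}).
\]

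The remaining step is the polynomial estimate on $\xi(\uz)$. Expand $\Gamma(\underline{s}) = \sum_{\underline{m}\ne 0}\Gamma_{\underline{m}}\,\underline{s}^{\,\underline{m}}$ and decompose each $\Gamma_{\underline{m}}\in\fm_\bC$ into joint eigenspaces of $\{\tad H_1,\ldots,\tad H_n\}$. On each weight space, $\tAd(t(\uy)^{-1})$ scales by a monomial in the $y_j$ of fixed (bounded) degree, while $\tAd(e^{iy_jN_j^0})$ acts polynomially of bounded degree in $y_j$ by nilpotence (and the remaining non-$N^0$ parts of $N_j$ contribute polynomially as well, by Corollary~\ref{cor:sl2-rep}). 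Combining with $|\underline{s}^{\,\underline{m}}| = \prod_j e^{-2\pi y_j m_j}$, one obtains $\|\xi(\uz)\|\le C_2\sum_j y_j^{\beta_j}e^{-2\pi y_j}$ for $y_j$ beyond some $\alpha_1\ge\alpha_0$, since the higher-order multi-indices $\underline{m}$ are strictly exponentially smaller than the leading ones $\underline{m}=\mathbf{e}_j$. Taking $\alpha:=\alpha_1$ and $C:=C_1C_2$ finishes the proof.

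The main obstacle is controlling the interaction between the two sources of polynomial growth in $\tAd(t(\uy)^{-1}e^{i\sum y_jN_j})$: the ``horizontal'' scalings of $t(\uy)^{-1}$ along the weight spaces of the $H_j$ (which can include positive powers of the $y_j$) and the nilpotent polynomial growth from $e^{iy_j N_j}$. Verifying that the combined growth is uniformly polynomial on $I'$ and that every intermediate operator remains inside $\tAut(\fm_\bC)$ is exactly what the Mumford--Tate adaptation of parts (iv)--(ix) of \cite[Thm.~4.20]{\CKS} in \S\ref{S:CKSMTD} is designed to ensure.
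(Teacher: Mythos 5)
Your overall architecture is the right one, and it is the one the paper (implicitly, via its citation of \cite{\HP} and \cite{\CKS}) has in mind: conjugate by $m(\uz)=t(\uy)^{-1}e^{-\sum_j x_jN_j}$, which lies in $M(\bR)^+$ up to a harmless scalar; use Theorem~\ref{thm:rel-compact} to land in a fixed relatively compact subset of $D_M$; and convert the problem into an estimate on $\xi(\uz)=\tAd\bigl(t(\uy)^{-1}e^{i\sum_j y_jN_j}\bigr)\Gamma(\underline{s})$. The identity $m(\uz)\tilde\Phi(\uz)=e^{\xi(\uz)}\,m(\uz)\theta(\uz)$ and the reduction $d\le C_1\|\xi\|$ are correct, and your observation that every intermediate object lives in $M(\bR)^+$, $\fm_{\bR}$ or $\check D_M$ (so that the $M(\bR)^+$-invariant metric can be used) is exactly the new content this section is meant to supply.

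The gap is in the final estimate. Bounding $\|\xi(\uz)\|$ term by term in the power series of $\Gamma$ gives, for the multi-index with $m_j=1$ and all other entries zero, a contribution of the form $P(y_1,\dots,y_n)\,e^{-2\pi y_j}$ where $P$ is a polynomial in \emph{all} of the variables: $\tAd(t(\uy)^{-1})=\prod_i y_i^{\tad H_i/2}$ (up to the scalar) has eigenvalues $\prod_i y_i^{l_i/2}$ with some $l_i>0$, and nothing constrains the coefficient $\Gamma_{\underline m}$ to lie in eigenspaces where only $y_j$ appears. A term such as $y_1^{c}e^{-2\pi y_j}$ with $j\neq1$ is \emph{not} dominated by $\sum_i y_i^{\beta_i}e^{-2\pi y_i}$: fix $y_j$ and let $y_1\to\infty$, and the left side blows up while the right side stays bounded. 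So the difficulty is not whether the growth is ``uniformly polynomial,'' but that each polynomial factor $y_i^{c_i}$ must be paired with the exponential $e^{-2\pi y_i}$ in the \emph{same} variable. This is why the proofs in \cite[\S\S5--6]{\HP} and \cite{\CKS} do not estimate $\xi(\uz)$ all at once but telescope: the variables are replaced by their nilpotent-orbit approximations one at a time, and the \emph{one-variable} estimate in $z_j$ (with the other variables as parameters, and with constants made uniform by the relative compactness of Theorem~\ref{thm:rel-compact}) contributes only $y_j^{\beta_j}e^{-2\pi y_j}$ at the $j$-th step. That ``careful analysis of the 1-variable case'' is the missing ingredient; once it is supplied, your verification that all the data take values in $M$ transports the estimate to $D_M$. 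A second, minor omission: Theorem~\ref{thm:rel-compact} is stated only on the sector $I'$, so one must also permute the indices and cover $\{y_j>\alpha\}$ by the finitely many reordered sectors.
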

\begin{proof} This follows verbatim from the proofs in \S 5 and \S 6 of~\cite{\HP} since
$y_1^{k/2}t(\uy)$ takes values in $M(\bR)^+$ and $\psi(\underline{s})=e^{\Gamma(\underline{s})}F_{\infty}$
with $\Gamma(\underline{s})$ taking values in $\mathfrak q\subset\mathfrak m_{\mathbb C}$.  Alternatively,
one can revisit the proof given in~\cite{\CKS} for period maps into $D$.  The main point is to
use the fact that $M(\bR)^+$ acts transitively by isometries and a careful analysis of
the 1-variable case.
\end{proof}

\begin{theorem}\label{thm:field-def} Let $K$ be a subfield of $\mathbb R$, and assume
that $V_{\mathbb R}$ arises by extension of scalars from a finite dimensional $K$-vector space 
$V_K$ such that $Q:V_K\otimes V_K\to K$.  Let $\mathfrak g_K$ denote the Lie algebra of 
infinitesimal isometries of $Q$ over $K$, and suppose that in the setting of Theorem \ref{thm:ds-1}, 
$Y_{(n)}$ and $N_1,\dots,N_n\in\mathfrak g_K$.  Then, each of the $\mathfrak{sl}_2$-pairs 
\eqref{eq:ds-2} consists of elements of $\mathfrak g_K$.
\end{theorem}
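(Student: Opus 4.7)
The plan is to argue by downward induction on $j$, from $j=n$ down to $j=0$, that $Y_{(j)} \in \mathfrak g_K$. From this the $\mathrm{sl}_2$-pair claim follows immediately: $H_j = Y_{(j)} - Y_{(j-1)}$ is the subtraction of two $K$-rational elements, and $N_j^0$ is the projection of $N_j \in \mathfrak g_K$ onto the zero eigenspace of the $K$-rational semisimple operator $\mathrm{ad}\,Y_{(j-1)}$ (a projection onto the generalized $0$-eigenspace of a $K$-rational semisimple operator preserves $\mathfrak g_K$). The base case $j=n$ is the hypothesis, and the terminal case $j=0$ is trivial since $Y_{(0)}=0$.

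For the inductive step I would invoke the algebraic construction of the $\mathrm{sl}_2$-splittings along nilpotent orbits proved in \cite{\BP} and surveyed in \S 6 of \cite{BPR}; this is also the ingredient used in the proof of Theorem \ref{thm:ds-1}. Its essential content is that the tower of commuting $\mathrm{sl}_2$-pairs $(N_j^0,H_j)$ can be reconstructed from the inputs $(N_1,\ldots,N_n;Y_{(n)})$ through purely Lie-algebraic operations internal to $\mathfrak g$: relative weight filtrations \`a la Kashiwara, centralizers in $\mathfrak g$, eigenspace decompositions, and Jacobson--Morozov completions. Concretely, the passage from $Y_{(j)}$ to $Y_{(j-1)}$ proceeds by forming the centralizer $\mathfrak z_j := \mathfrak z_{\mathfrak g}(H_{j+1},\ldots,H_n)$ and identifying $(N_j^0,H_j)$ as the unique $\mathrm{sl}_2$-pair inside $\mathfrak z_j$ such that $H_j$ commutes with $N_1,\ldots,N_{j-1}$ and $N_j-N_j^0$ lies in the non-zero eigenspaces of $\mathrm{ad}\,H_j$. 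Since $\mathfrak z_j$, the weight filtration $W^{j-1}=W(N_1+\cdots+N_{j-1})[-k]$, and all other ingredients of the recipe are defined over $K$ by the inductive hypothesis, the entire procedure runs inside $\mathfrak g_K$ and yields $H_j,N_j^0\in\mathfrak g_K$, whence $Y_{(j-1)}=Y_{(j)}-H_j\in\mathfrak g_K$.

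The principal obstacle is the need for a uniqueness statement bridging the Hodge-theoretic and algebraic pictures. The definition of $Y_{(j)}$ in Theorem \ref{thm:ds-1} uses the filtrations $F_j$, which are not assumed $K$-rational; one must therefore verify that the $K$-rational output of the algebraic recipe coincides with the Hodge-theoretic $Y_{(j)}$. This is precisely the uniqueness implicit in Deligne's construction, as established in \cite{\BP}: any $\mathrm{sl}_2$-pair inside $\mathfrak z_j$ satisfying the commutation and congruence conditions above is uniquely determined. Given this uniqueness, the $K$-rational solution of the defining equations must coincide with the unique $\mathbb R$-rational solution given by Theorem \ref{thm:ds-1}, so the induction carries through.
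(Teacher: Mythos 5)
Your proposal is correct and follows essentially the same route as the paper: downward induction on $j$, using the $K$-rationality of the weight filtrations $W^j$ and the fact that Deligne's purely algebraic construction (the ``Deligne system'' formalism of \S 6 of \cite{BPR}, already invoked in the proof of Theorem \ref{thm:ds-1}) produces $Y^{j-1}$, hence $H_j$ and $Y_{(j-1)}$, over $K$, after which $N_j^0$ is the degree-zero eigencomponent of $N_j$ with respect to $\mathrm{ad}\,Y_{(j-1)}$ and so is also defined over $K$. The uniqueness point you raise is exactly what the paper relies on via Remark \ref{rmk:ds-3} and \cite{\BP}.
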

\begin{proof} Since $N_1+\dots+N_j\in \mathfrak{gl}(V_K)$ it follows that $W^j$
arises by extension of scalars from an increasing filtration of $V_K$.
The data $(W^{n-1},N_n,Y^n)$ is therefore a Deligne system as defined
in \S 6 of~\cite{BPR} where $Y^n = Y_{(n)} + k\mathbf{1}$.  
Consequently, $Y^{n-1}$ is defined over $K$, and hence so are 
$H_n = Y^n - Y^{n-1}$ and $Y_{(n-1)} = Y^{n-1} - k\mathbf{1}$.  
Accordingly $N_n^0 = $ degree zero eigencomponent of $N_n$ with
respect to ${\rm ad}\, Y_{(n-1)}$ is also defined over $K$.
Iterating this construction shows that each pair \eqref{eq:ds-2}
consists of elements of $\mathfrak g_K$.
\end{proof}

\begin{corollary} In the setting of Theorem \ref{thm:mt-rep}, if 
$Y_{(n)}$ and $N_1,\dots,N_n\in\mathfrak m_{\mathbb Q}$ then each
of the $\mathfrak{sl}_2$-pairs \eqref{eq:ds-2} consists of elements of 
$\mathfrak m_{\mathbb Q}$.
\end{corollary}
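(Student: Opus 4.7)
The plan is to deduce this corollary as a direct combination of Theorem~\ref{thm:field-def} and Theorem~\ref{thm:mt-rep}, with essentially no new work required beyond checking that the hypotheses of the former are met and that the two conclusions intersect in the desired way.

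First I would set $K = \mathbb{Q}$ and take $V_K = V_{\mathbb{Q}}$, the rational structure on $V_{\mathbb{R}}$ relative to which $Q$ is rational (which we already have, since we are in the Mumford--Tate setting). Then $\mathfrak{g}_{\mathbb{Q}}$ in the sense of Theorem~\ref{thm:field-def} is the $\mathbb{Q}$-Lie algebra of infinitesimal isometries of $Q$, and the hypothesis $Y_{(n)}, N_1, \dots, N_n \in \mathfrak{m}_{\mathbb{Q}} \subset \mathfrak{g}_{\mathbb{Q}}$ is immediate from the corollary's assumption. Applying Theorem~\ref{thm:field-def} directly yields that each of the $\mathfrak{sl}_2$-pairs \eqref{eq:ds-2} consists of elements of $\mathfrak{g}_{\mathbb{Q}}$.

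Next I would invoke Theorem~\ref{thm:mt-rep}, whose hypotheses are satisfied since $N_1,\dots,N_n \in \mathfrak{m}_{\mathbb{Q}} \subset \mathfrak{m}_{\mathbb{R}}$ and $F \in \check{D}_M$ (the setting of \ref{thm:mt-rep} being presumed). This tells us that the same $\mathfrak{sl}_2$-pairs \eqref{eq:ds-2} take values in $\mathfrak{m}_{\mathbb{R}}$.

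Finally, since $M$ is a Mumford--Tate group (hence defined over $\mathbb{Q}$), $\mathfrak{m}$ is a $\mathbb{Q}$-subalgebra of $\mathfrak{g}$, and therefore $\mathfrak{m}_{\mathbb{Q}} = \mathfrak{m}_{\mathbb{R}} \cap \mathfrak{g}_{\mathbb{Q}}$ as subsets of $\mathfrak{g}_{\mathbb{R}}$. Combining the two conclusions places each of $N_j^0$ and $H_j$ in this intersection, hence in $\mathfrak{m}_{\mathbb{Q}}$, which is exactly what we want. There is no serious obstacle here; the only thing worth flagging is the final intersection argument, which relies on $\mathfrak{m}$ being a $\mathbb{Q}$-structure compatible with that of $\mathfrak{g}$, and this is automatic from the Mumford--Tate hypothesis.
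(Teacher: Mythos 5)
Your proof is correct and is exactly the paper's argument, just written out in full: the paper's one-line proof ("the pairs belong to $\mathfrak m_{\mathbb R}\cap\mathfrak g_{\mathbb Q}$") is precisely your combination of Theorem~\ref{thm:field-def} with $K=\mathbb Q$, Theorem~\ref{thm:mt-rep}, and the fact that $\mathfrak m$ is a $\mathbb Q$-subalgebra so that $\mathfrak m_{\mathbb R}\cap\mathfrak g_{\mathbb Q}=\mathfrak m_{\mathbb Q}$.
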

\begin{proof} The pairs \eqref{eq:ds-2} belong to 
$\mathfrak m_{\mathbb R}\cap\mathfrak g_{\mathbb Q}$.
\end{proof}

%------------------------------------------------------------------------------
\section{Period domains} \label{S:pd}
%------------------------------------------------------------------------------

%------------------------------------------------------------------------------
\subsection{Period domains versus Mumford--Tate domains} \label{S:PDvMT}
%------------------------------------------------------------------------------

The next three sections focus on the computation of polarized relations on $\Psi_D$ in three special cases.  In this section, we consider the period domain parameterizing all Hodge structures on $V$ polarized by $Q$ with Hodge numbers $\mathbf{h}=\{ h^{p,n-p}\}_{0\leq p\leq n}$. (To avoid some pathologies, we shall assume in the even weight case $n=2m$ that both $h^{m,m}$, and some other $h^{p,2m-p}$ with $p$ odd, are nonzero.) In this case it turns out that the conjugacy classes and relations introduced above may be enumerated ``hieroglyphically'' by Hodge diamonds:

\begin{i_list}
\item The conjugacy classes of nilpotent elements are classified by partially signed Young diagrams, and a theorem of \Dokovic's characterizes the partial order in terms of these diagrams; see \cite[\S2]{BPR} and the references therein.
\item It is implicit in the work of Deligne, Cattani, Kaplan and Schmid (and the representation theoretic classification of (i)) that the conjugacy classes of horizontal $\mathrm{SL}(2)$s are classified by the possible Hodge diamonds (Proposition \ref{P:hd} below).
\item Together (ii) and Theorem \ref{T:4} yield a relatively simple test to determine when a relation $\le$ on these $\mathrm{SL}(2)$s is polarized (Theorem \ref{T:PDpr} below).
\end{i_list}

\noindent However, there is a caveat:  in even weight, we must coarsen the equivalence relation.  Recall that an even-weight period domain is a union of \emph{two} connected components $$\tilde{D} = D \amalg D'$$ on which the \emph{full} orthogonal group $\tilde{G}(\bR)=\mathrm{O}(V_{\bR},Q)$ acts transitively. Actually $G(\bR)=\mathrm{SO}(V_{\bR},Q)$ acts transitively, and $G$ is the generic Mumford-Tate group,\footnote{$\tilde{G}$ is not connected as an algebraic group!} but (i)-(iii) only pertain to conjugacy classes for $\tilde{G}(\bR)$, as we shall see.  This is all in contrast to the general M-T domain setting treated in the rest of this paper, where our use of results from the literature (incl. \cite{KP2012, SL2}) require us to work with a connected domain, and with $\mathrm{SL}(2)$s, $\partial D$, and $\mathrm{Nilp}(\mathfrak{g}_{\bR})$ modulo the action of the identity connected component $G(\bR)^+$.

To define more precisely the the objects of study in this section, for even or odd weight, write $\tilde{G}:=\mathrm{Aut}(V,Q)$ with (algebraic) identity component $G:=\tilde{G}^{\circ}$, $\varphi: S^1 \to G(\bR)^+$ a Hodge structure on $V$ (with Hodge numbers $\mathbf{h}$) polarized by $Q$, and $$\check{D} =\tilde{G}(\bC).F_{\varphi} \supseteq \tilde{D} = \tilde{G}(\bR).\varphi \supseteq  D= G(\bR)^+ .\varphi .$$ (For odd weight, $\tilde{D}=D$, $\tilde{G}=G$, and $G(\bR)=G(\bR)^+=\tSp(r,\bR)$.) Denote by $\Psi_{\tilde{D}}$ the $G(\bR)^+$-conjugacy classes of $\bR$-split PMHS on $\tilde{D}$, and set $\pi (\Psi_{\tilde{D}})=:\bfN_{\tilde{D}}$, $\phi_{\infty}(\Psi_{\tilde{D}})=:\bfD_{\tilde{D}}$.  Then we may further quotient these objects by $\tilde{G}(\bR)/G(\bR)^+ \cong \mathbb{Z}/2\mathbb{Z} \times \mathbb{Z}/2\mathbb{Z}$ to define 

\begin{equation*}
\begin{tikzcd}
  & \overline{\Psi}_D \arrow[rd, two heads, "\phi_\infty"] \arrow[ld, two heads, "\pi"'] & \\
  \overline{\bfN}_D & & \overline{\bfD}_D .
\end{tikzcd}
\end{equation*}

When $n$ is odd, these are the same as the un-barred objects. For $n$ even, we have $\Psi_{\tilde{D}} = \Psi_D \amalg \Psi_{D'}$; and $G(\bR)/G(\bR)^+ \cong \mathbb{Z}/2\mathbb{Z}$ swaps $D$ and $D'$, giving an identification between $\Psi_D$ and $\Psi_{D'}$. So $\overline{\Psi}_D$ is the further quotient of $\Psi_D$  by the action of $\tilde{G}(\bR)/G(\bR)\cong \mathbb{Z}/2\mathbb{Z}$, and this quotient can indeed be nontrivial. We will encounter this phenomenon when $D$ admits a polarized mixed Hodge structure $(F,W(N))$ with the property that the $N$--strings are all of even length, and each length occurs with even multiplicity;\footnote{These PMHS correspond to the ``very even'' partitions in the classification \cite{BPR} of $\bfN$.} see Example \ref{eg:242}.

Before we turn to (ii) and (iii), here is a first glimpse of why $\tilde{G}(\bR)$-conjugacy classes are the natural object when $n$ is even. While $\phi_{\infty}$ gives bijections $\Psi_D \to \bfD_D$ and $\Psi_{D'} \to \bfD_{D'}$, the fact that $\bfD_{\tilde{D}} = \bfD_D \cup \bfD_{D'}$ need \emph{not} be a disjoint union means that $\Psi_{\tilde{D}} \twoheadrightarrow \bfD_{\tilde{D}}$ need \emph{not} be a bijection, and the problem may not be resolved after quotienting by $G(\bR)/G(\bR)^+$ (for example, if one has a pair of boundary orbits exchanged by $g\in G(\bR)\backslash G(\bR)^+$). However, in view of Corollary \ref{cor:barequiv} below, $\overline{\Psi}_D \to \overline{\bfD}_D$ is always a bijection.

%------------------------------------------------------------------------------
\subsection{Hodge--Deligne numbers} \label{S:hd}
%------------------------------------------------------------------------------

Let $(F,W(N))$ be a polarized mixed Hodge structure on $D$, and let $V_\bC = \op I^{p,q}$ denote the Deligne bigrading.  The \emph{Deligne--Hodge numbers} of the PMHS are 
\[
  i^{p,q} \ := \ \tdim_\bC\,I^{p,q}\,.
\] 
It is sometimes convenient to view these dimensions as giving a function
\[
  \Diamond(F,N) : \bZ \times \bZ \ \to \ \bZ_{\ge0} 
  \quad\hbox{sending}\quad
  (p,q) \ \mapsto \ i^{p,q} \,.
\]
We call the function $\Diamond(F,N)$ the \emph{Hodge diamond} of $(F,N)$.  Recall that the $p$--th column of the Hodge diamond must sum to $h^{p,n-p}$; that is,
\begin{subequations}\label{SE:hd}
\begin{equation}
  \sum_q i^{p,q} \ = \ h^{p,n-p} \,.
\end{equation} 
The Hodge diamond is also symmetric about the lines $p=q$ and $p+q=n$: that is, 
\begin{equation} \label{E:hd2}
  i^{p,q} \ = \ i^{q,p} \ = \ i^{n-q,n-p}\,.
\end{equation}
Moreover, the Hodge--Deligne numbers must be nondecreasing as they approach the diagonal $p+q=n$ along a line $p-q = k$, with $-n\le k \le n$ fixed.  That is, 
\begin{equation}
  i^{p-1,q-1} \ \le \ i^{p,q} \quad\hbox{if}\quad
  p+q \,\le\, n \,.
\end{equation}
\end{subequations}
(By the symmetry \eqref{E:hd2} this implies $i^{p,q} \ge i^{p+1,q+1}$ if $p+q \ge n$.)  

The following proposition asserts that the possible Hodge diamonds enumerate the elements of $\overline{\Psi}_D$.

\begin{proposition} \label{P:hd}
Any function $f: \bZ \times \bZ \to \bZ_{\ge0}$ satisfying \eqref{SE:hd} may be realized as the Hodge diamond $\Diamond(F,N)$ of an $\bR$--split polarized mixed Hodge structure $(F,W(N))$ on the period domain $D$.  Moreover, $\Diamond(F_1,N_1) = \Diamond(F_2,N_2)$ if and only if $[F_1,N_1] = [F_2,N_2] \in \overline{\Psi}_D$.
\end{proposition}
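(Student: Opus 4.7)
The forward implication of the ``moreover'' clause is immediate, since the Deligne bigrading $V_\bC = \bigoplus I^{p,q}_{(F,N)}$ is functorially determined by the pair $(F, W(N)[-n])$, and therefore its dimensions are invariant under $\tilde{G}(\bR) = \tAut(V_\bR, Q)$.  The reverse implication and realizability I would treat together via the $N$-string decomposition of an $\bR$-split PMHS.  Recall that the primitive subspaces $P^{p,q} := \ker N \cap I^{p,q}$ (for $p+q\geq n$) carry pure polarized Hodge structures of weight $p+q$ under $Q_\ell(u,v) := Q(u, N^\ell v)$, $\ell := p+q-n$, and $V_\bR$ decomposes as a direct sum of $N$-strings $\bigoplus_{k=0}^\ell N^k P^{p,q}_\bR$.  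The primitive dimensions $p^{p,q} := i^{p,q} - i^{p+1,q+1}$, non-negative by \eqref{SE:hd}, together with Hodge symmetry encode $f$.

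For existence, given $f$ I would build $(V, Q, F, N)$ as a direct sum of standard $N$-strings.  For each $(p,q)$ with $p+q \geq n$, take a real vector space $P^{p,q}_\bR$ of the appropriate dimension equipped with a pure polarized Hodge structure of weight $p+q$ of Hodge type $(p,q) + (q,p)$ (nonempty by the homogeneity of pure period domains); form the free $N$-string $W^{p,q} := \bigoplus_{k=0}^{\ell} N^k P^{p,q}_\bR$ with $N$ the shift map; equip it with $Q(N^a x, N^b y) := (-1)^a Q_\ell(x,y)$ when $a+b=\ell$ and $0$ otherwise; and set $I^{p-k,q-k}_{(F,N)} := N^k P^{p,q}_\bC$.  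Then $\bigoplus_{p,q} W^{p,q}$ is an $\bR$-split PMHS realizing $\Diamond(F,N) = f$.

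For the reverse moreover, suppose $\Diamond(F_1,N_1) = \Diamond(F_2,N_2)$.  The real primitive pieces $P^{p,q}_{i,\bR}$ are polarized pure Hodge structures of equal weight, Hodge numbers, and polarization dimension, hence isomorphic as polarized Hodge structures:  the transitivity of $\tAut(V_\bR, Q_\ell)$ on the pure period domain yields Hodge isometries $g^{p,q}: P^{p,q}_{1,\bR} \to P^{p,q}_{2,\bR}$ intertwining $Q_{1,\ell}$ with $Q_{2,\ell}$.  I would extend via $g(N_1^k x) := N_2^k g^{p,q}(x)$ and sum over the string decomposition to obtain $g \in \tilde G(\bR)$, which by construction carries $(F_1, N_1)$ to $(F_2, N_2)$; hence $[F_1,N_1] = [F_2,N_2]$ in $\overline{\Psi}_D$.

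The principal obstacle I expect is the sign and normalization bookkeeping: arranging the string polarization so that the extended $g$ is truly a $Q$-isometry across each string, and verifying that the primitive isometries glue consistently.  A closely related subtlety explains why one must pass to $\overline{\Psi}_D$ rather than $\Psi_D$:  in even weight with ``very even'' partitions, the gluing $g$ may lie in a component of $\tilde G(\bR)$ outside $G(\bR)^+ \cong \tSO(V_\bR,Q)^+$, so the two PMHS are $\tilde G(\bR)$-conjugate but not $G(\bR)^+$-conjugate; this is precisely the ambiguity absorbed by the quotient $\tilde G(\bR)/G(\bR)^+$.
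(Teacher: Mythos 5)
Your argument is correct and follows the paper's proof in all essentials: both realize $f$ by assembling free $N$-strings on polarized primitive pieces, and both prove the converse by extending Hodge isometries of the primitive subspaces along the string decomposition (this is exactly the paper's Remark \ref{R:Aut(P,Q)} and its basis-matching construction of $g$), with the passage to $\overline{\Psi}_D$ absorbing precisely the component ambiguity you identify at the end. The one step you leave implicit is the identification of your abstractly built $(V',Q')$ with the given $(V,Q)$ --- automatic in odd weight, and in even weight guaranteed because the column sums of $f$ are the Hodge numbers $\mathbf{h}$, which determine the signature of any polarizing form; the paper handles the same point by conjugating its constructed form $Q_0$ to $Q$ inside $\tAut(V_\bR)$.
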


\noindent As demonstrated in Example \ref{eg:242} (see also the examples of \cite{SL2}), Proposition \ref{P:hd} is in general \emph{false} for $\Psi_D$.  The proof of the proposition makes use of the notion of primitive subspaces; these are introduced in \S\ref{S:prim}, and the proof is given on page \pageref{prf:hd}.

\begin{corollary}\label{cor:barequiv}
$\phi_{\infty}$ induces a bijection from $\overline{\Psi}_D$ to $\overline{\bfD}_D$.
\end{corollary}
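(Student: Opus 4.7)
My proof plan combines Theorem \ref{T:0} with the Hodge-diamond classification of $\overline{\Psi}_D$ from Proposition \ref{P:hd}.  Since $\phi_\infty$ is $\tilde{G}(\bR)$-equivariant by its definition as a pointwise limit, it descends to a well-defined map $\overline{\phi}_\infty \colon \overline{\Psi}_D \to \overline{\bfD}_D$, and it remains to prove bijectivity.  For surjectivity, every $\tilde{G}(\bR)$-orbit in $\overline{\tilde D}$ meets $\overline{D}$ (since $G(\bR)/G(\bR)^+$ swaps $D$ and $D'$), so after a $\tilde{G}(\bR)$-translation Theorem \ref{T:0} applied to $\Psi_D$ provides a preimage in $\Psi_D\subset\Psi_{\tilde D}$ whose $\overline{\Psi}_D$-class maps correctly.

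For injectivity, I would take two classes in $\overline{\Psi}_D$ sharing a common image in $\overline{\bfD}_D$ and lift to $[F_1,N_1], [F_2,N_2]\in\Psi_{\tilde D}$ whose $\phi_\infty$-images lie in a common $\tilde{G}(\bR)$-orbit of $\bfD_{\tilde D}$.  Using $\tilde{G}(\bR)$-equivariance to replace $(F_1,N_1)$ by a $\tilde{G}(\bR)$-translate, and then $G(\bR)^+$-equivariance to further translate, I reduce to $F_{\infty,1}=F_{\infty,2}=:F_\infty$ (neither operation changes the class in $\overline{\Psi}_D$).  The argument from the proof of Theorem \ref{T:0} then applies verbatim: by \cite[Lemma 3.2]{MR3331177} there is a unique bigrading $\fg_\bC = \bigoplus \fg^{p,q}_\infty$ attached to $(F_\infty,\ft)$, and the identity $\fg^{p,q}_{(F_i,N_i)} = \fg^{-q,-p}_\infty$ forces the two Deligne bigradings on $\fg_\bC$ to coincide.

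The main obstacle is translating this equality of $\fg$-bigradings into equality of Hodge diamonds on $V$.  For a period domain with fixed Hodge numbers $\bh$, the dimensions $\dim_\bC \fg^{p,q}_{(F,N)}$ for $\fg=\fsp(V,Q)$ or $\fso(V,Q)$ are bilinear expressions in the Deligne--Hodge numbers $\{i^{r,s}\}$ of the PMHS on $V$ (via the inclusion $\fg\hookrightarrow V\otimes V^*$ cut out by the polarization), and a direct combinatorial check shows that this assignment is invertible under the outer constraints $\sum_q i^{p,q} = h^{p,n-p}$ and the symmetries \eqref{SE:hd} of admissible Hodge diamonds.  Granting this representation-theoretic inversion, $\Diamond(F_1,N_1) = \Diamond(F_2,N_2)$, and Proposition \ref{P:hd} yields the desired equality $[F_1,N_1]=[F_2,N_2]$ in $\overline{\Psi}_D$.
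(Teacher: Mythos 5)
You take a genuinely different route from the paper, and it has a gap at its final step. For contrast: the paper's proof never passes through $\fg$. It defines a map $\mathtt{h}$ directly on boundary orbits by $\mathtt{h}_F(p,q)=\dim\bigl(F^p\cap\overline{F^q}\,/\,(F^p\cap\overline{F^{q+1}}+F^{p+1}\cap\overline{F^q})\bigr)$, observes that $(F,N)$ and $(\hat F,-N^+)$, with $\hat F=\phi_\infty(F,N)$, represent the \emph{same} class in $\overline{\Psi}_D$ (they differ by $\rho\bigl(\begin{smallmatrix}0&1\\-1&0\end{smallmatrix}\bigr)$), so that $\mathtt{h}\circ\phi_\infty=\Diamond$ is exactly the bijection of Proposition \ref{P:hd}; since $\phi_\infty$ is surjective by definition of $\overline{\bfD}_D$, bijectivity follows formally.

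The problem with your ending is twofold. First, the ``representation-theoretic inversion'' --- recovering $\Diamond(F,N)$ from the integers $\dim_\bC\fg^{p,q}_{(F,N)}$ --- is the crux of your injectivity argument, and you only assert it. In terms of the Laurent polynomial $P(x,y)=\sum i^{p,q}x^py^q$, it amounts to recovering $P$ from $\tfrac12\bigl(P(x,y)^2\pm P(x^2,y^2)\bigr)$ subject to the constraints \eqref{SE:hd}. The symplectic ($\tSym^2$) case does follow from a short leading-coefficient argument, but the orthogonal ($\bigwedge^2$) case is not formal: the same argument only yields that the leading coefficient of $P_1+P_2$ equals $1$, and excluding collisions then genuinely requires the symmetries and column-sum constraints. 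None of this is supplied. Second, the detour is unnecessary and discards information you already have: at that stage the two Deligne bigradings of $\fg_\bC$ coincide \emph{as subspaces}, not merely in dimension. From this you may either conclude $\tilde\fl_1=\tilde\fl_2$ and invoke Lemma \ref{L:tilde} exactly as in the proof of Theorem \ref{T:0}, or note that the grading elements $Y$ and $\ttE'$ are the \emph{unique} elements of the centerless algebra $\fg_\bC$ whose adjoint actions induce the given gradings, hence coincide for $i=1,2$; their joint eigenspaces in $V_\bC$ are the $I^{p,q}$, which gives $\Diamond(F_1,N_1)=\Diamond(F_2,N_2)$ directly. Either repair closes the gap; as written, the proof rests on an unproved combinatorial claim. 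You should also add a word on why the preliminary $\tilde G(\bR)$-translation --- which can carry a PMHS on $D$ to one on $D'$ --- is compatible with the machinery of \S\ref{S:Psi} and Lemma \ref{L:tilde}, all of which is set up for the connected component $D$ and the group $G(\bR)^+$.
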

\begin{proof}
Let $\mathbf{H}_D$ denote the set of Hodge diamonds of LMHS/naive limits; by Prop. \ref{P:hd}, these are just the functions $\Diamond = \{ i^{p,q}\}$ satisfying \eqref{SE:hd}.  Construct a map $\mathtt{h}: \overline{\bfD}_D \to \mathbf{H}_D$ by sending a representative flag $F$ on $V$ to the function 
\[
  \mathtt{h}_F (p,q)\ := \ \dim \left( 
  \frac{F^p \cap \overline{F^q}}{F^p \cap \overline{F^{q+1}} + F^{p+1}\cap \overline{F^q}} 
  \right)\,.
\] 
This is evidently well-defined.  If $(F,W)$ is $\bR$-split, then $\mathtt{h}_F$ is precisely its Hodge diamond.

Now given an $\bR$-split PMHS $(F,N)$ with corresponding $\mathfrak{sl}_2$-triple $(N,Y,N^+)$, homomorphism $\rho: \mathrm{SL}_2(\bR)\to G(\bR)^+$, and naive limit $\hat{F} := \lim_{y\to \infty} e^{iyN}F$, $(\hat{F},-N^+)$ is an $\bR$-split PMHS representing the same element of $\Psi_D$ (hence $\overline{\Psi}_D$).  Indeed, this is just the image of $(F,N)$ by $\rho\left( \left( \tiny \begin{matrix} 0 & 1 \\ -1 & 0 \end{matrix} \right) \right) .$ We therefore have $\Diamond(F,N) = \Diamond(\hat{F},-N^+)=\mathtt{h}_{\hat{F}} = (\mathtt{h}\circ \phi_{\infty})(F,N)$, which is to say that the composite $$\overline{\Psi}_D \overset{\phi_{\infty}}{\twoheadrightarrow} \overline{\bfD}_D \overset{\mathtt{h}}{\to} \mathbf{H}_D$$ yields the bijection $\Diamond$ of Prop. \ref{P:hd}.  This forces $\phi_{\infty}$ to be a bijection.
\end{proof}

We finish this subsection by illustrating Prop. \ref{P:hd}.  In all the examples that follow, $\pi: \overline{\Psi}_D\to\overline{\bfN}_D$ is seen to be an isomorphism (as the map from possible Hodge diamonds to partially signed Young diagrams is one-to-one).  For a situation where this is not the case, see Example \ref{eg:borel7}.  (Because this example is of odd weight, of course $\overline{\Psi}_D = \Psi_D$ and $\overline{\bfN}_D = \bfN_D$.)

\begin{example}[Curves of genus $g$] \label{eg:curve1}
Suppose that $\bh = (g,g)$.  The possible Hodge diamonds, which we denote $\mathrm{I}_a = \Diamond(F_a,N_a)$, are 
\begin{center}
\setlength{\unitlength}{20pt}
\begin{picture}(6.5,2)
\put(0,0){\vector(1,0){2}} \put(0,0){\vector(0,1){2}}
\put(0,0){\circle*{0.2}} \put(-0.35,0.2){\footnotesize{$a$}}
\put(1,0){\circle*{0.2}} \put(1.2,0.2){\footnotesize{$g-a$}}
\put(0,1){\circle*{0.2}} \put(-1.2,1.2){\footnotesize{$g-a$}}
\put(1,1){\circle*{0.2}} \put(1.2,1.2){\footnotesize{$a$}}
\put(4,0.5){$0\le a \le g$.}
\end{picture}
\end{center}
The partially signed Young diagram classifying the conjugacy class $\cN_a \in \bfN$ of $N_a$ is 
\begin{center}
\begin{small}
\setlength{\unitlength}{10pt}
\begin{picture}(10,9)
  \put(0,7){\scriptsize{\young(+-)}}
  \put(0.8,5.5){$\vdots$}
  \put(0,3){\scriptsize{\young(+-,\blank)}}
  \put(0.3,1.5){$\vdots$}
  \put(0,0){\scriptsize{\young(\blank)}}
  \put(2.5,5.7){$\Bigg\}{a \ \mathrm{rows}}$}
  \put(2.5,1.6){$\Bigg\}{g-a \ \mathrm{rows}}$}
\end{picture}
\end{small}
\end{center}
See \cite[\S2.5]{BPR} for a discussion of how one obtains the Young diagram from the Hodge diamond.  Let $\cN_a = \pi(\mathrm{I}_a) \in \bfN_D$.  \Dokovic's \cite[Theorem 2.21]{BPR} yields 
\[
  \cN_a \ < \ \cN_b
  \quad\hbox{if and only if} \quad
  a \ < \ b \,.
\]
Likewise, setting $\cO_a = \phi_\infty(\mathrm{I}_a) \in \bfD_D$, we have 
\[
  \cO_a \ < \ \cO_b
  \quad\hbox{if and only if} \quad
  a \ < \ b \,; 
\]
see \S\ref{S:hs}.
\end{example}

\begin{example}[K3 surfaces] \label{eg:K31}
Suppose that $\bh = (1,m,1)$, with $m\geq 1$.  The three possible Hodge diamonds are depicted below.  (The first is the trivial $[D,0] \in \overline{\Psi}_D$.)  In this example we omit the labels $i^{p,q}$ as they may easily be deduced from the Hodge numbers $\bh$ and \eqref{SE:hd}.
\begin{center}
\begin{small}
\setlength{\unitlength}{10pt}
\begin{tabular}{c|c|c}
  0 & I & II \\
  %0
    \begin{picture}(5,3)(-1,0)
    \put(0,0){\vector(1,0){3}} \put(0,0){\vector(0,1){3}}  
    \put(0,2){\circle*{0.25}} 
    \put(1,1){\circle*{0.25}} 
    \put(2,0){\circle*{0.25}} 
    \end{picture}
  & 
  %I
    \begin{picture}(5,3)(-1,0)
    \put(0,0){\vector(1,0){3}} \put(0,0){\vector(0,1){3}}  
    \put(0,1){\circle*{0.25}}  
    \put(1,0){\circle*{0.25}} \put(1,1){\circle*{0.25}} 
    \put(1,2){\circle*{0.25}} 
    \put(2,1){\circle*{0.25}} \end{picture}
  & 
  %II
    \begin{picture}(5,3)(-1,0)
    \put(0,0){\vector(1,0){3}} \put(0,0){\vector(0,1){3}}
    \put(0,0){\circle*{0.25}} 
    \put(1,1){\circle*{0.25}} 
    \put(2,2){\circle*{0.25}} \end{picture}
  \\ 
  %0
    \begin{picture}(6,5)(-1,0)
    \put(0,1.5){\scriptsize{\young(-,-,+)}}
    \put(0.4,0.35){$\vdots$}  
    \put(1.2,0.9){$\Big\}\mystack{m-2}{\mathrm{boxes}}$}
    \end{picture}
  &
  %I
    \begin{picture}(6,5)(-1,0)
    \put(0,1.5){\scriptsize{\young(\blank\blank,\blank\blank,+)}}
    \put(0.4,0.35){$\vdots$}  
    \put(1.2,0.9){$\Big\}\mystack{m-4}{\mathrm{boxes}}$}
    \end{picture}
  &
  %II
    \begin{picture}(6,5)(-1,-1)
    \put(0,1.5){\scriptsize{\young(-+-,+)}}
    \put(0.4,0.35){$\vdots$}  
    \put(1.2,0.9){$\Big\}\mystack{m-3}{\mathrm{boxes}}$}
    \end{picture}
\end{tabular}
\end{small}
\end{center}
The second row of the table depicts the partially signed Young diagram classifying the conjugacy class $\cN \in \overline{\bfN}_D$ of $N$.  \Dokovic's \cite[Theorem 2.21]{BPR} yields $\cN_0 < \cN_\mathrm{I} < \cN_\mathrm{II}$.   If $m=1$ then there is no type `I'.  (Remark that ``0,I,II'' in our nomenclature correspond to types ``I,II,III'' in Kulikov's classification.)
\end{example}

\begin{example}[Surfaces with contact IPR] \label{eg:H1}
Suppose that $\bh = (2,m,2)$ with $m\ge 4$.  (In this case the horizontal subbundle $T^hD \subset TD$ is a contact distribution.  In particular, the horizontal subbundle is of corank one, and so very close to the classical Hermitian symmetric case in which $T^hD = TD$.)  Then the six possible Hodge diamonds are listed in the first row of the table below.  (The first is the trivial $[D,0] \in \overline{\Psi}_D$.)  Again, we omit the labels $i^{p,q}$ as they may easily be deduced from the Hodge numbers $\bh$ and \eqref{SE:hd}.
\begin{center}
\begin{small}
\setlength{\unitlength}{10pt}
\begin{tabular}{c|c|c|c|c|c}
  0 & I & II & III & IV & V \\
  %0
    \begin{picture}(5,3)(0,0)
    \put(0,0){\vector(1,0){3}} \put(0,0){\vector(0,1){3}}  
    \put(0,2){\circle*{0.25}} \put(1,1){\circle*{0.25}} 
    \put(2,0){\circle*{0.25}} \end{picture}
  & 
  %I
    \begin{picture}(5,3)(0,0)
    \put(0,0){\vector(1,0){3}} \put(0,0){\vector(0,1){3}}  
    \put(0,1){\circle*{0.25}} \put(0,2){\circle*{0.25}} 
    \put(1,0){\circle*{0.25}} \put(1,1){\circle*{0.25}} 
    \put(1,2){\circle*{0.25}} \put(2,0){\circle*{0.25}} 
    \put(2,1){\circle*{0.25}} \end{picture}
  & 
  %II
    \begin{picture}(6,3)(-1,0)
    \put(0,0){\vector(1,0){3}} \put(0,0){\vector(0,1){3}}
    \put(0,0){\circle*{0.25}} \put(0,2){\circle*{0.25}}
    \put(1,1){\circle*{0.25}} \put(2,0){\circle*{0.25}}
    \put(2,2){\circle*{0.25}} \end{picture}
  & 
  %III
    \begin{picture}(6,3)(-1,0)
    \put(0,0){\vector(1,0){3}} \put(0,0){\vector(0,1){3}}
    \put(0,1){\circle*{0.25}} \put(1,0){\circle*{0.25}}
    \put(1,1){\circle*{0.25}} \put(1,2){\circle*{0.25}}
    \put(2,1){\circle*{0.25}} \end{picture}
  & 
  %IV
    \begin{picture}(6,3)(-1,0)
    \put(0,0){\vector(1,0){3}} \put(0,0){\vector(0,1){3}}
    \put(0,0){\circle*{0.25}} \put(0,1){\circle*{0.25}}
    \put(1,0){\circle*{0.25}} \put(1,1){\circle*{0.25}}
    \put(1,2){\circle*{0.25}} \put(2,1){\circle*{0.25}}
    \put(2,2){\circle*{0.25}}  \end{picture}
  & 
  %V
    \begin{picture}(5,3)(-1,0)
    \put(0,0){\vector(1,0){3}} \put(0,0){\vector(0,1){3}}
    \put(0,0){\circle*{0.25}} \put(1,1){\circle*{0.25}} 
    \put(2,2){\circle*{0.25}} \end{picture}
  \\ 
  %0
    \begin{picture}(5,7)
    \put(0,1.5){\scriptsize{\young(-,-,-,-,+)}}
    \put(0.4,0.35){$\vdots$}  
    \put(1.2,0.9){$\Big\}\mystack{m-4}{\mathrm{boxes}}$}
    \end{picture}
  &
  %I
    \begin{picture}(5,7)
    \put(0,1.5){\scriptsize{\young(\blank\blank,\blank\blank,-,-,+)}}
    \put(0.4,0.35){$\vdots$}  
    \put(1.2,0.9){$\Big\}\mystack{m-6}{\mathrm{boxes}}$}
    \end{picture}
  &
  %II
    \begin{picture}(6,7)(-1,-1)
    \put(0,1.5){\scriptsize{\young(-+-,-,-,+)}}
    \put(0.4,0.35){$\vdots$}  
    \put(1.2,0.9){$\Big\}\mystack{m-5}{\mathrm{boxes}}$}
    \end{picture}
  & 
  %III
    \begin{picture}(6,7)(-1,0)
    \put(0,1.5){\scriptsize{\young(\blank\blank,\blank\blank,\blank\blank,\blank\blank,+)}}
    \put(0.4,0.35){$\vdots$} 
    \put(1.2,0.9){$\Big\}\mystack{m-8}{\mathrm{boxes}}$}
    \end{picture} 
  &     
  %IV
    \begin{picture}(6,7)(-1,-1)
    \put(0,1.5){\scriptsize{\young(-+-,\blank\blank,\blank\blank,+)}}
    \put(0.4,0.35){$\vdots$}  
    \put(1.2,0.9){$\Big\}\mystack{m-7}{\mathrm{boxes}}$}
    \end{picture}
  &     
  %V
    \begin{picture}(5,7)(-1,-2)
    \put(0,1.5){\scriptsize{\young(-+-,-+-,+)}}
    \put(0.4,0.35){$\vdots$}  
    \put(1.2,0.9){$\Big\}\mystack{m-6}{\mathrm{boxes}}$}
    \end{picture}
\end{tabular}
\end{small}
\end{center}
The second row of the table depicts the partially signed Young diagram classifying the conjugacy class $\cN \in \overline{\bfN}_D$ of $N$.   \Dokovic's \cite[Theorem 2.21]{BPR} yields 
\[
  0 \ < \ 
  \cN_\mathrm{I} \ < \ 
  \left\{ \begin{array}{c} 
       \cN_\mathrm{II} \\ \cN_\mathrm{III}
  \end{array}\right\} \ < \ 
  \cN_\mathrm{IV} \ < \ 
  \cN_\mathrm{V}\,.
\]

\end{example}

\begin{example}[$\bh = (2,4,2)$] \label{eg:242}
Here we specialize Example \ref{eg:H1} to $m=4$.  In order to illustrate our assertion that Prop. \ref{P:hd} can fail for $\Psi_D$, we consider the pairs $(F,N)$ modulo the connected identity component $\tSO(4,4)^+ \subsetneq \tO(4,4)$.  In this case the conjugacy classes are classified by \eqref{E:FNvl}, \emph{not} Proposition \ref{P:hd}.  We find that there are six nontrivial conjugacy classes, rather than the five of Example \ref{eg:H1}: the $\tO(4,4)$--conjugacy class $\mathrm{III}$ splits into two $\tSO(4,4)^+$--conjugacy classes.

To describe these six conjugacy classes, we follow the notation of \S\ref{S:rt}.   Then $\ttE_\varphi = \ttS^2$.  This implies that the the Weyl subgroup $\sW^0 \subset \sW$ is generated by the simple reflections $\{(1),(3),(4)\}$.  We will let $\sS'$ denote the simple roots of a representative $\fl_\bR$ of the element $[\fl_\bR]\in\Lambda_{\varphi,\ft}$ indexing the conjugacy class.  Finally, we employ the short-hand $(a_1,\ldots,a_4)$ to indicate the distinguished $\sZ = \pi^\tss_\fl(\ttE_\varphi) = a_1\ttS^1+\cdots+a_4\ttS^4$.  Then the six nontrivial $\tSO(4,4)^+$--conjugacy classes  $[F,N]$ are 
\begin{center}
\begin{tabular}{c||c|c|c}
  HD & $\fl_\bR^\tss$ & $\sZ$ & $\sS'$ \\ \hline
  I & $\fsu(1,1)$ & $(-1,2,-1,-1)$ & $\{\a_2\}$ \\
  II & $\fsu(1,1)\op\fsu(1,1)$ & $(-2,2,0,0)$ 
    & $\{ \a_2 \,,\, \a_2+\a_3+\a_4\}$ \\
  III & $\fsu(1,1)\op\fsu(1,1)$ & $(0,2,0,-2)$ 
    & $\{ \a_2 \,,\, \a_1+\a_2+\a_3\}$ \\
  III & $\fsu(1,1)\op\fsu(1,1)$ & $(0,2,-2,0)$ 
    & $\{ \a_2 \,,\, \a_1+\a_2+\a_4\}$ \\
  IV & $\fsu(1,1)^{\op3}$ & $(-1,2,1,-1)$ 
    & $\{ \a_2 \,,\, \a_1+\a_2+\a_3 \,,\, \a_2+\a_3+\a_4\}$ \\
  V & $\fsu(2,1)$ & $2\ttE_\varphi$ 
    & $\{\a_2\,,\,\a_1+\cdots+\a_4\}$
\end{tabular}
\end{center}
\end{example}

\begin{example}[Calabi--Yau 3-folds] \label{eg:CY1}
Suppose that $\bh = (1,m,m,1)$.  There are $4m$ possible Hodge diamonds, including the trivial one; they are listed in the first row of the table below.  
\begin{center}
\begin{small}
\setlength{\unitlength}{14pt}
\begin{tabular}{c|c|c|c}
  $\mathrm{I}_a$ & $\mathrm{II}_b$ & $\mathrm{III}_c$ & $\mathrm{IV}_d$ \\
  %I
    \begin{picture}(6,4)(-1,0)
    \put(0,0){\vector(1,0){4}} \put(0,0){\vector(0,1){4}}  
    \put(0,3){\circle*{0.2}} 
    \put(1,2){\circle*{0.2}} \put(0.4,1.9){\scriptsize{$a'$}}
    \put(1,1){\circle*{0.2}} \put(0.5,0.9){\scriptsize{$a$}}
    \put(2,1){\circle*{0.2}} \put(2.2,0.9){\scriptsize{$a'$}}
    \put(2,2){\circle*{0.2}} \put(2.2,1.9){\scriptsize{$a$}}
    \put(3,0){\circle*{0.2}} 
    \put(0,-1){\scriptsize{$(a+a'=m)$}}
    \end{picture}
  & 
  %II
    \begin{picture}(6,4)(-1,0)
    \put(0,0){\vector(1,0){4}} \put(0,0){\vector(0,1){4}}
    \put(0,2){\circle*{0.2}} 
    \put(1,1){\circle*{0.2}} \put(0.5,0.9){\scriptsize{$b$}}
    \put(1,2){\circle*{0.2}} \put(0.4,1.9){\scriptsize{$b'$}} 
    \put(1,3){\circle*{0.2}}
    \put(2,0){\circle*{0.2}} 
    \put(2,1){\circle*{0.2}} \put(2.2,0.9){\scriptsize{$b'$}}
    \put(2,2){\circle*{0.2}} \put(2.2,1.9){\scriptsize{$b$}}
    \put(3,1){\circle*{0.2}} 
    \put(0,-1){\scriptsize{$(b+b'=m-1)$}}
    \end{picture}
  & 
  %III
    \begin{picture}(6,4)(-1,0)
    \put(0,0){\vector(1,0){4}} \put(0,0){\vector(0,1){4}}
    \put(0,1){\circle*{0.2}} 
    \put(1,1){\circle*{0.2}} \put(0.5,0.9){\scriptsize{$c$}}
    \put(2,1){\circle*{0.2}} \put(0.4,1.9){\scriptsize{$c'$}} 
    \put(1,0){\circle*{0.2}}
    \put(2,3){\circle*{0.2}} 
    \put(1,2){\circle*{0.2}} \put(2.2,0.9){\scriptsize{$c'$}}
    \put(2,2){\circle*{0.2}} \put(2.2,1.9){\scriptsize{$c$}}
    \put(3,2){\circle*{0.2}} 
    \put(0,-1){\scriptsize{$(c+c'=m-1)$}}
    \end{picture}
  & 
  %IV
    \begin{picture}(6,4)(-1,0)
    \put(0,0){\vector(1,0){4}} \put(0,0){\vector(0,1){4}}
    \put(0,0){\circle*{0.2}} 
    \put(1,1){\circle*{0.2}} \put(0.5,0.9){\scriptsize{$d$}}
    \put(1,2){\circle*{0.2}} \put(0.4,1.9){\scriptsize{$d'$}}
    \put(2,1){\circle*{0.2}} \put(2.2,0.9){\scriptsize{$d'$}}
    \put(2,2){\circle*{0.2}} \put(2.2,1.9){\scriptsize{$d$}}
    \put(3,3){\circle*{0.2}}
    \put(0,-1){\scriptsize{$(d+d'=m)$}}
    \end{picture}
  \\ 
  %I
    \begin{picture}(3,4.5)(0.5,-1)
    \put(0,0){\scriptsize{\young(+-,\blank)}}
    \put(1.6,0.9){\scriptsize{$\ot a$}}
    \put(1.6,0.1){\scriptsize{$\ot 2a'+2$}}
    \end{picture}
  &
  %II
    \begin{picture}(3,3)(0,-0.5)
    \put(0,0){\scriptsize{\young(+-,-+,\blank)}}
    \put(1.6,1.7){\scriptsize{$\ot b$}}
    \put(1.6,0.9){\scriptsize{$\ot 2$}}
    \put(1.6,0.1){\scriptsize{$\ot 2b'$}}
    \end{picture}
  & 
  %III
    \begin{picture}(5,3)(-0.5,-0.5)
    \put(0,0){\scriptsize{\young(\blank\blank\blank,+-,\blank)}}
    \put(2.3,1.7){\scriptsize{$\ot 2$}}
    \put(2.3,0.9){\scriptsize{$\ot c$}}
    \put(2.3,0.1){\scriptsize{$\ot 2c'-2$}}
    \end{picture} 
  &     
  %IV
    \begin{picture}(5,3)(-0.5,-0.5)
    \put(0,0){\scriptsize{\young(-+-+,+-,\blank)}}
    \put(3,1.7){\scriptsize{$\ot 1$}}
    \put(3,0.9){\scriptsize{$\ot d-1$}}
    \put(3,0.1){\scriptsize{$\ot 2d'$}}
    \end{picture} 
\end{tabular}
\end{small}
\end{center} 
Above we have $0 \le a \le m$, $0 \le b \le m-1$, $0 \le c \le m-2$ and $1 \le d \le m$.  The second row of the table depicts the partially signed Young diagram classifying the conjugacy class $\cN \in \bfN$ of $N$.   \Dokovic's \cite[Theorem 2.21]{BPR} yields 
\begin{eqnarray*}
  \cN_{\mathrm{I}_a} < \cN_{\mathrm{I}_b} \,,\
  \cN_{\mathrm{II}_a} < \cN_{\mathrm{II}_b} \,,\
  \cN_{\mathrm{III}_a} < \cN_{\mathrm{III}_b}\,,\
  \cN_{\mathrm{IV}_a} < \cN_{\mathrm{IV}_b}
  & \hbox{if and only if} & a < b \,;\\
  \cN_{\mathrm{I}_a} < \cN_{\mathrm{II}_b}
  & \hbox{if and only if} & a \le b \,;\\
  \cN_{\mathrm{I}_a} , \cN_{\mathrm{II}_a} < \cN_{\mathrm{III}_b}
  & \hbox{if and only if} & a \le b+2 \,;\\
  \cN_{\mathrm{I}_a} , \cN_{\mathrm{II}_a} < \cN_{\mathrm{IV}_b}
  & \hbox{if and only if} & a \le b \,;\\
  \cN_{\mathrm{III}_a}  < \cN_{\mathrm{IV}_b}
  & \hbox{if and only if} & a+2 \le b \,.
\end{eqnarray*}
\end{example}

\begin{remark}
The enumeration of $\Psi_D$ (or a quotient thereof) by ``numerically admissible'' Hodge diamonds is particular to period domains.  Besides the greater constraints on degenerations in a M-T domain, there is the fact that a general such domain may have no ``standard representation'' (as for exceptional groups).  Even when $G$ is classical, and a M-T group, the standard representation may not be realizable as a Hodge representation (e.g. $\tSp(a,b)$ \cite{MR2918237}).  
\end{remark}

%------------------------------------------------------------------------------
\subsection{Primitive subspaces} \label{S:prim}
%------------------------------------------------------------------------------

The \emph{$N$--primitive subspace} $P(N)$ of $V$ is defined by 
\[
  P(N) \ := \  \bigoplus_{k=0}^n P(N)_{n+k} \,,
\]
where
\begin{equation}\label{E:Ppq}
\renewcommand{\arraystretch}{1.3}
\begin{array}{rcl}
  P(N)_{n+k,\bC} & := & 
  \displaystyle\bigoplus_{p+q=n+k} P(N)^{p,q} \,,\quad\hbox{and}\\
  P(N)^{p,q} & := & \tker\{ N^{k+1} : I^{p,q} \to I^{-p-1,-q-1} \} \,.
\end{array}
\end{equation}
Recall that the \emph{weight $n+k$ $N$--primitive subspace} $P(N)_{n+k}$ is defined over $\bR$, and 
\begin{equation}\label{E:Ndecomp}
  V \ = \ \bigoplus_{\mystack{0\le k}{0 \le a \le k}} N^a P(N)_{n+k} \,.
\end{equation}
In particular, the decomposition \eqref{E:Ppq} determines the Deligne bigrading $V_\bC = \op\,I^{p,q}$ of $(F,W(N))$.  Moreover,  \eqref{E:Ppq} is a weight $n+k$ Hodge decomposition of $P(N)_{k,\bQ}$ polarized by 
\[
  Q^N_k(\cdot,\cdot) \ := \ Q(\cdot , N^k\cdot)\,.
\]
The \emph{$N$--primitive Hodge--Deligne numbers} are the 
\[
  j^{p,q} \ := \ \tdim_\bC\,P(N)^{p,q}\,.
\]
The \emph{weight $n+k$ primitive part} of $\Diamond(F,N)$ is the function
\[
  \Diamond^\tprim_{n+k}(F,N) : \bZ \times \bZ \ \to \ \bZ_{\ge0}
  \quad\hbox{sending}\quad
  (p,q) \ \mapsto \ j^{p,q} \, ,
\]
%Note that $\Diamond^\tprim_{n+k}(F,N)$ not a Hodge diamond: \eqref{SE:hd} will fail whenever $N\not=0$.  Likewise, 
which is supported on $p+q=n+k$.  The \emph{primitive part} of $\Diamond(F,N)$ is the sum
\[
  \Diamond^\tprim(F,N) \ = \ \sum_{k=0}^n
  \Diamond^\tprim_{n+k}(F,N)
\] 
of the weight $n+k$ primitive parts.  We will call any such $\Diamond^\tprim(F,N)$ a \emph{primitive sub-diamond for the period domain $D$}.\footnote{Note that it is not, in fact, a diamond:  \eqref{SE:hd} will fail whenever $N\neq 0$.}  From \eqref{E:Ndecomp} we see that 
\begin{equation}\label{E:pHD}
  \hbox{\emph{$\Diamond^\tprim(F,N)$ determines 
  $\Diamond(F,N)$ (and visa versa).}}
\end{equation}
To be more precise, given $f : \bZ \times \bZ \to \bZ_{\ge0}$ define $f[k,k]: \bZ \times \bZ \to \bZ_{\ge0}$ by $(p,q) \mapsto f(p+k,q+k)$.  Then \eqref{E:Ndecomp} implies
\begin{equation}\nonumber%\label{E:DvDp}
  \Diamond(F,N) \ = \ \sum_{\mystack{0 \le k}{0 \le a\le k}} 
  \Diamond^\tprim_{n+k}(F,N)[a,a] \,.
\end{equation}
From Proposition \ref{P:hd} we then obtain

\begin{corollary}\label{C:hd}
The conjugacy class $[F,N] \in \Psi_D$ is determined by the primitive sub-diamond $\Diamond^\tprim(F,N)$.
\end{corollary}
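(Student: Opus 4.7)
The plan is to deduce this corollary directly from Proposition \ref{P:hd} together with the observation \eqref{E:pHD} that the primitive sub-diamond determines the full Hodge diamond. Concretely, I would first verify the explicit recovery formula
\[
  \Diamond(F,N) \ = \ \sum_{\mystack{0 \le k}{0 \le a \le k}} \Diamond^\tprim_{n+k}(F,N)[a,a]
\]
displayed just before the corollary. This is an immediate consequence of the decomposition \eqref{E:Ndecomp}: writing $V_{\bC} = \bigoplus_{k,a} N^a P(N)_{n+k,\bC}$, and observing that $N$ shifts bidegree by $(-1,-1)$, each $N^a P(N)^{p,q}$ with $p+q = n+k$ sits inside $I^{p-a,q-a}$; collecting dimensions recovers $i^{p,q}$ from the $j^{p,q}$.

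Given this, the corollary follows by composing two determinations. If $[F_1,N_1], [F_2,N_2] \in \Psi_D$ satisfy $\Diamond^\tprim(F_1,N_1) = \Diamond^\tprim(F_2,N_2)$, then the displayed formula gives $\Diamond(F_1,N_1) = \Diamond(F_2,N_2)$, and Proposition \ref{P:hd} then yields equality of the classes in $\overline{\Psi}_D$. (I would note here that, strictly speaking, Proposition \ref{P:hd} is stated for $\overline{\Psi}_D$ rather than $\Psi_D$; the statement of the corollary should presumably be read in $\overline{\Psi}_D$, matching the scope of Proposition \ref{P:hd}, since the example \ref{eg:242} already shows the finer classification in $\Psi_D$ is not captured even by the full Hodge diamond.)

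There is no real obstacle: the substantive content lies in Proposition \ref{P:hd} and in \eqref{E:Ndecomp}, both of which are already established. The only point requiring care is the bookkeeping in the recovery formula, namely that the weight $n+k$ primitive part contributes to Hodge-Deligne levels $(p-a,q-a)$ for $0 \le a \le k$, and one must check that the supports do not overlap and that every $I^{p,q}$ is accounted for exactly once—both of which follow from the Jacobson–Morozov decomposition \eqref{E:Ndecomp} applied bidegree-by-bidegree.
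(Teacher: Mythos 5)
Your proposal is correct and is essentially identical to the paper's own derivation, which obtains the corollary by combining the recovery formula \eqref{E:pHD} (a consequence of \eqref{E:Ndecomp}) with Proposition \ref{P:hd}. Your caveat that the statement should be read in $\overline{\Psi}_D$ rather than $\Psi_D$ is also well taken, as Example \ref{eg:242} shows.
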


\begin{example}[Surfaces with contact IPR] \label{eg:H2}
The primitive sub-diamonds for the five nontrivial $[F,N] \in \Psi_D$ of Example \ref{eg:H1} are depicted below.
\begin{center}
\begin{small}
\setlength{\unitlength}{15pt}
\begin{tabular}{c|c|c|c|c}
  I & II & III & IV & V \\
  %I
    \begin{picture}(5.5,3)(-1.5,0)
    \put(0,0){\vector(1,0){3}} \put(0,0){\vector(0,1){3}}  
    \put(0,2){\circle*{0.2}} \put(-0.5,1.9){\scriptsize{$1$}}
    \put(1,1){\circle*{0.2}} \put(-0.9,0.5){\scriptsize{$m-2$}}
    \put(1,2){\circle*{0.2}} \put(1.2,2.1){\scriptsize{$1$}}
    \put(2,0){\circle*{0.2}} \put(2.2,0.1){\scriptsize{$1$}}
    \put(2,1){\circle*{0.2}} \put(2.2,1.1){\scriptsize{$1$}}
    \end{picture}
  & 
  %II
    \begin{picture}(5,3)(-1,0)
    \put(0,0){\vector(1,0){3}} \put(0,0){\vector(0,1){3}}
    \put(0,2){\circle*{0.2}} \put(-0.5,1.9){\scriptsize{$1$}}
    \put(1,1){\circle*{0.2}} \put(1.2,1.0){\scriptsize{$m-1$}}
    \put(2,0){\circle*{0.2}} \put(2.2,2.1){\scriptsize{$1$}}
    \put(2,2){\circle*{0.2}} \put(2.2,0.1){\scriptsize{$1$}}
    \end{picture}
  & 
  %III
    \begin{picture}(5,3)(-1,0)
    \put(0,0){\vector(1,0){3}} \put(0,0){\vector(0,1){3}}
    \put(1,1){\circle*{0.2}} \put(-0.9,0.9){\scriptsize{$m-4$}}
    \put(1,2){\circle*{0.2}} \put(2.2,1.1){\scriptsize{$2$}}
    \put(2,1){\circle*{0.2}} \put(1.2,2.1){\scriptsize{$2$}}
    \end{picture}
  & 
  %IV
    \begin{picture}(5,3)(-1,0)
    \put(0,0){\vector(1,0){3}} \put(0,0){\vector(0,1){3}}
    \put(1,2){\circle*{0.2}} \put(0.4,1.9){\scriptsize{$1$}}
    \put(1,1){\circle*{0.2}} \put(-0.9,0.9){\scriptsize{$m-3$}}
    \put(2,1){\circle*{0.2}} \put(2.2,0.9){\scriptsize{$1$}}
    \put(2,2){\circle*{0.2}} \put(2.2,1.9){\scriptsize{$1$}}
    \end{picture}
  & 
  %V
    \begin{picture}(5,3)(-1,0)
    \put(0,0){\vector(1,0){3}} \put(0,0){\vector(0,1){3}}
    \put(1,1){\circle*{0.2}} \put(1.2,0.9){\scriptsize{$m-2$}}
    \put(2,2){\circle*{0.2}} \put(2.2,1.9){\scriptsize{$2$}}
    \end{picture}
\end{tabular}
\end{small}
\end{center}
\end{example}

\begin{proof}[Proof of Proposition \ref{P:hd}] \label{prf:hd}
Given a function $f: \bZ \times \bZ \to \bZ_{\ge0}$ satisfying \eqref{SE:hd}, the pair $(F,N)$ may be constructed as follows.  For convenience, and without loss of generality, we assume that $n\ge0$ and that $f(p,q) \not=0$ only when $0 \le p,q \le n$.  Fix a direct sum decomposition $V_\bC = \op\,I^{p,q}_0$ so that $\tdim_\bC\,I^{p,q}_0 = f(p,q)$ and $\overline{I^{p,q}_0} = I^{q,p}_0$ for all $p,q$.  Then \eqref{SE:hd} implies one may define a nilpotent $N_0 \in \tEnd(V_\bR)$ so that $N(I^{p,q}_0) \subset I^{p-1,q-1}_0$ for all $p,q$, and so that
\[
  N^{p+q-n}_0 : I^{p,q}_0 \ \to \ I^{n-q,n-p}_0
\]
is an isomorphism for all $p+q\ge n$.

Suppose that $Q$ is symmetric ($n$ is even) and of signature $(a,b)$ over $\bR$.  Then it follows from \eqref{SE:hd} and the classification of nilpotent elements in $\fso(a,b)$ (see \cite[\S2]{BPR}), that there exists a symmetric, bilinear form $Q_0$ on $V_\bR$ of signature $(a,b)$ such that $N_0 \in \tEnd(V_\bR,Q_0)$.  More precisely, given a basis $\{v^{p,q}_i\}$ ($\overline{v^{p,q}_i}=v^{q,p}_i$) for each $P(N_0)^{p,q}:=\ker(N_0^{p+q-n+1})\subset I^{p,q}_0$, we can set 
\[
  Q_0 ( N^j v^{p,q}_i ,N^{p+q-n-j}\overline{v^{p',q'}_i} ) \ := \ 
  \mathbf{i}^{q-p+2j} \delta_{kk'}\delta_{pp'}\delta_{qq'} \,.
\]
It is then the case that $Q_0$ is conjugate to $Q$ under the action of some $g \in \tAut(V_\bR)$.  Set $I^{p,q} = g(I^{p,q}_0)$ and $N = \tAd_g(N_0)$.  Then $V_\bC = \op\,I^{p,q}$ is the Deligne bigrading of an $\bR$--split mixed Hodge structure $(F,W)$ that is polarized by $N$.  The symplectic case works in the same way.  This establishes the first assertion of the proposition.  

For the second assertion, let $V_\bC = \op I^{p,q}_i$ denote the respective Deligne splittings of $(F_i,W(N_i))$ for $i=1,2$.  It is clear that equality of the equivalence classes $[F_i,N_i]$ implies equality of the Hodge diamonds $\Diamond(F_i,N_i)$, since any $g \in G(\bR)$ with the property $g \cdot (F_1,N_1) = (F_2,N_2)$ will satisfy $g(I^{p,q}_1) = I^{p,q}_2$.  We will establish the converse by constructing an explicit $g \in \tAut(V_\bR,Q)$ with the properties that $\tAd_gN_1 = N_2$ and $g(I^{p,q}_1) = I^{p,q}_2$.  The latter implies $F_2 = g \cdot F_1$, so  that $(F_2 , N_2) = g\cdot(F_1 , N_1)$, completing the proof. 

By \eqref{E:pHD}, the hypothesis $\Diamond(F_1,N_1) = \Diamond(F_2,N_2)$ is equivalent to equality $j^{p,q}_1 = j^{p,q}_2$ of the primitive Hodge numbers.  Given $p\ge q$ with $p+q = n+k$, fix bases $\sB^{p,q}_i = \{ v^{p,q}_{i,1} , \ldots , v^{p,q}_{i,j_{p,q}} \}$ of the primitive spaces $P(N_i)^{p,q}$ so that $Q^{N_i}_k( C v^{p,q}_{i,a} , \overline{v^{p,q}_{i,b}}) = \d_{ab}$.  (Here $C$ denotes the Weil operator on $P(N_i)_{n+k}$.)   If $p=q$, then we may assume that $\sB^{p,p}_i \subset V_\bR$ is real.  Then \eqref{E:Ndecomp} implies
\[
  \bigcup_{k\ge0} \bigcup_{\mystack{0 \le \ell \le k}{p+q = n+k}}
  N^\ell_i \left\{ \sB^{p,q}_i \,\cup\, \overline{\sB^{p,q}_i} \right\}
\]
are bases of $V_\bC$, $i=1,2$.  So we may define $g \in \tAut(V_\bC)$ by 
\[
  g\left(N^\ell_1 \, v^{p,q}_{1,a}\right) \ := \ N^\ell_2 \, v^{p,q}_{2,a} \tand
  g\left(N^\ell_1 \, \overline{v^{p,q}_{1,a}}\right) \ := \ 
  N^\ell_2 \, \overline{v^{p,q}_{2,a}} \,.
\]
By construction $g$ is defined over $\bR$ and preserves $Q$ -- that is, $g \in \tAut(V_\bR,Q)$ -- and has the properties $\tAd_g N_1 = N_2$ and $g(I^{p,q}_1) = I^{p,q}_2$.  
\end{proof}

\begin{remark} \label{R:Aut(P,Q)}
Implicit in the proof of Proposition \ref{P:hd} is the following, very useful fact:
\begin{quote}
\emph{
  There is a natural injection 
  $\prod_{k\ge0}\,\tAut( P(N)_k , Q^N_k ) \inj \tAut(V_\bR,Q).$
}
\end{quote}
Given $h \in \prod \tAut(P(N)_k , Q^N_k)$, \eqref{E:Ndecomp} allows us to define $g \in \tAut(V_\bR,Q)$ by $g(N^\ell v) := N^\ell h(v)$, where $v \in P(N)_k$ and $0 \le \ell \le k$.  This is in fact what makes Theorem \ref{T:PDpr} below work for period domains (as opposed to general Mumford-Tate domains).
\end{remark}

%------------------------------------------------------------------------------
\subsection{Primitive subspaces and polarized relations} \label{S:prim+sl2}
%------------------------------------------------------------------------------

Now suppose that $[F_1 , N_1] \preceq [F_2 , N_2] \in \overline{\Psi}_D$, where (in the even weight case) ``$\preceq$'' just means the quotient relation.  Then Theorem \ref{T:4} asserts that there exist commuting $\tSL(2)$'s
\[
  \upsilon : \tSL(2,\bC) \times \tSL(2,\bC) \ \to \ G(\bC)
\]
with the following properties: 
\begin{i_list}
\item There exist commuting DKS--triples
\[
  \{\overline\sE_1 , \sZ_1 , \sE_1\} \tand 
  \{\overline\sE' , \sZ' , \sE' \}
\]
spanning the first and second summands, respectively, of the Lie algebra $\fsl(2,\bC) \op \fsl(2,\bC)$.  Let's distinguish the first and second factors of $\upsilon$ by denoting them $\tSL(2,\bC)_1$ and $\tSL(2,\bC)'$, respectively.
\item 
The sum
\[
  \{\overline\sE_2 = \overline\sE_1 + \overline\sE' \,,\, 
    \sZ_2 = \sZ_1+\sZ' \,,\, \sE_2 = \sE_1+\sE'\} 
\]
is also a DKS--triple.
\item 
For $i=1,2$, the Cayley transform 
\[
  \{ N^+_i , Y_i , N_i \} \ := \
  \tAd_{\varrho_i}^{-1}\{ \overline\sE_i , \sZ , \sE_i\} \ \subset \ \fg_\bR
\]
is a standard triple containing the nilpotent $N_i$ as the nilnegative element, and 
\[
  F_i \ = \ \varrho_i^{-1}\cdot \varphi \,.
\]
Here $\varrho_i = \exp \bi\frac{\pi}{4}(\overline\sE_i+\sE_i)$ is defined as in \eqref{E:rho}.
\end{i_list}
What we want to do in this section is explain how $\Diamond(F_2,N_2)$ is obtained from the action of $\tSL(2)'$ on $P(N_1)$.

Let $V_\bC = \op\,I^{p,q}_1$ denote the Deligne bigrading of the mixed Hodge structure $(F_1,W(N_1))$, and let $j_1^{p,q} = \tdim_\bC\,P(N_1)^{p,q}$ denote the primitive Deligne--Hodge numbers.  Recall that 
\[
  \bigoplus_{p+q = k} I^{p,q}_1 \ = \ \{ v \in V_\bC \ | \ Y_1(v) = k\,v \}
\]
is a $Y_1$--eigenspace.  
\begin{a_list}
\item The fact that the two $\tSL(2)$'s commute implies that $P(N_1)_{n+k}$ is preserved under the action of $\tSL(2)'$.  In fact, setting $N = N_1$ in \eqref{E:Ndecomp} yields
\begin{equation}\label{E:1decomp}
  V_\bR \ = \ \bigoplus_{\mystack{0\le k}{0 \le a \le k}} N_1^a P(N_1)_{n+k}\,,
\end{equation} 
so that the action of $\tSL(2)'$ on $P(N_1)$ determines the action of $\tSL(2)'$ on all of $V$.
\item The restriction $\upsilon$ to the second factor yields a representation 
\[
  \upsilon' : \tSL(2,\bC) \ \to \ 
  \bigoplus_{k=0}^n \tAut(P(N_1)_{n+k} \,,\, Q^{N_1}_k)\,.
\]  
Composing this map with the obvious projection yields a horizontal $\tSL(2)$
\[
  \upsilon_k : \tSL(2,\bC)' \ \to \ \tAut(P(N_1)_{n+k} \,,\, Q^{N_1}_k)
\]
on the period domain $D_k$ for the weight $n+k$, $Q^{N_1}_k$--polarized Hodge structures on $P(N_1)_{n+k}$ with Hodge numbers $\{ j^{p,q}_1 \ | \ p+q = n+k \}$.
\item Set $\varrho' = \exp \bi\frac{\pi}{4}(\overline\sE' + \sE')$ and $(F',N') = (\varrho')^{-1}(\varphi,\sE')$.  The final observations of (b) imply that $(F',N')$ determines a polarized mixed Hodge structure $(F'_k,W(N'_k))$ on $P(N_1)_{n+k}$ with $F'_k \in \check D_k$ and $N'_k \in \tEnd(P(N_1)_{n+k}\,,\,Q^{N_1}_k)$, for each $k=0,\ldots,n$.  Let $\Diamond(F'_k,N'_k)$ denote the Hodge diamond.
\item
The commutativity of the two horizontal $\tSL(2)$'s and \eqref{E:1decomp} imply
\begin{equation}\label{E:HD2}
  \Diamond(F_2,N_2) \ = \ \sum_{\mystack{0\le k}{0 \le a \le k}}  
  \Diamond(F'_k,N'_k)(a) \, ,
\end{equation}
\end{a_list}
where $\Diamond(a)(p,q):=\Diamond(p+a,q+a)$.

\begin{theorem} \label{T:PDpr}
Let $D$ be a period domain parameterizing weight $n$, $Q$--polarized Hodge structures on $V_\bR$.  Let $[F_1,N_1], [F_2,N_2] \in \overline{\Psi}_D$.  Then $[F_1,N_1] \preceq [F_2,N_2]$ if and only if $\Diamond(F_2,N_2)$ can be expressed as a sum \eqref{E:HD2} for Hodge diamonds \emph{(that is, functions subject to \eqref{SE:hd} (applied to $D_k$))} $\Diamond(F'_k,N'_k)$ on the period domains $D_k$ parameterizing weight $n+k$, $Q^{N_1}_k$--polarized Hodge structures on $P(N_1)_{n+k}$ with Hodge numbers $\{j_1^{p,q} \ | \ p+q = n+k \}$.
\end{theorem}

\begin{remark*}
This observation was made independently by Mark Green and Phillip Griffiths \cite{PDpr} in the case that $n=2$.
\end{remark*}

\begin{proof}
$(\implies)$: Necessity was established in the discussion preceding the statement of the proposition, using Theorem \ref{T:4}. Alternatively, this is just admissibility of the degeneration of MHS along a face of the nilpotent cone -- i.e. existence of $M(N',W(N_1))\, (=W(N_2))$.

$(\impliedby)$:  To establish sufficiency note that the converse to items (a) and (b) above holds.  More precisely, if we are given
\begin{blist}
\item a horizontal $\upsilon_1 : \tSL(2,\bC)_1 \to G(\bC)$, and 
\item for each $k=0,\ldots,n$, a horizontal $\upsilon_k  ': \tSL(2,\bC) \to \tAut(P(N_1)_k , Q^{N_1}_k)$ on $D_k$, 
\end{blist}
then we may use the injection of Remark \ref{R:Aut(P,Q)} to assemble these $v_k '$ into a second horizontal $\upsilon' : \tSL(2,\bC)' \to G(\bC)$ commuting with $\upsilon_1$ (Remark \ref{R:Aut(P,Q)}).  This observation, taken with Proposition \ref{P:hd} and \eqref{E:pHD}, yields sufficiency.
\end{proof}

The following four examples illustrate the application of Theorem \ref{T:PDpr}.

\begin{example}[Curves of genus $g$] \label{eg:curve2}
The polarized relations amongst the $[F,N] \in \Psi_D$ of Example \ref{eg:curve1} are 
\[
  \mathrm{I}_a \ \prec \ \mathrm{I_b} \quad\iff\quad a \ < \ b \,.
\]
\end{example}

\begin{example}[K3 surfaces] \label{eg:K32}
The polarized relations amongst the three $[F,N] \in \overline{\Psi}_D$ of Example \ref{eg:K31} are $\mathrm{0} \prec \mathrm{I} \prec \mathrm{II}$.
\end{example}

\begin{example}[Surfaces with contact IPR] \label{eg:H3}
The polarized relations amongst the five nontrivial $[F,N] \in \overline{\Psi}_D$ of Example \ref{eg:H1} are 
\begin{eqnarray*}
  \mathrm{I} & \prec & \mathrm{II}\,,\ \mathrm{III} \,,\ 
  \mathrm{IV}\,,\ \mathrm{V} \,;\\
  \mathrm{II}\,,\ \mathrm{III} & \prec & \mathrm{IV}\,,\ \mathrm{V} \,;\\
  \mathrm{IV} & \prec & \mathrm{V} \,.
\end{eqnarray*}
%\[
%  \mathrm{I} \ \prec \ \mathrm{II}\,,\ \mathrm{III} \,,\ 
%  \mathrm{IV}\,,\ \mathrm{V} \,;\quad
%  \mathrm{II}\,,\ \mathrm{III} \ \prec \ \mathrm{IV}\,,\ \mathrm{V} \,;\quad
%  \mathrm{IV} \ \prec \ \mathrm{V} \,.
%\]
In this case the polarized relations are transitive, and so define a partial order that we may more compactly express as
\[
  0 \ \prec \ 
  \mathrm{I} \ \prec \ 
  \left\{ \begin{array}{c} 
       \mathrm{II} \\ \mathrm{III}
  \end{array}\right\} \ \prec \ 
  \mathrm{IV} \ \prec \ 
  \mathrm{V}\,.
\]
In particular, all the relations on $\bfN_D$ are polarized.
\end{example}

\begin{example}[Calabi--Yau 3-folds]\label{eg:CY2}
The polarized relations amongst the $4m$ elements of $\Psi_D$ in Example \ref{eg:CY1} are 
\[
 \left.\begin{array}{c} \rmI_a \prec \rmI_b \\
   \rmII_a \prec \rmII_b \\
   \rmIII_a \prec \rmIII_b \\
   \rmIV_a \prec \rmIV_b 
  \end{array} \right\} \ \iff \ a < b \,,
\]
and
\[ \begin{array}{rclcl}
  \rmI_a & \prec & \rmII_b \,,\ \rmIII_b
  & \iff & a \le b \,,\ a < m \,,\\
  \rmI_a & \prec & \rmIV_d 
  & \iff & a < d \,,\ a < m \,,\\
  \rmII_b & \prec & \rmIII_c & \iff & 2 \le b \le c+2 \,,\\
  \rmII_b & \prec & \rmIV_d & \iff & 1 \le b \le d-1 \,,\\
  \rmIII_c & \prec & \rmIV_d & \iff & c+2 \le d \,.
\end{array}\]
Comparing with the result of Example \ref{eg:CY1}, we find that not all the relations on $\bfN_D$ are polarized.  Furthermore, the polarized relation fails to be transitive: $\rmII_0 \prec \rmII_1 \prec \rmIV_2$, but $\rmII_0 \not\prec \rmIV_2$, and so is not a partial order.
\end{example}

%------------------------------------------------------------------------------
\section{The classical case} \label{S:HS}
%------------------------------------------------------------------------------

In this section we study the ``classical case'' that $D$ is Hermitian symmetric, and $T^h D = D$.  This includes the cases that $D$ is the period domain parameterizing polarized Hodge structures with Hodge numbers $(g,g)$ or $(1,m,1)$ (which corresponds to curves and principally polarized abelian varieties, and K3--surfaces, respectively).%\footnote{However, in the latter case we revert to $G(\bR)^+$-orbits and conjugacy classes.}

It will be helpful to first review commuting horizontal $\tSL(2)$'s associated with strongly orthogonal roots.  (The discussion in \S\ref{S:rtsl2} is terse; see \cite[\S6]{KR1} for more detail.)

%------------------------------------------------------------------------------
\subsection{Roots and horizontal $\tSL(2)$'s}\label{S:rtsl2}
%------------------------------------------------------------------------------
Let $\sR \subset \fh^*$ denote the roots of $\fg_\bC$.  Given a root $\a\in\sR$, let $\fg^\a \subset \fg_\bC$ denote the root space.  Then $\fg^\a$ is 1--dimensional, and $[\fg^\a,\fg^\b]$ is nonzero if and only if $\a+\b$ is a root.

Let 
\[
  \fsl^\a(2,\bC) \ := \ \fg^\a \,\op\,[\fg^\a,\fg^{-\a}] \,\op\, \fg^{-\a}
  \ \simeq \ \fsl(2,\bC)
\]
denote the associated 3--dimensional subalgebra.  If $\a(\ttE_\varphi) = 1$, then we can choose a DKS--triple $\{\overline\sE_\a , \sZ_\a , \sE_\a\}$ spanning the subalgebra $\fsl^\a(2,\bC)$ so that $\sE_\a \in \fg^{-\a} \subset \fg^{-1}_\varphi$ and $\sZ_\a\in\bi\ft \subset \fg^0_\varphi$ (which imply $\overline\sE_\a \in \fg^\a \subset \fg^1_\varphi$).  Note that $\fsl^\a(2,\bC)$ determines a horizontal $\tSL(2)$ 
\[
  \upsilon : \tSL(2,\bC) \ \to \ \tSL^\a(2,\bC) \ \subset \ G(\bC)
\]
at $\varphi$ (\S\ref{S:hSL2}).  The DKS--triple yields a Cayley transform
\begin{equation}\label{E:rb}
  \varrho_\a \ := \ \exp\,\bi\tfrac{\pi}{4} ( \overline\sE_\a + \sE_\a) \,.
\end{equation}

We say that two roots $\a,\b\in\sR$ are \emph{strongly orthogonal} if the two subalgebras $\fsl^\a_2$ and $\fsl^\b_2$ commute.  (Equivalently, neither $\a\pm\b$ are roots.)  In particular, a set of strongly orthogonal roots $\{ \b_1,\ldots,\b_s\} \in \sR$ satisfying $\b_i(\ttE_\varphi) = 1$ yields commuting horizontal $\tSL(2)$'s
\[
  \upsilon : \prod \tSL(2,\bC) \ \to \ \prod_i \tSL^{\b_i}(2,\bC) \ \subset \ G(\bC)
\] 
at $\varphi \in D$.  The corresponding nilpotent cone
\[
  \sigma \ = \ \tspan_{\bR_{>0}}\{N_1 , \ldots , N_s\}
\]
is given by 
\[
  N_i \ := \ \tAd_{\varrho_i}^{-1} \sE_i \ \in \ \fg_\bR \,.
\]
Moreover, given a subset $I \subset \{1,\ldots,s\}$, we see from \eqref{E:rho}, \eqref{E:FN} and \eqref{E:phiinf-ct} that the orbit $\cO_I \in \bfD_D$ polarized by the face
\[
  \s_I \ := \ \tspan_{\bR_{>0}}\{N_i \ | \ i \in I\}
\]
passes through the point
\[
  F_{I,\infty} \ = \ \phi_\infty(F_I,N_I) \,,
\]
where 
\[
  N_I \ := \ \sum_{i \in I} N_i \ \in \ \s_I
\]
and 
\[
  F_I \ := \ \varrho_I^{-1}\cdot\varphi
  \quad\hbox{with}\quad
  \varrho_I \ := \ \prod_{i\in I} \varrho_i \ \in \ G(\bC) \,.
\]

%------------------------------------------------------------------------------
\subsection{The case that $D$ is Hermitian symmetric} \label{S:hs}
%------------------------------------------------------------------------------

In the case that $D$ is Hermitian symmetric the infinitesimal period relation is trivial.  As a result every $G(\bR)^+$--orbit $\cO \subset \overline D$ is polarized; that is, 
\[
  \overline D \ = \ \bigsqcup_{\cO \subset \bfD_D} \cO \,.
\]

\noindent Our proof of Theorem \ref{T:hs} makes use of 

\begin{theorem}[Kor\'anyi--Wolf] \label{T:KW}
Assume that $D$ is Hermitian symmetric.
\begin{a_list}
\item The $G(\bR)^+$--orbits $\cO \subset \overline D$ are linearly ordered.  That is, they may be enumerated so that 
\[
  D \ < \ \cO_1 \ < \cdots < \ \cO_s \,.
\]
\item Moreover, there exists a set $\{\b_1,\ldots,\b_s\} \subset \sR$ of strongly orthogonal roots such that $\b_i(\ttE_\varphi) = 1$ and the orbit $\cO_i$ passes through the point $\varrho_i \circ \cdots \varrho_2 \circ \varrho_1(\varphi) \in \check D$, where $\varrho_i$ is the Cayley transform \eqref{E:rb} associated with $\b_i$.
\end{a_list}
\end{theorem}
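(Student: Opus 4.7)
\medskip

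The plan is to derive the statement from the classical structure theory of Hermitian symmetric spaces, organized in a way that matches the Hodge-theoretic conventions of \S\ref{S:rtsl2}. First I would record the key simplification: when $D$ is Hermitian symmetric, $T^h D = TD$ forces the Hodge decomposition \eqref{E:gphi} to collapse to $\fg_\bC = \fg^{-1}_\varphi \oplus \fg^0_\varphi \oplus \fg^1_\varphi$, so every root $\alpha \in \sR$ takes one of the values $\alpha(\ttE_\varphi) \in \{-1,0,1\}$. The noncompact positive roots are precisely $\sR^+_\tnc = \{\alpha \in \sR \mid \alpha(\ttE_\varphi) = 1\}$, and the horizontal $\fsl^\alpha(2,\bC)$'s of \S\ref{S:rtsl2} correspond bijectively to $\alpha \in \sR^+_\tnc$.

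Next I would construct the strongly orthogonal roots $\{\beta_1,\ldots,\beta_s\}$ inductively, following Harish-Chandra and Moore. Take $\beta_1 \in \sR^+_\tnc$ to be a maximal element (with respect to the standard partial order on $\sR$), and having chosen $\beta_1,\ldots,\beta_i$, let $\beta_{i+1}$ be a maximal element of $\{\alpha \in \sR^+_\tnc \mid \alpha \text{ strongly orthogonal to } \beta_1,\ldots,\beta_i\}$ when this set is nonempty. This procedure terminates at some $s = \tilde r(\fg_\bR)$, the real rank of $\fg_\bR$ (assuming $\fg_\bR$ is simple; the general case follows by handling simple factors separately). The Moore-Harish-Chandra argument shows the $\beta_i$ are automatically strongly orthogonal and that $s$ agrees with the real rank.

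The core of the proof is part (a). Here I would invoke Wolf's theorem on boundary components of bounded symmetric domains (as developed in Wolf-Kor\'anyi): every $G(\bR)^+$-orbit $\cO \subset \overline D$ contains a point of the form $\varrho_I \cdot \varphi$ for some $I \subset \{1,\ldots,s\}$, and two such points $\varrho_I \cdot \varphi, \varrho_J \cdot \varphi$ lie in the same orbit if and only if $|I| = |J|$. This is proved using the Harish-Chandra embedding $D \hookrightarrow \fg^{-1}_\varphi$ (realizing $D$ as a bounded symmetric domain), together with the fact that the partial Cayley transform $\varrho_I$ sends $\varphi$ to the center of the characteristic $|I|$-dimensional polydisc spanned by the $\fsl^{\beta_i}(2)$'s, $i \in I$. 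The linear order then arises because a partial Cayley transform of lower level degenerates to one of higher level under a real one-parameter subgroup of the adjoint torus generated by $\sum_{i \in I \setminus I'} \sZ_{\beta_i}$, giving $\cO_{|I'|} \subset \overline{\cO_{|I|}}$ when $I' \subset I$. The enumeration $\cO_i = G(\bR)^+ \cdot (\varrho_i \circ \cdots \circ \varrho_1) \cdot \varphi$ with $0 \le i \le s$ then establishes (a), and part (b) is immediate from the construction.

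The main obstacle is the identification of \emph{all} boundary orbits with partial Cayley-transform images; this is the substantive input from Wolf's theory and requires analyzing the action of $G(\bR)^+$ on the topological boundary of the Harish-Chandra realization. Once this is granted, the linear ordering and the description of representatives follow mechanically from the commutativity and strong orthogonality of the $\varrho_{\beta_i}$'s established in \S\ref{S:rtsl2}. A clean route I would consider, if a self-contained argument is desired, is to compute the Jacobson-Morozov filtration of $N_I := \sum_{i \in I} N_i$ directly and verify that $W(N_I)$ has weight length $2|I|+1$, so the $|I|$ detects the orbit and the inclusion of index sets matches the closure relation.
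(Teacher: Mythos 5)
Your outline is essentially the paper's own proof: the paper's proof of Theorem \ref{T:KW} consists of a citation to \cite[Theorem 3.2.1]{MR2188135} and the original Kor\'anyi--Wolf papers \cite{MR0174787, MR0192002}, and what you sketch --- the collapse of the Hodge decomposition to three pieces, the Harish-Chandra/Moore construction of a maximal strongly orthogonal set of noncompact roots, and Wolf's classification of the boundary orbits by the level $|I|$ of the partial Cayley transform $\varrho_I$ --- is precisely the content of those references. (One small index slip: the degeneration you describe yields $\cO_{|I|} \subset \overline{\cO_{|I'|}}$ for $I' \subset I$, i.e.\ the deeper stratum lies in the closure of the shallower one, which is the opposite of the containment you wrote but consistent with the paper's ``opposite'' order convention on $\bfD$.)
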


\begin{proof}[Proof of Theorem \ref{T:KW}]
See \cite[Theorem 3.2.1]{MR2188135}, and the references therein (especially \cite{MR0174787, MR0192002}).
\end{proof}

\begin{proof}[Proof of Theorem \ref{T:hs}]
Assertion (i) of the theorem follows directly from Theorems \ref{T:1}, \ref{T:0} and \ref{T:KW}(a).  Likewise, part (ii) of the theorem is an immediate consequence of Theorem \ref{T:KW}(b) and the discussion of \S\ref{S:rtsl2}.  (In fact, the complete statement of \cite[Theorem 3.2.1]{MR2188135} implies that $\phi_\infty(F_I,N_I) \in \cO_i$, where $(F_I,N_I)$ is as defined in \S\ref{S:rtsl2} and $|I| = i$.)
\end{proof}

\begin{remark}\label{R:1cone}
It is a consequence of the properties of Cayley transforms that we may choose a (noncompact) real Cartan $\fh_\bR \subset \fg_\bR$ so that the root spaces $\fg^{\b_i}$ are defined over $\bR$.  Moreover, we may choose root vectors $N_i \in \fg^{\b_i}_\bR$ so that every polarized relation in $\Psi_D$ is realized by some face of the cone
\[
  \s \ = \ \tspan_{\bR_{>0}}\{N_1 , \ldots , N_s\} \,.
\]

For general domains, not necessarily Hermitian symmetric, no such single cone will exist.  Special cases in which such a cone does exist include the ``nearly classical'' case that $D$ is a period domain parameterizing polarized Hodge structures with $\bh = (2,m,2)$ (Examples \ref{eg:H1}, \ref{eg:242}, \ref{eg:H2} and \ref{eg:H3}), see \cite[\S5.3]{BPR} and \cite[\S5.7]{KR1}.

See \cite[\S6]{KR1} for a thorough discussion of Cayley transforms, the construction of nilpotent cones (underlying nilpotent orbits) from sets of strongly orthogonal noncompact roots, the corresponding polarized $\cO \in \bfD_D$ and the polarized relationships between these orbits.
\end{remark}

%------------------------------------------------------------------------------
\section{The other extreme: the case that $G^0(\bR)$ is a torus} \label{S:fd}
%------------------------------------------------------------------------------

Here we study the case where the stabilizer $G^0(\bR)$ of the Hodge structure $\varphi \in D$ is a torus, and $T^hD$ is bracket-generating. Under the latter assumption, this is as far from classical as it is possible to be in the following sense: the IPR is trivial when $D$ is Hermitian symmetric; that is, $T^hD = D$ has the maximal possible rank.  In contrast, the length of the filtration
\[
  T^hD \ \subset \ [T^hD , T^hD ] \ \subset \ 
  \left[ T^h D , [T^hD,T^h] \right] \ \subset \cdots \subset TD
\]
is maximized (under the bracket-generating assumption) by the case $G^0(\bR) = T$ as $D$ ranges over all Mumford--Tate domains with automorphism group $G(\bR)^+$.  This length may be thought of as measuring the degree to which the subbundle $T^hD$ fails to be involutive.  In the Hermitian symmetric case $T^hD$ is involutive, while in the case that $G^0(\bR)$ is a torus $T^hD$ is maximally non-involutive in the sense above.

Recall \eqref{E:tg00} that the Lie algebra $\fg^0_{\varphi,\bR}$ of $G^0(\bR)$ contains the compact Cartan subalgebra $\ft \subset \fg_\bR$.  Here we consider the case that the equality 
\begin{equation}\label{E:t=g00}
  \fg^0_{\varphi,\bR} \ = \ \ft
\end{equation}
holds.  Equivalently, the Weyl subgroup $\sW^0$ of \S\ref{S:po} is trivial, so that 
\[
  \cL_{\varphi,\ft} \ = \Lambda_{\varphi,\ft}\,.
\]  

The complexification $\fh = \ft\ot\bC$ is a Cartan subalgebra of $\fg_\bC$.  The equality \eqref{E:t=g00} holds if and only if the stabilizer of $F_{\varphi} \in \check D$ in $G(\bC)$ is a Borel subgroup $B$; that is, $\check D = G(\bC)/B$. Equivalently, there exists a choice of simple roots $\sS \subset \fh^*$ of $\fg_\bC$ such that 
\begin{equation}\label{E:a=1}
  \a\in\sR \quad\hbox{\emph{satisfies}}\quad 
  \a(\ttE_\varphi) \ = \ 1 \quad \hbox{\emph{if and only if}} \quad 
  \a\in \sS \,.
\end{equation}
(This equivalence requires the hypothesis that the IPR is bracket--generating.  See \cite[\S3.1.2]{SL2} and the references therein for further discussion.)

Let $\cP(\sS)$ denote the power set of the simple roots $\sS = \{\a_1,\ldots,\a_r\} \subset \fh^*$ of $\fg_\bC$.  Define a partial order $\le$ on $\cP(\sS)$ by declaring $\sS_1 \le \sS_2$ if and only if $\sS_1 \subseteq \sS_2$.  We say the relation $\sS_1 \le \sS_2$ is \emph{polarized} if the elements of $\sS_1$ are strongly orthogonal (\S\ref{S:rtsl2}) to the elements of $\sS_2 \backslash \sS_1$; in this case we write $\sS_1 \preceq \sS_2$.  Note that $\preceq$ is also a partial order, since given $\sS_1\subseteq \sS_2 \subseteq \sS_3$ with $\sS_1$ [resp. $\sS_2$] strongly orthogonal to $\sS_2\setminus \sS_1$ [resp. $\sS_3\setminus \sS_2$], we have $\sS_1$ strongly orthogonal to $\sS_2 \setminus \sS_1 \cup \sS_3 \setminus \sS_2 = \sS_3 \setminus \sS_1$.

\begin{remark}\label{R:sosr}
If $\a_i,\a_j \in \sS$ are two simple roots of $\fg_\bC$, then $\a_i - \a_j$ is never a root.  So $\a_i$ and $\a_j$ are strongly orthogonal if and only if $\a_i+\a_j$ is not a root.
\end{remark}

\begin{proposition}\label{P:fd}
There is a natural bijection between $\cL_{\varphi,\ft} = \Lambda_{\varphi,\ft} \simeq \Psi_D$ and the power set $\cP(\sS)$ that preserves both the relations $\le$ and the polarized relations $\preceq$.
\end{proposition}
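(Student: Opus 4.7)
The plan is to verify in sequence: (i) the bijection $\cP(\sS)\xrightarrow{\sim}\cL_{\varphi,\ft}$ via $I\mapsto\fl_I$ (the standard Levi with simple roots $I$); (ii) that the relation $\le$ on $\Psi_D$ matches set inclusion; and (iii) that the polarized relation $\preceq$ matches the cross-strong-orthogonality condition.

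For (i), any $\fl_\bR\in\cL_{\varphi,\ft}$ contains $\ft$, so $\fl_\bC$ is determined by its Levi subsystem $\sR_\fl\subset\sR$. Setting $\ttE':=\pi^\tss_\fl(\ttE_\varphi)$, the element $\ttE_\varphi-\ttE'$ is central in $\fl_\bC$, so $\a(\ttE')=\a(\ttE_\varphi)$ for $\a\in\sR_\fl$; hence the positive system on $\fl^\tss_\bC$ cut out by $\sZ=2\ttE'$ restricts that on $\fg_\bC$. The distinguished requirement $\a(\sZ)\in\{1,2\}$ on simple roots of $\fl^\tss$ forces $\a(\ttE_\varphi)=1$; if such $\a$ were non-simple in $\fg$ we would have $\a=\b+\gamma$ for positive roots $\b,\gamma$ with $\a(\ttE_\varphi)\ge 2$, a contradiction. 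So simple roots of $\fl^\tss$ lie in $\sS$ by \eqref{E:a=1}, and since $\fl$ is a Levi, $\sR_\fl=\sR\cap\bZ I$ for $I=\{\text{simple roots of }\fl^\tss\}\subset\sS$, i.e.\ $\fl=\fl_I$. The converse is immediate, and $\sW^0=\{1\}$.

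For (ii), I claim $\tilde\fl_I=\fl_I$. Writing $\ttE_\varphi=\ttE_I^c+\ttE_I^\tss$ with $\ttE_I^\tss:=\pi^\tss_{\fl_I}(\ttE_\varphi)$ and $\ttE_I^c\in\fz_{\fl_I}$ (the center of $\fl_I$), we have $2\ttE_\varphi-\sZ=2\ttE_I^c$ by \eqref{SE:sZ}. Solving $\a_i(\ttE_I^\tss)=1$ on $I$ gives $\ttE_I^\tss=\sum_{\a_i\in I}c_i H_{\a_i}$ with all $c_i>0$ (since the inverse of a Cartan matrix has positive entries); combined with $\a_j(H_{\a_i})\le 0$ for distinct simple $\a_i,\a_j$, this yields $\a_j(\ttE_I^\tss)\le 0$ and hence $\a_j(\ttE_I^c)\ge 1$ for every $\a_j\in\sS\setminus I$. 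Since any $\a\in\sR\setminus\sR_{\fl_I}$ expands as $\a=\sum n_i\a_i$ with all $n_i$ of one sign and some $n_j\ne 0$ for $\a_j\not\in I$, we get $\a(\ttE_I^c)=\sum_{\a_j\not\in I}n_j\,\a_j(\ttE_I^c)\ne 0$. Hence $\tilde\fl_I=Z_{\fg_\bC}(\ttE_I^c)=\fl_I$, so $[\fl_{I_1}]\le[\fl_{I_2}]\iff\fl_{I_1}\subseteq\fl_{I_2}\iff I_1\subseteq I_2$.

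For the $(\impliedby)$ part of (iii), given $I_1\subseteq I_2$ with $I_1$ cross-strongly-orthogonal to $I_2\setminus I_1$, the Dynkin pieces decouple and $\fl_{I_2}^\tss=\fl_{I_1}^\tss\op\fl_{I_2\setminus I_1}^\tss$ as commuting semisimple ideals. The principal horizontal $\tSL(2)$'s in $\fl_{I_1}$ and $\fl_{I_2\setminus I_1}$ (classified via \eqref{E:FNvl}) commute, and the cone $\sigma$ they span satisfies $\psi^\circ(\bR_{>0}N_{I_1})=[\fl_{I_1}]$ and $\psi^\circ(\sigma)=[\fl_{I_2}]$ -- the latter because the sum of distinguished grading elements from commuting semisimple summands is distinguished in the sum. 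For $(\implies)$, Theorem~\ref{T:4} realizes the polarized relation by a cone $\hat\sigma$ from commuting horizontal $\tSL(2)$'s, each of which by (i) is $\upsilon_{J_j}$ for some $J_j\subset\sS$. The bracket $[N_{J_j},Y_{J_k}]=\sum_{\a\in J_j}c_\a\,\a(Y_{J_k})\,E_{-\a}=0$ forces $\a(Y_{J_k})=0$ for all $\a\in J_j$; since $\a(H_\b)\le 0$ for distinct simple $\a,\b$ with equality iff they are strongly orthogonal, this forces the $J_j$'s pairwise disjoint and pairwise cross-strongly-orthogonal. The same distinguished-grading calculation gives $\psi^\circ(\hat\sigma)=[\fl_{\sqcup_j J_j}]$ and $\psi^\circ(\hat\tau_K)=[\fl_{\sqcup_{j\in K} J_j}]$; matching with $[\fl_{I_2}]$ and $[\fl_{I_1}]$ yields $I_2=\sqcup_j J_j$ and $I_1=\sqcup_{j\in K} J_j$, whence $I_1$ and $I_2\setminus I_1$ are cross-strongly-orthogonal.

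The main obstacle is the Lie-theoretic identity $\tilde\fl_I=\fl_I$ in step (ii): without it, $\le$ could admit extra pairs coming from diagonal Levis strictly larger than $\fl_{I_2}$, breaking the order-compatibility of the bijection. The key input is the elementary positivity $c_i>0$ coupled with non-positivity of off-diagonal Cartan integers, which propagates to $\a(\ttE_I^c)\ne 0$ for every $\a\not\in\sR_{\fl_I}$ and identifies $\tilde\fl_I$ as the centralizer of a suitably ``generic'' element of $\fz_{\fl_I}$.
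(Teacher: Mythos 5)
Your proof is correct and follows essentially the same route as the paper: the bijection via the distinguished-grading condition together with \eqref{E:a=1}, the key identity $\tilde\fl_I=\fl_I$ via positivity (your inverse-Cartan-matrix computation is the same fact as the paper's fundamental-weight argument from \S\ref{S:wt}), and the polarized relations via Theorem \ref{T:4}, Bala--Carter, and non-adjacency of simple roots in the Dynkin diagram. The only blemish is the phrase ``$\a(\sZ)\in\{1,2\}$'' in step (i): the correct statement is that $\a(\sZ)=2\a(\ttE_\varphi)$ is a positive even integer and generation of $\fl_+$ by $\fl_2$ forces $\a(\sZ)=2$ on simple roots of $\fl^\tss$, which is exactly what the paper's dimension count $\tdim\,\fl^0=\tdim\,\fl^1$ encodes.
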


\begin{corollary}\label{C:fd}
The relations $\leq$ and $\preceq$ on $\Psi_D$ are partial orders.
\end{corollary}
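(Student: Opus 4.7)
The plan is to derive Corollary \ref{C:fd} as an immediate consequence of Proposition \ref{P:fd}. Since Proposition \ref{P:fd} furnishes a bijection $\Psi_D \simeq \cP(\sS)$ that preserves the relation $\le$, it suffices to verify that the corresponding relation on $\cP(\sS)$, which by definition is set inclusion $\sS_1 \subseteq \sS_2$, is a partial order. But inclusion of subsets of any set is manifestly reflexive, antisymmetric, and transitive, so the conclusion transfers to $\Psi_D$ via the bijection.

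More concretely, I would proceed in three short steps. First, invoke Proposition \ref{P:fd} to obtain a bijection $\Psi_D \to \cP(\sS)$ that sends $[\fl] \mapsto \sS_{[\fl]}$ and satisfies $[\fl_1] \le [\fl_2]$ if and only if $\sS_{[\fl_1]} \subseteq \sS_{[\fl_2]}$. Second, observe that reflexivity ($[\fl] \le [\fl]$) corresponds to $\sS_{[\fl]} \subseteq \sS_{[\fl]}$, transitivity is the standard transitivity of $\subseteq$, and antisymmetry is the standard antisymmetry of $\subseteq$ (which also matches the Corollary stated after Lemma \ref{L:tilde}). Third, conclude that $\le$ on $\Psi_D$ satisfies all three partial-order axioms.

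There is essentially no obstacle here beyond having Proposition \ref{P:fd} in hand; the work lies entirely in that proposition (which must pin down the bijection and verify that the defining condition $\fl_1 \subset w(\tilde\fl_2)$ translates to the inclusion of sets of simple roots). Once the proposition is granted, the corollary is a one-line transport of structure along the bijection. I would not expect to need to re-prove antisymmetry from scratch, since it has already been established in general (see the corollary after Lemma \ref{L:tilde}); the new content that the torus case provides is transitivity, which would otherwise fail (compare Examples \ref{eg:CY0} and \ref{eg:CY2}), and this transitivity is what we extract for free from $(\cP(\sS),\subseteq)$.
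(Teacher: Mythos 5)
Your proof is correct and matches the paper's (implicit) argument: Corollary \ref{C:fd} is stated as an immediate consequence of Proposition \ref{P:fd}, obtained by transporting the partial-order structure of $(\cP(\sS),\subseteq)$ along the relation-preserving bijection, exactly as you describe. Your remark that antisymmetry already holds in general and that the genuinely new content is transitivity is also consistent with the paper.
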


\begin{proof}
\emph{Step 1.}  First we will establish the bijection 
\begin{equation}\label{E:bij}
  \cL_{\varphi,\ft} \ \leftrightarrow \ \cP(\sS)\,.
\end{equation}  
Fix a Levi subalgebra $\ft \subset \fl_\bR \subset \fg_\bR$.  Let $\sR' \subset\sR$ denote the roots of $\fl_\bC$.  Let $\fb \subset \fg_\bC$ be the Lie algebra of $B$.  Note that $\fb' = \fb\cap\fl_\bC^\tss$ is a Borel subalgebra of $\fl^{\tss}_\bC$.  Equivalently, $\fl^{\tss}_\bC$ admits a choice of simple roots $\sS'\subset\sR'$ with the properties
\begin{subequations} \label{SE:b}
\begin{eqnarray}
  \label{E:b>0}
  \b\in\sS' & \hbox{implies} & \b(\ttE_\varphi) > 0 \,,\\
  \label{E:b=1}
  \b\in\sR' \tand
  \b(\ttE_\varphi) \ = \ 1 & \hbox{implies} &
  \b\in \sS' \,.
\end{eqnarray}
\end{subequations}  
Let $\fl_\bC^\tss = \op\,\fl^p$ be the $\ttE_\varphi$--eigenspace decomposition.  Then \eqref{E:b>0} holds if and only if $\fl^0$ is the Cartan subalgebra $\fh' = \fh \cap \fl^\tss_\bC$; in particular, 
\begin{equation}\label{E:bhl}
  \tdim\,\fl^0 \ = \ \tdim\,\fh' \ = \ |\sS'| \,.
\end{equation}
Note that   
\begin{equation} \nonumber
  \fl^1 \ = \ \bigoplus_{\mystack{\b\in\sR'}{\b(\ttE_\varphi)=1}} \fg^\b 
  \ \subset \ \fl^\tss_\bC \,.
\end{equation}
Recollect that $\sZ$ is a distinguished grading element of $\fl^\tss_\bC$ if and only if $\tdim\,\fl^0 = \tdim\,\fl^1$ (\S\ref{S:dge}).  Equations \eqref{E:a=1}, \eqref{E:b=1} and \eqref{E:bhl} imply that this is equivalent to $\sS' \subset \sS$.  This establishes the bijection \eqref{E:bij}.  

For later use we note that the argument above establishes
\begin{equation}\label{E:bor2}
  \fl^1 \ = \ \bigoplus_{\b\in\sS'} \fg^\b \,.
\end{equation}

\smallskip

\emph{Step 2.}  Given two Levis $\fl_1 , \fl_2 \in \cL_{\varphi,\ft}$, let $\sS_1 , \sS_2 \subset \sS$ denote the corresponding simple roots.  Note that $\fl_1 \subset \fl_2$ if and only if $\sS_1 \subset \sS_2$.  So to prove that the bijection \eqref{E:bij} preserves the relations it suffices to show that 
\begin{equation}\label{E:=t}
  \tilde \fl \ = \ \fl \quad\hbox{for all} \quad 
  \fl \ \in \ \cL_{\varphi,\ft}\,.
\end{equation}

Both $\fb \cap \fl$ and $\fb \cap \tilde\fl$ are Borel subalgebras of $\fl$ and $\tilde\fl$, respectively.  These Borel subalgebras uniquely determine simple roots $\sS' \subset \tilde\sS \subset \sR$ of $\fl$ and $\tilde\fl$, respectively, with the property that 
\begin{equation}\label{E:bor1}
  2\a(\ttE_\varphi) \ = \ \a(\sZ) > 0 \,,
\end{equation}
for all $\a\in\tilde\sS$ (cf. \S\ref{S:rt} and \eqref{E:tL}).  By definition $\sZ \in \fl^\tss_\bC$.  If $\tilde \fl \not= \fl$, then there exists a fundamental weight $\w_i$ of $\tilde\fl^\tss$ so that $\w_i(\sZ) = 0$ (\S\ref{S:wt}(a)).  However, \eqref{E:bor1} and \S\ref{S:wt}(b) imply no such $\w_i$ exists.  This establishes \eqref{E:=t}.  

\smallskip

\emph{Step 3.}  It remains to show that \eqref{E:bij} preserves the polarized relations.  First suppose $[F_1,N_1] \prec [F_2,N_2] \in \Psi_D$.  Let $\fl_1 \subset \fl_2 \in \cL_{\varphi,\ft}$ denote the corresponding Levis and $\sS_1 \subset \sS_2 \subset \sS$ their simple roots.  We need to show the roots of $\sS_1$ are strongly orthogonal to the roots of $\sS' = \sS_2\backslash\sS_1$. 

By Theorem \ref{T:4} the polarized relation $[F_1,N_1] \prec [F_2,N_2]$ may be realized by commuting horizontal $\tSL(2)$'s.  Equivalently, there exist commuting DKS--triples
\begin{equation}\label{E:2dks}
  \{ \overline\sE_1 , \sZ_1 , \sE_1 \} \tand 
  \{ \overline\sE' , \sZ' , \sE' \} 
\end{equation}
(these span the two commuting horizontal $\fsl(2,\bC)$'s) such that
\[
  \overline\sE_2 \ = \ \overline\sE_1 + \overline\sE' \,,\quad
  \sZ_2 \ = \ \sZ_1 + \sZ' \,,\quad
  \sE_2 \ = \ \sE_1 + \sE'
\]
is a third DKS--triple and 
\[
  F_i \ = \ \varrho_i^{-1} \varphi \tand 
  N_i \ = \ \tAd_{\varrho_i}^{-1} \sE_i
\]
with $\varrho_i$ given by \eqref{E:rho}, for $i=1,2$, \cf\eqref{E:FN}

Let $\{ \overline\sE , \sZ , \sE\}$ be any one of the three DKS--triples above.  By virtue of the isomorphism $\Upsilon_D \simeq \Lambda_{\varphi,\ft}$ of \S\ref{S:1varSL2}, each triple determines a Levi $\fl \in \cL_{\varphi,\ft}$.  Let $\sS_\fl \subset \sS$ denote the simple roots.   Then $\sE \in \fl^{-1}$ and \eqref{E:bor2} imply that 
\[
  \sE \ = \ \sum_{\a\in\sS_\fl}\sE^\a \,,\quad \sE^\a \in \fg^{-\a}
\]
is a sum of \emph{simple} root vectors.  The Bala--Carter Theorem \cite{MR0417306, MR0417307} asserts that $\fl_\bC$ is the minimal Levi subalgebra containing $\sE$.  This forces $\sE^\a \not=0$ for all $\a \in \sS_\fl$.  

Therefore,
\begin{equation}\label{E:2E}
  \sE_1 \ = \ \sum_{\a\in\sS_1} \sE^\a \tand 
  \sE' = \sum_{\b\in\sS'} \sE^\b \,,
\end{equation}
and the $\sE^\a$ and $\sE^\b$ are nonzero.  A necessary condition for the two DKS--triples to commute is $[\sE^\a,\sE^\b]=0$ for all $\a\in\sS_1$ and $\b\in\sS_2\backslash\sS_1$.  Equivalently (\S\ref{S:rt}), $\a+\b$ is not a root.  By Remark \ref{R:sosr}, this implies the roots of $\sS_1$ and $\sS' = \sS_2\backslash \sS_1$ are strongly orthogonal.

Conversely, if $\sS_1$ and $\sS' = \sS_2\backslash\sS_1$ are strongly orthogonal, the Levis $\fl_1 , \fl' \in \cL_{\varphi,\ft}$ with simple roots $\sS_1$ and $\sS'$ we have two horizontal DKS--triples \eqref{E:2dks} at $\varphi$ satisfying \eqref{E:2E}.  We claim that the two DKS--triples commute.  It is immediate from the definition of strong orthogonality that $\{ \overline \sE_1 , \sE_1\}$ and $\{\overline\sE' , \sE'\}$ commute.  The Jacobi identity then implies that $\sZ_1 = [\overline\sE_1 , \sE_1]$ commutes with $\{\overline\sE' , \sE'\}$ and that $\sZ' = [\overline\sE',\sE']$ commutes with $\{ \overline \sE_1 , \sE_1\}$.
\end{proof}

%------------------------------------------------------------------------------
\begin{example}[Weight 7 and $\bh = (1,\ldots,1)$] \label{eg:borel7}
%------------------------------------------------------------------------------

Let $D$ be the period domain parameterizing Hodge structures of weight $n=7$ with Hodge numbers $\bh = (1,\ldots,1)$.  In this case the full automorphism group $G(\bR) = \tSp_8\bR$ is connected, and $G^0(\bR)$ is a torus.  By Proposition \ref{P:fd} the elements of $[F,N] \in \Psi_D$ are indexed by subsets $\sS' \subset \sS$.  For each subset we give the corresponding Hodge diamond, the distinguished semisimple element $\sZ$, the codimension of the $G(\bR)$--orbit $\cO = \phi_\infty([F,N])$ in $\check D$ (see \cite[\S4]{MR3331177}), and the partially signed Young diagram classifying the nilpotent conjugacy class $\cN = \pi([F,N]) \in \bfN$ (see \cite[\S2.3]{BPR}).  Making use of Proposition \ref{P:fd}, we can read the relations $\leq$ and $\preceq$ on $\Psi_D$ off from the simple roots $\sS'$.  Finally, in order order to specify $\sZ$, we let $\{\ttS^1,\ldots,\ttS^4\}$ denote the basis of $\fh$ dual to the simple roots $\sS = \{\a_1,\ldots,\a_4\}\subset \fh^*$.  Then $\ttE_\varphi = \ttS^1 + \cdots+ \ttS^4$.  We will employ the shorthand $\sum_i a_i \ttS^i = (a_1,\ldots,a_4)$ to denote the grading element $\sZ = 2\,\pi^\tss_\fl(\ttE_\varphi)$.  
\begin{center}
\begin{footnotesize}
\setlength{\unitlength}{8pt}
\begin{picture}(23,9)(0,0)
\put(0,0){\vector(1,0){8}} \put(0,0){\vector(0,1){8}}  
\put(0,7){\circle*{0.4}} \put(1,6){\circle*{0.4}} \put(2,5){\circle*{0.4}} \put(3,4){\circle*{0.4}} \put(4,3){\circle*{0.4}} \put(5,2){\circle*{0.4}} \put(6,1){\circle*{0.4}} \put(7,0){\circle*{0.4}}
\put(5.5,6.0){$\sS' = \emptyset$, $\sZ = 0$}
\put(5.5,4.5){$\tcodim\,\cO = 0$}
\put(14,4.0){\tiny{$\yng(1,1,1,1,1,1,1,1)$}}
\end{picture}
\hsp{20pt}
\begin{picture}(23,9)(0,0)
\put(0,0){\vector(1,0){8}} \put(0,0){\vector(0,1){8}}
\put(0,7){\circle*{0.4}} \put(1,6){\circle*{0.4}} \put(2,5){\circle*{0.4}} \put(3,3){\circle*{0.4}} \put(4,4){\circle*{0.4}} \put(5,2){\circle*{0.4}} \put(6,1){\circle*{0.4}} \put(7,0){\circle*{0.4}}
\put(6.5,6.0){$\sS' = \{\a_4\}$}
\put(6.5,4.5){$\sZ = (0,0,-1,2)$}
\put(6.5,3.0){$\tcodim\,\cO = 1$}
%\put(6.5,3.0){$\mathrm{sg}\,F^4 \leadsto (2,1)$}
\put(16,4.0){\tiny{$\young(+-,\blank,\blank,\blank,\blank,\blank,\blank)$}}
\end{picture}
\vspace{5pt}\\
\begin{picture}(23,9)(0,0)
\put(0,0){\vector(1,0){8}} \put(0,0){\vector(0,1){8}}  \put(0,6){\circle*{0.4}} \put(1,7){\circle*{0.4}} \put(2,5){\circle*{0.4}} \put(3,4){\circle*{0.4}} \put(4,3){\circle*{0.4}} \put(5,2){\circle*{0.4}} \put(6,0){\circle*{0.4}} \put(7,1){\circle*{0.4}}
\put(8.5,6.0){$\sS' = \{\a_1\}$}
\put(8.5,4.5){$\sZ = (2,-1,0,0)$}
\put(8.5,3.0){$\tcodim\,\cO = 1$}
%\put(8.5,3.0){$\mathrm{sg}\,F^7 \leadsto (0,0)$}
%\put(8.5,1.5){$\mathrm{sg}\,F^1 \leadsto (3,3)$}
\put(18,4.0){\tiny{$\young(-+,-+,\blank,\blank,\blank,\blank)$}}
\end{picture}
\hsp{20pt}
\begin{picture}(23,9)(0,0)
\put(0,0){\vector(1,0){8}} \put(0,0){\vector(0,1){8}}  \put(0,7){\circle*{0.4}} \put(1,5){\circle*{0.4}} \put(2,6){\circle*{0.4}} \put(3,4){\circle*{0.4}} \put(4,3){\circle*{0.4}} \put(5,1){\circle*{0.4}} \put(6,2){\circle*{0.4}} \put(7,0){\circle*{0.4}}
\put(8.5,6.0){$\sS' = \{\a_2\}$}
\put(8.5,4.5){$\sZ = (-1,2,-1,0)$}
\put(8.5,3.0){$\tcodim\,\cO = 1$}
%\put(8.5,3.0){$\mathrm{sg}\,F^6 \leadsto (1,0)$}
%\put(8.5,1.5){$\mathrm{sg}\,F^2 \leadsto (3,2)$}
\put(19,4.0){\tiny{$\young(+-,+-,\blank,\blank,\blank,\blank)$}}
\end{picture}
\vspace{5pt}\\
\begin{picture}(23,9)(0,0)
\put(0,0){\vector(1,0){8}} \put(0,0){\vector(0,1){8}}
\put(0,7){\circle*{0.4}} \put(1,6){\circle*{0.4}} \put(2,4){\circle*{0.4}} \put(3,5){\circle*{0.4}} \put(4,2){\circle*{0.4}} \put(5,3){\circle*{0.4}} \put(6,1){\circle*{0.4}} \put(7,0){\circle*{0.4}}
\put(8.5,6.0){$\sS' = \{\a_3\}$}
\put(8.5,4.5){$\sZ = (0,-1,2,-2)$}
\put(8.5,3.0){$\tcodim\,\cO = 1$}
%\put(8.5,3.0){$\mathrm{sg}\,F^5 \leadsto (1,1)$}
%\put(8.5,1.5){$\mathrm{sg}\,F^3 \leadsto (2,2)$}
\put(19,4.0){\tiny{$\young(-+,-+,\blank,\blank,\blank,\blank)$}}
\end{picture}
\hsp{20pt}
\begin{picture}(23,9)(0,0)
\put(0,0){\vector(1,0){8}} \put(0,0){\vector(0,1){8}}
\put(0,6){\circle*{0.4}} \put(1,7){\circle*{0.4}} \put(2,4){\circle*{0.4}} \put(3,5){\circle*{0.4}} \put(4,2){\circle*{0.4}} \put(5,3){\circle*{0.4}} \put(6,0){\circle*{0.4}} \put(7,1){\circle*{0.4}}
\put(8.5,6.0){$\sS' = \{\a_1,\a_3\}$}
\put(8.5,4.5){$\sZ = (2,-2,2,-2)$}
\put(8.5,3.0){$\tcodim\,\cO = 2$}
%\put(8.5,3.0){$\mathrm{sg}\,F^7 \leadsto (0,0)$}
%\put(8.5,1.5){$\mathrm{sg}\,F^5 \leadsto (1,1)$}
\put(19,4.0){\tiny{$\young(-+,-+,-+,-+)$}}
\end{picture}
\vspace{5pt}\\
\begin{picture}(23,9)(0,0)
\put(0,0){\vector(1,0){8}} \put(0,0){\vector(0,1){8}}
\put(0,6){\circle*{0.4}} \put(1,7){\circle*{0.4}} \put(2,5){\circle*{0.4}} \put(3,3){\circle*{0.4}} \put(4,4){\circle*{0.4}} \put(5,2){\circle*{0.4}} \put(6,0){\circle*{0.4}} \put(7,1){\circle*{0.4}}
\put(8.5,6.0){$\sS' = \{\a_1,\a_4\}$}
\put(8.5,4.5){$\sZ = (2,-1,-1,2)$}
\put(8.5,3.0){$\tcodim\,\cO = 2$}
%\put(8.5,3.0){$\mathrm{sg}\,F^7 \leadsto (0,0)$}
%\put(8.5,1.5){$\mathrm{sg}\,F^4 \leadsto (2,1)$}
\put(19,4.0){\tiny{$\young(+-,-+,-+,\blank,\blank)$}}
\end{picture}
\hsp{20pt}
\begin{picture}(23,9)(0,0)
\put(0,0){\vector(1,0){8}} \put(0,0){\vector(0,1){8}}
\put(0,7){\circle*{0.4}} \put(1,5){\circle*{0.4}} \put(2,6){\circle*{0.4}} \put(3,3){\circle*{0.4}} \put(4,4){\circle*{0.4}} \put(5,1){\circle*{0.4}} \put(6,2){\circle*{0.4}} \put(7,0){\circle*{0.4}}
\put(8.5,6.0){$\sS' = \{\a_2,\a_4\}$}
\put(8.5,4.5){$\sZ = (-1,2,-2,2)$}
\put(8.5,3.0){$\tcodim\,\cO = 2$}
%\put(8.5,3.0){$\mathrm{sg}\,F^6 \leadsto (1,0)$}
%\put(8.5,1.5){$\mathrm{sg}\,F^4 \leadsto (2,1)$}
\put(19,4.0){\tiny{$\young(+-,+-,+-,\blank,\blank)$}}
\end{picture}
\vspace{5pt}\\
\begin{picture}(23,9)(0,0)
\put(0,0){\vector(1,0){8}} \put(0,0){\vector(0,1){8}}
\put(0,5){\circle*{0.4}} \put(1,6){\circle*{0.4}} \put(2,7){\circle*{0.4}} \put(3,4){\circle*{0.4}} \put(4,3){\circle*{0.4}} \put(5,0){\circle*{0.4}} \put(6,1){\circle*{0.4}} \put(7,2){\circle*{0.4}}
\put(8.5,6.0){$\sS' = \{\a_1,\a_2\}$}
\put(8.5,4.5){$\sZ = (2,2,-2,0)$}
\put(8.5,3.0){$\tcodim\,\cO = 3$}
%\put(8.5,3.0){$\mathrm{sg}\,F^7 \leadsto (0,0)$}
%\put(8.5,1.5){$\mathrm{sg}\,F^6 \leadsto (1,0)$}
\put(18,4.0){\tiny{$\yng(3,3,1,1)$}}
\end{picture}
\hsp{20pt}
\begin{picture}(23,9)(0,0)
\put(0,0){\vector(1,0){8}} \put(0,0){\vector(0,1){8}}
\put(0,7){\circle*{0.4}} \put(1,4){\circle*{0.4}} \put(2,5){\circle*{0.4}} \put(3,6){\circle*{0.4}} \put(4,1){\circle*{0.4}} \put(5,2){\circle*{0.4}} \put(6,3){\circle*{0.4}} \put(7,0){\circle*{0.4}}
\put(8.5,6.0){$\sS' = \{\a_2 , \a_3\}$}
\put(8.5,4.5){$\sZ = (-2,2,2,-4)$}
\put(8.5,3.0){$\tcodim\,\cO = 3$}
%\put(8.5,3.0){$\mathrm{sg}\,F^6 \leadsto (1,0)$}
%\put(8.5,1.5){$\mathrm{sg}\,F^5 \leadsto (1,1)$}
\put(18.5,4.0){\tiny{$\yng(3,3,1,1)$}}
\end{picture}
\vspace{5pt}\\
\begin{picture}(23,9)(0,0)
\put(0,0){\vector(1,0){8}} \put(0,0){\vector(0,1){8}}
\put(0,7){\circle*{0.4}} \put(1,6){\circle*{0.4}} \put(2,2){\circle*{0.4}} \put(3,3){\circle*{0.4}} \put(4,4){\circle*{0.4}} \put(5,5){\circle*{0.4}} \put(6,1){\circle*{0.4}} \put(7,0){\circle*{0.4}}
\put(8.5,6.0){$\sS' = \{\a_3,\a_4\}$}
\put(8.5,4.5){$\sZ = (0,-3,2,2)$}
\put(8.5,3.0){$\tcodim\,\cO = 4$}
%\put(8.5,3.0){$\mathrm{sg}\,F^5 \leadsto (1,1)$}
%\put(8.5,1.5){$\mathrm{sg}\,F^4 \leadsto (1,1)$}
\put(18.0,4.0){\tiny{$\young(-+-+,\blank,\blank,\blank,\blank)$}}
\end{picture}
\hsp{20pt}
\begin{picture}(23,9)(0,0)
\put(0,0){\vector(1,0){8}} \put(0,0){\vector(0,1){8}}
\put(0,6){\circle*{0.4}} \put(1,7){\circle*{0.4}} \put(2,2){\circle*{0.4}} \put(3,3){\circle*{0.4}} \put(4,4){\circle*{0.4}} \put(5,5){\circle*{0.4}} \put(6,0){\circle*{0.4}} \put(7,1){\circle*{0.4}}
\put(8.5,6.0){$\sS' = \{\a_1,\a_3,\a_4\}$}
\put(8.5,4.5){$\sZ = (2,-4,2,2)$}
\put(8.5,3.0){$\tcodim\,\cO = 5$}
\put(18.0,4.0){\tiny{$\young(-+-+,-+,-+)$}}
%\put(8.5,3.0){$\mathrm{sg}\,F^7 \leadsto (0,0)$}
%\put(8.5,1.5){$\mathrm{sg}\,F^5 \leadsto (1,1)$}
%\put(8.5,0.0){$\mathrm{sg}\,F^4 \leadsto (1,1)$}
\end{picture}
\vspace{5pt}\\
\begin{picture}(23,9)(0,0)
\put(0,0){\vector(1,0){8}} \put(0,0){\vector(0,1){8}}
\put(0,5){\circle*{0.4}} \put(1,6){\circle*{0.4}} \put(2,7){\circle*{0.4}} \put(3,3){\circle*{0.4}} \put(4,4){\circle*{0.4}} \put(5,0){\circle*{0.4}} \put(6,1){\circle*{0.4}} \put(7,2){\circle*{0.4}}
\put(8.5,6.0){$\sS' = \{\a_1,\a_2,\a_4\}$}
\put(8.5,4.5){$\sZ = (2,2,-3,2)$}
\put(8.5,3.0){$\tcodim\,\cO = 5$}
\put(18.0,4.0){\tiny{$\young(\blank\blank\blank,\blank\blank\blank,+-)$}}
%\put(8.5,3.0){$\mathrm{sg}\,F^7 \leadsto (0,0)$}
%\put(8.5,1.5){$\mathrm{sg}\,F^6 \leadsto (1,0)$}
%\put(8.5,0.0){$\mathrm{sg}\,F^4 \leadsto (2,1)$}
\end{picture}
\hsp{20pt}
\begin{picture}(23,9)(0,0)
\put(0,0){\vector(1,0){8}} \put(0,0){\vector(0,1){8}}
\put(0,4){\circle*{0.4}} \put(1,5){\circle*{0.4}} \put(2,6){\circle*{0.4}} \put(3,7){\circle*{0.4}} \put(4,0){\circle*{0.4}} \put(5,1){\circle*{0.4}} \put(6,2){\circle*{0.4}} \put(7,3){\circle*{0.4}}
\put(8.5,6.0){$\sS' = \{\a_1,\a_2,\a_3\}$}
\put(8.5,4.5){$\sZ = (2,2,2,-6)$}
\put(8.5,3.0){$\tcodim\,\cO = 6$}
%\put(8.5,3.0){$\mathrm{sg}\,F^6 \leadsto (0,0)$}
%\put(8.5,1.5){$\mathrm{sg}\,F^5 \leadsto (1,1)$}
\put(18.0,4.0){\tiny{$\young(-+-+,-+-+)$}}
%\put(8.5,0.0){$\mathrm{sg}\,F^4 \leadsto (2,2)$}
\end{picture}
\vspace{5pt}\\
\begin{picture}(23,9)(0,0)
\put(0,0){\vector(1,0){8}} \put(0,0){\vector(0,1){8}}
\put(0,7){\circle*{0.4}} \put(1,1){\circle*{0.4}} \put(2,2){\circle*{0.4}} \put(3,3){\circle*{0.4}} \put(4,4){\circle*{0.4}} \put(5,5){\circle*{0.4}} \put(6,6){\circle*{0.4}} \put(7,0){\circle*{0.4}}
\put(8.5,6.5){$\sS' = \{\a_2,\a_3,\a_4\}$}
\put(8.5,5.0){$\sZ = (-5,2,2,2)$}
\put(8.5,3.5){$\tcodim\,\cO = 9$}
\put(8.5,1.0){\tiny{$\young(+-+-+-,\blank,\blank)$}}
%\put(8.5,3.0){$\mathrm{sg}\,F^6 \leadsto (1,0)$}
%\put(8.5,1.5){$\mathrm{sg}\,F^5 \leadsto (1,0)$}
%\put(8.5,0.0){$\mathrm{sg}\,F^4 \leadsto (1,0)$}
\end{picture}
\hsp{20pt}
\begin{picture}(23,9)(0,0)
\put(0,0){\vector(1,0){8}} \put(0,0){\vector(0,1){8}}
\put(0,0){\circle*{0.4}} \put(1,1){\circle*{0.4}} \put(2,2){\circle*{0.4}} \put(3,3){\circle*{0.4}} \put(4,4){\circle*{0.4}} \put(5,5){\circle*{0.4}} \put(6,6){\circle*{0.4}} \put(7,7){\circle*{0.4}}
\put(8.5,6.0){$\sS' = \{\a_1,\a_2,\a_3,\a_4\}$}
\put(8.5,4.5){$\sZ = (2,2,2,2)$}
\put(8.5,3.0){$\tcodim\,\cO = 16$}
\put(8.5,1.0){\tiny{$\young(-+-+-+-+)$}}
%\put(8.5,1.5){$\mathrm{sg}\,F^4 \leadsto (0,0)$}
%\put(8.5,3.0){$\check\cN = \cN_\tprin$}
\end{picture}
\end{footnotesize}
\end{center}
Note that the two $[F,N]$ indexed by $\sS' = \{\a_1\}$ and $\sS' = \{\a_3\}$, respectively, have the same Young diagram.  That is, $\pi(\{\a_1\}) = \pi(\{\a_3\})$.  Likewise $\pi(\{\a_1,\a_2\}) = \pi(\{\a_2,\a_3\})$.  Thus, $\pi$ fails to be injective.
\end{example}

\begin{example}[Weight $5$ and $\bh=(1,\ldots,1)$]
In this example we answer a question asked of the authors by E. Cattani: given an inclusion $D_M = M(\bR)^+ .\varphi \subset G(\bR)^+ .\varphi =D$ of M-T domains, and a nilpotent $N\in \mathfrak{m}_{\mathbb{Q}}$ such that the (pre-)boundary component $\tilde{B}(N)$ is nonempty, must $\tilde{B}_M(N)$ be nonempty? Here $\tilde{B}(N)\subset \tilde{D}$ is the set of flags for which $e^{zN}F$ is a nilpotent orbit in $D$, and $\tilde{B}_M(N)\subset \check{D}_M$ is the subset yielding nilpotent orbits on $D_M$.

Assume $G(\bR)$ is connected; then according to \cite[(VI.B.11)]{\GGK}, we have $\check{D}_M \cap D = M(\bR).\varphi$. Clearly if this is connected, then it is $D_M$, and $\tilde{B}_M(N) = \tilde{B}(N) \cap \check{D}_M$. We show that even in this case the answer can be negative.

Let $D$ be the period domain for HS of weight $5$ with all Hodge numbers $1$, on $(V,Q)=\oplus_{i=0}^2 (V_i,Q_i),$ where the $(V_i,Q_i)$ are isomorphic symplectic planes. Let $\varphi = \oplus_{i=0}^2 \varphi_i$ be the sum of these CM HS of respective types $(i,5-i)+(5-i,i)$, with M-T groups $T_i$ (compact $1$-tori non-isomorphic over $\mathbb{Q}$). Set $M=\mathrm{SL}(V_0)\times T_1 \times T_2$, and put $N:=\left( \tiny \begin{matrix} 0 & 0 \\ 1 & 0 \end{matrix} \right)$ in the $\mathfrak{sl}(V_0)$-factor of $\mathfrak{m}$; then $D_M=M(\bR).\varphi$ is a $1$-dimensional subdomain with trivial horizontal distribution hence no boundary components. That is, $\tilde{B}_M(N)=\emptyset$.

Next let $\varphi ' = \oplus_{i=0}^2 \varphi_i  ' \in D$ be the CM HS given by $\varphi_0 ' = (\varphi_2)^{\frac{1}{3}}$, $\varphi_1 ' = \varphi_1$, $\varphi_2 ' = (\varphi_0)^3$; i.e. the Hodge numbers on $V_0$ and $V_2$ have been swapped.  Then $D_M ' := M(\bR).\varphi '$ is a \emph{horizontal} $1$-dimensional subdomain (with different compact dual!), which \emph{is} a nilpotent orbit in $D$ under $N$. So $\tilde{B}(N) \neq \emptyset$.

As in the last example, $\Psi_D \twoheadrightarrow \bfN_D$ is not bijective.  One wonders the extent to which this is responsible for the negative answer.
\end{example}

See Example \ref{eg:G2_12} for another domain with $G^0(\bR)$ a torus.

%------------------------------------------------------------------------------
\section{The secondary poset and further examples} \label{S:eg}
%------------------------------------------------------------------------------

In this final section we turn at last to the classification of nilpotent orbits and cones as promised in the Introduction.  The secondary poset and its refinements are defined in \S\ref{S:cubes}, with the remaining subsections devoted to examples.

%------------------------------------------------------------------------------
\subsection{Admissible $n$-cubes and polarizable cones} \label{S:cubes}
%------------------------------------------------------------------------------

Returning once more to the subject of $\S$\ref{S:nc+pr}, let $\sigma=\bR_{>0}\langle N_1,\ldots,N_n\rangle \subset \fg_{\bR}$ be a nilpotent cone\footnote{More precisely, we mean a nilpotent cone together with an ordering of its generators, but will quotient this ordering out below.} underlying a nilpotent orbit; assume in particular that $\mathrm{rk}(\sigma)=n$ so that $\sigma$ is simplicial. Let $\cC_k$ denote the poset consisting of functions $\be: \{1,\ldots,n\}\to\{0,1\}$, with the natural partial order: $\be\leq \be' \iff \be(j)\leq \be'(j) \;(\forall j)$. Write $\underline{0}$ and $\underline{1}$ for the constant functions, and $\be^i$ for $\delta_j^i$; and set $|\be|:=\sum_{i=1}^n \be(i)$. Recalling the map $\psi^{\circ}:\Gamma_{\sigma}\to \Psi_D$ defined in \eqref{E:glob}, we may consider the composite
\begin{equation*} \mu_{\sigma}: \cC_n \overset{\cong}{\to}\Gamma_{\sigma} \overset{\psi^{\circ}}{\to}\Psi_D , \end{equation*}
which maps $\underline{0}\mapsto  [\{0\}]$ (the trivial Levi), and relations $\leq$ to polarized relations $\preceq$.\begin{definition} \label{D:onc}
An {\it ordered $n$-cube} is a map $\mu:\cC_n\to\Psi_D$ with:
\begin{a_list}
\item $\mu^{-1} [\{0\}] = \{\underline{0}\}$; and
\item $\be \leq \be' \implies \mu(\be)\preceq \mu(\be')$.
\end{a_list}
We shall term $\mu$ {\it polarizable} if $\mu=\mu_{\sigma}$ for some ($n$-dimensional, simplicial) polarizable nilpotent cone $\sigma$.
\end{definition}
The obvious question here is how to find the polarizable ordered $n$-cubes (hard) among the ordered $n$-cubes (easy, once $(\Psi_D,\preceq)$ is known). In particular, how much can be deduced from ``combinatorial'' (finite) methods alone? To this end, we introduce an integer invariant on $\Psi_D$, defined as follows.
\par Given a Levi $\ell\in\cL_{\varphi,\ft}$, with $F,N,\tilde{\fl}=\oplus \tilde{\fl}^{p,p} = \oplus \fg_{(F,N)}^{p,p}$ as in \eqref{E:diag}, choose a decomposition $\cR_{\tilde{\fl}}=\cR^+_{\tilde{\fl}}\cup \cR^-_{\tilde{\fl}}$ of the roots so that $\cR^+_{\tilde{\fl}}$ contains the weights of the $\{\tilde{\fl}^{p,p}\}_{p>0}$; and let
\begin{itemize}
\item $\cW^+_{\tilde{\fl}} := \left\{ w\in \cW_{\tilde{\fl}} \left| w(\cR^+_{\tilde{\fl}})\supset \cR^+_{\tilde{\fl}^{0,0}}\right.\right\} $,
\item $\Delta(w):= w(\cR^+_{\tilde{\fl}})\cap \cR^-_{\tilde{\fl}}$,
\item $\cW^{\#}_{\tilde{\fl}} := \left\{\left. w\in \cW^+_{\tilde{\fl}} \right| \Delta(w)\subset \cR_{\tilde{\fl}^{-1,-1}} \right\}$, and
\item $\cW^{\#}_{\tilde{\fl}}(d):= \left\{\left. w\in \cW^{\#}_{\tilde{\fl}} \right| |\Delta(w)|=d \right\}$.
\end{itemize}
\begin{definition} \label{D:cap}
The {\it capacity} of $\fl$ is the nonnegative integer \[ \mathrm{cap}(\fl):=\max\left\{ d \left| \cW^{\#}_{\tilde{\fl}}(d)\neq \emptyset\right. \right\} .\]
\end{definition}
Since this is invariant under the Weyl group $\cW^0$, it yields a function \[ \mathrm{cap}:\Psi_D \to \bZ_{\geq 0} .\]
\begin{definition} \label{D:adm}
An ordered $n$-cube is {\it admissible} if $|\be|\leq \mathrm{cap}(\mu(\be))$ for each $\be\in \cC_n$.
\end{definition}
It is convenient to choose a system of representatives $\fl_{\be}\in \cL_{\varphi,\ft}$ with $[\fl_{\be} ]=\mu(\be)$ and $\be'\leq \be \implies \tilde{\fl}_{\be'}\subseteq \tilde{\fl}_{\be}$. (Note that this is possible by Theorem \ref{T:2}.) For each $\fl_{\be}$, write $N_{\be}$ for a corresponding nilpotent (and $N_i :=N_{\be^i}$); in particular, recall that this must belong to an {\it open} orbit of $L^{0,0}_{\be} (\bR)^+$ on $\fl_{\be}^{-1,-1}$. (For simplicity, here we can simply restrict the Hodge-Tate grading on $\tilde{\fl}_{\underline{1}}$ associated to $(F,W(N_{\underline{1}}))$ to all $\fl_{\be},\tilde{\fl}_{\be}$.) Set $I_{\be}:=\left\{ i\in \{1,\ldots,n\}\left| \be(i)=1\right. \right\}$.
\begin{definition} \label{D:str}
An ordered $n$-cube is {\it strongly admissible} if, for each $\be$, there exist $\{ g_i^{\be}\in \tilde{L}^{0,0}_{\be}(\bR)^+\}_{i\in I_{\be}}$ such that:
\begin{a_list}
\item the $\{ N_i^{\be}:=\mathrm{Ad}(g^{\be}_i )N_i\}_{i\in I_{\be}}$ commute and are linearly independent; and
\item $\sigma_{\be'}^{\be} =\sum \bR_{>0} \be'(i) N_i^{\be} \subset \tilde{L}^{0,0}_{\be}(\bR)^+.N_{\be'} \; (\forall \be'\leq \be )$.
\end{a_list}
\end{definition}
This condition is more subtle and difficult to check than admissibility, as we shall see in $\S$\ref{S:G2}. In a sense it is the full ``first Hodge-Riemann bilinear relation'' for $n$-cubes; that is, the only remaining obstacle to polarizability is a ``positivity condition''.
\begin{theorem} \label{T:cubes}
For ordered $n$-cubes, \[ \text{polarizability}\implies \text{strong admissibility} \implies \text{admissibility} .\]
\end{theorem}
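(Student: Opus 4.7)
I would prove Theorem~\ref{T:cubes} by handling the two implications separately.

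\medskip

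\emph{Polarizable $\Rightarrow$ strongly admissible.}  Suppose $\mu = \mu_\sigma$ for a simplicial cone $\sigma = \bR_{>0}\langle M_1,\ldots,M_n\rangle$ of rank $n$.  For each $\be \in \cC_n$, the face $\sigma_\be$ carries the natural PMHS $(F_{\sigma_\be},W(N_{\sigma_\be}))$ with $N_{\sigma_\be} = \sum_{i \in I_\be} M_i$, yielding a canonical Levi representative $\fl_\be \in \cL_{\varphi,\ft}$ of $\mu(\be)$.  The proof of Theorem~\ref{T:2} shows these can be arranged to satisfy the nesting $\tilde \fl_{\be'} \subseteq \tilde \fl_\be$ for $\be' \le \be$.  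With this choice the $N_i := N_{\be^i}$ coincide with the cone generators $M_i$, so taking $g_i^\be = \mathrm{id}$, condition~(a) of Definition~\ref{D:str} follows from the cone axioms (commuting, linearly independent generators), while~(b) holds trivially since $\sigma^\be_{\be'} = \sigma_{\be'}$ is itself a face of $\sigma$ and hence contains $N_{\be'}$.  Because the conditions of Definition~\ref{D:str} are invariant (mod conjugating the $g_i^\be$'s) under the $\sW^0$-action on the system of Levi representatives, this establishes strong admissibility in general.

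\medskip

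\emph{Strongly admissible $\Rightarrow$ admissible: construction.}  Fix $\be$, write $m := |I_\be|$, and consider the commuting, linearly independent nilpotents $\{N^\be_i\}_{i \in I_\be} \subset \tilde \fl_\be^{-1,-1}$ supplied by strong admissibility.  I would first extend each $N^\be_i$ to an $\fsl_2$-triple inside $\tilde\fl_\be$ via Jacobson--Morozov, and then use Kostant's and Rao's theorems --- as invoked in the proof of Lemma~\ref{L:tilde} --- together with commutativity of the $N^\be_i$, to choose the neutral elements so that the full triples pairwise commute.  The iterated Cayley transform procedure of \S\ref{S:multvarSL2}, performed internally to the sub-Hodge structure on $\tilde \fl_\be$ (\S\ref{S:subH}) rather than on all of $\fg$, then produces commuting horizontal DKS-triples $\{(\overline{\sE}_i, \sZ_i, \sE_i)\}_{i \in I_\be}$ at $(\varphi,\ft)$, all lying in $\tilde\fl_\be$, with $\sE_i \in \tilde\fl_\be \cap \fg^{-1}_\varphi$.

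\medskip

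\emph{Strongly admissible $\Rightarrow$ admissible: the Weyl element.}  To finish, I would apply the Bala--Carter argument of the proof of Proposition~\ref{P:fd}: each $\sE_i$ is a sum $\sE_i = \sum_{\alpha \in \sS_i} \sE^{-\alpha}_i$ of nonzero root vectors over the simple roots $\sS_i$ (of weight~$1$ under $\ttE_\varphi$) of the minimal Levi of $\tilde\fl_\be$ containing $\sE_i$.  The commutativity relations $[\sE_i,\sE_j]=0$ and $[\sE_i,\sZ_j]=0$ for $i \ne j$ then force pairwise strong orthogonality of the root sets $\sS_i$ and $\sS_j$, via the same Jacobi-identity computation used there.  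Choosing one $\beta_i \in \sS_i$ per $i$, the $m$ roots $\{\beta_i\}_{i\in I_\be}$ are pairwise strongly orthogonal and of weight one; the product $w := \prod_i s_{\beta_i}$ is independent of ordering, and after translating back to the $(F_\be,N_\be)$-grading via Cayley its inversion set $\Delta(w)$ is an $m$-element subset of $\cR_{\tilde\fl_\be^{-1,-1}}$.  The defining property $w(\cR^+_{\tilde\fl_\be})\supseteq \cR^+_{\tilde\fl_\be^{0,0}}$ places $w$ inside $\cW^+_{\tilde\fl_\be}$ (reflecting in strongly orthogonal weight-one roots preserves the positive weight-zero system up to reassembly).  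Hence $w \in \cW^\#_{\tilde\fl_\be}(m)$, and $\mathrm{cap}(\mu(\be)) \ge m = |\be|$.

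\medskip

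\emph{Main obstacle.}  The hard part will be the middle two paragraphs.  The iterated Cayley transform of \S\ref{S:multvarSL2} was developed for commuting horizontal $\mathrm{SL}(2)$'s inside the ambient $G$, and care is required to carry it out internally to $\tilde\fl_\be$ with its induced Hodge structure --- the multivariable $\mathrm{SL}(2)$-orbit theorem for Mumford--Tate domains established in \S\ref{S:CKSMTD} is precisely what makes this work.  The Bala--Carter/strong-orthogonality step, modeled on Prop.~\ref{P:fd} (where $G^0(\bR)$ was a torus, so that simple roots of every Levi automatically had weight $\pm 1$), must be extended to the present setting where the simple roots of $\tilde\fl_\be$ need not all be of weight $\pm 1$; the key observation is that only the weight-one part of the root system enters the Cayley-transformed picture, and the Jacobi-identity argument proceeds essentially verbatim on that piece.
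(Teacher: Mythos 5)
Your first implication follows the paper's route, but the step ``(b) holds trivially since $\sigma^\be_{\be'}=\sigma_{\be'}$ is itself a face of $\sigma$ and hence contains $N_{\be'}$'' is not a proof: condition (b) of Definition \ref{D:str} demands that the \emph{entire open face} lie in a single $\tilde L^{0,0}_\be(\bR)^+$--orbit, not merely that it contain $N_{\be'}$. The paper closes this by observing that $\psi^\circ(\sigma_{\be'})=\mu_\sigma(\be')=[\fl_{\be'}]$ with $N_{\be'}$ general in $\fl_{\be'}^{-1,-1}$ and $\sigma_{\be'}\subset\overline\sigma\subset\tilde\fl_{\underline1}^{-1,-1}$ (essentially Theorem \ref{T:R}), which is also what produces the conjugating elements $\gamma_i$ that you suppress by asserting $N_i=M_i$.

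The second implication is where your argument genuinely breaks. First, commuting nilpotents need not lie in commuting $\fsl_2$--triples: already for $N_1=E_{13}$, $N_2=E_{23}$ in $\fsl_3$ the centralizer of any $\fsl_2$ through $E_{13}$ is a torus, so no commuting companion triple through $E_{23}$ exists. Strong admissibility hands you only commuting, linearly independent nilpotents in $\tilde\fl_\be^{-1,-1}$, and your appeal to Kostant/Rao plus the iterated Cayley machinery to upgrade these to commuting horizontal $\tSL(2)$'s is unjustified (the CKS construction would replace the $N^\be_i$ by modified nilpotents $\hat N_i$, defeating the purpose). Second, even granting pairwise strongly orthogonal roots $\beta_i$, the element $w=\prod_i s_{\beta_i}$ does not have $|\Delta(w)|=m$: a single reflection $s_\beta$ in a non-simple root already has inversion set strictly larger than $\{\beta\}$ (e.g.\ $s_{\alpha_1+\alpha_2}$ in $A_2$ has three inversions), and there is no reason for $\Delta(w)$ to lie inside $\cR_{\tilde\fl_\be^{-1,-1}}$. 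So your $w$ need not belong to $\cW^\#_{\tilde\fl_\be}(m)$, and the bound $\mathrm{cap}(\mu(\be))\ge|\be|$ does not follow. The paper's proof bypasses all of this: condition (a) alone gives an abelian subalgebra of dimension $|\be|$ in $\tilde\fl_\be^{-1,-1}$, and by \cite[Thm.\ 3.32]{MR3217458} the kernel of $\delta:\bigwedge^k\tilde\fl_\be^{-1,-1}\to\tilde\fl_\be^{-2,-2}\otimes\bigwedge^{k-2}\tilde\fl_\be^{-1,-1}$ is $\bigoplus_{w\in\cW^\#_{\tilde\fl_\be}(k)}\tspan\{\tilde L^{0,0}_\be(\bR)^+\cdot\hat\fn_w\}$, so the existence of such a subalgebra with $k=|\be|$ is \emph{equivalent} to $\cW^\#_{\tilde\fl_\be}(|\be|)\neq\emptyset$, i.e.\ to $\mathrm{cap}(\mu(\be))\ge|\be|$. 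You would need either to cite that result or to reprove it; the Weyl element it produces is not the product of reflections you construct.
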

\begin{proof}
To deal with the first implication, suppose $\mu=\mu_{\sigma}$, with $\sigma = \sum \bR_{>0}.N_i '$, and take $N_{\sigma}\in \sigma$, $F_{\sigma}\in \tilde{B}_{\bR}(\sigma)^{\circ}$. It will suffice to check (a) and (b) in Definition \ref{D:str} for $\be=\underline{1}$, since the faces of $\sigma$ are polarizable. We may assume (as in the proof of Theorem \ref{T:2}) that $(F_{\sigma},N_{\sigma})$ arises from $\fl_{\underline{1}}$ via \eqref{E:FN}. Writing $\fl_{\underline{1}}=\oplus \fl_{\underline{1}}^{p,p}$ for the corresponding decomposition, it is clear that $N_{\sigma}\in\fl^{-1,-1}_{\underline{1}} \implies \sigma \subset \tilde{\fl}_{\underline{1}}^{-1,-1} \implies \sigma \subset \tilde{L}_{\underline{1}}^{0,0}(\bR)^+.N_{\sigma}.$ More generally, $\sigma_{\be'} = \sum \bR_{>0}\be'(i)N_i ' \subset \tilde{L}_{\underline{1}}^{0,0}(\bR)^+.N_{\be'}$ follows from $\psi^{\circ}(\sigma_{\be'})=\mu_{\sigma}(\be')=[\fl_{\be'}]$, since $N_{\be'}$ is general in $\fl^{-1,-1}_{\be'}$ and $\sigma_{\be'}\subset \bar{\sigma}\subset \tilde{\fl}^{-1,-1}_{\underline{1}}$. In particular, there exist $\gamma_i \in \tilde{L}^{0,0}_{\underline{1}}(\bR)^+$ such that $\gamma_i N_i ' = N_i $, and (taking $N_i^{\underline{1}} := N_i '$, $g_i^{\underline{1}}:=\gamma_i^{-1}$) (a) and (b) are immediate.
\par For the second implication, observe that for each $\be$, (a) in Definition \ref{D:str} says that $\tilde{\fl}^{-1,-1}_{\be}$ contains an abelian subalgebra of dimension $|\be|$. But by the proof of Theorem 3.32 of \cite{MR3217458}, we have 
\begin{equation} \label{E:ker}
\ker \left\{ \delta: \bigwedge^k \tilde{\fl}^{-1,-1}_{\be} \to \tilde{\fl}^{-2,-2}_{\be} \otimes \bigwedge^{k-2} \tilde{\fl}^{-1,-1}_{\be} \right\} = \bigoplus_{w\in \cW_{\tilde{\fl}_{\be}}^{\#}(k)} \mathrm{span}\left\{ \tilde{L}^{0,0}_{\be}(\bR)^+.\hat{\fn}_w \right\}
\end{equation}
where $\hat{n}_w \subset \bigwedge^k \tilde{\fl}^{-1,-1}_{\be}$ is the top exterior power of $\fn_w = \oplus_{\delta \in \Delta(w)} (\tilde{\fl}^{-1,-1}_{\be})_{\delta}$.  The left-hand side of \eqref{E:ker} is nonzero iff an abelian subalgebra of dimension $k$ exists, while the right-hand side is nonzero iff $(\mathrm{cap}(\mu(\be))=)\,\mathrm{cap}(\fl_{\be})\geq k$.
\end{proof}

The symmetric group $\mathfrak{S}_n$ acts on $\cC_n$ (by $(\mathfrak{s}.\be)(j)=\be(\mathfrak{s}^{-1}j)$) and hence on the set of ordered $n$-cubes (by $(\mathfrak{s}.\mu)(\be)=\mu(\mathfrak{s}^{-1}.\be)$); obviously, polarizability, admissibility and strong admissibility are stable under $\mathfrak{S}_n$.
\begin{definition}\label{D:cube}
An {\it $n$-cube} $[\mu]$ is an $\mathfrak{S}_n$-equivalence class of ordered $n$-cubes.
\end{definition}
Now for each $I=\{i_1,\ldots, i_k\}\subseteq \{1,\ldots ,n\}$ there is an inclusion $\cC_k \overset{\imath_I}{\hookrightarrow}\cC_n$ defined by
\begin{equation*}
(\imath_I \be)(i):= \left\{ \begin{array}{cc} \be(j) & , \; i=i_j \\ 0 & , \; i\notin I . \end{array} \right.
\end{equation*}
There is a natural {\it inclusion relation} on the set of ordered cubes: given an ordered $k$-cube $\mu'$ and ordered $n$-cube $\mu$, we write $\mu' \leq \mu $ iff $k\leq n$ and $\mu' =\imath^*_I \mu$ for some $I$.
\begin{definition} \label{D:secpos}
The {\it secondary poset} $\tilde{\Psi}_D$ (or $\tilde{\Psi}_D^{\text{adm}}$) is the set of all admissible cubes $[\mu]$, with the inclusion relation.  Note that it is a poset by construction (even though neither $(\Psi_D,\preceq)$ nor $(\Psi_D,\leq)$ may be posets).  Write $\tilde{\Psi}_D^{\text{pol}}\subseteq \tilde{\Psi}_D^{\text{str}} \subseteq \tilde{\Psi}_D$ for the sub-posets of polarizable and strongly admissible cubes.
\end{definition}
\begin{example} \label{X:cubes}
The following $n$-cubes always belong to $\tilde{\Psi}_D^{\text{pol}}$:
\begin{a_list}
\item the trivial $0$-cube $\mu_0$. 
\item the $k$-cubes arising from $\tSL(2)^{\times k}$-orbits. (We mention this since the computation of $(\Psi_D,\preceq)$ produces a lot of $\tSL(2)^{\times 2}$-orbits in view of Theorem \ref{T:4}.)
\item for any polarized relation $[\fl']\preceq [\fl]$ in $\Psi_D$, the $2$-cube with $\mu(\be^1)=[\fl']$, $\mu(\be^2)=\mu(\underline{1})=[\fl]$. (Call this $\mu_{[\fl']\preceq [\fl]}$.)
\item for any $[\fl]\in \Psi_D$, with $k\leq \mathrm{cap}(\fl)$, the $k$-cube with $\mu(\be)=[\fl]$ ($\forall \be \neq \underline{0}$). (Call this $\mu_{[\fl]}^k$.)
\end{a_list}
Here (b) is immediate and (c) is essentially by the definition of polarized relations; while (d) is seen as follows: writing $[\fl]=[F,N]$, $\tilde{L}^{0,0}(\bR)^+.N$ is an open cone in $\tilde{\fl}^{-1,-1}_{\bR}$ hence contains a simplicial abelian (nilpotent) $k$-cone $\sigma \ni N$. (See the last step of the above proof.) Possibly shrinking this cone about $N$, we obtain the required positivity statement (that $F\in \tilde{B}(\sigma)$) as well as $\mu_{\sigma}=\mu^k_{[\fl]}$. Note that cones of types (c) and (d) typically don't arise from multi-$\tSL(2)$'s.
\end{example}

\begin{corollary} \label{C:finite}
$\tilde{\Psi}_D^{\cdots}$ is finite, and $(\tilde{\Psi}_D^{\cdots},\leq)$ surjects onto $(\Psi_D ,\preceq)$ {\rm[}resp. $(\mathbf{N}_D,\preceq)$, $(\mathbf{\Delta}_D,\preceq)${\rm ]} via the map $\Theta: \mu \mapsto \mu(\underline{1})$ {\rm [}resp. $\pi \circ \Theta$, $\phi_{\infty} \circ \Theta${\rm ]}. {\rm (}Here ``$\,\,\cdots$'' is ``adm'', ``str'', or ``pol''; and ``surjects'' means that every ``$\,\leq$'' maps to a ``$\,\preceq$'' and that every ``$\,\preceq$'' is obtained in this way.{\rm )}  Note that unlike $\pi$ and $\phi_{\infty}$, these are actual morphisms of posets.
\end{corollary}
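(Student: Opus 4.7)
The plan is to prove this in three steps: finiteness of $\tilde{\Psi}_D^{\text{adm}} \supseteq \tilde{\Psi}_D^{\text{str}} \supseteq \tilde{\Psi}_D^{\text{pol}}$; monotonicity of $\Theta$ (and its composites with $\pi$, $\phi_\infty$); and surjectivity on relations. First I would address finiteness. Since $\Psi_D$ is a finite set and each $\cC_n$ is finite, for any fixed $n$ the set of ordered $n$-cubes is automatically finite, and the only thing to check is that the ambient dimension $n$ is bounded. Admissibility forces $n = |\underline{1}| \leq \mathrm{cap}(\mu(\underline{1}))$, and the capacity function takes only finitely many values on the finite set $\Psi_D$; hence $\tilde{\Psi}_D^{\text{adm}}$ is finite, and Theorem \ref{T:cubes} then yields finiteness of the two smaller subposets.

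Next, for monotonicity of $\Theta$, I would fix an inclusion $\mu' \leq \mu$ in $\tilde{\Psi}_D^{\cdots}$ realized by an ordered representative $\mu' = \imath_I^*\mu$ for some $I \subseteq \{1,\ldots,n\}$, and set $\chi_I := \imath_I(\underline{1}_{|I|}) \in \cC_n$, the characteristic function of $I$. Then $\Theta(\mu') = \mu'(\underline{1}_{|I|}) = \mu(\chi_I)$ while $\Theta(\mu) = \mu(\underline{1}_n)$. Since $\chi_I \leq \underline{1}_n$ in the poset $\cC_n$, part (b) of Definition \ref{D:onc} immediately gives $\mu(\chi_I) \preceq \mu(\underline{1}_n)$, i.e.\ $\Theta(\mu') \preceq \Theta(\mu)$. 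Monotonicity of $\pi \circ \Theta$ and $\phi_\infty \circ \Theta$ then follows from the commutativity of \eqref{E:cd}, combined with the definitions of polarized relations on $\bfN_D$ and $\bfD_D$ as images (via $\pi^\circ$, $\phi_\infty^\circ$) of polarized relations on $\Psi_D$.

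Finally, for surjectivity on relations, given any nontrivial polarized relation $[\fl_1] \preceq [\fl_2]$ in $\Psi_D$, I would form the polarizable $2$-cube $\mu := \mu_{[\fl_1]\preceq[\fl_2]} \in \tilde{\Psi}_D^{\text{pol}}$ of Example \ref{X:cubes}(c); its $1$-cube restriction $\mu' := \imath_{\{1\}}^*\mu$ satisfies $\Theta(\mu') = [\fl_1]$ and $\Theta(\mu) = [\fl_2]$, so the inclusion $\mu' \leq \mu$ in $\tilde{\Psi}_D^{\text{pol}} \subseteq \tilde{\Psi}_D^{\text{str}} \subseteq \tilde{\Psi}_D^{\text{adm}}$ realizes the given relation. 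Reflexive relations $[\fl] \preceq [\fl]$ are handled by $\mu \leq \mu$ for the $1$-cube or $0$-cube representing $[\fl]$, which lies in $\tilde{\Psi}_D^{\text{pol}}$ by Example \ref{X:cubes}(a),(d). Surjectivity onto $(\bfN_D, \preceq)$ and $(\bfD_D, \preceq)$ then comes for free, since polarized relations on these sets are defined as images of polarized relations on $\Psi_D$. The only step that is not a direct unwinding of definitions is the finiteness argument, where one must genuinely exploit the capacity invariant together with Theorem \ref{T:cubes} to rule out unbounded $n$; everything else is essentially formal.
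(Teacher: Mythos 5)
Your proposal is correct and follows essentially the same route as the paper's (very terse) proof: finiteness from the boundedness of the capacity function, monotonicity of $\Theta$ from Definition \ref{D:onc}(b), and surjectivity on relations by exhibiting, for each polarized relation in $\Psi_D$, a polarizable $2$-cube realizing it. The only cosmetic difference is that you invoke Example \ref{X:cubes}(c) where the paper cites \ref{X:cubes}(b) (the multi-$\tSL(2)$ cubes); these are interchangeable here, since by Theorem \ref{T:4} every polarized relation is realized by a two-variable $\tSL(2)$-orbit.
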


\begin{proof}
Finiteness is immediate from the fact that $\mathrm{cap}[\fl]\leq \mathrm{cap}[\fg]<\infty$ for any $[\fl]\in \Psi_D$. The surjectivity statement follows from (b) in Definition \ref{D:onc} and Example \ref{X:cubes}(b).
\end{proof}

The computation of $\tilde{\Psi}_D^{\text{pol}}$ and its maps to $\mathbf{N}_D$ and $\mathbf{\Delta}_D$ might plausibly be regarded as a full solution,\footnote{However, there are invariants of nilpotent cones which are not well-defined on elements of $\tilde{\Psi}_D^{\text{pol}}$, such as the M-T group of the associated boundary component, or the Looijenga-Lunts group; this suggests a further refinement of the secondary poset, which we expect to address in a future work.} as far as the ``finite'' classification of nilpotent cones and their interaction with the $G(\bR)^+$-orbits on $\mathrm{Nilp}(\fg_{\bR})$ and $\check{D}$ are concerned. Moreover, $\tilde{\Psi}_D$ can be computed combinatorially, and comes remarkably close.

\begin{example} \label{X:f4}
($G=F_4$) To see in particular that $\tilde{\Psi}_D$ goes far beyond the multi--$\tSL(2)$'s involved in the construction of $(\Psi_D,\preceq)$, consider the case where $D$ is the $F_4$-adjoint (contact) domain \cite{KR1} parametrizing weight-two Hodge structures with Hodge numbers $(6,14,6)$. (Letting $\alpha_1,\alpha_2,\alpha_3,\alpha_4$ denote the simple roots, the grading element corresponding to $D$ is $\ttE=\ttS^1$.) Taking for $\fl$ the Levi (of type $A_2$) with simple roots $\alpha_1$ and $\alpha_1 + 3\alpha_2 + 4\alpha_3 + \alpha_4$, we have $\dim \fl^{-1,-1} = 2 = \dim (\fl^{\tss})^{0,0}$, but $\tilde{\fl}=\fg$ with $\tilde{\fl}^{-1,-1}=14$. Since $D$ is contact, the capacity is $\tfrac{1}{2} \cdot 14 = 7$; and so while the multi--$\tSL(2)$-orbits have dimension at most $\mathrm{rk}(G)=4$, we expect to find many admissible $7$-cubes with some of these polarizable (including $\mu^7_{[\fl]}$, as guaranteed by Example \ref{X:cubes}(d)).
\end{example}

\begin{remark} \label{R:f4}
The attentive reader will have noticed that we have said nothing about $G(\bR)^+$-conjugacy classes of polarizable nilpotent cones, concentrating instead on the coarser issue of what combinations of $G(\bR)^+$-conjugacy classes of 1-variable nilpotent orbits can appear on the faces. (This is of course valuable, as it determines, for an injective period map $(\Delta^*)^n \to \Gamma \backslash D$, the possible combinations of LMHS-types on the coordinate-$(\Delta^*)^k$'s.) The trouble is that the more refined classification certainly wouldn't be ``finite'', as the above example illustrates well: the space of abelian 7-dimensional subspaces in $\tilde{\fl}^{-1,-1}$ has dimension at least $7(14-7)-\binom{7}{2}=28$ (since $\dim \tilde{\fl}^{-2,-2} =1$), while the maximum dimension of an orbit of $\tilde{L}^{0,0}(\bR)^+$ on $Gr(7,\tilde{\fl}^{-1,-1})$ is $\dim \mathbb{P}\tilde{\fl}^{0,0} = \dim \tilde{\fl}^{0,0} - 1=21$.
\end{remark}

While we will not carry out large-scale computations of $\tilde{\Psi}_D^{\cdots}$ in this paper, in the remainder of this section we will present two examples which highlight aspects of the computation of $\tilde{\Psi}^{\text{str}}_D$ and $\tilde{\Psi}^{\text{pol}}_D$ once $\tilde{\Psi}_D$ is known.

%------------------------------------------------------------------------------
\subsection{Some exceptional Mumford--Tate domains} \label{S:G2}
%------------------------------------------------------------------------------

Let $G(\bR)$ be the (connected) noncompact real form of the exceptional, simple Lie group $G_2$ of rank two.  There are three Mumford--Tate domains $D$ (with bracket-generating IPR) for this group; they may be viewed as parameterizing Hodge structures on $V_\bR = \bR^7$ with Hodge numbers $\bh = (1,\ldots,1)$, $\bh=(1,2,1,2,1)$ and $\bh = (2,3,2)$, respectively.  In this section we will describe the the set $\Psi_D$, the relations $<$ and the polarized relations $\prec$ for each of these domains, and comment on the secondary posets $\tilde{\Psi}_D \supseteq \tilde{\Psi}_D^{\text{str}} \supseteq \tilde{\Psi}_D^{\text{pol}}$ introduced in $\S$\ref{S:cubes}.  The description of $\Psi_D$ will be given by the isomorphism \eqref{E:FNvl}.  To that end we recall the notation of \S\S\ref{S:rt} and \ref{S:weyl}, and that to describe a Levi $\fl \in \cL_{\varphi,\ft}$ representing $[F,N] \in \Psi_D$ it suffices to give simple roots $\sS' \subset \sR$ for $\fl_\bC$ (\S\ref{S:levi}). 

\begin{example}[Hodge numbers $\bh = (1,1,1,1,1,1,1)$] \label{eg:G2_12}
In this case we may choose our simple roots so that $\ttE_\varphi = \ttS^1+\ttS^2$, so that the subgroup $\sW^0 \subset \sW$ is trivial.  The poset $\Psi_D$ and the polarized relations are described in Proposition \ref{P:fd}; the three nontrivial elements are:
\begin{center}
\setlength{\unitlength}{10pt}
\begin{tabular}{rccc}
  & I & II & III \\ %\cline{2-4}
  \raisebox{6ex}[0pt]{$\Diamond(F,N) : \quad$}
  & \begin{picture}(7.5,7.5)
      \put(0,0){\vector(1,0){7}} \put(0,0){\vector(0,1){7}}  
      \put(0,6){\circle*{0.25}} \put(1,4){\circle*{0.25}} 
      \put(2,5){\circle*{0.25}} \put(3,3){\circle*{0.25}} 
      \put(4,1){\circle*{0.25}} \put(5,2){\circle*{0.25}} 
      \put(6,0){\circle*{0.25}}
    \end{picture} 
  & \begin{picture}(7.5,7.5)
       \put(0,0){\vector(1,0){7}} \put(0,0){\vector(0,1){7}}  
       \put(0,5){\circle*{0.25}} \put(1,6){\circle*{0.25}} 
       \put(2,2){\circle*{0.25}} \put(3,3){\circle*{0.25}} 
       \put(4,4){\circle*{0.25}} \put(5,0){\circle*{0.25}} 
       \put(6,1){\circle*{0.25}}
    \end{picture}
  & \begin{picture}(7,7.5)
      \put(0,0){\vector(1,0){7}} \put(0,0){\vector(0,1){7}}  
      \put(0,0){\circle*{0.25}} \put(1,1){\circle*{0.25}} 
      \put(2,2){\circle*{0.25}} \put(3,3){\circle*{0.25}} 
      \put(4,4){\circle*{0.25}} \put(5,5){\circle*{0.25}} 
      \put(6,6){\circle*{0.25}}
    \end{picture} 
  \\
  $\sS':\quad$ & $\{ \a_2 \}$ & $\{ \a_1 \}$ & $\{ \a_1\,,\,\a_2\}$ \\
  $\sZ:\quad$ & $-\ttS^1 + 2\ttS^2$ & $2\ttS^1-3\ttS^2$ & $2\ttS^1 + 2\ttS^2$
\end{tabular}
\end{center}
The nontrivial relations are 
\[
  \mathrm{I} , \mathrm{II} \ < \ \mathrm{III}\,,
\] 
but none of them are polarized. Note that for the corresponding orbits in $\overline{D}$, one has $D<\cO_{\mathrm{I}}<\cO_{\mathrm{II}}<\cO_{\mathrm{III}}$ \cite{MR3331177}.

Turning to the secondary poset, $\tilde{\Psi}_D = \tilde{\Psi}_D^{\text{str}} = \tilde{\Psi}_D^{\text{adm}}$ consists of the trivial $0$-cube $\mu_0$ and the $1$-cubes $\mu^1_{[\fl ]}$.  Denoting the latter by $\mu_1$, $\mu_2$, $\mu_3$, the poset is nothing but
$$ 
\xymatrixrowsep{0.3cm}
\xymatrixcolsep{0.8cm}
\xymatrix{
& \mu_1 \\ 
\mu_0 \ar [ur] \ar [rr] \ar [dr] & & \mu_3 \\ 
& \mu_2 .}
$$
\end{example}

\begin{example}[Hodge numbers $\bh = (1,2,1,2,1)$] \label{eg:G2_1}
In this case we may choose our simple roots so that $\ttE_\varphi = \ttS^1$.  The subgroup $\sW^0 \subset \sW$ is generated by the simple reflection $\{ (2) \}$.  There is only one nontrivial element in $\Psi_D$.  It is given by $\sS' = \{ \a_1 \}$ with $\sZ = 2\ttS^1 - 3\ttS^2$ and Hodge diamond
\begin{center}
\setlength{\unitlength}{10pt}
\begin{picture}(5,5)
  \put(0,0){\vector(1,0){5}} \put(0,0){\vector(0,1){5}}  
  \put(0,3){\circle*{0.25}} \put(1,1){\circle*{0.25}} 
  \put(1,4){\circle*{0.25}} \put(2,2){\circle*{0.25}} 
  \put(3,0){\circle*{0.25}} \put(3,3){\circle*{0.25}} 
  \put(4,1){\circle*{0.25}}
\end{picture}
\end{center}
For $\tilde{\Psi}_D = \tilde{\Psi}_D^{\text{str}} = \tilde{\Psi}_D^{\text{pol}}$, we simply have
$$ 
\xymatrixcolsep{0.8cm}
\xymatrix{
\mu_0 \ar[r] & \mu_1 .}
$$
\end{example}

\begin{example}[Hodge numbers $\bh = (2,3,2)$] \label{eg:G2_2}
(Note that this Mumford--Tate domain is a subdomain of the period domain in Example \ref{eg:H1} when $m=3$.)  In this case we may choose our simple roots so that $\ttE_\varphi = \ttS^2$, so that the subgroup $\sW^0 \subset \sW$ is generated by the simple reflection $(1)$.  The three nontrivial elements of $\Psi_D$, and their Hodge diamonds, are 
\begin{center}
\setlength{\unitlength}{10pt}
\begin{tabular}{c|ccc}
  & I & II & III \\ \hline
  \rb{$\Diamond(F,N)$}
  & \begin{picture}(3,3.5)
      \put(0,0){\vector(1,0){3}} \put(0,0){\vector(0,1){3}}  
      \put(0,1){\circle*{0.25}} \put(0,2){\circle*{0.25}} 
      \put(1,0){\circle*{0.25}} \put(1,1){\circle*{0.25}} 
      \put(1,2){\circle*{0.25}} \put(2,0){\circle*{0.25}} 
      \put(2,1){\circle*{0.25}}
    \end{picture}
  & \begin{picture}(3,3.5)
      \put(0,0){\vector(1,0){3}} \put(0,0){\vector(0,1){3}}  
      \put(0,0){\circle*{0.25}} \put(0,1){\circle*{0.25}} 
      \put(1,0){\circle*{0.25}} \put(1,1){\circle*{0.25}} 
      \put(1,2){\circle*{0.25}} \put(2,1){\circle*{0.25}} 
      \put(2,2){\circle*{0.25}}
    \end{picture} 
  & \begin{picture}(3,3.5)
      \put(0,0){\vector(1,0){3}} \put(0,0){\vector(0,1){3}}  
      \put(0,0){\circle*{0.25}} \put(1,1){\circle*{0.25}} 
      \put(2,2){\circle*{0.25}}
    \end{picture} 
  \\
  $\sS'$ & $\{\a_2\}$ & $\{ 2\a_1+\a_2\}$ & $\{\a_1,\a_2\}$ \\
  $\sZ$ & $-\ttS^1+2\ttS^2$ & $\ttS^1$ & $2\ttS^2$
\end{tabular}
\end{center}

To see that the nontrivial relations are
\[
  \mathrm{I}\,,\ \mathrm{II} \ < \ \mathrm{III}
\]
we recall Definition \ref{dfn:PsiPO} and note that
\begin{bcirclist}
\item 
$\tilde\fl_\mathrm{I} = \{ \ttS^1 = 0 \}$;
\item  
$\tilde\fl_\mathrm{II} = \{\ttS^1 - 2\ttS^2 = 0 \}$ and $(1)\tilde\fl_\mathrm{II} = \{ \ttS^1 - \ttS^2 \}$;
\item 
$\tilde\fl_\mathrm{III} = \fg$.
\end{bcirclist}

\noindent The relations trivially form a partial order.  To see that the relations are all polarized, observe that the roots $\sS_\mathrm{I}$ are strongly orthogonal to the roots $\sS_\mathrm{II}$; and therefore determine commuting $\fsl(2)$'s with $\sZ_\mathrm{I} + \sZ_\mathrm{II} = \sZ_\mathrm{III}$.

It is in this case that the computation of $\tilde{\Psi}_D \supset \tilde{\Psi}_D^{\text{str}} \supset \tilde{\Psi}_D^{\text{pol}}$ raises interesting issues.  Introducing the notation
$$
\langle \mu(\underline{\epsilon}^1) \mid \mu(\underline{1}) \mid \mu(\underline{\epsilon}^2) \rangle
$$
for admissible $2$-cubes, $\mu_{12} := \langle \rm{I} \mid \rm{III} \mid \rm{II} \rangle$ is automatically polarizable by Example \ref{X:cubes}(b). The remaining $2$-cubes
$$
\mu_{11} := \langle \rm{I} \mid \rm{III} \mid \rm{I} \rangle \;\;\;\;\text{and}\;\;\;\; \mu_{22} := \langle \rm{II} \mid \rm{III} \mid \rm{II} \rangle
$$
in $\tilde{\Psi}_D$ are \emph{not} obviously polarizable.

To specialize Definition \ref{D:str} to a $2$-cube $\mu$, suppose we have nilpotents $N$, $N_1$, $N_2$ as described there (with the Hodge-Tate grading imposed on $\tilde{\fl}$).  Then $\mu$ \emph{is strongly admissible if and only if}
\begin{equation} \label{E:8.2*}
\begin{split}
&\textit{there exist independent, commuting}\;\; \tilde{N}_i \in \tilde{L}^{0,0}(\mathbb{R})^+.N_i \;(i=1,2) 
\\
&\textit{such that} \;\; \mathbb{R}_{>0} \langle \tilde{N}_1 , \tilde{N}_2 \rangle \subset \tilde{L}^{0,0}(\bR)^+.N.
\end{split}
\end{equation}
It is clear that we may take $\tilde{N}_1 = N_1$ without loss of generality.
\begin{claim}
(a) $\mu_{22}$ is strongly admissible, and (b) $\mu_{11}$ is not.
\end{claim}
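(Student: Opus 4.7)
The plan is to identify $\fg^{-1,-1}$ with $\tSym^3(\bR^2)$ as a $\tilde{L}^{0,0}_{\mathrm{III}}$-module, reducing both parts of the claim to a single explicit bracket identity. Since $\tilde{\fl}_{\mathrm{III}}=\fg$ with the Hodge--Tate grading, $\fg^{-1,-1}$ is a $4$-dimensional irreducible representation of the $\fsl_2$ generated by $X_{\pm\alpha_1}$; we identify it with $\tSym^3(\bR^2)$ via $X_{-\alpha_2}\leftrightarrow x^3$, $X_{-(\alpha_1+\alpha_2)}\leftrightarrow x^2 y$, $X_{-(2\alpha_1+\alpha_2)}\leftrightarrow xy^2$, $X_{-(3\alpha_1+\alpha_2)}\leftrightarrow y^3$. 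Under this identification the nontrivial $\tilde{L}^{0,0}_{\mathrm{III}}(\bR)^+$-orbits on $\fg^{-1,-1}$ are classified by the root pattern of the cubic form: $\tilde{L}^{0,0}_{\mathrm{III}}(\bR)^+\cdot N_{\mathrm{I}}=\{c\,\ell^3\}$, $\tilde{L}^{0,0}_{\mathrm{III}}(\bR)^+\cdot N_{\mathrm{II}}=\{c\,\ell_1\ell_2^2:\ell_1\nparallel\ell_2\}$, and $N_{\mathrm{III}}$ may be chosen in either open orbit (cubics with three distinct roots). The bracket $\fg^{-1,-1}\times\fg^{-1,-1}\to\fg^{-2,-2}\cong\bR$ is, by $\fsl_2^{\alpha_1}$-equivariance, the unique (up to nonzero scalar) invariant symplectic form $B$ on $\tSym^3(\bR^2)$; in the $\binom{3}{i}$-normalized coordinates $p=\sum \binom{3}{i} a_i x^{3-i} y^i$ it reads $B(p,q)=a_0 b_3 - 3 a_1 b_2 + 3 a_2 b_1 - a_3 b_0$. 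A direct expansion then yields the crucial identity
\[
B(\ell_1^3,\ell_2^3) \;=\; (\alpha\delta-\beta\gamma)^3
\]
for $\ell_1=\alpha x+\beta y$ and $\ell_2=\gamma x+\delta y$.

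Part (b) is now immediate. Any candidate $\tilde{N}_i\in\tilde{L}^{0,0}_{\mathrm{III}}(\bR)^+\cdot N_{\mathrm{I}}$ must have the form $c_i\ell_i^3$, and the identity forces $[\tilde{N}_1,\tilde{N}_2]=0$ iff $\alpha\delta-\beta\gamma=0$ iff $\ell_1\parallel\ell_2$, in which case $\tilde{N}_1\parallel\tilde{N}_2$ --- contradicting linear independence. No independent commuting pair in $\tilde{L}^{0,0}_{\mathrm{III}}(\bR)^+\cdot N_{\mathrm{I}}$ exists, so Definition \ref{D:str}(a) itself fails for $\mu_{11}$.

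For (a), take
\[
\tilde{N}_1 = xy^2 \quad\text{and}\quad \tilde{N}_2 = (x-y)(2x+y)^2 = 4x^3-3xy^2-y^3.
\]
Both are of the form $\ell_1\ell_2^2$ with $\ell_1\nparallel\ell_2$, hence lie in $\tilde{L}^{0,0}_{\mathrm{III}}(\bR)^+\cdot N_{\mathrm{II}}$; they are manifestly linearly independent; and $B(\tilde{N}_1,\tilde{N}_2)=0$ follows from the vanishing of each of $B(xy^2,x^3)$, $B(xy^2,xy^2)$, $B(xy^2,y^3)$. To verify the $\underline{1}$-cone condition, the discriminant of the general cone element $c_1\tilde{N}_1+c_2\tilde{N}_2 = 4c_2 x^3 + (c_1-3c_2)xy^2 - c_2 y^3$ factors as
\[
\Delta(c_1,c_2) \;=\; -16\,c_1 c_2\,\bigl(c_1^2 - 9 c_1 c_2 + 27 c_2^2\bigr),
\]
the quadratic factor being positive definite (discriminant $-27 c_2^2<0$); hence $\Delta<0$ throughout $\bR_{>0}\langle\tilde{N}_1,\tilde{N}_2\rangle$. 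The whole open cone therefore lies in a single open $\tilde{L}^{0,0}_{\mathrm{III}}(\bR)^+$-orbit, and we choose $N_{\mathrm{III}}$ in this orbit to satisfy Definition \ref{D:str}(b). The main obstacle is the derivation of the symplectic identity $B(\ell_1^3,\ell_2^3)=\Delta^3$; once it is in hand, both parts of the claim follow rapidly.
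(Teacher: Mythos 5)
Your proof is correct and takes essentially the same route as the paper: both arguments work in the identification of $\fg^{-1,-1}$ with binary cubics under $\tilde{L}^{0,0}(\bR)^+\cong\tGL_2(\bR)^+$, characterize the orbits of types I, II, III by the root pattern of the cubic, and reduce both parts of the claim to the bracket into the one-dimensional $\fg^{-2,-2}$. Your packaging of that bracket as the $\tSL_2$-invariant symplectic form, together with the identity $B(\ell_1^3,\ell_2^3)=(\alpha\delta-\beta\gamma)^3$, is a cleaner, coordinate-free version of the paper's computation $[N_1,\tilde N_2]=b^3X_{2\alpha_2+3\alpha_1}$, and it yields (b) without the paper's normalization $\tilde N_1=N_1$; the only thing left tacit is that the bracket $\fg^{-1,-1}\times\fg^{-1,-1}\to\fg^{-2,-2}$ is nonzero (e.g.\ because the sum of the lowest and highest roots in the $\alpha_1$-string through $\alpha_2$ is again a root), so that it really is a nonzero multiple of $B$. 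For (a) you exhibit a different commuting pair than the paper's and verify the type-III condition via the discriminant factorization $-16c_1c_2(c_1^2-9c_1c_2+27c_2^2)$ rather than the paper's quartic \eqref{E:8.2s}; note that your cone lies in the open orbit $\Delta<0$ (one real root), whereas the paper's verification lands in $\Delta>0$ (three real roots). This is harmless --- both open $\tGL_2(\bR)^+$-orbits consist of admissible choices of $N_{\mathrm{III}}$, and indeed the paper's own polarizability argument for $\mu_{22}$ uses the $\Delta<0$ orbit via $x(x^2+y^2)$ --- and you do address the point by observing that $N_{\mathrm{III}}$ may be taken in either open orbit for the purposes of Definition \ref{D:str}(b).
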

\begin{proof}
Relabel root spaces $\fg_{\alpha} = \mathbb{R}\langle X_{\alpha} \rangle$ of $\fg=\tilde{\fl}=\oplus \tilde{\fl}^{p,p}$ so that $\fg_{\alpha_1} \subset \tilde{\fl}^{0,0} = \mathbb{R}\langle X_{\alpha_2}, X_{\alpha_2 + \alpha_1}, X_{\alpha_2 + 2\alpha_1}, X_{\alpha_2 + 3\alpha_1} \rangle$, and $\tilde{L}^{0,0}(\mathbb{R})^+\cong \tGL_2(\mathbb{R})^+$ acts on $\fl^{-1,-1}_{\mathbb{R}}$ through $\mathrm{Sym}^3(\mathbb{R}^2)$.  We choose $\fl_{\rm{I}}$, $\fl_{\rm{II}}$ so that $\tilde{\fl}^{-1,-1}_{\rm{I}} = \langle X_{\alpha_2} \rangle$ and $\tilde{\fl}^{-1,-1}_{\rm{II}} = \langle X_{\alpha_2 + 2\alpha_1} \rangle$.

We begin with (b), taking $N_1 = N_2 = X_{\alpha_2}$.  A general $\tGL_2(\bR)^+$-conjugate of $N_2$ is
$$
\tilde{N}_2 = a^3 X_{\alpha_2} + 3a^2  b X_{\alpha_2 + \alpha_1} + 3ab^2 X_{\alpha_2 + 2\alpha_1} + b^3 X_{\alpha_2 + 3 \alpha_1} . 
$$
We require $0=[N_1,\tilde{N}_2]=b^3 X_{2\alpha_2 + 3\alpha_1}$, hence $b=0$; but then $\tilde{N}_2 = a^3 X_{\alpha_2}$ is not independent from $N_1$. So criterion \eqref{E:8.2*} is not satisfied.

For (a), we have $N_1 = N_2 =X_{\alpha_2 + 2\alpha_1}$.  A general $\tGL_2(\bR)^+$-conjugate of $N_2$ is $\tilde{N}_2 = \sum_{j=0}^3 A_j X_{\alpha_2 + j\alpha_1}$ where $A_0  = a^2 c$, $A_1 = 2abc + a^2 d$, $A_2 = 2abd + b^2 c$, and $A_3 = b^2 d$ with $ad-bc > 0$.  Such $[\underline{A}]$ satisfy\footnote{The equation defines the \emph{closure} of the orbit of $X_{\alpha_2 + 2\alpha_1}$ in $\bP \tilde{\fl}^{-1,-1}$; this includes the (twisted cubic) orbit of $X_{\alpha_2}$. }
\begin{equation} \label{E:8.2s}
(A_2 A_1 - 9A_0 A_3 )^2 = 4(A_2^2 - 3 A_1 A_3 )(A_1^2 - 3A_0 A_2 ),
\end{equation}
whose complement is the orbit in $\bP \tilde{\fl}^{-1,-1}$ of $N$ (the type $\rm{III}$ nilpotents).  In particular, if we take $(a,b,c,d)=(1,1,-\tfrac{1}{3},\tfrac{2}{3})$ $\implies$ $[\underline{A}]=[-\tfrac{1}{3}:0:1:\tfrac{2}{3}]$, then $\tilde{N}_2=-\tfrac{1}{3}X_{\alpha_2} + X_{\alpha_2 + 2\alpha_1} + \tfrac{2}{3}X_{\alpha_2 + 3\alpha_1}$ is independent of $N_1$, and commutes with it.  Moreover, we claim that any sum $r_1 N_1 +r_2 \tilde{N}_2$ ($r_1 , r_2 >0$) does not satisfy \eqref{E:8.2s}, hence is of type III (as required by \eqref{E:8.2*}). To see this, take $r_2 = 1$, $r_1 = r>0$ and write
$$
4\left((1+r)^2 - 3\cdot 0\cdot\tfrac{2}{3}\right) \left(0^2 - 3(\tfrac{-1}{3})(1+r)\right)-\left(0(1+r)-9(\tfrac{-1}{3})(\tfrac{2}{3})\right)^2 = 4(1+r)^3 - 4>0
$$
for $r>0$.
\end{proof}
Thus we have completely determined $\tilde{\Psi}^{\text{str}}$:
$$ 
\xymatrixrowsep{0.3cm}
\xymatrixcolsep{0.8cm}
\xymatrix{
& \mu_1 \ar [r] \ar [rd] & \mu_{13} \\ 
\mu_0 \ar [ur] \ar [r] \ar [dr] & \mu_3 \ar [ur] \ar [dr] & \mu_{12} \\ 
& \mu_2 \ar [ur] \ar [r] \ar [dr] & \mu_{23} \\
& & \mu_{22} }
$$
whose only possible difference with $\tilde{\Psi}^{\text{pol}}$ is whether $\mu_{22}$ belongs to the latter.  That it does in fact belong, may be seen by a limiting argument. Begin by fixing $\sigma_0 = \bR_{>0} \langle N_1, N_2 (0)\rangle $ ($N_1 = X_{\alpha_2 + 2\alpha_1}, N_2 (0) = X_{\alpha_2}$) and $F^{\bullet} \in \partial D \subset \check{D}$ such that $(F^{\bullet},W_{\bullet}:=W(N_1 + N_2(0))_{\bullet})$ is $\bR$-split Hodge-Tate (guaranteed by $\mu_{12}\in \tilde{\Psi}^{\text{pol}}_D$), so that $e^{\bC\sigma_0}F^{\bullet}$ is a $\sigma_0$-nilpotent orbit. For $t<0$ set $N_2(t):=X_{\alpha_2} - 3t^2 X_{\alpha_2 + 2\alpha_1} - 2t^3 X_{\alpha_2 + 3\alpha_1}$, which corresponds to $(a,b,c,d)=(1,t,1,-2t)$ in the proof hence is of type II, and commutes with $N_1$.  Then $\sigma_t = \bR_{>0}\langle N_1 , N_2 (t)\rangle$ is of type III, and any $N_t \in \sigma_t$ induces $W(N_t )_{\bullet} = W_{\bullet}$. That $N_t$ polarizes $(F^{\bullet},W_{\bullet})$, so that $e^{\bC \sigma_t }F^{\bullet}$ is a $\sigma_t$-nilpotent orbit, is now immediate since $\sigma_t$ limits to $\sigma_0$ and this (positivity) statement holds for $\sigma_0$.
\end{example}

More generally, the methods of \cite[$\S$3]{BPR} may be useful for determining $\tilde{\Psi}^{\text{pol}}$ in some situations.  In the case of Calabi-Yau Hodge structures, there is another tool, which we will describe in the next section.

%------------------------------------------------------------------------------
\subsection{Mirror symmetry and geometric realization} \label{S:matt}
%------------------------------------------------------------------------------

We conclude by revisiting the period domain for ${\bf{h}} = (1,2,2,1)$ briefly treated in Example \ref{eg:CY0}, referring to Example \ref{eg:CY1} for the Hodge diamonds. By \cite{MR3217458}, the capacity of a given $[\fl]\in\Psi_D$ is the maximal dimension of an abelian subalgebra of $\tilde{\fl}$ contained in $\tilde{\fl}^{-1,-1}$. This easily allows us to determine that $\rm{cap}(\rm{I}_2)=2=\rm{cap}(\rm{II}_1)$ and $\rm{cap}(\rm{IV}_2)=3$, while the other four nontrivial elements have capacity $1$.

Suppose one wishes to determine the $2$-cubes of $\tilde{\Psi}_D^{\text{pol}}$.  (We shall say nothing about the $3$-cubes.) The first step would be to apply Example \ref{X:cubes}, which yields the following (partial) list of polarizable $2$-cubes:
\begin{description}
\item[(b)] $\langle \rm{I}_1\mid\rm{IV}_2\mid\rm{IV}_1\rangle$, $\langle \rm{III}_0\mid\rm{IV}_2\mid\rm{II}_1\rangle$, $\langle \rm{I}_1\mid\rm{II}_1\mid\rm{II}_0\rangle$, $\langle \rm{I}_1\mid\rm{I}_2\mid\rm{I}_1\rangle$ (the multi-$\rm{SL}(2)$'s: apply the algorithm of $\S$\ref{S:multvarSL2});
\item[(c)] one for each arrow not originating from $\rm{I}_0$: e.g. $\langle \rm{II}_0\mid \rm{II}_1\mid \rm{II}_1\rangle$;
\item[(d)] one for each type of capacity at least $2$: e.g. $\langle \rm{I}_2\mid \rm{I}_2\mid \rm{I}_2\rangle$ .
\end{description}
The remaining \emph{admissible} $2$-cubes are evidently:
\begin{itemize}
\item $\langle \rm{I}_1 \mid \rm{II}_1 \mid \rm{I}_1 \rangle$, $\langle \rm{II}_0 \mid \rm{II}_1 \mid \rm{II}_0\rangle $, $\langle \rm{I}_1 \mid \rm{IV}_2 \mid \rm{I}_1 \rangle$, $\langle \rm{II}_1 \mid \rm{IV}_2 \mid \rm{II}_1 \rangle$, $\langle \rm{II}_1 \mid \rm{IV}_2 \mid \rm{IV}_1 \rangle$
\end{itemize}
which one can show as in $\S$\ref{S:G2} (using criterion \eqref{E:8.2*}) are \emph{not} strongly admissible hence \emph{not} polarizable; and
\begin{itemize}
\item $\langle \rm{IV}_1 \mid \rm{IV}_2 \mid \rm{IV}_1 \rangle$, $\langle \rm{IV}_1 \mid \rm{IV}_2 \mid \rm{III}_0\rangle$, $\langle \rm{I}_1 \mid \rm{IV}_2 \mid \rm{III}_0 \rangle$, $\langle \rm{I}_1 \mid \rm{IV}_2 \mid \rm{II}_1 \rangle $, $\langle \rm{III}_0 \mid \rm{IV}_2 \mid \rm{III}_0 \rangle$
\end{itemize}
which \emph{are} strongly admissible.

\begin{claim}
$\langle \rm{IV}_1 \mid \rm{IV}_2 \mid \rm{III}_0 \rangle $ and $\langle \rm{I}_1 \mid \rm{IV}_2 \mid \rm{III}_0 \rangle$ are polarizable (in fact ``motivic'').
\end{claim}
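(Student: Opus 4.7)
The plan is to exhibit, for each of the two cubes, an explicit two-parameter family of Calabi--Yau threefolds whose complex moduli space contains a normal-crossings boundary configuration realizing the specified degeneration pattern, and to verify the LMHS types at each stratum via mirror symmetry. This establishes not merely polarizability but the stronger ``motivic'' assertion that the cubes are realized by actual families of algebraic varieties.

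First I would recall from Example \ref{eg:CY1} that the LMHS types in the $\mathbf{h} = (1,2,2,1)$ setting are distinguished by the Hodge diamond (equivalently, by the partially signed Young diagram of the associated nilpotent): $\rm{IV}_d$ is characterized by $N^3 \neq 0$ (maximally unipotent), $\rm{III}_c$ by $N^2 \neq 0$ but $N^3 = 0$, and $\rm{I}_a$, $\rm{II}_b$ both by $N^2 = 0$ but with distinct primitive Hodge diamonds. Given a two-parameter degeneration with commuting monodromy logarithms $N_1, N_2$ and LMHS of type $\rm{IV}_2$ at the depth-two stratum, the types at the two depth-one boundary strata are determined by the Hodge--Deligne numbers of the restrictions of the mixed Hodge structure to $W(N_1)$ and $W(N_2)$ respectively.

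For the cube $\langle \rm{IV}_1 \mid \rm{IV}_2 \mid \rm{III}_0 \rangle$, I would take the two-parameter hypersurface models of Candelas--de la Ossa--Font--Katz--Morrison, whose mirror $Y$ is a K3-fibered Calabi--Yau threefold with $h^{1,1}(Y) = 2$ (for instance the degree-8 hypersurface in $\mathbb{P}(1,1,2,2,2)$ or the degree-12 hypersurface in $\mathbb{P}(1,1,2,2,6)$). The stringy K\"ahler cone of $Y$ is a simplicial 2-cone whose interior corner is mirror-dual to the LCSL $\rm{IV}_2$ on the B-side; the facet where the fiber K3 volume is held finite while the base $\mathbb{P}^1$ expands yields a relative large complex structure limit of type $\rm{IV}_1$ (maximal unipotency along the base), while the facet where the K3 fiber itself degenerates yields a K3-type boundary of type $\rm{III}_0$. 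The precise LMHS types are extracted from the A-model topological data (triple intersection numbers $K_iK_jK_k$, Chern class pairings $c_2(Y) \cdot K_i$) via the Hosono--Klemm--Theisen--Yau formulas for B-model periods at the LCSL, which in the large volume limit determine the Hodge--Deligne diamond at each cusp.

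For the cube $\langle \rm{I}_1 \mid \rm{IV}_2 \mid \rm{III}_0\rangle$ the construction is analogous, but one of the boundary facets must produce type $\rm{I}_1$ rather than $\rm{IV}_1$. Such a configuration arises when the mirror side has a different fibration (or contraction) structure along the relevant facet, so that the associated LMHS has $N^2 = 0$ with the primitive Hodge numbers specified by the type-$\rm{I}_1$ diamond. Suitable examples occur among the two-parameter complete intersection CY3s studied by Hosono--Klemm--Theisen--Yau and by Berglund--Katz--Klemm. The main obstacle is the explicit verification of the Hodge--Deligne diamond at the $\rm{I}_1$ corner, since distinguishing $\rm{I}_1$ from $\rm{II}_1$ requires computing the primitive Hodge numbers and not merely the nilpotency index. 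I would carry this out using the Batyrev toric mirror construction to set up the Picard--Fuchs system at the relevant boundary, combined with Proposition \ref{P:PDpr} to confirm that the primitive sub-diamond matches that of $\rm{I}_1$ and that the induced degeneration along $W(N_1 + N_2)$ produces $\rm{IV}_2$ as claimed.
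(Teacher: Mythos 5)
Your overall strategy --- realizing each cube via the A-model/mirror-symmetry $\bZ$-VHS of a two-parameter Calabi--Yau family and reading off the LMHS types from topological data --- is the same as the paper's, which applies Iritani's theorem through the toric elliptic fibrations of \cite{BKV2}. But the specific geometries you propose do not work, and one of the two cubes cannot be reached by your method at all. At a large complex structure facet the monodromy logarithm is $N_j=-J_j\cup(\cdot)$ on $H^{\mathrm{even}}$ for a nonzero nef class $J_j$, so its type is dictated by intersection numbers. For the K3-fibered models you name ($\bP(1,1,2,2,2)[8]$, $\bP(1,1,2,2,6)[12]$), the K3-fiber class has $J_1^2=0$ and the resulting nilpotent has rank sequence $(3,0,0)$, i.e.\ type $\mathrm{II}_1$, not $\mathrm{IV}_1$ or $\mathrm{III}_0$; the other facet has $J_2^3\neq0$ with $\det(J_2J_kJ_l)\neq0$ and is already of type $\mathrm{IV}_2$. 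To produce a $\mathrm{III}_0$ facet one needs $J^2\neq0$ with $J^3=0$ (an elliptic fibration over a \emph{surface}), and to produce $\mathrm{IV}_1$ one needs $J^3\neq0$ with the quadratic form $JJ_kJ_l$ degenerate --- which is exactly what the paper's elliptically fibered hypersurface in $\bP_{\hat\Delta^\circ}$ with $\Delta^\circ=\mathrm{hull}\{(2,-1),(-1,2),(-1,-1)\}$ supplies, via the explicit matrices $N_0,N_1$ with rank sequences $(3,2,1)$ and $(4,2,0)$.

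The more fundamental gap concerns $\langle\mathrm{I}_1\mid\mathrm{IV}_2\mid\mathrm{III}_0\rangle$: no large complex structure facet of any toric mirror family can ever have type $\mathrm{I}_1$. A type $\mathrm{I}_1$ nilpotent has rank $1$, whereas $-J\cup(\cdot)$ for a nonzero nef $J$ always has rank at least $2$ (its image contains both $J\in H^2$ and $(J\cdot C)[p]\in H^6$ for a curve class $C$ with $J\cdot C\neq0$). So searching among ``different fibration structures'' at toric boundary facets is bound to fail. The paper instead obtains $\mathrm{I}_1$ from the rank-one \emph{conifold} monodromy $N_c$ around the discriminant, which your proposal never considers, and then avoids computing the LMHS of $N_1+N_c$ directly by observing that the already-established table of polarized relations (Example \ref{eg:CY2}) leaves $\mathrm{IV}_2$ as the only type to which both $\mathrm{I}_1$ and $\mathrm{III}_0$ can simultaneously degenerate. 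You would need to incorporate a non-toric (conifold-type) boundary divisor, and some substitute for that last elimination argument, to make your plan go through.
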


\begin{proof}
We shall invoke Iritani's A-model $\mathbb{Z}$-VHS \cite{Iritani} in a special case described in \cite{BKV2}, first briefly recasting the latter in more Hodge-theoretic language.   Recall that for a unipotent VHS $\Phi$ over $(\Delta^*)^{\ell}$, the lift $\tilde{\Phi}: \mathfrak{H}^{\ell}\to D$ may be written uniquely in \emph{local normal form}
\begin{equation} \label{E:8.3*!}
\tilde{\Phi}(\underline{z})=e^{\sum z_j N_j} e^{\mu(\underline{q})}F^{\bullet}_{lim} ,
\end{equation}
where $q_j := \exp{(2 \pi {\bf i}z_j)}$ and $\mu:\Delta^{\ell}\to \oplus_{p<0;\,q} \fg^{p,q}_{lim}$ is holomorphic with $\mu(\underline{0})=0$. Here we take $\Phi$ to be of type $(1,\ell,\ell,1)$, with underlying local system $\mathbb{V}$ (with basis $\gamma_{\underline{z}}$); and $F^{\bullet}_{lim}$ to be expressed by writing the \emph{Hodge basis} $\omega = \{\omega^3; \omega^2_i ; \omega^1_i ; \omega^0 \}_{i=1}^{\ell}$ ($\omega_{\cdot}^p \in V_{lim}^{p,p}$) with respect to a basis $\tilde{\gamma}_0$ of $e^{-\sum z_i N_i}\mathbb{V}$ at $\underline{q}=\underline{0}$, as a matrix $\Omega_{lim} = {}_{\tilde{\gamma}_{\underline{0}}}[{\bf 1}]_{\omega}$. Likewise, \eqref{E:8.3*!} will be interpreted as an equality of matrices with $\tilde{\Phi}(\underline{z}) = {}_{\gamma_{\underline{z}}} [{\bf 1}]_{\omega}$, where ${\bf 1}$ is the identity transformation of the even cohomology of a certain CY 3-fold $\rm{X}^{\circ}$. In particular, $\omega$ will be a \emph{fixed} basis of $H^{\text{even}}(\rm{X}^{\circ})$, while the \emph{integral basis} $\gamma_{\underline{z}}$ varies.

Let $\hat{\Delta}^{\circ} \subset \bR^4$ be the convex hull of $(0,0,0,1)$, $(0,0,1,0)$, and $\Delta^{\circ} \times (-2,-3)$, where $\Delta^{\circ}\subset \bR^2$ is a reflexive polytope. A general anticanonical (CY 3-fold) hypersurface $\rm{X}^{\circ}$ in the associated toric variety $\PP_{\hat{\Delta}^{\circ}}$ has a natural (torically induced) elliptic fibration with section, over $\PP_{\Delta^{\circ}} = \mathbb{G}_m^2 \cup (\cup_{i=1}^r C_i )$; and $\{ C_i\}_{i=0}^{r-2} \subset H^4 (\rm{X}^{\circ})$ is a basis\footnote{so $\ell = r-1$; it turns out that $r$ is the number of integer points on the boundary of $\Delta \subset \bR^2$ (the dual of $\Delta^{\circ}$). } with dual $\{J_i \}_{i=0}^{r-2} \subset H^2 (\rm{X}^{\circ})$. Together these give rise to a basis $\cO:=\{\cO_{\rm{X}^{\circ}},\cO_{J_0},\ldots,\cO_{J_{r-2}},\cO_{C_0},\ldots ,\cO_{C_{r-2}},\cO_p \}$ of $K_0^{\text{num}}(\rm{X}^{\circ})$, and we set $$\psi(\underline{q}):=\sum_{\underline{k}\neq \underline{0}} \mathcal{N}_{\underline{k}} q_0^{k_0} \cdots q_{r-2}^{k_{r-2}}$$ where $\mathcal{N}_{\underline{k}}$ is the genus-$0$ Gromov-Witten invariant of class $\sum k_i [C_i]$.  (Write $\psi_i = \frac{\partial \psi}{\partial q_i}$ etc. for partial derivatives.) The Hodge basis is given by $\omega^3 = [\rm{X}^{\circ}]$, $\omega^2_i = [J_i]$, $\omega^1_i = [C_i]$, $\omega^0 = [p]$.

Now use the transcendental characteristic class
$$
\hat{\Gamma}({\rm X}^{\circ}) = 1 - \tfrac{1}{24} {\rm ch}_2({\rm X}^{\circ}) - \tfrac{2\zeta(3)}{(2\pi {\bf i})^3} {\rm ch}_3 ({\rm X}^{\circ}) \in K_0 ({\rm X}^{\circ})
$$
to define a transformation $\Gamma: K_0^{\text{num}}({\rm X}^{\circ}) \to H^{\text{even}}({\rm X}^{\circ},\mathbb{Q})$ by $\xi \mapsto [\hat{\Gamma}({\rm X}^{\circ} )] \cup {\rm ch}(\xi)$, with matrix $M:={}_{\omega}[\Gamma]_{\cO}$. Setting
$$
\Sigma_{\underline{q}} := \begin{pmatrix} 1 & 0 & 0 & 0 \\ 0 & 1 & 0&0 \\ -\psi_k & \psi_{kl} & 1 &0 \\ 2\psi & \psi_l & 0 & 1 \end{pmatrix}\, ,\;\;\;\;\;\; N_j :=\log [\cO(-J_j)\otimes \; ]_{\cO} \, ,
$$
the VHS over $(\Delta^*)^{r-1}$ defined (via \eqref{E:8.3*!}) by
$$
\Omega_{lim} :=M^{-1}\, ,\;\;\; \mu(\underline{q}) :=\log(M^{-1}\Sigma_{\underline{q}} M)
$$
is polarized by $Q(\alpha,\beta):=(-1)^{\frac{1}{2} \deg(\alpha)} \int_{{\rm X}^{\circ}} \alpha \cup \beta$.  Its motivic (geometric) realization as $H^3({\rm X}_t)$ (mirror family of CY 3-folds) is due to Iritani.  It is worked out in detail in \cite{BKV2}, where in particular one finds (in terms of intersection numbers on ${\rm X}^{\circ}$) that
\begin{equation} \label{E:8.3*!!}
N_j = \begin{pmatrix} 0 & 0 & 0 & 0 \\ -\delta_k^j & 0& 0& 0\\ -\tfrac{1}{2} J_j^2 J_k & -J_j J_k J_l &0 &0 \\ -\tfrac{1}{3}J_j^3 & -\tfrac{1}{2} J_j^2 J_l & -\delta_j^l & 0 \end{pmatrix} .
\end{equation}
Via the motivic interpretation, there is also a (codim. $1$) \emph{conifold} monodromy locus intersecting all the coordinate axes (as one variable leaves $\Delta^*$ whilst the others remain small), with monodromy $N_c$ of rank $1$.

Specializing \eqref{E:8.3*!!} to $\Delta^{\circ} = \rm{hull}\{ (2,-1),(-1,2),(-1,-1)\}$, so that $r=3$ and the Hodge numbers are $(1,2,2,1)$, we find 
$$
N_0 = \left( \begin{array}{c|cc|cc|c} 0&0&0&0&0&0 \\ \hline -1&0&0&0&0&0 \\ 0&0&0&0&0&0 \\ \hline -\tfrac{9}{2} & -9&-3&0&0&0 \\ -\tfrac{3}{2} & -3& -1& 0&0&0 \\ \hline -3& -\tfrac{9}{2}&-\tfrac{3}{2}& -1&0&0 \end{array} \right) \;\;\; \text{and} \;\;\; N_1 = \left( \begin{array}{c|cc|cc|c} 0&0&0&0&0&0 \\ \hline 0&0&0&0&0&0 \\ -1&0&0&0&0&0 \\ \hline -\tfrac{1}{2} & -3&-1&0&0&0 \\ 0 & -1& 0& 0&0&0 \\ \hline 0& -\tfrac{1}{2}& 0& 0&-1&0 \end{array} \right)
$$
The elements of ${\bf N}_D \simeq \Psi_D$ are distinguished by the list of ranks of $N$, $N^2$, $N^3$; for $N_0$, $N_1$, $N_c$, resp. $N_0+N_1$, these are $(3,2,1)$ [${\rm IV}_1$], $(4,2,0)$ [${\rm III}_0$], $(1,0,0)$ [${\rm I}_1$], resp. $(4,2,1)$ [${\rm IV}_2$]. Since ${\rm I}_1$ cannot degenerate to ${\rm III}_0$ or ${\rm IV}_1$, ${\rm III}_2$ is the only possibility for $N_0+N_c$ or $N_1+N_c$. Only the $(N_0,N_c)$ pair yields a case previously known to be polarizable.
\end{proof}

It turns out that two of the other strongly admissible 2-cubes have been shown to be polarizable: $\langle \rm{III}_0 \mid \rm{IV}_2 \mid \rm{III}_0 \rangle$ by \cite{grimm}; and $\langle \rm{IV}_1 \mid \rm{IV}_2 \mid \rm{IV}_1 \rangle$ in \cite[6.67]{BPR}. Determining the status of $\langle \rm{I}_1 \mid \rm{IV}_2 \mid \rm{II}_1 \rangle$ is left as an exercise to the reader!

%------------------------------------------------------------------------------
\appendix
%------------------------------------------------------------------------------

%------------------------------------------------------------------------------
\section{Representation Theory Background} %\label{S:RT}
%------------------------------------------------------------------------------

This is a laconic summary of representation theoretic results that are used in the paper.  For the material in \S\S\ref{S:rt} and \ref{S:wt} we recommend any standard reference, such as \cite{MR1153249, MR499562, MR1920389}; for the material in \S\S\ref{S:weyl}, \ref{S:ge} and \ref{S:levi} we recommend \cite{MR2532439}; for the material in \S\S\ref{S:st} and \ref{S:dge} we recommend \cite{\CoMc}; and for the material in \S\ref{S:hSL2} we recommend \cite{\schmid}.

%------------------------------------------------------------------------------
\subsection{Roots} \label{S:rt}
%------------------------------------------------------------------------------

Let $\fg_\bC$ be a complex semisimple Lie algebra.  A \emph{Cartan subalgebra} $\fh \subset \fg_\bC$ is a maximal abelian subalgebra consisting of semisimple elements.  There exist roots $\sR \subset \fh^*$ so that 
\[
  \fg_\bC \ = \ \fh \ \op \ \bigoplus_{\a\in\sR} \fg^\a \,,
\]
where 
\[
  \fg^\a \ := \ 
  \{ \xi \in \fg_\bC \ | \ [h , \xi] = \a(h) \ \forall \ h \in \fh \}
\]
is the one--dimensional \emph{root space} of $\a$.  One may always choose a basis $\sS = \{\a_1,\ldots,\a_r\}$ of $\fh^*$ with the property that $\sS \subset \sR$ and every root $\a \in \sR$ may be expressed as $\a = m^i \a_i$ with the $m^i$ either all nonnegative or all nonpositive; $\a$ is a \emph{positive} or \emph{negative root}, respectively.  The positive roots are denoted $\sR^+$.  Note that $\sR^+$ and $\sR^- := -\sR^+$ are disjoint and we have $\sR = \sR^+ \cup \sR^-$.

Let $\{\ttS^1,\ldots,\ttS^r\} \subset \fh$ denote the basis dual to the simple roots $\sS = \{\a_1,\ldots,\a_r\} \subset \fh^*$ of $\fg_\bC$.  

Every parabolic subgroup $P \subset G(\bC)$ may be realized as the stabilizer of a flag $F$ in a $G(\bC)$--homogeneous compact dual $\check D$.  Recall the decomposition \eqref{E:gphi}; when $P$ stabilizes a Hodge structure $\varphi \in D$, the Lie algebra of $P$ is 
\[
  \fp \ = \ \fg^{\ge0}_\varphi\,.
\]
A \emph{Borel} subgroup $B$ is a minimal parabolic subgroup.  The standard example of a Borel subalgebra is 
\begin{equation}\label{E:r1}
  \fb \ = \ \fh \ \op \ \bigoplus_{\a\in\sR^+} \fg^\a \,.
\end{equation}
Conversely, given a Borel subalgebra $\fb \subset \fg_\bC$, it is always possible to choose a Cartan $\fh \subset \fb$, and given such a Cartan, there is a unique choice of simple roots $\sS$ so that \eqref{E:r1} holds.

%------------------------------------------------------------------------------
\subsection{Weyl group} \label{S:weyl}
%------------------------------------------------------------------------------

Fix a complex semisimple Lie algebra $\fg_\bC$.  Given a Cartan subalgebra $\fh \subset \fg_\bC$, let $\sW \subset \tAut(\fh^*)$ denote the \emph{Weyl group} of $\fg_\bC$.\footnote{For a suitable realization of $\fg_\bC$ as a matrix subalgebra of $\fgl_n\bC$, we may identify $\fh$ with the subalgebra of diagonal matrices in $\fg_\bC$.}  Given a choice of simple roots $\sS=\{\a_1,\ldots,\a_r\} \subset \fh^*$, let $(i) \in \sW$ denote the simple reflection in the hyperplane orthogonal to $\a_i$, and let $(i_1\cdots i_\ell)$ denote the composition $(i_1)\circ\cdots\circ (i_\ell)$ of simple reflections.  Recall that $\sW$ is generated by the simple reflections $(i)$ subject to the following relations: $(i)^2 = \one$ for all $i$; and for all $i\not=j$:
\begin{center}
\begin{tabular}{ll}
$(ij)^2 = \one$, & if $\a_i$ and $\a_j$ are \emph{not} 
                   adjacent in the Dynkin diagram of $\fg_\bC$; \\
$(ij)^3 = \one$, & if $\a_i$ and $\a_j$ are joined by a single bond in the diagram;\\
$(ij)^4 = \one$, & if $\a_i$ and $\a_j$ are joined by a double bond;\\
$(ij)^6=\one$, & if $\a_i$ and $\a_j$ are joined by a triple bond.
\end{tabular}
\end{center}
See Figure \ref{f:dynkin} for the Dynkin diagrams.
\begin{figure}[!h] 
\begin{footnotesize}
\caption{Dynkin diagrams}
\begin{center}
\setlength{\unitlength}{3pt}
\begin{picture}(60,38)(-50,-5)
% An
\multiput(-50,25)(7,0){3}{\circle*{1}}
\put(-50,25){\line(1,0){14}}
\multiput(-34.5,25)(2,0){3}{\circle*{0.3}}
\multiput(-29,25)(7,0){3}{\circle*{1}}
\put(-29,25){\line(1,0){14}}
\put(-51,26.3){{1}}\put(-44,26.3){{2}} 
\put(-25.7,26.3){{$r-1$}}\put(-16,26.3){{$r$}}
\put(-11,25){{$\fsl(r+1,\bC)$}}
% Bn
\multiput(-50,16)(7,0){3}{\circle*{1}}
\put(-50,16){\line(1,0){14}}
\multiput(-34.5,16)(2,0){3}{\circle*{0.3}}
\multiput(-29,16)(7,0){3}{\circle*{1}}
\put(-29,16){\line(1,0){7}}
\multiput(-22,15.75)(0,0.5){2}{\line(1,0){7}}
\put(-18.7,15.15){$\rangle$}
\put(-51,17.3){{1}}\put(-44,17.3){{2}} 
\put(-25.7,17.3){{$r-1$}}\put(-16,17.3){{$r$}}
\put(-11,16){{$\fso(2r+1,\bC)$}}
% Cn
\multiput(-50,7)(7,0){3}{\circle*{1}}
\put(-50,7){\line(1,0){14}}
\multiput(-34.5,7)(2,0){3}{\circle*{0.3}}
\multiput(-29,7)(7,0){3}{\circle*{1}}
\put(-29,7){\line(1,0){7}}
\multiput(-22,6.75)(0,0.5){2}{\line(1,0){7}}
\put(-18.7,6.15){$\langle$}
\put(-51,8.3){{1}}\put(-44,8.3){{2}} 
\put(-25.7,8.3){{$r-1$}}\put(-16,8.3){{$r$}}
\put(-11,7){{$\fsp(2r,\bC)$}}
% Dn
\multiput(-50,0)(7,0){3}{\circle*{1}}
\put(-50,0){\line(1,0){14}}
\multiput(-34.5,0)(2,0){3}{\circle*{0.3}}
\multiput(-29,0)(7,0){2}{\circle*{1}}
\put(-29,0){\line(1,0){7}}
\multiput(-15,-2.33)(0,4.66){2}{\circle*{1}}
\put(-22,0){\line(3,1){7}}\put(-22,0){\line(3,-1){7}}
\put(-51,1.3){{1}}\put(-44,1.3){{2}} 
\put(-18,-3.7){{$r$}}\put(-18.7,3){{$r-1$}}
\put(-11,-0.5){{$\fso(2r,\bC)$}}
\end{picture}
\hspace{20pt}
\begin{picture}(50,38)(0,-5)
% E6
\multiput(0,25)(7,0){5}{\circle*{1}}
\put(0,25){\line(1,0){28}}
\put(14,30){\circle*{1}} \put(14,30){\line(0,-1){5}}
\put(-1,26.3){{1}}\put(6,26.3){{3}} \put(12,26.3){{4}}
\put(20,26.3){{5}}\put(27,26.3){{6}}
\put(14.7,29.4){{2}} \put(31,24.7){$E_6$}
% E7
\multiput(0,16)(7,0){6}{\circle*{1}}
\put(0,16){\line(1,0){35}}
\put(14,21){\circle*{1}} \put(14,21){\line(0,-1){5}}
\put(-1,17.3){{1}}\put(6,17.3){{3}} \put(12,17.3){{4}}
\put(20,17.3){{5}}\put(27,17.3){{6}}\put(34,17.3){{7}}
\put(14.7,20.4){{2}} \put(38,15.7){$E_7$}
% E8
\multiput(0,7)(7,0){7}{\circle*{1}}
\put(0,7){\line(1,0){42}}
\put(14,12){\circle*{1}} \put(14,12){\line(0,-1){5}}
\put(45,6.7){$E_8$} 
\put(-1,8.3){{1}}\put(6,8.3){{3}} \put(12,8.3){{4}}
\put(20,8.3){{5}}\put(27,8.3){{6}}\put(34,8.3){{7}}
\put(41,8.3){{8}} \put(14.7,11.4){{2}}
% F4
\multiput(0,0)(7,0){4}{\circle*{1}} 
\put(0,0){\line(1,0){7}}
\put(7,0.3){\line(1,0){7}} \put(7,-0.3){\line(1,0){7}}
\put(10,-0.85){$\rangle$}
\put(14,0){\line(1,0){7}} \put(25,-0.3){$F_4$}
\put(-1,1.3){{1}}\put(6,1.3){{2}}
\put(13,1.3){{3}}\put(20,1.3){{4}}
% G2
\multiput(40,0)(7,0){2}{\circle*{1}}
\put(40,0){\line(1,0){7}}\put(40,0.3){\line(1,0){7}}\put(40,-0.3){\line(1,0){7}}
\put(43.5,-0.85){$\langle$} \put(50,-0.3){$G_2$}
\put(39,1.3){{1}} \put(46,1.3){{2}}
\end{picture}
\label{f:dynkin}
\end{center}
\end{footnotesize}
\end{figure}
Note that $(i) \a_i = -\a_i$.  For $i\not=j$ the action of the simple reflection $(i)$ on the simple root $\a_j$ can be read off the Dynkin diagram as follows% in Table \ref{t:simprefs}
%\begin{table}[!h] 
%\caption{Simple reflections of simple roots}
\begin{eqnarray*}
  \setlength{\unitlength}{3pt}
  \begin{picture}(7,0)(0,-1)
  \put(0,0){\circle*{1}}\put(7,0){\circle*{1}}
  \put(-0.5,1){\footnotesize{$i$}} \put(6.5,1.5){\footnotesize{$j$}}
  \end{picture} \quad
  & \leadsto & (i)\a_j \, = \, \a_j \\
  \setlength{\unitlength}{3pt}
  \begin{picture}(7,0)(0,-1)
  \put(0,0){\circle*{1}}\put(7,0){\circle*{1}}
  \put(0,0){\line(1,0){7}}
  \put(-0.5,1){\footnotesize{$i$}} \put(6.5,1.5){\footnotesize{$j$}}
  \end{picture} \quad
  & \leadsto & (i)\a_j \, = \, \a_j + \a_i \\  
  \setlength{\unitlength}{3pt}
  \begin{picture}(7,0)(0,-1)
  \put(0,0){\circle*{1}}\put(7,0){\circle*{1}}
  \put(0,0.3){\line(1,0){7}} \put(0,-0.3){\line(1,0){7}}
  \put(3,-1){$\rangle$}
  \put(-0.5,1){\footnotesize{$i$}} \put(6.5,1.5){\footnotesize{$j$}}
  \end{picture} \quad
  & \leadsto & (i)\a_j \, = \, \a_j + \a_i \tand
  (j)\a_i \,=\, \a_i+2\a_j \\  
  \setlength{\unitlength}{3pt}
  \begin{picture}(7,0)(0,-1)
  \put(0,0){\circle*{1}}\put(7,0){\circle*{1}}
  \put(0,0.3){\line(1,0){7}} \put(0,-0.3){\line(1,0){7}}
  \put(0,0){\line(1,0){7}}
  \put(3,-1){$\rangle$}
  \put(-0.5,1){\footnotesize{$i$}} \put(6.5,1.5){\footnotesize{$j$}}
  \end{picture} \quad
  & \leadsto & (i)\a_j \, = \, \a_j + \a_i \tand
  (j)\a_i \,=\, \a_i+3\a_j 
\end{eqnarray*}
%\label{t:simprefs}
%\end{table}

%\begin{remark}[Intuition]
%Given an irreducible representation $V_\bC$ of $\fg_\bC$ with a suitable choice of basis\footnote{\label{fn:basis}This means a basis of weight vectors.  For example, if $\fg_\bC = \fso_n\bC$ then a basis $\{e_i\}$ so that $Q(e_i,e_j) = \d^{n+1}_{i+j}$.} one may think of the Weyl group as follows.  Consider the subgroup $W'$ of $\tAut(V_\bC)$ generated by permutations of the basis vectors, and the changes of sign $e_i \mapsto -e_i$.  Let $W = W' \cap G(\bC) \subset\tAut(V_\bC)$.  Then (at least for the classical simple Lie groups) the image of the induced $W \mapsto \tAut(\fh^*)$ is the Weyl group.  A simple example follows.
%\end{remark}

\begin{example}[Special Linear Algebra]
Let $\fg_\bC \simeq \fsl_n\bC$ be the algebra of trace--free linear maps $\bC^n\to\bC^n$.  Fix a basis of $\{e_1,\ldots,e_n\}$ of $\bC^n$ and let $\{e^1,\ldots,e^n\}$ be the dual basis of $(\bC^n)^*$ so that $\{ e_i\ot e^j \ | \ i\not=j\} \cup \{ e_i\ot e^i - e_{i+1}\ot e^{i+1} \ | \ 1 \le i \le n-1\}$ is a basis of $\fsl_n\bC$.  Then the diagonal subalgebra $\fh = \{ h = \sum_i h_i \,e_i\ot e^i \ | \ \sum_i h^i=0 \}$ is a Cartan subalgebra.  Define $\e_i \in \fh^*$ by $\e_i(h) = h_i$.  Then the roots are $\Delta = \{\e^i-\e^{j} \ | \ i\not= j\}\subset\fh^*$, and the Weyl group is the symmetric group $\cS_n$ permuting the $\e^i$.%\footnote{In the example of Footnote \ref{fn:basis}, we restrict to those permutations of the basis that preserve $Q$.}
\end{example}

\begin{remark}\label{R:WnS}
Fix a set of simple roots $\sS \subset \fh^*$ for $\fg_\bC$.  If $\sS' \subset \fh^*$ is a second set of simple roots, then there exists a unique $w \in \sW$ such that $\sS' = w\sS$.
\end{remark}

%------------------------------------------------------------------------------
\subsection{Grading elements} \label{S:ge}
%------------------------------------------------------------------------------

A \emph{grading element} is any semisimple element $\ttZ \in \fg_\bC$ acting on $\fg_\bC$ (via the adjoint action) by integer eigenvalues.  That is, 
\[
  \fg_\ell \ = \ \{ \xi \in \fg_\bC \ | \ [\ttZ,\xi] = \ell\,\xi \}
\]
is nonzero only if $\ell \in \bZ$, and $\fg_\bC$ admits a $\ttZ$--eigenspace decomposition
\[
  \fg_\bC \ = \ \fg_k \, \op\cdots\op\,\fg_{-k} \,.
\]

Note that the Jacobi identity implies
\[
  [\fg_a,\fg_b] \ \subset \ \fg_{a+b} \,.
\]
In fact, $\fg_{\ge0}$ is a parabolic subalgebra.  Every parabolic subalgebra $\fp$ may be realized in this fashion.  Two distinct grading elements may determine the same parabolic.  However, given a parabolic $\fp$, and a choice of Cartan and Borel subalgebra $\fh \subset \fb \subset\fp$, there exists a unique grading element $\ttZ \in \fh$ with $\fp = \fg_{\ge0}$ and such that $\fg_1$ generates $\fg_+$ as an algebra. 

We may always choose a Cartan subalgebra $\fh$ of $\fg_\bC$ so that 
\[
  \ttZ \ \in \ \fh \ \subset \ \fg_0\,.
\]  
We may further choose simple roots $\sS \subset \fh^*$ of $\fg_\bC$ so that 
\[
  \a(\ttZ) \ge 0 \quad\hbox{for all} \quad \a \in \sS\,.
\] 

For further discussion of grading elements in the context of Hodge theory, see \cite[\S2.2--2.3]{MR3217458}.

%------------------------------------------------------------------------------
\subsection{Levi subalgebras} \label{S:levi}
%------------------------------------------------------------------------------

A \emph{Levi subalgebra} $\fl_\bC \subset \fg_\bC$ arises as the commutator 
\[
  \fl_\bC \ = \ \{ \xi \in \fg_\bC \ | \ [\ttZ,\xi] = 0 \}
\]
of a semisimple element $\ttZ \in \fg_\bC$ acting on $\fg_\bC$ with integer eigenvalues.  Note that both $\fh$ and $\fg_\bC$ are Levi subalgebras.  More generally, Levi subalgebras are reductive subalgebras and as such 
\begin{subequations}\label{SE:proj}
\begin{equation} \label{E:ldecomp}
  \fl_\bC \ = \ \fz \,\op\, \fl^\tss_\bC
\end{equation}
decomposes as a direct sum of its center $\fz$ with the semisimple factor $\fl^\tss_\bC = [\fl_\bC,\fl_\bC]$.  Moreover, the decomposition \eqref{E:ldecomp} is orthogonal with respect to the Killing form on $\fl_\bC$.  (The Killing form on $\fl_\bC$ may be identified with restriction to $\fl_\bC$ of the Killing form on $\fg_\bC$.)  Let
\begin{equation}
  \pi_\fl^\tss : \fl_\bC \ \to \ \fl_\bC^\tss
\end{equation}
\end{subequations}
denote the projection to the semisimple factor.  

Any Levi $\fl_\bC$ contains a Cartan subalgebra $\fh \ni \ttZ$ of $\fg_\bC$.  Moreover, any set of simple roots $\sS'$ for $\fl_\bC^\tss$ may be realized as a subset of simple roots $\sS \subset \fh^*$ for $\fg_\bC$.  More precisely, $\fh' = \fh \cap \fl^\tss_\bC$ is a Cartan subalgebra of $\fl_\bC^\tss$, and any set of simple roots $\sS' \subset (\fh')^*$ for $\fl_\bC^\tss$ may be realized as $\left.\tilde\sS\right|_{\fh'}$ for some subset $\tilde\sS \subset \sS$ of simple roots $\sS \subset \fh^*$ of $\fg_\bC$.  In general, we will abuse notation and write $\sS' \subset \sS$.

\begin{remark}\label{R:finitelevis}
It follows from Remark \ref{R:WnS} and the discussion above that the number of Levi subalgebras containing a \emph{fixed} Cartan $\fh$ is \emph{finite}.
\end{remark}

Given a real form $\fg_\bR$ of $\fg_\bC$, we will say that a real subalgebra $\fl_\bR \subset \fg_\bR$ is a \emph{real Levi subalgebra} if the complexification $\fl_\bC = \fl_\bR \ot \bC$ is a Levi subalgebra of $\fg_\bC$.

%------------------------------------------------------------------------------
\subsection{Fundamental weights} \label{S:wt}
%------------------------------------------------------------------------------

Given a Cartan and Levi subalgebra $\fh \subset \fl_\bC$ there is a simple test to determine when an element $\z \in \fh$ lies in $\fl^\tss_\bC$.  A choice of simple roots $\sS = \{\a_1,\ldots,\a_r\}$ for $\fg_\bC$ determines a set of \emph{fundamental weights} $\{\w_1,\ldots,\w_r\}$.  We will need only two elementary properties of fundamental weights.  
\begin{a_list}
\item
If $\sS' \subset \sS$ is a set of simple roots for the semisimple factor $\fl^\tss_\bC$ of a Levi subalgebra $\fl_\bC$, then $\z\in \fh$ lies in $\fl^\tss_\bC$ if and only if $\w_i(\z) = 0$ for every $\a_i \in \sS \backslash \sS'$.
\item
Each $\w_i = q^i_j\,\a_j$ is a linear combination of the simple roots with \emph{positive} coefficients $0 < q^i_j \in \bQ$.  
\end{a_list}

%------------------------------------------------------------------------------
\subsection{Standard triples} \label{S:st}
%------------------------------------------------------------------------------

A \emph{standard triple} in $\fg$ is a set of three elements $\{ N^+ , Y , N\} \subset \fg$ such that 
$$
  [Y , N^+] \ = \ 2\,N^+ \,,\quad
  [N^+,N] \ = \ Y \tand
  [Y,N] \ = \ -2\,N \,.
$$
The elements $N^+ , Y , N$ are, respectively, the \emph{nilpositive}, \emph{neutral} and \emph{nilnegative} elements of the triple.  The neutral element $Y$ of a standard triple is a grading element \cite{\CoMc}.

\begin{example} \label{eg:stdtri_1}
The matrices 
\begin{equation} \label{E:stdtri_sl2}
  \bn^+ \ = \ \left(\begin{array}{cc} 0 & 1 \\ 0 & 0 \end{array}\right) \,,\quad 
  \by \ = \ \left(\begin{array}{cc} 1 & 0 \\ 0 & -1 \end{array}\right) \tand
  \bn \ = \ \left(\begin{array}{cc} 0 & 0 \\ 1 & 0 \end{array}\right)
\end{equation}
form a standard triple in $\fsl(2,\bR)$; while the matrices
\begin{equation} \label{E:stdtri_su11}
  \overline\be \ = \ \half \left(\begin{array}{cc} \bi & 1 \\
            1 & -\bi \end{array}\right) \,,\quad
  \bz \ = \ \left(\begin{array}{cc} 0 & \bi \\ 
           -\bi & 0 \end{array}\right) \tand
  \be = \half \left(\begin{array}{cc} -\bi & 1 \\ 
            1 & \bi \end{array}\right)
\end{equation}
form a standard triple in $\fsu(1,1)$.  The one-dimensional subalgebra spanned by $\bi\bz$ is a maximal compact Cartan subalgebra of $\fg_\bR = \fsl(2,\bR)$, and \eqref{E:stdtri_su11} is a DKS--triple (page \pageref{p:DKS}).
\end{example}

%------------------------------------------------------------------------------
\subsection{Jacobson--Morosov filtrations} \label{S:JMf}
%------------------------------------------------------------------------------

Given a nilpotent $N \in \tEnd(V)$, the Jacobson--Morosov Theorem asserts that $N$ may be realized as the nilnegative element of a standard triple $\{ N^+ , Y , N\} \subset \tEnd(V)$.  The vector space decomposes as a direct sum 
\[
  V \ = \ \bigoplus_{\ell\in\bZ} V_\ell
\]
of $Y$--eigenspaces with integer eigenvalues \cite[II.7]{MR499562}.  The \emph{Jacobson--Morosov filtration} $W(N)$ of $V$ is defined by 
\begin{equation} \label{E:W(N)}
  W_\ell(N) \ := \ \bigoplus_{m\le \ell} V_m \,.
\end{equation}
It is the unique increasing filtration of $V$ with the properties that 
\begin{i_list}
  \item $N (W_\ell(N)) \subset W_{\ell-2}(N)$, and 
  \item the induced map $N^\ell : W_\ell(N)/W_{\ell-1}(N) \to W_{-\ell}(N)/W_{-\ell-1}W(N)$ is a vector space isomorphism for all $\ell \ge 0$.
\end{i_list}
In particular, $W(N)$ does not depend on our choice of standard triple.

Given $k \in \bZ$, we define $W(N)[-k]$ to be the filtration 
\[
  W_\ell(N)[-k] \ = \ W_{\ell-k}(N) \,.
\]
When there exists $F \in \check D$ of weight $k$ (viewed as a filtration on $V_{\mathbb{C}}$) such that $z \mapsto e^{zN}F$ is a nilpotent orbit, $W_\ell(N)[-k]$ is the \emph{monodromy weight filtration}.

%------------------------------------------------------------------------------
\subsection{Distinguished grading elements} \label{S:dge}
%------------------------------------------------------------------------------
A grading element $Y \in \fg_\bC$ is \emph{distinguished} if the $Y$--eigenspace decomposition $\fg_\bC = \op_\ell \fg_\ell$, given by
\[
  \fg_\ell \ = \ \{ \xi \in \fg_\bC \ | \ [Y,\xi] = \ell\xi \}\,,
\]
satisfies two conditions:
\begin{a_list}
\item $\tdim\,\fg_0 = \tdim\,\fg_2$, and 
\item $\fg_2$ generates $\fg_+$ as an algebra.
\end{a_list}

Bala and Carter \cite{MR0417306, MR0417307} showed that a distinguished grading element $Y$ can be realized as the neutral element of a standard triple; see also \cite{MR1251060}.  However, it is not the case that every neutral element of a standard triple is a distinguished grading element.  In fact, the neutral element $Y$ of a standard triple $\{N^+,Y,N\} \subset \fg_\bC$ is a distinguished grading element if and only if there exists no proper Levi subalgebra $\fl_\bC \subsetneq \fg_\bC$ containing $N$ (equivalently, containing the standard triple).

Note that we include the ``trivial'' case $\fg=\{ 0\}$, $Y=0$, with the trivial DKS- and standard triples; this means in particular that $\mathcal{L}_{\varphi,\ft}$ in \eqref{E:dfncL} contains $\ft$ as an element.

%------------------------------------------------------------------------------
\subsection{Horizontal $\tSL(2)$s} \label{S:hSL2}
%------------------------------------------------------------------------------

Recall the notation \eqref{E:stdtri_su11}, and the decomposition \eqref{E:gphi}.  A \emph{horizontal $\tSL(2)$ at $\varphi \in D$} is given by a representation $\upsilon : \tSL(2,\bC) \to G(\bC)$ such that 
\begin{subequations}\label{SE:hTDS}
\begin{equation}
  \upsilon(\tSL(2,\bR)) \ \subset \ G(\bR)^+
\end{equation}
and 
\begin{equation} \label{E:hTDSb}
  \upsilon_* \overline\be \ \in\  \fg^1_\varphi \,,\quad
  \upsilon_* \bz \ \in\  \fg^0_\varphi \,,\quad
  \upsilon_* \be \ \in\  \fg^{-1}_\varphi \,.
\end{equation}
\end{subequations}
Note that \eqref{E:hTDSb} completely determines $\upsilon$, and 
\[
   \{ \overline\sE , \sZ , \sE  \} \ = \ 
   \upsilon_*\{ \overline\be , \bz , \be \}
\]
is a DKS--triple (page \pageref{p:DKS}).

%------------------------------------------------------------------------------
\bibliographystyle{alpha}
\bibliography{}

\end{document}